\newcommand{\abs}[1]{\lvert #1 \rvert}
\renewcommand{\vector}[1]{\mathbf{#1}}
\newcommand{\PP}{\mathcal{P}}
\newcommand{\N}{\mathbb{N}}
\newcommand{\Branch}{\mathbf{B}}
\newcommand{\Graphseq}{\mathbf{G}}
\newcommand{\Grid}{\mathbf{Grid}}
\newcommand{\Complete}{\mathbf{K}}
\newcommand{\Path}{\mathbf{P}}
\newcommand{\Hypercube}{\mathbf{Q}}
\DeclareMathOperator{\ex}{ex}
\DeclareMathOperator{\Binom}{Binom}
\DeclareMathOperator{\Ent}{Ent}
\DeclareMathOperator{\Forb}{Forb}
\numberwithin{equation}{section}
\newtheorem{theorem}{Theorem}[section]
\newtheorem{lemma}[theorem]{Lemma}
\newtheorem{corollary}[theorem]{Corollary}
\newtheorem{proposition}[theorem]{Proposition}
\newtheorem{observation}[theorem]{Observation}
\theoremstyle{definition}
\newtheorem{definition}[theorem]{Definition}
\newtheorem{example}[theorem]{Example}
\newtheorem{problem}[theorem]{Problem}
\theoremstyle{remark}
\newtheorem{remark}[theorem]{Remark}
\title{Multicolour containers, extremal entropy and counting}
\author{Victor Falgas-Ravry \\
\small Ume{\aa} Universitet\\
\small \tt victor.falgas-ravry@umu.se\\
\and
Kelly O'Connell \\
\small Vanderbilt University\\ 
\small \tt kelly.m.oconnell@vanderbilt.edu
\and 
Andrew Uzzell\\
\small University of Nebraska--Lincoln\\ 
\small \tt andrew.uzzell@unl.edu}
\begin{document}
\maketitle
	\begin{abstract}
		In breakthrough results, Saxton--Thomason and Balogh--Morris--Samotij developed powerful theories of hypergraph containers. 

In this paper, we explore some consequences of these theories. We use a simple container theorem of Saxton--Thomason and an entropy-based framework to deduce container and counting theorems  for hereditary properties of $k$-colourings of very general objects, which include both vertex-- and edge--colourings of general hypergraph sequences 
as special cases.

In the case of sequences of complete graphs, we further derive characterisation and transference results for hereditary properties in terms of their stability families and extremal entropy. This covers within a unified framework a great variety of combinatorial structures, some of which had not previously been studied via containers: directed graphs, oriented graphs, tournaments, multigraphs with bounded multiplicity and multicoloured graphs amongst others.

Similar results were recently and independently obtained by Terry.
	\end{abstract}

{
  \hypersetup{linkcolor=black}
  \tableofcontents
}
	\section{Introduction}
	\subsection{Notation and basic definitions}
	Given a natural number~$r$, we write $A^{(r)}$ for the collection of all subsets of $A$ of size~$r$. We denote the powerset of~$A$ by $2^{A}$ and the collection of nonempty subsets of $A$ by $2^A\setminus\{\emptyset\}$.
An \emph{$r$-uniform hypergraph}, or \emph{$r$-graph}, is a pair~$G=(V,E)$, where $V=V(G)$ is a set of \emph{vertices} and $E=E(G)\subseteq V^{(r)}$ is a set of \emph{$r$-edges}. We write `graph' for `$2$-graph' and, when there is no risk of confusion, `edge' for `$r$-edge'. We let $e(G):=\vert E(G)\vert$ denote the \emph{size} of~$G$ and  $v(G):=\vert V(G)\vert$ denote its \emph{order}.

	A \emph{subgraph} of an $r$-graph~$G$ is an $r$-graph~$H$ with $V(H)\subseteq V(G)$ and $E(H)\subseteq E(G)$. We use $H\leq G$ to denote that $H$ is a subgraph of $G$. Given a set of vertices~$A\subseteq V(G)$, the subgraph of~$G$ \emph{induced} by $A$ is $G[A]:=(A, E(G)\cap A^{(r)})$. A set of vertices~$A$ is \emph{independent} in $G$ if the subgraph it induces contains no edges. The \emph{degree} of a vertex $v\in V(G)$ is the number of $r$-edges of $G$ containing $v$.
Finally an \emph{isomorphism} between $r$-graphs $G_1$ and~$G_2$ is a bijection~$\phi: \ V(G_1)\rightarrow V(G_2)$ which sends edges to edges and non-edges to non-edges.

Let $[n]:=\{1,2,\ldots, n\}$. A property $\mathcal{P}$ of (labelled) $r$-graphs is a sequence $(\mathcal{P}_n)_{n \in \N}$, where $\mathcal{P}_n$ is a collection of $r$-graphs on the \emph{labelled} vertex set $[n]$.  Hereafter, we do not distinguish between a property~$\PP$ and the class of all $r$-graphs belonging to $\PP_n$ for some $n\in \mathbb{N}$.  An $r$-graph property is \emph{symmetric} if it is closed under relabelling of the vertices, i.e.\ under permutations of the vertex set $[n]$.  A symmetric $r$-graph property is \emph{monotone (decreasing)} if for every $r$-graph~$G\in \mathcal{P}$, every subgraph~$H$ of~$G$ is isomorphic to an element of~$\mathcal{P}$. A symmetric $r$-graph property is \emph{hereditary} if for every $r$-graph~$G\in \mathcal{P}$ every \emph{induced} subgraph~$H$ of~$G$ is isomorphic to an element of $\mathcal{P}$. Note that every monotone property is hereditary, but that the converse is not true. For example, the property of not containing a $4$-cycle as an induced subgraph is hereditary but not monotone.

In order to encode certain combinatorial objects of interest, such as directed graphs, we will consider a weaker notion of symmetry.
\begin{definition}
Let $m$,~$n\in\N$ with $m\leq n$. An \emph{order-preserving} map from $[m]$ to $[n]$ is a function~$\phi: \ [m]\rightarrow [n]$ such that $\phi(i)\leq\phi(j)$ whenever $i\leq j$. Given $e\in [m]^{(r)}$, we write $\phi(e)$ for the set~$\phi(e)=\{\phi(v): \ v\in e\}$.

Given $r$-graphs $G_1$ on $[m]$ and~$G_2$ on $[n]$, we say that $G_2$ contains $G_1$ as an \emph{ordered subgraph} if there is an order-preserving isomorphism from $G_1$ to an $m$-vertex subgraph~$H$ of~$G_2$. We further say that $G_2$ contains $G_1$ as an \emph{induced} orderered subgraph if the $m$-vertex subgraph~$H$ in question is an induced subgraph of~$G_2$.

An $r$-graph property $\mathcal{P}$ is said to be \emph{order-hereditary} if for every $G\in \mathcal{P}_n$ and every order-preserving injection $\phi: \ [m]\rightarrow [n]$, the graph~$G_{\vert \phi}=([m], \{e: \ \phi(e)\in E(G)\})$ is a member of~$\mathcal{P}_m$. 	
\end{definition}
Clearly, every hereditary property is order-hereditary, but the converse is not true. As an example, consider the property~$\mathcal{P}$ of not containing an increasing path of length $2$, that is, the collection of graphs on $[n]$ ($n\in\N$) not containing vertices $i<j<k$ such that $ij$ and $jk$ are both edges. This is order-hereditary, but not symmetric --- and, as we shall see in Section~\ref{section: examples}, is much larger than the symmetric monotone property of not containing a path of length~$2$.

We use standard Landau notation throughout this paper, which we recall here. Given functions $f$,~$g:\N\to\mathbb{R}$, we have $f=O(g)$ if there exists a constant $C>0$ such that $\limsup_{n\to\infty}{f(n)}/{g(n)}\leq C$. If $\lim_{n\to\infty}{f(n)}/{g(n)}=0$, then we write $f=o(g)$. We write $f=\Omega (g)$ and $f=\omega(g)$ to denote $g=O(f)$ and $g=o(f)$ respectively. If we have both $f=O(g)$ and $f=\Omega(g)$, we say that $f$ and $g$ are of the same order and denote this by $f=\theta(g)$. We also use $f\ll g$ and $f \gg g$ as alternatives to $f=o(g)$ and $f=\omega(g)$, respectively. Finally, we say that a sequence of events~$A_n$ occurs \emph{with high probability (whp)} if $\lim_{n\rightarrow \infty} \mathbb{P}(A_n)=1$.

	\subsection{Background: speeds of hereditary graph properties}\label{section: background hereditary props}
The problem of counting and characterising graphs in a given hereditary property $\mathcal{P}$ has a long and distinguished history. The \emph{speed} $n\mapsto \vert \mathcal{P}_n \vert$ of a graph property was introduced in 1976 by Erd{\H o}s, Kleitman and Rothschild~\cite{ErdosKleitmanRothschild76}. Together with the structural properties of a `typical' element of $\mathcal{P}_n$, it has received extensive attention from the research community.

Early work focused on the case where $\mathcal{P}=\Forb(F)$, the monotone decreasing property of not containing a fixed graph $F$ as a subgraph. We refer to the graphs in $\Forb(F)$ as \emph{$F$-free} graphs. The \emph{Tur\'an number} of~$F$, a function of $n$ denoted by $\ex(n, F)$, is the maximum number of edges in an $F$-free graph on $n$ vertices. Clearly, any subgraph of an $F$-free graph is also $F$-free. This gives the following lower bound on the number of $F$-free graphs on $n$ labelled vertices:
\[\Forb(F)_n \geq 2^{\ex(n, F)}.\]
Erd{\H o}s, Kleitman and R{\"o}dl~\cite{ErdosKleitmanRothschild76} showed that if $F=K_t$, the complete graph on $t$ vertices, then the exponent in this lower bound is asymptotically tight:
\[\Forb(K_t)_n \leq 2^{\bigl(1+o(1)\bigr)\ex(n, K_t)}.\]
Their work was generalised by Erd{\H o}s, Frankl and R{\"o}dl~\cite{ErdosFranklRodl86} to the case of arbitrary forbidden subgraphs~$F$ and by Pr\"omel and Steger~\cite{PS92}, who considered the property $\Forb^{*}(F)$ of not containing $F$ as an induced subgraph.  Finally, Alekseev~\cite{Alekseev93} and Bollob\'as--Thomason~\cite{BT} independently determined the asymptotics of the logarithm of the speed for any hereditary property in terms of its \emph{colouring number}, which we now define.
\begin{definition}\label{definition: colouring number}
For each $r\in \N$ and $\vector{v}\in \{0,1\}^r$, let  $\mathcal{H}(r,\vector{v})$ be the collection of all graphs $G$ such that $V(G)$ may be partitioned into $r$ disjoint sets $A_1$, \dots,~$A_r$ such that for each $i$, $G[A_i]$ is an empty graph if $v_i=0$ and a complete graph if $v_i=1$. The \emph{colouring number} $\chi_c(\mathcal{P})$ of a hereditary property is defined to be
\[\chi_c(\mathcal{P}):=\sup\bigl\{r\in \N \, : \, \mathcal{H}(r,\vector{v})\subseteq \mathcal{P} \textrm{ for some }\vector{v} \in \{0,1\}^r\bigr\}.\]
\end{definition}
\begin{theorem}[Alekseev--Bollob\'as--Thomason Theorem]\label{theorem: alekseevbollobasthomason}
If\/ $\mathcal{P}$ is a hereditary property of graphs with $\chi_c(\mathcal{P})=r$, then
\[\lim_{n\rightarrow \infty} \dfrac{\log_2\vert\mathcal{P}_n\vert}{\binom{n}{2}}= 1-\dfrac{1}{r}.\]
\end{theorem}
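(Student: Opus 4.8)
The plan is to prove the Alekseev--Bollob\'as--Thomason Theorem in two matching halves: a lower bound on $\log_2\vert\mathcal{P}_n\vert$ coming from an explicit large subfamily, and an upper bound coming from a container-style (or in the original proofs, Szemer\'edi-regularity-based) argument. For the lower bound, let $r=\chi_c(\mathcal{P})$ and fix $\vector{v}\in\{0,1\}^r$ with $\mathcal{H}(r,\vector{v})\subseteq\mathcal{P}$. Partition $[n]$ into parts $A_1,\dots,A_r$ of sizes as equal as possible, put the prescribed empty/complete graph on each $A_i$ according to $v_i$, and let the $\binom{r}{2}$ bipartite pairs between distinct parts be arbitrary. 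Every graph obtained this way lies in $\mathcal{H}(r,\vector{v})\subseteq\mathcal{P}_n$, and the number of choices for the bipartite parts is $2^{\sum_{i<j}\abs{A_i}\abs{A_j}}$. Since $\sum_{i<j}\abs{A_i}\abs{A_j} = \binom{n}{2} - \sum_i \binom{\abs{A_i}}{2} = \bigl(1-\tfrac1r\bigr)\binom{n}{2} - O(n)$, this gives $\log_2\vert\mathcal{P}_n\vert \geq \bigl(1-\tfrac1r+o(1)\bigr)\binom{n}{2}$. One should check that these graphs are genuinely distinct as labelled graphs; up to a bounded multiplicative ambiguity from the choice of partition (which is absorbed into the $o(1)$ term), they are.

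For the upper bound, the goal is to show $\vert\mathcal{P}_n\vert \leq 2^{(1-1/r+o(1))\binom{n}{2}}$. The approach I would take is to produce a small family of ``container'' graphs, each of which is close to having a nice $(r+1)$-part structure, such that every $G\in\mathcal{P}_n$ is a subgraph of (more precisely, is sandwiched inside) one of them. Concretely: for a graph $G\in\mathcal{P}_n$, apply Szemer\'edi's regularity lemma to obtain a partition $V_1,\dots,V_M$ into a bounded number $M=M(\varepsilon)$ of parts with at most $\varepsilon M^2$ irregular pairs; form the reduced/cluster graph in which each regular pair is coloured by whether its density is near $0$, near $1$, or in between. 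Because $\mathcal{P}$ is hereditary and $\chi_c(\mathcal{P})=r$, the cluster structure cannot contain a ``clean'' $\mathcal{H}(r+1,\vector{w})$-configuration: if it did, an embedding/counting lemma would let us find, inside $G$, an induced copy of an arbitrary member of $\mathcal{H}(r+1,\vector{w})$ on linearly many vertices, contradicting that $\mathcal{P}$ omits $\mathcal{H}(r+1,\cdot)$ by maximality of $r$. One then argues combinatorially (this is essentially a stability/Ramsey-type statement about edge-$3$-colourings of $K_M$ with no monochromatic-in-$\{0,1\}$ clique of size $r+1$) that the number of edges of $G$ not accounted for by an $r$-partite ``random-like'' block plus lower-order corrections is at most $\bigl(1-\tfrac1r\bigr)\binom{n}{2} + o(n^2)$; summing over the bounded number of cluster structures and the $2^{o(n^2)}$ ways to fill in the uncertain, sparse, and irregular pairs yields the bound.

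Laying this out as a container argument instead: one would invoke the simple Saxton--Thomason container theorem referenced in the abstract to cover $\mathcal{P}_n$ by a family $\mathcal{C}$ of containers with $\log_2\abs{\mathcal{C}} = o(n^2)$, each container $C$ having the property that the hereditary closure of the graphs it admits still omits $\mathcal{H}(r+1,\vector{w})$ for every $\vector{w}$; a supersaturation/removal step then forces each $C$ to be structurally $r$-partite up to $o(n^2)$ edges, and counting subgraphs of each such $C$ gives $\abs{\mathcal{P}_n}\leq\abs{\mathcal{C}}\cdot 2^{(1-1/r)\binom{n}{2}+o(n^2)}$.

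The main obstacle is the upper bound, and specifically the structural step: translating ``$\mathcal{P}$ omits every $\mathcal{H}(r+1,\vector{w})$'' into a usable bound on the number of edges of a typical $G\in\mathcal{P}_n$. This requires a supersaturation/counting lemma (so that a few dense cluster edges of the ``wrong'' type force many induced copies of a forbidden configuration) together with an extremal-graph-theoretic estimate, essentially a colourability analogue of the Turán bound $\bigl(1-\tfrac1r\bigr)\binom{n}{2}$, applied to the coloured cluster graph. The lower bound and the bookkeeping for irregular/sparse pairs are routine by comparison; the delicate points there are only that the constructed graphs are distinct and that the $O(n)$ slack from unequal part sizes is negligible against $\binom{n}{2}$.
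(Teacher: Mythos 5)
Your lower bound matches the paper's and is fine. The upper bound, however, has a genuine gap, and it is exactly the step you yourself flag as ``the main obstacle'': converting the hypothesis $\chi_c(\mathcal{P})=r$ (i.e.\ that for every $\mathbf{w}\in\{0,1\}^{r+1}$ some member of $\mathcal{H}(r+1,\mathbf{w})$ is missing from $\mathcal{P}$) into an entropy bound of $(1-1/r+o(1))\binom{n}{2}$ on the containers. You gesture at ``a supersaturation/removal step'' forcing each container to be ``structurally $r$-partite up to $o(n^2)$ edges'' and at ``a colourability analogue of the Tur\'an bound'' for a coloured cluster graph, but neither is supplied, and the structural claim is stronger than what is needed (and than what containers directly give: a container only guarantees that admitted colourings have \emph{few} bad embeddings, not that their hereditary closure omits $\mathcal{H}(r+1,\mathbf{w})$, so your intermediate assertion about the hereditary closure is not available as stated). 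What is actually needed, and what the paper proves as its key Lemma, is: if $t$ is a $2$-colouring template with $\langle t\rangle\subseteq\mathcal{P}_n$ and $\mathrm{Ent}(t)\geq(1-1/r+\varepsilon)\binom{n}{2}$, then $\mathcal{H}(r+1,\mathbf{v})_{\ell}\subseteq\mathcal{P}$ for some $\mathbf{v}$ and every $\ell$. The paper gets this by applying Erd\H{o}s--Stone to the graph of full-entropy edges of $t$ to find a $K_{r+1}(m)$ with $m$ Ramsey-large, then using Ramsey's theorem inside each of the $r+1$ parts to extract, for each part, an $\ell$-set on which one fixed colour is available on every pair; this pins down $\mathbf{v}$ and lets one embed an arbitrary member of $\mathcal{H}(r+1,\mathbf{v})_{\ell}$ as a realisation of $t$, and a pigeonhole over $\mathbf{v}$ then contradicts $\chi_c(\mathcal{P})=r$. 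Without this (or an equivalent) argument your container sketch does not close; note also that the paper's supersaturation lemma applies to properties defined by finitely many forbidden colourings, so for a general hereditary $\mathcal{P}$ one must first pass through the approximation step (its Theorem on approximating hereditary properties / the speed corollary), which your outline elides when it asserts $\log_2|\mathcal{C}|=o(n^2)$ together with per-container control for $\mathcal{P}$ itself.

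Your alternative regularity-lemma sketch is the classical Bollob\'as--Thomason route and could in principle be completed, but it is likewise only a plan: the embedding step (no clean $\mathcal{H}(r+1,\mathbf{w})$ pattern in the reduced coloured graph) and the Tur\'an-type bound for that coloured reduced graph are precisely the analogous unproven core. The paper's point in this section is that one can avoid regularity altogether: containers reduce everything to bounding the entropy of a single template with $\langle t\rangle\subseteq\mathcal{P}_n$, and Erd\H{o}s--Stone plus Ramsey does that in a few lines.
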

Subsequently, the rate of convergence of $\log_2 \vert \mathcal{P}_n\vert/\binom{n}{2}$ and the structure of typical graphs were investigated by Balogh, Bollob\'as and Simonovits~\cite{BBS04, BBS09} for monotone properties, and by Alon, Balogh, Bollob\'as and Morris~\cite{ABBM11} for hereditary properties.

There has also been interest in the speed of monotone properties in other discrete structures. Kohayakawa, Nagle and R\"odl~\cite{KohayakawaNagleRodl03}, Ishigami~\cite{Ishigami07}, Dotson and Nagle~\cite{DotsonNagle09} and Nagle, R\"odl and Schacht~\cite{NagleRodlSchacht06} investigated the speed of hypergraph properties, while in a series of papers Balogh, Bollob\'as and Morris~\cite{BaloghBollobasMorris06, BaloghBollobasMorris07b, BaloghBollobasMorris07} studied the speed of properties of ordered graphs, oriented graphs and tournaments. Many of these results relied on the use of graph and hypergraph regularity lemmas. See the survey of Bollob\'as~\cite{Bollobas07} for an overview of the state of the area \emph{before} the breakthroughs discussed in the next subsection.
\subsection{Background: transference and containers}
Recently, there has been great interest in \emph{transference} theorems, in which central results of extremal combinatorics are shown to also hold in `sparse random' settings.   These results are motivated by, \emph{inter alia}, the celebrated Green--Tao theorem on arithmetic progressions in the primes~\cite{GreenTao08} and the K{\L}R conjecture of Kohayawa, {\L}uczak and R\"odl~\cite{KLR} and its applications.

Very roughly, the K{\L}R conjecture says the following: let $H$ be an arbitrary graph, and let $p=p(n)$ be such that with high probability the Erd{\H o}s--R\'enyi random graph~$G_{n,p}$ contains many more copies of $H$ than edges (so that in particular it cannot be made $H$-free by deleting a negligible proportion of the edges). Then all but an exponentially small proportion of the graphs obtained by replacing each vertex $x\in V(H)$ by an $n$-set $V_x$ and each edge $xy \in E(H)$ by a `sparse' $(\varepsilon, p)$-regular bipartite graph between $V_x$ and $V_y$ contain a `canonical' copy of $H$ (a copy taking exactly one vertex from each of the parts $(V_x)_{x\in V(H)}$). This implies in particular that with high probability the partitions obtained from applications of a sparse regularity lemma to a sufficiently dense subgraph of an Erd{\H o}s--R\'enyi random graph satisfy `sparse' analogues of the embedding lemmas for Szemer\'edi regularity partitions. See~\cite{ConlonGowersSamotijSchacht14} for a more rigorous statement and discussion of the conjecture and its applications.


In major breakthroughs a little over five years ago, Conlon and Gowers~\cite{ConlonGowers10} and independently 
Schacht~\cite{Schacht2009} proved very general transference results, yielding important corollaries of the K{\L}R conjecture. Their work was soon followed by another dramatic breakthrough: Balogh, Morris and Samotij~\cite{BaloghMorrisSamotij15} and independently Saxton and Thomason~\cite{SaxtonThomason15}, building on work of Kleitman--Winston~\cite{KleitmanWinston82} and of Sapozhenko~\cite{Sapozhenko87,Sapozhenko01} for graphs, developed powerful theories of hypergraph containers.

These container theories imply that hereditary properties of graphs and hypergraphs can be `covered' by `small' families of `containers', which are themselves `almost in the property'. We discuss containers with more precision and details in Section~\ref{section: containers}. As an application of their theories, Balogh--Morris--Samotij and Saxton--Thomason gave both new proofs of known counting/characterisation results and many new counting/characterisation results for hereditary properties, and in addition a spate of transference results.
In particular Balogh--Morris--Samotij and Saxton--Thomason settled the K{\L}R conjecture in full generality --- 
see the excellent ICM survey of Conlon~\cite{Conlon14} for an in-depth discussion of some of the recent groundbreaking progress made by researchers in the area.

\subsection{Background: entropy and graph limits}
A parallel but separate development at the intersection of extremal combinatorics and discrete probability has been the rise of theories of limit objects for sequences of discrete structures. One of several approaches to limits of sequences of graphs (see~\cite{Austin08} for a description of other approaches and the links between them) is the theory of left convergence and graphons, which is developed at length in the monograph of Lov\'asz~\cite{LovaszBook}.

Recently, Hatami, Janson and Szegedy~\cite{HJS} defined and studied the entropy of a graphon.  They used this notion to recover Theorem~\ref{theorem: alekseevbollobasthomason} and to describe the typical structure of a graph in a hereditary property. In a separate paper~\cite{FalgasRavryStrombergUzzell17} (see also~\cite{FalgasRavryOConnellStrombergUzzell16}), Str\"omberg and a subset of the authors of the present article follow the Hatami--Janson--Szegedy approach to recover some of the main results in this paper using the entropy of decorated graphons rather than multicolour containers as their main tool. We discuss this briefly in Section~\ref{subsection: discussion of containers/limits}.

Let us however note here that the Hatami--Janson--Szegedy notion of entropy can be viewed as a graphon analogue of the classical notion of the entropy of a discrete random variable, which first appeared in Shannon's foundational paper~\cite{Shannon48}. Using entropy to count objects is an old and celebrated technique in discrete probability --- see for example Galvin~\cite{Galvin14} for an exposition of the applications of entropy to counting. This provides a natural motivation for the arguments in this paper.

	\subsection{Contributions of this paper}\label{subsection: results}
In this paper, we explore consequences of the container theories of Balogh--Morris--Samotij and Saxton--Thomason. We use existing container theorems, together with ideas of Saxton--Thomason and Balogh--Wagner and an entropy-based framework, to deduce container and counting theorems for general hereditary properties of $k$-colourings of very general objects (set-sequences equipped with embeddings, or \emph{ssee}, see Definition~\ref{definition: possee}). As special cases relevant in many applications, our results cover vertex-- and edge-colourings of graph and hypergraph sequences; examples of such sequences include hypercube graphs, multipartite graphs and grid graphs amongst others.

In the case of sequences of complete graphs, we further derive characterisation and transference results for order-hereditary properties in terms of their stability families and extremal entropy.   Amongst other structures of interest, these latter results cover $k$-coloured graphs, directed graphs, oriented graphs, tournaments and multipartite graphs with bounded multiplicity.

As we restrict ourselves to the study of `dense' properties, our container statements and their corollaries are (we believe) simple and easy to apply (albeit weaker than the full strength of the Balogh--Morris--Samotij and Saxton--Thomason container theorems), which we hope may be useful to other researchers. In particular the corollaries (counting/characterisation/transference) are very general `assumption-free' statements, which do not require checking any codegree condition or even any knowledge of container theory.

We  give a number of examples and applications.  First, we give a very short proof of the Alekseev--Bollob\'as--Thomason theorem.  Second, we solve a problem of K\"uhn, Osthus, Townsend and Zhao~\cite{KuhnOsthusTownsendZhao14} on $H$-free digraphs.  Third, we prove a counting result for multigraphs in which no triple of vertices supports more than four edges (this is a special case of recent and much more general results of Mubayi and Terry~\cite{MubayiTerry16a, MubayiTerry16b}).  Fourth, we prove counting and stability results for $3$-coloured graphs with no rainbow triangle, which is a special case of a problem of Erd{\H o}s and Rothschild~\cite{Erdos74}.  Fifth, we determine the asymptotic number of induced subgraphs of the hypercube graph containing no $4$-cycle. 


Our main tools are a container theorem of Saxton--Thomason for linear hypergraphs and the adoption of an entropy-based framework. We should like to emphasise here once more the intellectual debt this paper owes to the pioneering work of Balogh--Morris--Samotij and Saxton--Thomason: our work relies on theirs in a crucial way, and many of our ideas exist already in their papers in an embryonic form, which we explore further. The usefulness of our exploration is vindicated by the fact that some of the applications of containers to other discrete structures which we treat are new, and were not well understood by the mathematical community at the time of writing.

For example, finding a suitable container theorem for digraphs was a problem raised by K\"uhn, Osthus, Townsend and Zhao~\cite{KuhnOsthusTownsendZhao14}, which we resolve in the present paper. The `twist' in our approach is that, following the earlier approaches of Saxton and Thomason~\cite{SaxtonThomason15} and Balogh and Wagner~\cite{BaloghWagner16}, our `containers' are, in essence, collection of random digraph models, rather than the collections of digraphs as had been used previously.  Explicitly, given a digraph property $\mathcal{P}$, we derive the existence of a collection $\mathcal{T}$ of random digraph models  in which the state of each pair of vertices is independent of the rest and such that (i) every digraph in $\mathcal{P}$ occurs with strictly positive probability as the outcome  of some random digraph model from $\mathcal{T}$ (ii) outcomes of random graph models from $\mathcal{T}$ are either digraphs in $\mathcal{P}$ or digraphs close to $\mathcal{P}$ in edit distance, and (iii) $\vert \mathcal{T}\vert$ is small. As we show, the maximum entropy of a random digraph model satisfying (ii) then determines the speed of $\mathcal{P}$. This connection with random graphs is explored in greater detail in~\cite{FalgasRavryStrombergUzzell17}, where containers and graph limits are studied in parallel.


There is a significant overlap between the container, enumeration, and stability results presented here and those obtained independently by Terry~\cite{Terry16} (see the discussion below), although the emphases and arguments of our two papers are quite different. 
Since the first version of this paper was written, Terry has further explored interesting related questions on the different possible speeds in general multicolour properties~\cite{Terry17}, going beyond the case of `dense' properties we consider in the present work.
		
	\subsection{Structure of the paper}\label{subsection: structure of the paper}
Section~\ref{section: containers} gathers together our main results on multicolour containers for colourings of $K_n$. Section~\ref{subsection: key definitions templates and entropy} contains our key definitions of templates and entropy.  In Section~\ref{subsection: containers proof}, we state and derive our first multicolour container theorem (Theorem~\ref{theorem: multi-colour container}), and in Section~\ref{subsection: entropy density, supersaturation} we introduce entropy density and prove a supersaturation result that is key to several of our applications.  In Section~\ref{subsection: approximation of arbitrary hereditary properties}, we use these tools to obtain container theorems for general order-hereditary properties (Corollary~\ref{corollary: containers for arbitrary hereditary properties}) and prove a general counting result (Corollary~\ref{corollary: speed of arbitrary hereditary properties}).  Finally in Sections \ref{subsection: stability} and~\ref{subsection: transference} we obtain general characterisation and transference results (Theorems \ref{theorem: strong stability and containers} and~\ref{theorem: transference}, respectively).

As indicated above, the results of Sections \ref{subsection: containers proof}--\ref{subsection: stability} are very similar to those recently obtained by Terry~\cite{Terry16}.  In particular, Terry's Theorems 2, 3, 6 and~7 correspond to our Proposition~\ref{proposition: entropy density}, Corollary~\ref{corollary: speed of arbitrary hereditary properties}, Theorem~\ref{theorem: multi-colour container} and Lemma~\ref{lemma: supersaturation}, respectively, while Terry's Theorem~5 is very similar to our Theorem~\ref{theorem: strong stability and containers}.  Both Terry's results and our own hold for $r$-uniform hypergraphs (see Section~\ref{subsection: hypergraph sequences}). Terry's results extend even further to any finite relational language, which we do not cover in this paper.

In Section~\ref{section: other structures}, we extend our main results to a number of other discrete structures. 
 Section~\ref{subsection: oriented} describes how our theorems apply to tournaments, oriented graphs and directed graphs; as mentioned earlier, this addresses an issue raised in~\cite{KuhnOsthusTownsendZhao14}. In Section~\ref{subsection: possee} we extend our main results to obtain container theorems for colourings of very general objects, namely set-sequences equipped with embeddings (or \emph{ssee}, see Definition~\ref{definition: possee}). This class of objects include many examples of interest, including both edge-- and vertex--colourings of sequences of graphs and hypergraphs, as well as other structures such as sequences of posets or groups. We restate the main results on containers for ssee-s in the more familiar terms of graph and hypergraph sequences in Section~\ref{subsection: hypergraph sequences}, and then illustrate their implications in Section~\ref{subsection: hypercubes} by deriving general counting results for hereditary properties of vertex-- and edge--colourings of hypercube graphs.

Section~\ref{section: examples} is dedicated to applications of our results to a variety of concrete examples. 
Amongst other things, we give a short proof of the Alekseev--Bollob\'as--Thomason theorem and prove counting and/or characterisation results for some hereditary properties of directed graphs, multigraphs with bounded multiplicitys, $3$-coloured graphs and hypercube graphs.

We end this paper in Section~\ref{section: concluding remarks} with an open problem on the possible structure of entropy maximisers in the multicolour setting and a brief discussion of the links between the container and the graph limit approaches to counting, characterisation and transference.
	\section{Multicolour containers}\label{section: containers}
\subsection{Key definitions: templates and entropy}\label{subsection: key definitions templates and entropy}
Let $K_n$ denote the complete graph $([n], [n]^{(2)})$. We study $k$-colourings of (the edges of) $K_n$, that is to say, we work with the set of colouring functions~$c: \ E(K_n)\rightarrow [k]$. Denote by $[k]^{K_n}$ the set of all $k$-colourings of $K_n$.  Such colourings are of interest as they allow us to encode many important combinatorial structures. Note that each colour~$i$ induces a graph~$c^i$ on $[n]$, where $c^i=([n], c^{-1}(i))$. An ordinary graph~$G$ may thus be viewed as a $2$-colouring of~$E(K_n)$, with $G=c^1$ and its complement~$\overline{G}=c^2$. Similarly, an oriented graph~$\vec{G}$ may be viewed as a $3$-colouring of~$E(K_n)$, in which each edge~$ij$ with $i<j$ is coloured $2$ if $\vec{ij}\in D$, $3$ if $\vec{ji} \in D$ and $1$ otherwise. See  Section~\ref{subsection: oriented} for more examples in this vein.

Write $\binom{K_n}{K_m}$ for the collection of order-preserving injections $\phi: \ [m]\to [n]$. Given $\phi\in \binom{K_n}{K_m}$ and a colouring $c\in [k]^{K_n}$, we write $c_{\vert \phi}$ for the \emph{subcolouring of $c$ induced by $\phi$}, defined by $c_{\vert\phi}(ij) =c(\phi(i)\phi(j))$ for all $ij\in [m]^{(2)}$. Further, we say $c'\in [k]^{K_m}$ is a \emph{subcolouring} of $c\in [k]^{K_n}$ if there exists $\phi \in \binom{K_n}{K_m}$ with $c'=c_{\vert \phi}$.
Our main object of study in this section will be \emph{order-hereditary} properties of~$[k]^{K_n}$.
\begin{definition}\label{definition: order hereditary}
An \emph{order-hereditary property} of $k$-colourings is a sequence~$\mathcal{P}= \left(\mathcal{P}_n\right)_{n\in \N}$, such that:
\begin{enumerate}[(i)]
	\item $\mathcal{P}_n$ is a family of $k$-colourings of~$K_n$,
	\item for every $m\leq n$, $c\in \mathcal{P}_n$ and $\phi\in \binom{K_n}{K_m}$, $c_{\vert \phi}\in \mathcal{P}_{m}$.
\end{enumerate} 		
\end{definition}
A key tool in extending container theory to $k$-coloured graphs will be the notion of a \emph{template}. This was first introduced in the context of container theory by Saxton and Thomason in~\cite[Section~2.4]{SaxtonThomason15} (in the case $k=2$, under the name of ``$2$-coloured multigraphs''), and later by Balogh and Wagner in~\cite[Section 4]{BaloghWagner16} (in the case of general $k$, and simply called ``containers'' in that paper).
\begin{definition}[Template]\label{definition: template}
A \emph{template} for a $k$-colouring of~$K_n$ is a function 
\[t: \ E(K_n)\rightarrow	2^{[k]}\setminus \{\emptyset\},\]
 associating to each edge $e$ of~$K_n$ a non-empty list of colours $t(e)\subseteq [k]$; we refer to $t(e)$ as the \emph{palette} available at $e$.  We write $\left(2^{[k]}\setminus \{\emptyset\} \right)^{K_n}$ for the family of all $k$-colouring templates of $K_n$.

Given a template~$t\in \left(2^{[k]}\setminus \{\emptyset \}\right)^{K_n}$, we say that a $k$-colouring~$c\in[k]^{K_n}$ \emph{realises} $t$ if $c(e)\in t(e)$ for every edge~$e\in E(K_n)$.  We write $\langle t \rangle$ for the collection of realisations of~$t$ and $c\in \langle t\rangle$  as a shorthand for `$c$ is a realisation of $t$'.
\end{definition}
 In other words, a template~$t$ gives, for each edge of~$K_n$, a palette of permitted colours, and $\langle t \rangle$ is the set of $k$-colourings of~$K_n$ that respect the template. We observe that a $k$-colouring of~$K_n$ may itself be regarded as a template, albeit with only~one colour allowed at each edge. We extend our notion of subcolouring to templates in the natural way.
 \begin{definition}\label{definition: subtemplate}
 Let $t\in\left(2^{[k]}\setminus \{\emptyset \}\right)^{K_n}$ and let $\phi \in \binom{K_n}{K_m}$. The \emph{subtemplate of $t$ induced by $\phi$} is the template $t_{\vert \phi} \in \left(2^{[k]}\setminus \{\emptyset \}\right)^{K_m}$ defined by $t_{\vert \phi}(ij)=t(\phi(i)\phi(j))$.

 Given $m\leq n$ and  $k$-colouring templates $t$,~$t'$ for $K_m$,~$K_n$ respectively, we say that $t$ is a \emph{subtemplate} of~$t'$, which we denote by $t\leq t'$, if there exists $\phi \in \binom{K_n}{K_m}$ such that $t(e)\subseteq t'_{\phi}(e)$ for every $e\in E(K_m)$. 
\end{definition}
Our notion of subtemplates can be viewed as the template analogue of the notion of an order-preserving subgraph for graphs on a linearly ordered vertex set.

 Templates enable us to generalise the notion of containers to the $k$-coloured setting.
 \begin{definition}
	Given a family of $k$-colourings $\mathcal{F}$ of $E(K_n)$, a \emph{container family} for $\mathcal{F}$ is a collection~$\mathcal{T}=\{t_1, t_2, \ldots, t_m\}$ of $k$-colouring templates such that for every template $t\in (2^{[k]}\setminus\{\emptyset\} )^{K_n}$ with $\langle t \rangle \subseteq \mathcal{F}$, there exists $t_i\in \mathcal{T}$ with $t\leq t_i$.
\end{definition}
In particular, if $\mathcal{T}$ is a container family for $\mathcal{F}$ then every colouring from $\mathcal{F}$ is a realisation of some template from $\mathcal{T}$ (so the set of realisations from $\mathcal{T}$ `contains' $\mathcal{F}$).

Next we define the key notion of the \emph{entropy} of a template.
\begin{definition}\label{def: entropy}
The \emph{entropy} of a $k$-colouring template $t$ is 
\[\Ent(t):= \log_k\prod_{e\in E(K_n)} \vert t(e)\vert.\]
\end{definition}
For any template~$t$ we have $0\leq \Ent(t)\leq \binom{n}{2}$, and the number of distinct realisations of~$t$ is exactly~$\vert\langle t \rangle\vert= k^{\Ent(t)}$. Observe also that zero-entropy templates correspond to $k$-colourings of~$K_n$, and that if $t$ is a subtemplate of~$t'$ then $\Ent(t)\leq \Ent(t')$. There is a direct correspondence between our notion of entropy and that of Shannon entropy in discrete probability: given a template $t$, we can define a \emph{$t$-random} colouring~$\mathbf{c}_t$, by choosing for each $e\in E(K_n)$ a colour~$\mathbf{c}_t(e)$ uniformly at random from $t(e)$. The entropy of~$t$ defined above is precisely the $k$-ary Shannon entropy of the discrete random variable~$\mathbf{c}_t$.

The notion of $t$-random colouring allows us to view our templates as, in essence, random graph models, and their realizations as random graph outcomes. Using templates/the associated random colourings as containers is key to making multicolour containers work. This idea, due to Saxton and Thomason~\cite{SaxtonThomason15} and Balogh and Wagner~\cite{BaloghWagner16}, will allow us to overcome the obstacles to a container theorem for a particular digraph problem of K\"uhn, Osthus, Townsend and Zhao~\cite{KuhnOsthusTownsendZhao14} (see Section~\ref{subsection: digraph examples}).
\subsection{Container families}\label{subsection: containers proof}

Let $N\in \N$ be fixed and let $\mathcal{F}$ be a nonempty collection of $k$-colourings of~$E(K_N)$. Let $\Forb(\mathcal{F})$ be the collection of all $k$-colourings~$c$ of~$K_n$, $n\in \N$, such that for all $\phi \in \binom{K_n}{K_N}$,  $c_{\vert \phi}\notin \mathcal{F}$. More succinctly, $\Forb(\mathcal{F})$ is the collection of all $k$-colourings avoiding $\mathcal{F}$, which, clearly, is an order-hereditary property of $k$-colourings.

\begin{theorem}\label{theorem: multi-colour container}
Let $N\in \N$ be fixed and let $\mathcal{F}$ be a nonempty collection of $k$-colourings of~$E(K_N)$.  For any $\varepsilon>0$, there exist constants $C_0$,~$n_0>0$, depending only on $(\varepsilon, k, N)$,  such that for any $n\geq n_0$ there exists a collection~$\mathcal{T}_n$ of $k$-colouring templates for $K_n$ satisfying:
\begin{enumerate}[(i)]
\item $\mathcal{T}_n$ is a container family for $\left(\Forb(\mathcal{F})\right)_n$;
\item for each template $t\in\mathcal{T}_n$, there are at most $\varepsilon \binom{n}{N}$ pairs $(\phi,c)$ with $\phi \in \binom{K_n}{K_N}$, $c\in \mathcal{F}$ and $c\in \langle t_{\vert \phi}\rangle$;
\item $\log_k \vert \mathcal{T}_n\vert\leq \frac{C_0}{n^{1/\bigl(2\binom{N}{2}-1\bigr)}} \binom{n}{2}$.
\end{enumerate}
\end{theorem}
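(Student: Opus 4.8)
The plan is to deduce Theorem~\ref{theorem: multi-colour container} from a single application of a hypergraph container theorem of Saxton--Thomason for linear (or near-linear) hypergraphs, by encoding the problem of avoiding $\mathcal{F}$ as an independent-set problem in an auxiliary hypergraph. First I would set up the \emph{auxiliary hypergraph} $\mathcal{H}=\mathcal{H}_n$ whose vertex set is the set of $(e,i)$ with $e\in E(K_n)$ and $i\in[k]$ (so $v(\mathcal{H}) = k\binom{n}{2}$), encoding ``edge $e$ receives colour $i$''. A $k$-colouring $c$ of $K_n$ corresponds to a subset $I_c\subseteq V(\mathcal{H})$ that is a \emph{perfect matching across the colour classes}: exactly one of $(e,1),\dots,(e,k)$ lies in $I_c$ for each edge $e$. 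For each pair $(\phi,c^*)$ with $\phi\in\binom{K_n}{K_N}$ and $c^*\in\mathcal{F}$, we place a hyperedge consisting of the $\binom{N}{2}$ vertices $\{(\phi(i)\phi(j),\,c^*(ij)) : ij\in[N]^{(2)}\}$; these are the ``forbidden configurations''. Then a colouring $c$ lies in $\Forb(\mathcal{F})$ precisely when $I_c$ contains no such hyperedge, i.e.\ $I_c$ is an independent set in $\mathcal{H}$. Since $\mathcal{F}\neq\emptyset$ and is symmetric enough, $\mathcal{H}$ is genuinely $\binom{N}{2}$-uniform (possibly after adding the trivial "at most one colour per edge" constraints as $2$-edges, or by working with the induced sub-hypergraph on matchings).

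Next I would verify the \emph{codegree / boundedness hypotheses} of the Saxton--Thomason container lemma. The number of hyperedges through a fixed vertex $(e,i)$ is $O(n^{N-2})$, and more generally the $j$-codegree (number of hyperedges containing a fixed $j$-set of vertices, for $2\le j\le\binom{N}{2}$) is $O(n^{N-j'})$ where $j'$ is the number of distinct \emph{edges} of $K_n$ spanned; the key point is that the hypergraph is essentially ``linear'' in the sense required, because two hyperedges coming from distinct $(\phi,c^*)$ share at most $O(1)$ vertices once $\phi$ is pinned on enough points — i.e.\ the $2$-codegree of an edge-pair is $O(n^{N-4})$ relative to a vertex-degree of $\theta(n^{N-2})$, giving the co-degree function $\tau = O(n^{-1/(2\binom{N}{2}-1)}\cdot(\text{something}))$ that feeds into the $1/(2\binom{N}{2}-1)$ exponent in part~(iii). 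This exponent is the telltale sign that the authors are invoking the linear-hypergraph container theorem (where the bound is $n^{-1/(2r-1)}$ for an $r$-uniform linear hypergraph, here $r=\binom{N}{2}$), so the main technical content is checking that the relevant codegrees are small enough to certify linearity up to lower-order terms.

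Once the container lemma applies, it outputs a family $\mathcal{C}$ of subsets $C\subseteq V(\mathcal{H})$ with $|\mathcal{C}|$ subexponential in $v(\mathcal{H})$ — precisely $\log_k|\mathcal{C}| \le C_0 n^{-1/(2\binom{N}{2}-1)}\binom{n}{2}$ — such that every independent set is contained in some $C$, and each $C$ spans only $\varepsilon$ times the ``expected'' number of hyperedges. I would then \emph{translate each container $C$ back into a template} $t_C$: for edge $e$, set $t_C(e) = \{i\in[k] : (e,i)\in C\}$ (discarding any $C$ for which some palette is empty, since those contain no valid colouring). Property~(i) is then immediate: if $\langle t\rangle\subseteq\Forb(\mathcal{F})$, pick any $c\in\langle t\rangle$ (nonempty as palettes are nonempty), find a container $C\supseteq I_c$, and check $t\le t_{C}$ — this needs a small argument that the template generated by the union of $I_c$ over $c\in\langle t\rangle$ is exactly $t$ and is still contained in a \emph{single} container, which is where one uses that the container lemma's containers can be taken to depend only on a bounded ``fingerprint'' $S\subseteq I_c$; I would instead phrase (i) directly via the standard trick of applying the container construction to the whole realisation set of $t$ simultaneously, or note $\langle t\rangle\subseteq\Forb(\mathcal F)$ means the ``maximal'' colouring realising $t$ on each edge still lands in a container whose palette dominates $t$. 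Property~(ii) is exactly the ``few hyperedges in each container'' conclusion, after checking that the number of pairs $(\phi,c)$ with $c\in\langle t_{C|\phi}\rangle$ equals the number of hyperedges of $\mathcal{H}$ inside $C$ (each such hyperedge is the vertex set coming from a unique $(\phi,c)$), so choosing the container-lemma parameter to be $\varepsilon$ times the edge/vertex ratio does it. Property~(iii) is the container-count bound, with $C_0, n_0$ read off from the lemma in terms of $(\varepsilon,k,N)$.

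The main obstacle I anticipate is \emph{verifying the linearity/codegree conditions cleanly enough to get the stated exponent $1/(2\binom{N}{2}-1)$}, rather than a weaker one: one must show the auxiliary hypergraph $\mathcal{H}_n$ is, up to negligible corrections, a linear $\binom{N}{2}$-uniform hypergraph with the right edge density, which requires care because distinct $(\phi,c^*)$ can produce overlapping hyperedges when the images of $\phi$ overlap in $\ge 3$ points, and also because the ``exactly one colour per edge'' matching constraint must be handled (either by restricting to the induced problem on matchings, incurring no loss, or by adding $2$-uniform edges and appealing to a version of the container theorem allowing bounded small edges). A secondary subtlety is the subtemplate direction in~(i): making sure that ``$t \le t_C$'' (which by Definition~\ref{definition: subtemplate} allows composing with some $\phi\in\binom{K_n}{K_n}$, here necessarily the identity since $m=n$) really follows from $C\supseteq\bigcup_{c\in\langle t\rangle}I_c$ together with the fact that the container assignment is monotone in the independent set — this is routine but must be stated, since a priori different $c\in\langle t\rangle$ could be placed in different containers. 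I would resolve it by invoking the container lemma in the form where the container depends only on a bounded subset of the independent set and then taking a union bound / maximal-realisation argument, exactly as in Saxton--Thomason's treatment of $2$-coloured multigraphs.
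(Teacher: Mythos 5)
There is a genuine gap at the heart of your plan: you propose to apply the linear-hypergraph container theorem (Theorem~\ref{theorem: saxton thomason simple containers}) after ``checking that the relevant codegrees are small enough to certify linearity up to lower-order terms'', but the auxiliary hypergraph $H$ is not close to linear in any sense that deletion of a negligible set of hyperedges could fix. Two hyperedges $e_{\phi,c}$, $e_{\phi',c'}$ share at least two vertices of $H$ whenever the images of $\phi,\phi'$ share at least three vertices of $K_n$ and the colourings agree on the shared pairs (and already for $\phi=\phi'$, $c\neq c'$ they can share $\binom{N}{2}-1$ vertices), so the number of overlapping pairs is of order $n^{2N-3}$, which for $N\geq 4$ vastly exceeds $e(H)=\Theta(n^N)$. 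Small $2$-codegrees do not make a hypergraph linear, and Theorem~\ref{theorem: saxton thomason simple containers} genuinely requires linearity. The missing idea is the random sparsification step (Lemma~\ref{lemma: random sparsification}, following Saxton--Thomason): keep each hyperedge independently with probability $p\asymp n^{-(N-3)}$, so that the expected number of surviving overlapping pairs ($\propto p^2 n^{2N-3}$) becomes a small fraction of the surviving edge count ($\propto p n^{N}=\Omega(n^3)$); one must also establish the density-inheritance event $F_3$ (every vertex set spanning $\geq\varepsilon_1 e(H)$ edges of $H$ spans $\Omega(\varepsilon_1)e(H')$ edges of $H'$), since without it the container conclusion ``few edges of the sparsified hypergraph inside $C$'' cannot be pulled back to the conclusion (ii) about $H$ itself. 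Only after sparsifying, discarding one edge per overlapping pair, and checking the average degree is still $\Omega(n)$ does the linear container theorem apply and yield the exponent $1/\bigl(2\binom{N}{2}-1\bigr)$. (Alternatively one could invoke the full codegree-based container theorems of Balogh--Morris--Samotij or Saxton--Thomason, as the paper remarks, but that is a different route from the linear theorem you name and would have to be set up with its codegree function, not with a linearity claim.)

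A secondary, more easily repaired defect is your treatment of property (i). You work with the independent sets $I_c$ of individual colourings and then worry that different realisations of a template could be assigned different containers, proposing fingerprints or a ``maximal colouring''. The clean observation (used in the paper) is that a template $t$ with $\langle t\rangle\subseteq\Forb(\mathcal{F})_n$ itself defines a single independent set $I_t=\{(e,i):\, i\in t(e)\}$ in $H$: if $I_t$ contained a hyperedge $e_{\phi,c}$ then some realisation of $t$ would restrict to $c\in\mathcal{F}$. Covering $I_t$ by one container $C$ immediately gives $t\leq t(C)$, and no matching constraints, extra $2$-uniform edges, or fingerprint arguments are needed.
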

In other words, the theorem says that we can find a \emph{small} (property (iii)) collection of templates, that together \emph{cover} $\Forb(\mathcal{F})_n$ (property (i)), and whose realisations are \emph{close} to lying in $\Forb(\mathcal{F})_n$ (property (ii)). 

We shall deduce Theorem~\ref{theorem: multi-colour container} from a hypergraph container theorem of Saxton and Thomason. Say that an $r$-graph~$H$ is \emph{linear} if each pair of distinct $r$-edges of~$H$ meets in at most~$1$ vertex. Saxton and Thomason proved the following:
\begin{theorem}[Saxton--Thomason (Theorem 1.2 in~\cite{SaxtonThomason16})]\label{theorem: saxton thomason simple containers}
	Let $r \geq 2$ and let $0<\delta<1$. There exists $d_0=d_0(r, \delta)$ such that if $G$ is a linear $r$-graph of average degree~$d\geq d_0$, then there exists a collection~$\mathcal{C}$ of subsets of~$V(G)$ satisfying:
	\begin{enumerate}
		\item if $I\subseteq V(G)$ is an independent set, then there exists $C\in \mathcal{C}$ with $I\subseteq C$;
		\item $e(G[C])< \delta e(G)$ for every $C\in \mathcal{C}$;
		\item  $\vert \mathcal{C} \vert \leq 2^{\beta v(G)}$, where $\beta=(1/d)^{1/(2r-1)}$.
	\end{enumerate}  	
\end{theorem}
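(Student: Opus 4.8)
Proof proposal.

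The plan is to prove the theorem via the \emph{container algorithm} paradigm: construct an explicit map $I\mapsto \bigl(S(I),C(S(I))\bigr)$ defined on the independent sets of $G$, where $S(I)\subseteq I$ is a small ``fingerprint'', $C=C(S)$ is a subset of $V(G)$ that depends only on $S$ (and $G$), $I\subseteq C$, and $e(G[C])<\delta\,e(G)$. One then takes $\mathcal{C}:=\{\,C(S):S\subseteq V(G),\ |S|\le s\,\}$, where $s$ is the size bound on fingerprints: part~(1) is immediate from $I\subseteq C(S(I))$, part~(2) holds by construction, and since every member of $\mathcal{C}$ is indexed by a fingerprint of size at most $s$ we get $|\mathcal{C}|\le\binom{v(G)}{\le s}\le\bigl(e\,v(G)/s\bigr)^{s}$, which will be at most $2^{\beta v(G)}$ for a suitable choice of parameters.

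The algorithm itself. Fix a degree threshold $\tau$. Maintain a current subhypergraph $G'\le G$ (initially $G$, always of the form $G[A]$ for the current vertex set $A$) and a set $S$ (initially empty). Repeat: if $e(G')<\delta\,e(G)$, stop and output $S$ together with $C:=A\cup S$; otherwise let $v$ be the vertex of maximum degree in $G'$, ties broken by a fixed linear order on $V(G)$ so that $v$ is determined by $G'$ alone. Since $e(G')\ge\delta\,e(G)=\delta\,d\,v(G)/r$, the average degree of $G'$ is at least $\delta d$, so $\deg_{G'}(v)\ge\delta d$. If $v\in I$, add $v$ to $S$ and delete from $A$ the vertex $v$ together with $\bigcup_{e\ni v,\ e\in G'}(e\setminus\{v\})$; if $v\notin I$, delete only $v$ from $A$. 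Because $G$ is linear, the sets $e\setminus\{v\}$ over the edges $e\ni v$ of $G'$ are pairwise disjoint, so each ``$v\in I$'' step deletes at least $(r-1)\delta d$ vertices, giving $|S|=O_{r,\delta}\bigl(v(G)/d\bigr)$; every step also removes at least $\delta d$ edges, bounding the total number of steps. Crucially, $C(S)$ depends only on $S$: replaying the loop knowing only $S$, at each step the current max-degree vertex $v$ is determined, and $v\in S$ holds precisely when the original run had $v\in I$ (a selected vertex lies in $S$ iff it lay in $I$, and $S$ receives vertices only via selection), so the entire run — and hence $C$ — is reconstructed. Also $I\subseteq C$, since the only vertices of $I$ ever deleted are exactly those placed in $S$.

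Counting, parameters, and the obstacle. With $|S|\le s$ in hand, one optimises $\tau$ so that both $\bigl(e\,v(G)/s\bigr)^{s}\le 2^{\beta v(G)}$ and the hypergraph spanned by the output container is $\delta$-sparse; writing $e(G)=d\,v(G)/r$ and using the linear structure (all $j$-codegrees with $j\ge 2$ are at most $1$), the balance point is $\tau$ of order $d^{2r/(2r-1)}$ up to logarithmic factors, which yields $s=O\bigl(\beta v(G)/\log(1/\beta)\bigr)$ with $\beta=d^{-1/(2r-1)}$ and hence $|\mathcal{C}|\le 2^{\beta v(G)}$ once $d\ge d_0(r,\delta)$ — the exponent $1/(2r-1)$ falls out of exactly this optimisation. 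The main obstacle is verifying $e(G[C])<\delta\,e(G)$ for $C=A\cup S$: the edges lying entirely inside $A$ are controlled by the stopping rule, but the edges of $G[C]$ that meet the fingerprint $S$ must be shown to be negligible. For graphs ($r=2$) this is painless, since one may delete the entire neighbourhood of an accepted vertex and there are then no edges between $A$ and $S$ at all; for $r\ge 3$ one cannot delete whole neighbourhoods, and one must instead exploit linearity more carefully — the link of each vertex is a matching of $(r-1)$-sets — either by a short direct estimate on the edges incident to $S$ or by peeling off one coordinate and recursing down to the graph case. An alternative, non-self-contained route is to invoke the general hypergraph container theorem of Saxton--Thomason / Balogh--Morris--Samotij and merely check its codegree hypotheses, which hold trivially for a linear $r$-graph and deliver precisely the stated bound with exponent $1/(2r-1)$. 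A minor further point in either approach is the well-definedness of $C(S)$ — the replay-consistency — which is exactly why the fixed tie-breaking order is built into the algorithm.
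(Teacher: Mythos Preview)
The paper does not prove this statement at all: Theorem~\ref{theorem: saxton thomason simple containers} is quoted verbatim from Saxton--Thomason and used as a black box, so there is no ``paper's own proof'' to compare against. What you have written is therefore an attempted independent proof, and it has a real gap for $r\ge 3$.

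Your algorithm, when $v\in I$ is selected, deletes $v$ together with $\bigcup_{e\ni v}(e\setminus\{v\})$. You then assert ``the only vertices of $I$ ever deleted are exactly those placed in $S$'', and deduce $I\subseteq C=A\cup S$. This is false for $r\ge 3$: if $e=\{v,w,u,\dots\}$ is an $r$-edge with $v,w\in I$ and $u\notin I$, then $I$ is still independent (it does not contain all of $e$), yet your deletion removes $w$ from $A$ without placing $w$ in $S$, so $w\notin C$. The argument is correct only for $r=2$, where any edge through $v\in I$ has its other endpoint outside $I$ --- this is precisely the Kleitman--Winston graph algorithm. You do later write ``for $r\ge 3$ one cannot delete whole neighbourhoods'', which shows you sense the problem, but you locate it as an obstacle to bounding $e(G[C])$ rather than to the containment $I\subseteq C$, and you do not supply a replacement step. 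The phrase ``peeling off one coordinate and recursing down to the graph case'' is exactly how Saxton--Thomason prove the linear case, but that recursion is the entire content of the argument, not a detail to be waved at; similarly ``invoke the general container theorem and check codegrees'' is a legitimate route, but then your fingerprint algorithm is irrelevant and you have not written a proof. A smaller symptom of the same incompleteness: you introduce a threshold $\tau$ and later ``optimise'' it to order $d^{2r/(2r-1)}$, but $\tau$ never appears in the algorithm you actually describe, and the $1/(2r-1)$ exponent does not emerge from your single max-degree loop (which, when it works at $r=2$, gives the much stronger $|S|=O(v(G)/d)$).
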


In the proof of Theorem~\ref{theorem: multi-colour container} and elsewhere, we shall use the following standard Chernoff bound: if $X\sim\Binom(n,p)$, then for any $\delta\in[0,1]$,
\begin{equation}\label{equation: Chernoff} \mathbb{P}\bigl(\vert X-np\vert \geq \delta np\bigr) \leq 2e^{-\frac{\delta^2np}{4}}.\end{equation}

\begin{proof}[Proof of Theorem~\ref{theorem: multi-colour container}]
If $N=2$, then $\mathcal{F}$ just gives us a list of forbidden colours, say $F\subseteq [k]$. Then $\left(\Forb(\mathcal{F})\right)_n$ is exactly the collection of all realisations of the template~$t$ assigning to each edge~$e$ of~$K_n$ the collection~$[k]\setminus F$ of colours not forbidden by $\mathcal{F}$. Thus in this case our result trivially holds, and we may therefore assume $N\ge3$ in the rest of the proof.

We define a hypergraph~$H$ from $\mathcal{F}$ and $K_n$ as follows. Set $r=\binom{N}{2}$. We let the vertex set of $H$ consist of~$k$ disjoint copies of~$E(K_n)$, one for each of our $k$ colours: $V(H)=E(K_n)\times [k]$; this idea, allowing us to apply Theorem~\ref{theorem: saxton thomason simple containers}, first appeared in work of Saxton and Thomason~\cite[Section 2.4]{SaxtonThomason15} (in the $2$-colour case) and of Balogh and Wagner~\cite[Section 4]{BaloghWagner16} (in the $k$-colour case).

For every order-preserving embedding $\phi \in  \binom{K_n}{K_N}$ and every  $k$-colouring~$c \in \mathcal{F}$, we add to $H$ an $r$-edge~$e_{\phi,c}$, where
\[e_{\phi,c}=\bigl\{\bigl(\phi(i)\phi(j), c(ij)\bigr) \, : \, ij\in [N]^{(2)} \bigr\}.\]
This gives us an $r$-graph~$H$. Let us give bounds on its average degree. 
Since $\mathcal{F}$ is nonempty, for every $N$-set $A\subseteq [n]$, there are at least~$1$ and at most~$k^{\binom{N}{2}}$ colourings~$c$ of $A^{(2)}$ which are order-isomorphic to an element of~$\mathcal{F}$. It follows that:
\begin{equation}\label{equation: bounds on e(H)}
\frac{n^N}{N^N} \leq  \binom{n}{N} \leq e(H)\leq k^{\binom{N}{2}}\binom{n}{N}\leq k^{\binom{N}{2}}\left(\frac{en}{N}\right)^N .
\end{equation}
Thus $e(H)$ is of order~$n^N$ and the average degree in $H$ is of order~$n^{N-2}$ (since $v(H)=k\binom{n}{2}$), which tends to infinity as $n\rightarrow \infty$. We are almost in a position to apply Theorem~\ref{theorem: saxton thomason simple containers}, with one caveat: the hypergraph~$H$ we have defined is in no way linear. Following Saxton--Thomason~\cite{SaxtonThomason16}, we circumvent this difficulty by considering a \emph{random sparsification} of~$H$. We note that other approaches are possible, by using the original container theorems of~\cite{BaloghMorrisSamotij15, SaxtonThomason15} and computing co-degrees; this would avoid the need for sparsification and potentially give better bounds on $\vert \mathcal{T}_n$, but make other aspects of our proof --- and our later generalisations --- less transparent. Since we are focussing on `dense' properties in this paper and easily applicable general statements, we do not pursue this here.

Let $\varepsilon_1 \in(0,1)$ be a constant to be specified later and let
\begin{equation}\label{eq: sparsification probability}
p=\varepsilon_1 \big/\left(12k^{2\binom{N}{2}-3}\binom{N}{3}\binom{n-3}{N-3}\right).
\end{equation}
Note that by (\ref{equation: bounds on e(H)}) we have
\begin{equation}\label{equation: bound on pe(H)}
pe(H)\geq p\binom{n}{N}=\Omega(n^3).
\end{equation}
We shall keep each $r$-edge of~$H$ independently with probability~$p$, and delete it otherwise, to obtain a random subgraph~$H'$ of~$H$. Standard probabilistic estimates will then show that with positive probability the $r$-graph~$H'$ is almost linear, has large average degree and respects the density of~$H$. More precisely, we show:
\begin{lemma}\label{lemma: random sparsification}
Let $p$ be as in~\eqref{eq: sparsification probability}, let $H'$ be the random subgraph of~$H$ defined above and consider the following events:
\begin{itemize}
	\item the event $F_1$ that $e(H')\geq \frac{pe(H)}{2}$; 
	\item the event $F_2$ that $H'$ has at most 
	$\frac{\varepsilon_1}{4}p\binom{n}{N}$ pairs of edges~$(e,e')$ with $\vert e\cap e'\vert \geq 2$;
	\item the event $F_3$ that for all $S\subseteq V(H)$ with $e(H[S])\geq \varepsilon_1 e(H)$, we have $e(H'[S])\geq \frac{\varepsilon_1}{2} e(H')$.
\end{itemize}
There exists $n_1=n_1(\varepsilon_1, k, N)\in\N$ such that for all $n\geq n_1$, $F_1 \cap F_2 \cap F_3$ occurs with strictly positive probability.
\end{lemma}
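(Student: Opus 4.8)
The plan is to bound the probability that each of the three events fails: I would show $\Prob(F_1^c)\to 0$ and $\Prob(F_3^c)\to 0$, while $\limsup_{n\to\infty}\Prob(F_2^c)<1$, and then conclude via the union bound that $\Prob(F_1\cap F_2\cap F_3)\geq\Prob(F_2)-\Prob(F_1^c)-\Prob(F_3^c)>0$ once $n$ is large enough. The common engine is the fact, recorded in \eqref{equation: bound on pe(H)}, that $pe(H)=\Omega(n^3)$; since $v(H)=k\binom{n}{2}=O(n^2)$, this cubic-versus-quadratic gap is exactly what lets Chernoff-type tail bounds of order $\exp(-\Omega(n^3))$ survive a union bound over all $2^{v(H)}$ subsets of $V(H)$.

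For $F_1$: the variable $e(H')$ is binomially distributed with parameters $e(H)$ and $p$, hence has mean $pe(H)\geq p\binom{n}{N}=\Omega(n^3)$, so the Chernoff bound \eqref{equation: Chernoff} with $\delta=\tfrac12$ gives $\Prob(F_1^c)\leq 2\exp(-pe(H)/16)\to 0$. The same estimate applied to the upper tail shows that, outside an event $\mathcal{E}^c$ of probability $o(1)$, one also has $e(H')\leq\tfrac32 pe(H)$; I will use this below. For $F_3$: fix any $S\subseteq V(H)$ with $e(H[S])\geq\varepsilon_1 e(H)$. Then $e(H'[S])$ is binomial with mean at least $\varepsilon_1 pe(H)=\Omega(n^3)$, so \eqref{equation: Chernoff} with $\delta=\tfrac14$ gives $\Prob\bigl(e(H'[S])<\tfrac34\varepsilon_1 pe(H)\bigr)\leq 2\exp(-\varepsilon_1 pe(H)/64)$. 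Taking a union bound over the at most $2^{v(H)}=2^{k\binom{n}{2}}$ choices of $S$ and using that $\varepsilon_1 pe(H)=\Omega(n^3)$ dominates $k\binom{n}{2}\log 2$, with probability $1-o(1)$ every such $S$ satisfies $e(H'[S])\geq\tfrac34\varepsilon_1 pe(H)$. On this event intersected with $\mathcal{E}$ one has $e(H'[S])\geq\tfrac34\varepsilon_1 pe(H)=\tfrac{\varepsilon_1}{2}\cdot\tfrac32 pe(H)\geq\tfrac{\varepsilon_1}{2}e(H')$, which is precisely $F_3$; hence $\Prob(F_3^c)\to 0$.

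The event $F_2$ is the delicate one, and I would treat it by a first-moment (Markov) argument. Let $X$ count the unordered pairs $\{e,e'\}$ of edges of $H'$ with $\lvert e\cap e'\rvert\geq 2$, so that $\E[X]=p^2 B$, where $B$ is the number of such pairs already present in $H$. The combinatorial heart of the matter is that if two $r$-edges $e_{\phi,c}$, $e_{\phi',c'}$ meet in at least two vertices of $H$, then the (at least two) edges of $K_n$ they have in common span at least three vertices, so $\phi([N])$ and $\phi'([N])$ overlap in at least three vertices of $[n]$ \emph{and} the colourings $c,c'$ are forced to agree on those two common edges. Enumerating such pairs --- choose the common triple of vertices of $[n]$, extend it in two ways to $N$-sets of $[n]$, then choose the two colourings subject to the forced agreements --- yields a bound of the form $B=O\bigl(k^{2\binom{N}{2}}\binom{N}{3}\binom{n}{N}\binom{n-3}{N-3}\bigr)$; the implied constant is exactly what the factors $12$, $k^{2\binom{N}{2}-3}$, $\binom{N}{3}$ and $\binom{n-3}{N-3}$ in \eqref{eq: sparsification probability} are designed to absorb, so that the resulting first-moment bound beats $\tfrac{\varepsilon_1}{4}p\binom{n}{N}$. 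Markov's inequality then bounds $\Prob(F_2^c)=\Prob\bigl(X>\tfrac{\varepsilon_1}{4}p\binom{n}{N}\bigr)$ by a constant strictly less than $1$.

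Finally I would pick $n_1$ so large that $\Prob(F_1^c)+\Prob(F_3^c)$ is smaller than $1-\limsup_n\Prob(F_2^c)$ for all $n\geq n_1$, and conclude $\Prob(F_1\cap F_2\cap F_3)>0$. I expect the $F_2$ step to be the main obstacle: one must carefully count the pairs of $r$-edges of $H$ meeting in at least two vertices (the ``linearity defect'' that the sparsification is meant to kill) and verify that the precise choice of $p$ makes the expected surviving defect genuinely smaller than $\tfrac{\varepsilon_1}{4}p\binom{n}{N}$. A secondary nuisance is that in $F_3$ the normalisation $e(H')$ is itself random, which forces one to also control its upper tail (the event $\mathcal{E}$ above).
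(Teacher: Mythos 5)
Your overall strategy coincides with the paper's: Chernoff for $F_1$, a first-moment/Markov argument for $F_2$, and, for $F_3$, a Chernoff bound for each fixed $S$ combined with a union bound over all $2^{k\binom{n}{2}}$ subsets of $V(H)$ together with an upper-tail bound on $e(H')$ (the paper uses the thresholds $1/\sqrt2$ and $\sqrt2$ where you use $3/4$ and $3/2$, which is immaterial), and finally $\Prob(F_1\cap F_2\cap F_3)\geq \Prob(F_2)-\Prob(F_1^c)-\Prob(F_3^c)>0$. The $F_1$ and $F_3$ parts are correct as you describe them, and your accounting of why the cubic scale $pe(H)=\Omega(n^3)$ beats the quadratic union bound is exactly the paper's.

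The step that does not yet close is the one you yourself flag, the count for $F_2$, and as displayed it genuinely fails rather than merely needing a routine check. Since $p$ is \emph{fixed} by~\eqref{eq: sparsification probability}, whose denominator carries only $12\,k^{2\binom{N}{2}-3}\binom{N}{3}\binom{n-3}{N-3}$, your bound $B=O\bigl(k^{2\binom{N}{2}}\binom{N}{3}\binom{n}{N}\binom{n-3}{N-3}\bigr)$ only gives $\E X=p^{2}B=O\bigl(\varepsilon_1 k^{3}\,p\binom{n}{N}\bigr)$, which exceeds the target $\tfrac{\varepsilon_1}{4}p\binom{n}{N}$ for every $k\geq2$ (and for any implied constant $\geq1$), so Markov returns a bound $\geq1$. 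You must actually cash in the observation you make only in passing, namely that the two hyperedges are constrained on the $K_n$-edges they share: the paper counts an overlapping pair by choosing an $N$-set $A$, a $3$-set $B\subseteq A$, an $(N-3)$-set $A'\subseteq[n]\setminus B$, and then a \emph{single} colouring of the union $A^{(2)}\cup(A'\cup B)^{(2)}$, which has at most $2\binom{N}{2}-3$ edges; this yields $Y_H\leq\binom{n}{N}\binom{N}{3}\binom{n-3}{N-3}k^{2\binom{N}{2}-3}$, hence $\E Y_{H'}=p^{2}Y_H\leq\tfrac{\varepsilon_1}{12}p\binom{n}{N}$, and Markov gives $\Prob(F_2^c)\leq 1/3$. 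With that sharper exponent in place (i.e.\ saving the factor $k^{3}$ you currently give away), the rest of your scheme goes through verbatim and matches the paper's proof.
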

The proof of our lemma follows that of~\cite[Lemma~3.3]{SaxtonThomason16} with minor modifications.
\begin{proof}
By (\ref{equation: bounds on e(H)}), we have $\mathbb{E}e(H')=pe(H)\geq p\binom{n}{N}$. Applying the Chernoff bound (\ref{equation: Chernoff}) with $\delta=1/2$, we get that the probability that $F_1$ fails in $H'$ is at most
\begin{equation*}\label{equation: bound on prob 1. fails}
\mathbb{P}\biggl(e(H')< \frac{1}{2}pe(H)\biggr)\leq 2e^{-\frac{p\binom{n}{N}}{16}}= e^{-\Omega(n^3)},
\end{equation*}
where the last equality follows from (\ref{equation: bound on pe(H)}).
Next consider the pairs of $r$-edges $(e,e')$ in $H$ with $\vert e\cap e'\vert \geq 2$, which we refer to hereafter as \emph{overlapping pairs}.  Let $Y_H$ and $Y_{H'}$ denote the number of overlapping pairs in $H$ and $H'$ respectively. Since one needs at least $3$ vertices to support $2$ distinct edges in $K_n$, $Y_H$ is certainly bounded above by the number of ways of choosing an $N$-set~$A\subseteq [n]$, a $3$-set~$B$ from $A$ and an $(N-3)$-set~$A'$ from $[n]\setminus B$ (thereby making an overlapping pair of $N$-sets~$(A, A'\cup B)$) and assigning an arbitrary $k$-colouring to the edges in $A^{(2)}\cup ({A'}\cup B)^{(2)}$. Thus,
\begin{equation*} Y_H \leq \binom{n}{N}\binom{N}{3} \binom{n-3}{N-3}k^{2\binom{N}{2}-3}
\end{equation*}
and
\begin{equation*}
\mathbb{E}(Y_{H'}) = p^2 Y_H \leq p^2\binom{n}{N}\binom{N}{3} \binom{n-3}{N-3}k^{2\binom{N}{2}-3} =\frac{\varepsilon_1}{12} p\binom{n}{N}.
\end{equation*} 
Applying Markov's inequality, we have with probability at least~$\frac{2}{3}$ that $Y_{H'} \leq \frac{\varepsilon_1}{4} p\binom{n}{N}$ (and thus $F_2$) holds.

Finally, consider a set $S\subseteq V(H)$ with $e(H[S])\geq \varepsilon_1 e(H)$. Applying the Chernoff bound~\eqref{equation: Chernoff} with $\delta=1-1/\sqrt{2}$ and the lower bound~\eqref{equation: bound on pe(H)} for $pe(H)$, we get
\begin{equation}\label{eq: too few induced edges}
\mathbb{P}\left(e\bigl(H'[S]\bigr)\leq\frac{1}{\sqrt {2}}\mathbb{E}e\bigl(H'[S]\bigr)\right) \leq 2e^{-(1-\frac{1}{\sqrt{2}})^2\frac{\mathbb{E} e(H'[S])}{4}}= e^{-\Omega( p\varepsilon_1 e(H))}= e^{-\Omega(n^3)}.
\end{equation}
Moreover, by \eqref{equation: Chernoff} with $\delta = \sqrt{2}-1$ and~\eqref{equation: bound on pe(H)} again,
\begin{equation}\label{eq: too many induced edges}
\mathbb{P}\left(e(H')\ge\sqrt{2}\mathbb{E} e(H')\right)\leq 2e^{-\frac{(\sqrt{2}-1)^2p e(H)}{4}}= e^{-\Omega(n^3)}.
\end{equation}
Say that a nonempty set~$S\subseteq V(H)$ is \emph{bad} if $e(H[S])\geq \varepsilon_1 e(H)$ and $e(H'[S])\leq \frac{\varepsilon_1}{2} e(H')$.   By \eqref{eq: too few induced edges},~\eqref{eq: too many induced edges} and the union bound, the probability that $F_3$ fails, i.e., that there exists some bad $S\subseteq V(H)$, is at most
\begin{equation*}
\mathbb{P}\bigl(\exists \textrm{ bad }S\bigr) \leq \mathbb{P}\bigl(e(H')\geq \sqrt{2}\mathbb{E} e(H')\bigr)+\sum_S \mathbb{P}\biggl(e(H'[S]\leq\frac{1}{\sqrt {2}}\mathbb{E}e\bigl(H'[S]\bigr)\biggr)\leq 2^{k\binom{n}{2}} e^{-\Omega(n^3)}=e^{-\Omega(n^3)}.
\end{equation*}

Therefore with probability at least~$2/3-o(1)$ the events $F_1$, $F_2$ and~$F_3$ all occur, and in particular they must occur simultaneously with strictly positive probability for all $n\geq n_1=n_1(\varepsilon_1, k, N)$. 
\end{proof}
By Lemma~\ref{lemma: random sparsification}, for any $\varepsilon>0$ and $\varepsilon_1=k^{-\binom{N}{2}}\varepsilon$ fixed and any $n\geq n_1(\varepsilon_1, k, N)$, there exists a sparsification~$H'$ of~$H$ for which the events $F_1$, $F_2$ and~$F_3$ from the lemma all hold.  Deleting one $r$-edge from each overlapping pair in $H'$, we obtain a linear $r$-graph~$H''$ with average degree~$d$ satisfying
\begin{equation}\label{eq: linear hypergraph average degree}
d=\frac{re(H'')}{v(H'')} \geq \frac{\binom{N}{2}}{k\binom{n}{2}}\left(e(H') - Y_{H'}\right)
\geq \frac{\binom{N}{2}}{k\binom{n}{2}}\biggl(\frac{1}{2} -\frac{\varepsilon_1}{4}\biggr)p\binom{n}{N}=\Omega(\frac{n^{N}}{n^2n^{N-3}} )=\Omega(n).
\end{equation}
We are now in a position to apply the container theorem for linear $r$-graphs, Theorem~\ref{theorem: saxton thomason simple containers}, to $H''$. Let $\delta=\delta(\varepsilon_1)$ satisfy $0<\delta <\varepsilon_1/4$ and let $d_0=d_0(\delta, r)$ be the constant in Theorem~\ref{theorem: saxton thomason simple containers}. For $n\geq n_2(k, N, \delta)$ sufficiently large, we have $d\geq d_0$. Thus there exists a collection~$\mathcal{C}$ of subsets of~$V(H'')=V(H)$ satisfying conclusions 1.--3.\ of Theorem~\ref{theorem: saxton thomason simple containers}.

For each $C\in \mathcal{C}$, we obtain a template~$t=t(C)$ for a \emph{partial} $k$-colouring of~$K_n$ as follows: for each edge $e$, we are given a palette $t(e)=\{i\in[k]: \ (e,i)\in C\}$ of available colours (note that some edges may have the empty palette). 
Set
\[\mathcal{T}:=\{t(C): \ C\in \mathcal{C}, \ t(e)\neq \emptyset  \textrm{ for all }e\in E(K_n)  \}\]
to be the family of templates from $\left(2^{[k]}\setminus \{\emptyset\}\right)^{K_n}$ which can be constructed in this way.
We claim that the template family~$\mathcal{T}$ satisfies conclusions (i)--(iii) of Theorem~\ref{theorem: multi-colour container}.

Indeed, by definition of~$H$, any template $t'$ with $\langle t\rangle\subseteq \mathcal{P}_n$ gives rise to an independent set $I$ in the $r$-graph~$H$ and hence its subgraph~$H''$, namely $I=\{(e,i): \ i\in [k], \ e\in E(K_n), \ i\in t(e)\}$. Thus there exist $C\in \mathcal{C}$ with $I\subseteq C$, giving rise to a proper template~$t \in \mathcal{T}$ with $t'\leq t$. Conclusion~(i) is therefore satisfied by $\mathcal{T}$.

Further for each $C\in \mathcal{C}$, conclusion 2.\ of Theorem~\ref{theorem: saxton thomason simple containers} and the event~$F_2$ together imply
\begin{equation*}
e\bigl(H'[C]\bigr)\leq e\bigl(H''[C]\bigr) + \left(e(H')-e(H'')\right)\leq \delta e(H'') + \frac{\varepsilon_1}{4}e(H')< \frac{\varepsilon_1}{2}e(H').
\end{equation*}
Together with the fact that $F_3$ holds, this implies $e(H[C])<\varepsilon_1 e(H)$, which by (\ref{equation: bounds on e(H)}) and our choice of $\varepsilon_1$ is at most $\varepsilon_1 k^{\binom{N}{2}} \binom{n}{N} =\varepsilon \binom{n}{N}$. In particular, by construction of~$H$, we have that for each $t=t(C)\in \mathcal{T}$ there are at most~$\varepsilon \binom{n}{N}$ pairs~$(\phi, c)$ with $\phi \in \binom{K_n}{K_m}$, $c\in \mathcal{F}$ and $c\in  \langle t_{\vert \phi}\rangle$. This establishes (ii).

Finally by conclusion 3.\ of Theorem~\ref{theorem: saxton thomason simple containers} and our bound (\ref{eq: linear hypergraph average degree}) on the average degree~$d$ in $H''$, we have
\begin{equation*}
\vert \mathcal{T}\vert \leq \vert \mathcal{C}\vert \leq 2^{\beta(d) k\binom{n}{2}}=k^{O\left( n^{-1/(2r-1)}\right)\binom{n}{2}},
\end{equation*}
so that there exist constants $C_0$,~$n_3>0$ such that for all $n\geq n_3$~sufficiently large, 
\[\log_k\vert \mathcal{T}\vert \leq \frac{C_0}{n^{1/\bigl(2\binom{N}{2}-1\bigr)}} \binom{n}{2}\]
 and (iii) is satisfied. This establishes the statement of Theorem~\ref{theorem: multi-colour container} for $n\geq n_0=\max(n_1, n_2, n_3)$.
\end{proof}


\subsection{Extremal entropy and supersaturation}\label{subsection: entropy density, supersaturation}
In this section we derive the two ingredients needed in virtually all applications of containers, namely the existence of a limiting `entropy density' and a supersaturation result.

\begin{definition}
Let $\mathcal{P}$ be an order-hereditary property of $k$-colourings with $\mathcal{P}_n \neq \emptyset$ for every $n\in \N$. For every $n\in \N$, we define the \emph{extremal entropy} of $\mathcal{P}$ to be 
\[\ex(n, \mathcal{P})=\max\left\{\Ent(t)\, : \,  t \textrm{ is a $k$-colouring template for }K_n \textrm{ with } \langle t \rangle\subseteq \mathcal{P}_n\right\}.\]
\end{definition}
Note that this definition generalises the concept of the Tur\'an number: if $k = 2$, $F$ is a graph and $\mathcal{P} = \Forb(F)$, then $\ex(n, \mathcal{P}) = \ex(n, F)$.
\begin{proposition}\label{proposition: entropy density}
If\/ $\mathcal{P}$ is an order-hereditary property of $k$-colourings with\/ $\mathcal{P}_n \neq \emptyset$ for every $n\in \N$, then the sequence $\bigl(\ex(n, \mathcal{P})/\binom{n}{2}\bigr)_{n\in \N}$ is nonincreasing and tends to a limit $\pi(\mathcal{P})\in[0,1]$ as $n\rightarrow \infty$.
\end{proposition}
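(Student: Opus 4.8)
The plan is to run the standard averaging argument that establishes the existence of Tur\'an densities, adapted to templates. First I would record that $\ex(n,\mathcal{P})$ is well defined and that $\ex(n,\mathcal{P})/\binom{n}{2}\in[0,1]$ for $n\geq 2$: since $\mathcal{P}_n\neq\emptyset$, any $c\in\mathcal{P}_n$ regarded as a zero-entropy template witnesses that the set over which we maximise is nonempty, and for every template $t$ on $K_n$ one has $0\leq\Ent(t)\leq\binom{n}{2}$ by Definition~\ref{def: entropy}. Hence it suffices to prove the sequence is nonincreasing; convergence to some $\pi(\mathcal{P})\in[0,1]$ is then automatic for a bounded monotone sequence. (The ratio and the monotonicity are understood for $n\geq 2$, since $\binom{1}{2}=0$.)

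The one step that needs a little care, and which I would isolate as a lemma, is the following: if $t$ is a $k$-colouring template for $K_n$ with $\langle t\rangle\subseteq\mathcal{P}_n$ and $\phi\in\binom{K_n}{K_{n-1}}$, then $\langle t_{\vert\phi}\rangle\subseteq\mathcal{P}_{n-1}$. To see this, take any $c'\in\langle t_{\vert\phi}\rangle$ and extend it to a $k$-colouring $c$ of $K_n$ by choosing, for each edge $e$ not in the image of $\phi$, an arbitrary colour from the palette $t(e)$ — this is possible precisely because palettes are nonempty. Then $c\in\langle t\rangle\subseteq\mathcal{P}_n$ and $c_{\vert\phi}=c'$, so order-heredity (Definition~\ref{definition: order hereditary}(ii)) gives $c'\in\mathcal{P}_{n-1}$. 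In particular $\Ent(t_{\vert\phi})\leq\ex(n-1,\mathcal{P})$.

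Next I would carry out the averaging. Fix an extremal template $t$ for $K_n$, so $\Ent(t)=\ex(n,\mathcal{P})$ and $\langle t\rangle\subseteq\mathcal{P}_n$. For each $v\in[n]$ let $\phi_v\in\binom{K_n}{K_{n-1}}$ be the order-preserving injection with image $[n]\setminus\{v\}$. An edge $e=ij$ of $K_n$ contributes to $\Ent(t_{\vert\phi_v})$ precisely for the $n-2$ vertices $v\notin\{i,j\}$, so
\[\sum_{v\in[n]}\Ent\bigl(t_{\vert\phi_v}\bigr)=\sum_{v\in[n]}\ \sum_{e\in E(K_n),\, v\notin e}\log_k\lvert t(e)\rvert=(n-2)\sum_{e\in E(K_n)}\log_k\lvert t(e)\rvert=(n-2)\ex(n,\mathcal{P}).\]
Combining this with the lemma (each summand on the left is at most $\ex(n-1,\mathcal{P})$) yields $(n-2)\ex(n,\mathcal{P})\leq n\,\ex(n-1,\mathcal{P})$, that is, $\ex(n,\mathcal{P})/n\leq\ex(n-1,\mathcal{P})/(n-2)$; multiplying by $2/(n-1)$ gives $\ex(n,\mathcal{P})/\binom{n}{2}\leq\ex(n-1,\mathcal{P})/\binom{n-1}{2}$, which is the desired monotonicity, and the proof concludes as indicated above.

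There is no genuine obstacle in this argument; the only delicate point is the lemma, and within it the use of nonemptiness of palettes, which is exactly what allows a realisation of a subtemplate to be lifted to a realisation of the full template so that order-heredity can be applied. Everything else is the bookkeeping displayed above.
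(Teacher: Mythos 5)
Your proof is correct and follows essentially the same route as the paper: average the entropy of an extremal template over its $n$ vertex-deleted (order-preserving) subtemplates, using order-heredity to bound each subtemplate's entropy by $\ex(n-1,\mathcal{P})$, and conclude monotonicity of $\ex(n,\mathcal{P})/\binom{n}{2}$. The only difference is that you spell out, as a separate lemma, the lifting argument showing $\langle t_{\vert\phi}\rangle\subseteq\mathcal{P}_{n-1}$ (extending a realisation of the subtemplate using nonempty palettes), a step the paper asserts directly from order-heredity.
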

\begin{proof}
This is similar to the classical proof of the existence of the Tur\'an density. As observed after Definition~\ref{def: entropy}, $0\leq \Ent(t)\leq \binom{n}{2}$ for any $k$-colouring template $t$ of~$K_n$, so that $\ex(n, \mathcal{P})/\binom{n}{2}\in[0,1]$. It is therefore enough to show that $\left(\ex(n, \mathcal{P})/\binom{n}{2}\right)_{n\in \N}$ is nonincreasing. Let $t$ be any $k$-colouring template for $K_{n+1}$ with $\langle t \rangle\subseteq \mathcal{P}_{n+1}$. For any $\phi \in \binom{K_{n+1}}{K_n}$,  consider $t_{\vert \phi}$. Since $\mathcal{P}$ is order-hereditary, $\langle t \rangle\subseteq \mathcal{P}_{n+1}$ implies $\langle t_{\vert \phi} \rangle \subseteq \mathcal{P}_n$. By averaging over all choices of $\phi$, we have:
\begin{align*}
\frac{\Ent(t)}{\binom{n+1}{2}} 
=\frac{1}{\binom{n+1}{2}} \log_k \left(\prod_{e\in [n+1]^{(2)}} \bigl\lvert t(e)\bigr\rvert\right)
&= \frac{1}{\binom{n+1}{2}}\log_k\left(\prod_{\phi \in \binom{K_{n+1}}{K_n}} \prod_{e\in [n]^{(2)}} \bigl\lvert t_{\vert \phi}(e)\bigr\rvert \right)^{1/n-1}\\
& =\frac{1}{n+1} \frac{1}{\binom{n}{2}}\sum_{\phi \in \binom{K_{n+1}}{K_n}} \Ent(t_{\vert \phi})\\
&\leq \frac{1}{n+1} \frac{1}{\binom{n}{2}}(n+1) \ex(n, \mathcal{P}).
\end{align*}
Thus $\ex(n+1, \mathcal{P})/\binom{n+1}{2}\leq \ex(n, \mathcal{P})/\binom{n}{2}$ as required and we are done.
\end{proof}
We call the limit $\pi(\mathcal{P})$ the \emph{entropy density} of $\mathcal{P}$. 
 Observe that the entropy density gives a lower bound on the \emph{speed} $\vert\mathcal{P}_n\vert$ of the property $\mathcal{P}$: for all $n\in\N$,
\begin{equation}\label{eq: entropy bound on P_n}
k^{\pi(\mathcal{P})\binom{n}{2}}\leq k^{\ex(n, \mathcal{P})}\leq \vert \mathcal{P}_n\vert.
\end{equation}
We shall show (Theorem~\ref{theorem: counting result for Forb(F), F finite hereditary families}) that the exponent in this lower bound is asymptotically tight.


\begin{lemma}[Supersaturation]\label{lemma: supersaturation}
Let $N \in \N$ be fixed and let $\mathcal{F}$ be a nonempty collection of $k$-colourings of~$K_N$. Set\/ $\mathcal{P}=\Forb(\mathcal{F})$.  For every $\varepsilon$ with $0<\varepsilon<1$, there exist constants $n_0\in\N$ and~$C_{0}>0$ such that for all $n\geq n_0$ and every template $t\in \left(2^{[k]}\setminus \{\emptyset \}\right)^{K_n}$ with
\[\Ent(t)> \left(\pi(\mathcal{P})+\varepsilon\right)\binom{n}{2},\]
 there are at least~
 \[C_{0} \varepsilon\binom{n}{N}\] 
 pairs~$(\phi, c)$ with $\phi \in \binom{K_{n}}{K_N}$, $c \in \mathcal{F}$ and  $c\in \langle t_{\vert \phi} \rangle$.
\end{lemma}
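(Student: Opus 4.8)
The plan is to prove this supersaturation lemma by a standard averaging-over-small-subsets argument, transferring the problem from $K_n$ down to $K_M$ for a suitably chosen constant $M$. First I would invoke Proposition~\ref{proposition: entropy density}: since $\ex(m,\mathcal{P})/\binom{m}{2}\to\pi(\mathcal{P})$ from above, we may fix $M=M(\varepsilon)$ large enough that $\ex(M,\mathcal{P})/\binom{M}{2} < \pi(\mathcal{P})+\varepsilon/2$. The key consequence is that \emph{any} $k$-colouring template $t'$ for $K_M$ with $\Ent(t')\ge(\pi(\mathcal{P})+\varepsilon/2)\binom{M}{2}$ must fail to have $\langle t'\rangle\subseteq\mathcal{P}_M$; equivalently, since $\mathcal{P}=\Forb(\mathcal{F})$, there exists a realisation $c\in\langle t'\rangle$ and an embedding $\psi\in\binom{K_M}{K_N}$ with $c_{\vert\psi}\in\mathcal{F}$, hence a pair $(\psi,c_{\vert\psi})$ witnessing $c_{\vert\psi}\in\langle t'_{\vert\psi}\rangle$.

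Next, given $t$ on $K_n$ with $\Ent(t)>(\pi(\mathcal{P})+\varepsilon)\binom{n}{2}$, I would average the entropy of the induced subtemplates $t_{\vert\phi}$ over all $\phi\in\binom{K_n}{K_M}$: each edge of $K_n$ lies in the same number of copies of $K_M$, so
\[
\frac{1}{\binom{n}{M}}\sum_{\phi\in\binom{K_n}{K_M}}\Ent(t_{\vert\phi})=\frac{\binom{M}{2}}{\binom{n}{2}}\Ent(t) > \left(\pi(\mathcal{P})+\varepsilon\right)\binom{M}{2}.
\]
Since each $\Ent(t_{\vert\phi})\le\binom{M}{2}$, a Markov-type counting argument shows that a positive proportion — at least $\frac{\varepsilon/2}{1-\pi(\mathcal{P})-\varepsilon/2}\ge\frac{\varepsilon}{2}$ — of the $\binom{n}{M}$ embeddings $\phi$ satisfy $\Ent(t_{\vert\phi})\ge(\pi(\mathcal{P})+\varepsilon/2)\binom{M}{2}$. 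Call such $\phi$ \emph{heavy}. By the previous paragraph, every heavy $\phi$ yields, inside $t_{\vert\phi}$, at least one pair $(\psi,c)$ with $\psi\in\binom{K_M}{K_N}$, $c\in\mathcal{F}$, $c\in\langle (t_{\vert\phi})_{\vert\psi}\rangle$; composing, $\phi\circ\psi\in\binom{K_n}{K_N}$ gives a pair $(\phi\circ\psi,c)$ of the desired type for $t$.

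Finally I would count these pairs for $t$ with multiplicity control. Each pair $(\phi',c')$ with $\phi'\in\binom{K_n}{K_N}$, $c'\in\mathcal{F}$, $c'\in\langle t_{\vert\phi'}\rangle$ arises from at most $\binom{n-N}{M-N}$ heavy embeddings $\phi$ (the number of ways to extend the $N$-set $\operatorname{im}\phi'$ to an $M$-set), so the number of distinct pairs is at least $\frac{(\varepsilon/2)\binom{n}{M}}{\binom{n-N}{M-N}}=\frac{\varepsilon}{2}\binom{n}{N}\big/\binom{M}{N}\ge C_0\varepsilon\binom{n}{N}$ with $C_0=\tfrac{1}{2}\binom{M}{N}^{-1}$ depending only on $\varepsilon$ (through $M$), $k$ and $N$, as required. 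The main obstacle is purely bookkeeping: getting the averaging identity for entropy exactly right (each edge of $K_n$ lies in precisely $\binom{n-2}{M-2}$ copies of $K_M$) and keeping the double-counting bounds clean so that the final constant genuinely depends only on $(\varepsilon,k,N)$; the conceptual content is entirely contained in the choice of $M$ via Proposition~\ref{proposition: entropy density}.
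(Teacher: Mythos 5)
Your proposal is correct and follows essentially the same route as the paper's proof: use Proposition~\ref{proposition: entropy density} to fix a constant size ($M$, the paper's $n_0$) at which any template of entropy above $(\pi(\mathcal{P})+\varepsilon/2)\binom{M}{2}$ must admit a forbidden pair, average the entropy of $t_{\vert\phi}$ over all $\phi\in\binom{K_n}{K_M}$ to find at least $\tfrac{\varepsilon}{2}\binom{n}{M}$ heavy embeddings, and double-count (each pair for $t$ extends to at most $\binom{n-N}{M-N}$ such $\phi$) to get $C_0=\bigl(2\binom{M}{N}\bigr)^{-1}$, exactly the paper's constant.
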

\begin{proof}
Given a template   $t'\in \left(2^{[k]}\setminus \{\emptyset\}\right)^{K_m}$, for some $m\geq N$, let  $B(t')$ denote the collection of pairs~$(\phi, c)$ with $\phi \in \binom{K_{m}}{K_N}$, $c\in \mathcal{F}$ and  $c\in \langle {t'}_{\vert \phi} \rangle$.

By Proposition~\ref{proposition: entropy density}, there exists $n_0\geq N$ such that for all $t'\in\left(2^{[k]}\setminus \{\emptyset\}\right)^{K_{n_0}}$ with $\Ent(t')>\left(\pi(\mathcal{P}) +\frac{\varepsilon}{2}\right)\binom{n_0}{2}$, we must have $\vert B(t')\vert\geq 1$. Let $t\in \left(2^{[k]}\setminus\{\emptyset\}\right)^{K_n}$ for some $n\geq n_0$, and suppose $\Ent(t)>\left(\pi(\mathcal{P})+\varepsilon\right)\binom{n}{2}$.  Let $X$ denote the number of sets $\phi\in \binom{K_n}{K_{n_0}}$ such that $\Ent(t_{\vert \phi})> \left(\pi(\mathcal{P}) +\frac{\varepsilon}{2}\right)\binom{n_0}{2}$. By summing $\Ent(t_{\vert \phi})$ over all $\phi\in \binom{K_n}{K_{n_0}}$, we have
\begin{align*}
\left(\pi(\mathcal{P})+\varepsilon\right)\binom{n}{2} \binom{n-2}{n_0-2}<\Ent(t)\binom{n-2}{n_0-2}&=\sum_{\phi} \Ent(t_{\vert \phi})\leq
\binom{n}{n_0}\left(\pi(\mathcal{P})+\frac{\varepsilon}{2}\right)\binom{n_0}{2} +X \binom{n_0}{2},
\end{align*}
implying  $X> \frac{\varepsilon}{2}\binom{n}{n_0}$.
On the other hand, summing $\vert B(t_{\vert \phi})\vert $ over all $\phi\in \binom{K_n}{K_{n_0}}$, we have
\begin{align*}
\vert B(t)\vert \binom{n-N}{n_0-N} = \sum_{\phi}\vert B(t_{\vert \phi})\vert \geq X>\frac{\varepsilon}{2}\binom{n}{n_0},
\end{align*}
so that 
\begin{align*}
\vert B(t)\vert > \frac{1}{2\binom{n_0}{N}} \varepsilon\binom{n}{N}.
\end{align*}
This proves the lemma with $C_0= \bigl(2\binom{n_0}{N}\bigr)^{-1}$. 
\end{proof}
\subsection{Speed of order-hereditary properties}\label{subsection: approximation of arbitrary hereditary properties}
In this section, we relate the speed of an order-hereditary property to its extremal entropy density and obtain container and counting theorems for arbitrary order-hereditary properties (i.e., properties defined by a possibly \emph{infinite} set of forbidden colourings).
\begin{theorem}\label{theorem: counting result for Forb(F), F finite hereditary families}
Let $N \in \N$ be fixed and let $\mathcal{F}$ be a nonempty collection of $k$-colourings of~$E(K_N)$. Set\/ $\mathcal{P}=\Forb(\mathcal{F})$.  For all $\varepsilon>0$, there exists $n_0$ such that for all $n\geq n_0$ we have
\[k^{\pi(\mathcal{P})\binom{n}{2}}\leq \vert \mathcal{P}_n \vert \leq k^{\left(\pi(\mathcal{P})+\varepsilon\right) \binom{n}{2}}.\]
\end{theorem}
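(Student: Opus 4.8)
The lower bound requires nothing new: it is exactly inequality~(\ref{eq: entropy bound on P_n}), valid for \emph{every} $n$. The entire task is the upper bound, and the plan is the standard ``containers plus supersaturation'' argument, combining Theorem~\ref{theorem: multi-colour container} with Lemma~\ref{lemma: supersaturation}.

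The skeleton is as follows. Fix $\varepsilon>0$. Each colouring $c\in\mathcal{P}_n$, regarded as a zero-entropy template, satisfies $\langle c\rangle=\{c\}\subseteq\mathcal{P}_n$, so by conclusion~(i) of Theorem~\ref{theorem: multi-colour container} it is a realisation of some container template $t\in\mathcal{T}_n$. Hence $\vert\mathcal{P}_n\vert\leq\sum_{t\in\mathcal{T}_n}\vert\langle t\rangle\vert=\sum_{t\in\mathcal{T}_n}k^{\Ent(t)}$, and it suffices to bound $\Ent(t)$ uniformly over $t\in\mathcal{T}_n$ and then control $\vert\mathcal{T}_n\vert$ via conclusion~(iii). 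Conclusion~(ii) says every $t\in\mathcal{T}_n$ has few pairs $(\phi,c)$ with $\phi\in\binom{K_n}{K_N}$, $c\in\mathcal{F}$, $c\in\langle t_{\vert\phi}\rangle$; Lemma~\ref{lemma: supersaturation} says that a template whose entropy exceeds $(\pi(\mathcal{P})+\varepsilon/2)\binom{n}{2}$ has \emph{many} such pairs. Matching these two bounds forces $\Ent(t)\leq(\pi(\mathcal{P})+\varepsilon/2)\binom{n}{2}$ for all $t\in\mathcal{T}_n$.

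The one point that needs care is the order in which the parameters are chosen. First apply Lemma~\ref{lemma: supersaturation} with parameter $\varepsilon/2$ to obtain constants $C_0'>0$ and $n_0'$ such that, for $n\geq n_0'$, any template $t$ with $\Ent(t)>(\pi(\mathcal{P})+\varepsilon/2)\binom{n}{2}$ has strictly more than $C_0'(\varepsilon/2)\binom{n}{N}$ such bad pairs. \emph{Only then} do we apply Theorem~\ref{theorem: multi-colour container}, with parameter $\varepsilon_1:=C_0'\varepsilon/4<C_0'\varepsilon/2$: this yields, for all large $n$, a container family $\mathcal{T}_n$ each of whose templates has at most $\varepsilon_1\binom{n}{N}<C_0'(\varepsilon/2)\binom{n}{N}$ bad pairs, so by the contrapositive of supersaturation $\Ent(t)\leq(\pi(\mathcal{P})+\varepsilon/2)\binom{n}{2}$. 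Plugging this into the skeleton gives $\vert\mathcal{P}_n\vert\leq\vert\mathcal{T}_n\vert\cdot k^{(\pi(\mathcal{P})+\varepsilon/2)\binom{n}{2}}$.

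Finally, conclusion~(iii) gives $\log_k\vert\mathcal{T}_n\vert\leq C_0\, n^{-1/(2\binom{N}{2}-1)}\binom{n}{2}$, which is $o\bigl(\binom{n}{2}\bigr)$ and in particular is below $(\varepsilon/2)\binom{n}{2}$ once $n$ is large enough; combining, $\vert\mathcal{P}_n\vert\leq k^{(\pi(\mathcal{P})+\varepsilon)\binom{n}{2}}$ for all sufficiently large $n$. There is no genuine obstacle here, as all the real work has been done in proving Theorem~\ref{theorem: multi-colour container} and Lemma~\ref{lemma: supersaturation}; the only thing to watch is that the supersaturation constant $C_0'$ depends on the chosen accuracy $\varepsilon/2$, so the container accuracy $\varepsilon_1$ must be chosen afterwards in terms of $C_0'$, not before.
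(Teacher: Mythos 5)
Your proof is correct and follows essentially the same route as the paper: lower bound from inequality~(\ref{eq: entropy bound on P_n}), upper bound by combining Theorem~\ref{theorem: multi-colour container} with the contrapositive of Lemma~\ref{lemma: supersaturation} to cap the entropy of every container at $(\pi(\mathcal{P})+\varepsilon/2)\binom{n}{2}$, and then using the bound on $\vert\mathcal{T}_n\vert$. The only cosmetic difference is that you make the dependence on the supersaturation constant explicit when choosing the container parameter, where the paper simply says ``provided we pick $\eta$ sufficiently small''; the logic is identical.
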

\begin{proof}
Inequality~(\ref{eq: entropy bound on P_n}) already established the lower bound on the speed $n \mapsto \vert \mathcal{P}_n\vert$. 
For the upper bound, we apply multicolour containers. By Theorem~\ref{theorem: multi-colour container} for any $\eta>0$ there exists $n_1\in \N$ such that for all $n\geq n_1$, there exists a collection of templates ~$\mathcal{T}_n$ from $(2^{[k]}\setminus \emptyset )^{K_n}$ such that (i) $\mathcal{T}_n$ is a container family for $\mathcal{P}_n$, (ii) for each $t\in \mathcal{T}_n$ there are at most $\eta \binom{n}{N}$ pairs $(\phi,c)$ with $\phi \in \binom{K_n}{K_N}$, $c\in \mathcal{F}$ and $c\in \langle t_{\vert \phi}\rangle$, and (iii) $\vert \mathcal{T}_n\vert \leq k^{\eta\binom{n}{2}}$

Provided we pick $\eta>0$ sufficiently small, Lemma~\ref{lemma: supersaturation} implies there exists $n_2\in \N$ such that for all $n\geq n_2$, if $t\in \left(2^{[k]}\setminus \{\emptyset \}\right)^{K_n}$ and there are at most $\eta \binom{n}{N}$ pairs $(\phi,c)$ with $\phi \in \binom{K_n}{K_N}$, $c\in \mathcal{F}$ and $c\in \langle t_{\vert \phi}\rangle$, then $\Ent(t)\leq \left(\pi(\mathcal{P})+\frac{\varepsilon}{2}\right)\binom{n}{2}$.

Thus choosing $\eta=\eta(\varepsilon)>0$ sufficiently small (in particular less than $\varepsilon/2$) and $n_0\geq \max(n_1, n_2)$, we have that for $n\geq n_0$ every template $t\in  \mathcal{T}_n$ has entropy at most $\left(\pi(\mathcal{P})+\frac{\varepsilon}{2}\right)\binom{n}{2}$, whence we may at last bound above the number of realisations of templates from $\mathcal{T}_n$, and hence the speed of $\mathcal{P}$: for $n\geq n_0$,
\begin{equation*}
\vert \mathcal{P}_n\vert \leq \vert \mathcal{T}_n\vert k^{\max \{\Ent(t): \ t\in \mathcal{T}_n\}}\leq k^{\eta\binom{n}{2} + \left(\pi(\mathcal{P}) + \frac{\varepsilon}{2} \right)\binom{n}{2}}\leq k^{ \pi(\mathcal{P})\binom{n}{2} + \varepsilon \binom{n}{2}}. \qedhere
\end{equation*}
\end{proof}

\begin{theorem}[Approximation of general order-hereditary properties]\label{theorem: approximation of general hereditary properties}
Let\/ $\mathcal{P}$ be an order-hereditary property of $k$-colourings with\/ $\mathcal{P}_n\neq\emptyset$ for every $n\in \N$. Let $\varepsilon>0$ be fixed. There exist constants $N$ and~$n_0\in\N$ and a family~$\mathcal{F}$ of $k$-colourings of $E(K_N)$ such that for all $n\geq n_0$, we have
\begin{enumerate}[(i)]
\item $\mathcal{P}_n\subseteq \Forb(\mathcal{F})_n$, and 
\item $\vert\Forb(\mathcal{F})_n \vert \leq \lvert \mathcal{P}_n\rvert k^{\varepsilon \binom{n}{2}}$.
\end{enumerate}
\end{theorem}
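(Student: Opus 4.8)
The plan is to take $\mathcal{F}$ to be the ``obstruction set at level $N$'': for an $N$ to be chosen, set $\mathcal{F}=\mathcal{F}_N:=[k]^{K_N}\setminus\mathcal{P}_N$, the collection of all $k$-colourings of $K_N$ that do \emph{not} lie in $\mathcal{P}$. Property~(i) is then immediate: if $c\in\mathcal{P}_n$ and $\phi\in\binom{K_n}{K_N}$, then order-heredity gives $c_{\vert\phi}\in\mathcal{P}_N$, so $c_{\vert\phi}\notin\mathcal{F}_N$; as this holds for every such $\phi$, we get $c\in\Forb(\mathcal{F}_N)_n$. (If $\mathcal{P}_n=[k]^{K_n}$ for every $n$, the statement is trivial with $\mathcal{F}=\emptyset$, so we may assume $\mathcal{P}$ is not the full property; then $\mathcal{P}_N\neq[k]^{K_N}$ for all large $N$, so $\mathcal{F}_N\neq\emptyset$ for all large $N$.)

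For~(ii), the key point is that although $\Forb(\mathcal{F}_N)$ may be strictly larger than $\mathcal{P}$, its entropy density cannot exceed $\pi(\mathcal{P})$ by much. Evaluating the definition of $\Forb(\mathcal{F}_N)$ at $n=N$ --- where $\binom{K_N}{K_N}$ consists of the identity map alone --- shows $\Forb(\mathcal{F}_N)_N=[k]^{K_N}\setminus\mathcal{F}_N=\mathcal{P}_N$, and hence $\ex(N,\Forb(\mathcal{F}_N))=\ex(N,\mathcal{P})$. Since $\Forb(\mathcal{F}_N)$ is order-hereditary with every level nonempty (it contains $\mathcal{P}$), Proposition~\ref{proposition: entropy density} applies to it, and monotonicity of $n\mapsto\ex(n,\cdot)/\binom{n}{2}$ gives
\[\pi(\Forb(\mathcal{F}_N))\ \leq\ \frac{\ex(N,\Forb(\mathcal{F}_N))}{\binom{N}{2}}\ =\ \frac{\ex(N,\mathcal{P})}{\binom{N}{2}}.\]
Applying Proposition~\ref{proposition: entropy density} to $\mathcal{P}$ itself, the right-hand side tends to $\pi(\mathcal{P})$ as $N\to\infty$, so we may now fix $N$ large enough (and with $\mathcal{F}_N\neq\emptyset$) that $\pi(\Forb(\mathcal{F}_N))<\pi(\mathcal{P})+\varepsilon/2$.

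With this $N$ and $\mathcal{F}=\mathcal{F}_N$ fixed, we invoke Theorem~\ref{theorem: counting result for Forb(F), F finite hereditary families} for the finite forbidden family $\mathcal{F}$ with error parameter $\varepsilon/2$: there is $n_0$ so that for all $n\geq n_0$,
\[\vert\Forb(\mathcal{F})_n\vert\ \leq\ k^{\left(\pi(\Forb(\mathcal{F}))+\varepsilon/2\right)\binom{n}{2}}\ \leq\ k^{\left(\pi(\mathcal{P})+\varepsilon\right)\binom{n}{2}}.\]
On the other hand, inequality~\eqref{eq: entropy bound on P_n} gives $\vert\mathcal{P}_n\vert\geq k^{\pi(\mathcal{P})\binom{n}{2}}$. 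Combining the two bounds yields $\vert\Forb(\mathcal{F})_n\vert\leq\vert\mathcal{P}_n\vert\,k^{\varepsilon\binom{n}{2}}$ for all $n\geq n_0$, which is~(ii).

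I expect the only real content --- and the step to get right --- to be the middle paragraph: the identity $\Forb(\mathcal{F}_N)_N=\mathcal{P}_N$ combined with the monotonicity supplied by Proposition~\ref{proposition: entropy density}, which is what forces the entropy density of the truncated property down to within $\varepsilon/2$ of $\pi(\mathcal{P})$. Everything else is bookkeeping with the counting theorem and the trivial entropy lower bound on the speed.
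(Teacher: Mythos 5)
Your proposal is correct and follows essentially the same route as the paper: both take $\mathcal{F}$ to be the complement of $\mathcal{P}_N$ at a suitable level $N$, use the monotonicity in Proposition~\ref{proposition: entropy density} (via $\ex(N,\Forb(\mathcal{F}))=\ex(N,\mathcal{P})$) to force $\pi(\Forb(\mathcal{F}))<\pi(\mathcal{P})+\varepsilon/2$, and then combine Theorem~\ref{theorem: counting result for Forb(F), F finite hereditary families} with the lower bound~\eqref{eq: entropy bound on P_n}. Your only deviation is cosmetic: you argue directly with $\Forb(\mathcal{F}_N)$ and its level-$N$ identity with $\mathcal{P}_N$, whereas the paper packages the same computation through the auxiliary chain $\mathcal{Q}^n=\Forb(\bigcup_{m\leq n}\mathcal{F}_m)$ and a contradiction argument.
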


\begin{proof}
For every $n\in\N$, let $\mathcal{F}_n$ denote the collection of $k$-colourings of $E(K_n)$ which are not in $\mathcal{P}_n$ (and thus, as $\mathcal{P}$ is order-hereditary, do not appear as subcolourings of an elements of $\mathcal{P}_{n'}$ for any $n'\geq n$). Set $Q^n=\Forb(\bigcup_{m\leq n}\mathcal{F}_m)$ to be the order-hereditary property of $k$-colourings which avoids exactly the same $k$-colourings on at most $n$ vertices as $\mathcal{P}$. By construction we have
a chain of inclusions
\[\mathcal{Q}^1\supseteq \mathcal{Q}^2\supseteq \cdots \supseteq \mathcal{Q}^n \supseteq \cdots \supseteq \mathcal{P}.\] 
Consequently, the sequence of entropy densities $\left(\pi(\mathcal{Q}^n)\right)_{n\in \N}$ is nonincreasing and bounded below by $\pi(\mathcal{P})$. We claim that $\lim_{n\rightarrow \infty} \pi(\mathcal{Q}^n)=\pi(\mathcal{P})$. Indeed, suppose this was not the case. Then there exists $\eta>0$ such that $\pi(\mathcal{Q}^n)>\pi(\mathcal{P})+\eta$ for all $n\in\N$. For each $n\in \mathbb{N}$, let $t^n$ be a template such that $\langle t^n\rangle\subseteq (\mathcal{Q}^n)_n$ and $\Ent(t^n)=\ex(n, \mathcal{Q}^n)=\ex(n, \mathcal{P})$. Since the sequence $\left(\ex(m, \mathcal{Q}^n)/\binom{m}{2}\right)_{m\in \N}$ is nonincreasing (by Proposition~\ref{proposition: entropy density}), we have that for every $n \in \mathbb{N}$, 
\[\pi(\mathcal{P})+\eta<\pi(\mathcal{Q}^n)\leq \Ent(t^n)/\binom{n}{2}= \ex(n, \mathcal{P})/\binom{n}{2},\]
contradicting 
Proposition~\ref{proposition: entropy density}. Thus  $\lim_{n\rightarrow \infty} \pi(\mathcal{Q}^n)=\pi(\mathcal{P})$, as claimed. In particular there exists $N \in \N$ for which $\pi(\mathcal{Q}^N)<\pi(\mathcal{P})+\varepsilon/2$.

By (\ref{eq: entropy bound on P_n}), $k^{\pi(\mathcal{F}) \binom{n}{2}}\leq \vert \mathcal{P}_n\vert$. On the other hand, by Theorem~\ref{theorem: counting result for Forb(F), F finite hereditary families} (applied to the property $\mathcal{Q}^N$ with parameter $\varepsilon/2$) there exists $n_0\in \N$ with $n_0\geq N$ such that for all $n\geq n_0$ we have:
\begin{equation*}
\bigl\lvert (\mathcal{Q}^N)_n\bigr\rvert\leq k^{\left(\pi(\mathcal{Q}^N)+\frac{\varepsilon}{2} \right)\binom{n}{2}}< k^{\left(\pi(\mathcal{P})+\varepsilon\right)\binom{n}{2}}\leq \vert \mathcal{P}_n\vert k^{\varepsilon{\binom{n}{2}}}. 
\end{equation*}
Observing that for $n\geq n_0$ we have $\left(\mathcal{Q}^N\right)_n = \Forb(\mathcal{F}_N)_n\supseteq \mathcal{P}_n$ we see that the triple $(N, n_0, \mathcal{F}_N)$ satisfies the conclusion of the theorem. 
\end{proof}

\begin{corollary}[Containers for order-hereditary properties]\label{corollary: containers for arbitrary hereditary properties}
Let\/ $\mathcal{P}$ be an order-hereditary property of $k$-colourings with\/ $\mathcal{P}_n\neq\emptyset$ for every $n\in \N$, and let $\varepsilon>0$,~$m\in\N$ be fixed. There exists $n_0$ such that for any $n\geq n_0$ there exists a collection~$\mathcal{T}_n\subseteq \left(2^{[k]}\setminus \{\emptyset\} \right)^{K_n}$ satisfying:
\begin{enumerate}[(i)]
\item $\mathcal{T}_n$ is a container family for\/ $\mathcal{P}_n$;
\item for each template~$t\in \mathcal{T}_n$, $\Ent(t)\leq \left(\pi(\mathcal{P})+\varepsilon \right)\binom{n}{2}$;
\item for each template~$t\in \mathcal{T}_n$, there are at most $\varepsilon \binom{n}{m}$ pairs $(\phi,c)$ where $\phi \in \binom{K_n}{K_m}$, $c\notin \mathcal{P}_m$ and $c\in  \langle t_{\vert \phi}\rangle $;
\item $\vert \mathcal{T}_n\vert\leq k^{\varepsilon \binom{n}{2}}$.
\end{enumerate}
\end{corollary}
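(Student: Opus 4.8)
The plan is to combine the three tools developed in this section: the finite-type approximation (Theorem~\ref{theorem: approximation of general hereditary properties}), the multicolour container theorem (Theorem~\ref{theorem: multi-colour container}), and the supersaturation lemma (Lemma~\ref{lemma: supersaturation}).

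First I would dispose of the degenerate case: if $\mathcal{P}_n=[k]^{K_n}$ for every~$n$, then $\pi(\mathcal{P})=1$ and the singleton family $\mathcal{T}_n=\{t^\ast\}$ with $t^\ast(e)=[k]$ for every edge~$e$ satisfies (i)--(iv) immediately. Otherwise some $\mathcal{P}_M$ is a proper subset of~$[k]^{K_M}$, and then for every $N\geq\max(m,M)$ the family $\mathcal{F}_N:=[k]^{K_N}\setminus\mathcal{P}_N$ of $k$-colourings of~$K_N$ not lying in~$\mathcal{P}_N$ is nonempty (extend a forbidden colouring at scale~$M$ to $K_N$ arbitrarily; it remains forbidden by order-hereditariness). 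Following the proof of Theorem~\ref{theorem: approximation of general hereditary properties}---in which the entropy densities $\pi(\mathcal{Q}^n)$ are shown to form a nonincreasing sequence tending to $\pi(\mathcal{P})$---I would fix such an~$N$ large enough that, writing $\mathcal{F}:=\mathcal{F}_N$ and $\mathcal{P}':=\Forb(\mathcal{F})$ (so that $\mathcal{P}_n\subseteq\mathcal{P}'_n$ for all~$n$, directly from the definition of $\Forb$ and order-hereditariness), we have $\pi(\mathcal{P}')<\pi(\mathcal{P})+\tfrac{\varepsilon}{3}$.

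Next I would apply Theorem~\ref{theorem: multi-colour container} to $\mathcal{P}'=\Forb(\mathcal{F})$ with a parameter $\varepsilon_1\in(0,\varepsilon]$ to be fixed. For $n$ large this produces a container family $\mathcal{T}_n$ for $\mathcal{P}'_n$ with $\log_k|\mathcal{T}_n|\leq C_0\,n^{-1/(2\binom N2-1)}\binom n2$ and such that each $t\in\mathcal{T}_n$ admits at most $\varepsilon_1\binom nN$ pairs $(\psi,c)$ with $\psi\in\binom{K_n}{K_N}$, $c\in\mathcal{F}$ and $c\in\langle t_{\vert\psi}\rangle$. Since $\mathcal{P}_n\subseteq\mathcal{P}'_n$, any template $t$ with $\langle t\rangle\subseteq\mathcal{P}_n$ also has $\langle t\rangle\subseteq\mathcal{P}'_n$, so $\mathcal{T}_n$ is a container family for $\mathcal{P}_n$ as well; this is~(i). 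For $n$ large, $C_0\,n^{-1/(2\binom N2-1)}\leq\varepsilon$, giving~(iv). For~(ii), I would invoke Lemma~\ref{lemma: supersaturation} for $\mathcal{P}'$ with parameter $\tfrac{\varepsilon}{3}$ and choose $\varepsilon_1$ small enough (in terms of $\varepsilon$, $N$ and the constant produced by that lemma) that the above bound on the number of bad $(\psi,c)$ pairs is, by the contrapositive of the lemma, incompatible with $\Ent(t)>(\pi(\mathcal{P}')+\tfrac{\varepsilon}{3})\binom n2$; hence $\Ent(t)\leq(\pi(\mathcal{P}')+\tfrac{\varepsilon}{3})\binom n2<(\pi(\mathcal{P})+\varepsilon)\binom n2$ for every $t\in\mathcal{T}_n$.

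The only point requiring genuine work is~(iii), since the container theorem controls bad pairs at the scale~$N$ of the forbidden family, whereas the corollary asks for control at the arbitrary scale $m\leq N$. I would bridge this by a double-counting argument. Fix $t\in\mathcal{T}_n$ and call $(\phi,c)$ a \emph{scale-$m$ bad pair} if $\phi\in\binom{K_n}{K_m}$, $c\in[k]^{K_m}\setminus\mathcal{P}_m$ and $c\in\langle t_{\vert\phi}\rangle$; scale-$N$ bad pairs are defined analogously, and (as $\mathcal{F}=[k]^{K_N}\setminus\mathcal{P}_N$) these are exactly the pairs bounded by $\varepsilon_1\binom nN$ above. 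Each scale-$m$ bad pair $(\phi,c)$ has exactly $\binom{n-m}{N-m}$ order-preserving injections $\psi\in\binom{K_n}{K_N}$ with $\phi([m])\subseteq\psi([N])$; for each such~$\psi$ there is a unique order-preserving $\iota\colon[m]\to[N]$ with $\psi\circ\iota=\phi$, and assigning on every edge of $K_N$ not contained in $\iota([m])$ the least colour available in $t_{\vert\psi}$ extends~$c$ to a colouring $c^\ast\in\langle t_{\vert\psi}\rangle$ with $c^\ast_{\vert\iota}=c\notin\mathcal{P}_m$; hence $c^\ast\notin\mathcal{P}_N$ by order-hereditariness, so $(\psi,c^\ast)$ is a scale-$N$ bad pair. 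The map $(\phi,c,\psi)\mapsto(\psi,c^\ast)$ is at most $\binom Nm$-to-one (a fibre over $(\psi,c^\ast)$ is indexed by the at most $\binom Nm$ choices of~$\iota$), so the number of scale-$m$ bad pairs of~$t$ is at most $\binom Nm\binom{n-m}{N-m}^{-1}\varepsilon_1\binom nN=\varepsilon_1\binom nm\leq\varepsilon\binom nm$, the equality being the identity $\binom nN\binom Nm=\binom nm\binom{n-m}{N-m}$. This gives~(iii). Taking $n_0$ to be the largest of the finitely many thresholds produced along the way completes the proof. The main obstacle is organisational---selecting the constants in the right order ($N$ first, then $\varepsilon_1$, then $n_0$) and handling the scale change in~(iii)---since the three cited results do all the substantive work.
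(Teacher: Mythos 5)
Your proposal is correct and follows essentially the same route as the paper's own proof: approximate $\mathcal{P}$ by the finitely-defined property $\Forb(\mathcal{F}_N)$ with $\pi$ close to $\pi(\mathcal{P})$ (via Theorem~\ref{theorem: approximation of general hereditary properties}), apply Theorem~\ref{theorem: multi-colour container} with a small parameter, use Lemma~\ref{lemma: supersaturation} for the entropy bound (ii), and pass from scale-$N$ to scale-$m$ bad pairs by the same extension/double-counting argument for (iii). The only cosmetic differences are that you make the extension and multiplicity count in (iii) fully explicit and treat separately the degenerate case $\mathcal{P}_n=[k]^{K_n}$ for all $n$, which the paper leaves implicit.
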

\begin{proof}
Fix $\varepsilon>0$. Let $\mathcal{F}_n$ and $\mathcal{Q}^n$ be defined as in the proof of Theorem~\ref{theorem: approximation of general hereditary properties}. As shown in that proof, there exists some $N$ such that $\mathcal{Q}^N\supseteq \mathcal{P}$ and  $\pi(\mathcal{Q}^N)< \pi(\mathcal{P})+\frac{\varepsilon}{2}$. Without loss of generality we may take $N>m$. Note $(\mathcal{Q}^N)_n =\Forb(\mathcal{F}_N)_n$ for all $n\geq N$.


Pick $\delta>0$ sufficiently small. By Theorem~\ref{theorem: multi-colour container} applied to $\Forb(\mathcal{F}_N)$, there exists $n_1\geq N$ such that, for all $n\geq n_1$, there is a collection of templates $\mathcal{T}_n\subseteq \left(2^{[k]}\setminus \{\emptyset\}\right)^{K_n}$ satisfying (a) $\mathcal{T}_n$ is a container family for $(Q^N)_n$,  (b) for each $t\in \mathcal{T}_n$ there are most $\delta \binom{n}{N}$ pairs $(\phi,c)$ with $\phi \in \binom{K_n}{K_N}$, $c\in \mathcal{F}_N$ and $c\in \langle t_{\vert \phi}\rangle$ and (c) $\vert \mathcal{T}_n\vert \leq k^{\delta \binom{n}{2}}$.

Property (a) implies that $\mathcal{T}_n$ is a container family for $\mathcal{P}_n$, establishing part (i)  of the corollary. Property (c) implies part (iv), provided we pick $\delta<\varepsilon$. For part (iii), let $t\in \mathcal{T}_n$ and consider a pair $(\psi, c)$ with $\psi\in \binom{K_n}{K_m}$, $c\in \mathcal{F}_m$ and $c\in \langle t_{\vert \psi}\rangle $. Since $\mathcal{P}$ is order-hereditary, for every $\phi \in \binom{K_n}{K_N}$ extending $\psi$ (i.e. with $\phi(i)=\psi(i)$ for all $i \in [m]$), there exists $c'\in \mathcal{F}_N$ with $c'\in \langle t_{\vert \phi}\rangle $. As we know by property (c) that there are at most $\delta \binom{n}{N}$ such pairs $(\phi, c')$ for any $t\in \mathcal{T}_n$, we have that 
\[ \binom{n-m}{N-m} \bigl\vert \{ (\psi,c): \ \psi\in \binom{K_n}{K_m}, \ c\in \mathcal{F}_m, \ c\in \langle t_{\vert \psi}\rangle \}\bigr\vert \leq \binom{N}{m}\delta \binom{n}{N},\]
implying part (iii), provided we pick $\delta<\varepsilon$. Finally for part (ii)  we use supersaturation: there exists $n_0\geq n_1$ such that if $\delta$ is sufficiently small, $t\in \left(2^{[k]}\setminus \{\emptyset\}\right)^{K_n}$ with $n\geq n_0$ and there are at most~$\delta\binom{n}{N}$ pairs $(\phi,c)\in \binom{K_n}{K_N}\times \mathcal{F}_N$ with $c\in \langle t_{\vert \phi}\rangle $, then 
\[\Ent(t)\leq \left(\pi(\Forb\bigl(\mathcal{F}_N)\bigr)+\frac{\varepsilon}{2}\right)\binom{n}{2}=\left(\pi(\mathcal{Q}^N)+\frac{\varepsilon}{2}\right)\binom{n}{2}<\left(\pi(\mathcal{P})+\varepsilon\right)\binom{n}{2}.\] 
Thus for $n\geq n_0$ and $\delta>0$ sufficiently small, property (c) implies part (ii) and we are done.
\end{proof}

\begin{corollary}[Speed of order-hereditary properties]\label{corollary: speed of arbitrary hereditary properties}
Let\/ $\mathcal{P}$ be an order-hereditary property of $k$-colourings with\/ $\mathcal{P}_n\neq\emptyset$ for every $n\in \N$ and let $\varepsilon>0$ be fixed. There exists $n_0\in \N$ such that for all $n\geq n_0$,
\[k^{\pi(\mathcal{P})\binom{n}{2}}\leq \vert \mathcal{P}_n\vert \leq k^{\left(\pi(\mathcal{P})+\varepsilon \binom{n}{2}\right)}.\]
\end{corollary}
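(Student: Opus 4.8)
The plan is to read off the lower bound from the entropy estimate already established, and to obtain the upper bound from the container corollary for order-hereditary properties followed by the standard ``count the realisations'' argument. For the lower bound nothing new is required: inequality~\eqref{eq: entropy bound on P_n} already gives $k^{\pi(\mathcal{P})\binom{n}{2}}\leq k^{\ex(n,\mathcal{P})}\leq\vert\mathcal{P}_n\vert$ for every $n\in\N$, so I would simply invoke it.

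For the upper bound I would apply Corollary~\ref{corollary: containers for arbitrary hereditary properties} to $\mathcal{P}$ with parameter $\varepsilon/2$ (and any choice of $m$, say $m=1$, since property~(iii) there will not be used). This produces, for all $n\geq n_0$, a family $\mathcal{T}_n\subseteq\left(2^{[k]}\setminus\{\emptyset\}\right)^{K_n}$ that is a container family for $\mathcal{P}_n$, with $\Ent(t)\leq\bigl(\pi(\mathcal{P})+\varepsilon/2\bigr)\binom{n}{2}$ for every $t\in\mathcal{T}_n$ and with $\vert\mathcal{T}_n\vert\leq k^{(\varepsilon/2)\binom{n}{2}}$. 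The one small point to check is that such a container family really covers $\mathcal{P}_n$: each $c\in\mathcal{P}_n$, regarded as a zero-entropy template, satisfies $\langle c\rangle=\{c\}\subseteq\mathcal{P}_n$, so by the defining property of a container family there is $t\in\mathcal{T}_n$ with $c\leq t$, i.e.\ $c\in\langle t\rangle$; hence $\mathcal{P}_n\subseteq\bigcup_{t\in\mathcal{T}_n}\langle t\rangle$. Since a template $t$ has exactly $k^{\Ent(t)}$ realisations, I would then conclude, for $n\geq n_0$,
\begin{equation*}
\vert\mathcal{P}_n\vert\leq\sum_{t\in\mathcal{T}_n}\vert\langle t\rangle\vert=\sum_{t\in\mathcal{T}_n}k^{\Ent(t)}\leq\vert\mathcal{T}_n\vert\,k^{\max_{t\in\mathcal{T}_n}\Ent(t)}\leq k^{(\varepsilon/2)\binom{n}{2}}\cdot k^{\bigl(\pi(\mathcal{P})+\varepsilon/2\bigr)\binom{n}{2}}=k^{\bigl(\pi(\mathcal{P})+\varepsilon\bigr)\binom{n}{2}},
\end{equation*}
which is the claimed upper bound.

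An essentially equivalent route, which I would note as an alternative, bypasses Corollary~\ref{corollary: containers for arbitrary hereditary properties} and instead combines Theorem~\ref{theorem: approximation of general hereditary properties} with Theorem~\ref{theorem: counting result for Forb(F), F finite hereditary families}: the proof of the former exhibits a finite family $\mathcal{F}=\mathcal{F}_N$ with $\mathcal{P}_n\subseteq\Forb(\mathcal{F})_n$ for $n\geq N$ and $\pi(\Forb(\mathcal{F}))<\pi(\mathcal{P})+\varepsilon/2$, and applying the latter to $\Forb(\mathcal{F})$ with parameter $\varepsilon/2$ yields $\vert\mathcal{P}_n\vert\leq\vert\Forb(\mathcal{F})_n\vert\leq k^{(\pi(\mathcal{P})+\varepsilon)\binom{n}{2}}$ for large $n$. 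As for the main obstacle, I do not expect one: all of the substance --- the multicolour container theorem, the supersaturation lemma, and the approximation of an arbitrary order-hereditary property by a finitely-forbidden one --- is already packaged into the cited results, and what remains is just the ``number of containers times maximum number of realisations per container'' bound. The only thing demanding any care is the observation, used above, that a container family for $\mathcal{P}_n$ genuinely contains every colouring of $\mathcal{P}_n$, which is handled by treating colourings as zero-entropy templates.
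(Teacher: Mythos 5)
Your proof is correct and matches the paper's argument essentially verbatim: the lower bound is read off from inequality~\eqref{eq: entropy bound on P_n}, and the upper bound comes from applying Corollary~\ref{corollary: containers for arbitrary hereditary properties} with parameter $\varepsilon/2$ and bounding $\vert\mathcal{P}_n\vert$ by $\vert\mathcal{T}_n\vert\,k^{\max_t \Ent(t)}$. Your explicit check that a container family covers every colouring (viewing colourings as zero-entropy templates) is exactly the observation the paper makes right after defining container families, so nothing is missing.
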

\begin{proof}
The lower bound is given by inequality~(\ref{eq: entropy bound on P_n}). For the upper bound, we apply Corollary~\ref{corollary: containers for arbitrary hereditary properties} (with parameter~$\varepsilon/2$) to obtain for all $n\geq n_0$ a family of templates~$\mathcal{T}_n$ satisfying properties (i), (ii) and~(iv). Thus for $n\geq n_0$,
\[\vert\mathcal{P}_n\vert \leq \vert\mathcal{T}_n\vert k^{\max\{ \Ent(t):\ t\in \mathcal{T}_n\} } \leq k^{\frac{\varepsilon}{2}\binom{n}{2}} k^{\pi(\mathcal{P})\binom{n}{2}+\frac{\varepsilon}{2}\binom{n}{2}}= k^{\left(\pi(\mathcal{P})+\varepsilon \binom{n}{2}\right)}. \qedhere\]
\end{proof}
\subsection{Stability and characterisation of typical colourings}\label{subsection: stability}

\begin{definition}[Edit distance]\label{definition: edit distance}
	The \emph{edit distance}~$\rho(t,t')$ between two $k$-colouring templates $t$,~$t'$ of~$K_n$ is the number of edges~$e\in E(K_n)$ on which $t(e)\neq t'(e)$. Further, if $\mathcal{S}$ is a family of $k$-colouring templates and $t$ is a $k$-colouring template of $K_n$, we define the \emph{edit distance} between them to be
	\[\rho(\mathcal{S}, t):=\min\{\rho(s,t):\ s\in \mathcal{S}\cap (2^{[k]}\setminus \{\emptyset\})^{K_n} \}\] 
if $\mathcal{S}$ contains at least one element of $(2^{[k]}\setminus \{\emptyset\})^{K_n} $, and $\binom{n}{2}$ otherwise.
	\end{definition}
Given a family of $k$-colouring templates $\mathcal{S}$, we let $\langle \mathcal{S}\rangle$ denote the collection of all realisations from $\mathcal{S}$. We further let $\rho(\langle \mathcal{S}\rangle, c)$ denote the edit distance between $\langle \mathcal{S}\rangle$ and a $k$-colouring $c$.
\begin{definition}[Stability]\label{definition: strong stability family}
Let $\mathcal{P}$ be an order-hereditary property of $k$-colourings of~$K_n$. A family~$\mathcal{S}$ of $k$-colouring templates is said to be a \emph{stability family} for $\mathcal{P}$ if the following holds:\\
for all $\varepsilon>0$, there exist $\delta>0$ and $m$,~$n_0\in \N$ such that for all $n\geq n_0$, every 
$t\in (2^{[k]}\setminus \{\emptyset\})^{K_n}$ with
	\begin{enumerate}[(i)]
		\item (almost extremality) $\Ent(t)\geq (\pi(\mathcal{P})-\delta)\binom{n}{2}$, and
		\item (almost locally  in $\mathcal{P}$) at most~$\delta \binom{n}{m}$ pairs~$(\phi, c)$ with $\phi\in \binom{K_n}{K_m}$, $c\notin \mathcal{P}_m$  and $c\in \langle t_{\vert_\phi}\rangle$ 
	\end{enumerate}
	must lie within edit distance at most $\varepsilon \binom{n}{2}$ of some template $s\in\mathcal{S}$.

If $\mathcal{P}$ has a stability family, then it is said to be \emph{stable}.
\end{definition}

\begin{theorem}[Characterisation of typical colourings in stable order-hereditary properties]\label{theorem: strong stability and containers}
\	\\
Let\/ $\mathcal{P}$ be an order-hereditary property of $k$-colourings and suppose $\mathcal{S}$ is a stability family for\/ $\mathcal{P}$. Then typical elements of\/ $\mathcal{P}$ are close in the edit distance to realisations from\/ $\mathcal{S}$.

Explicitly, for all $\varepsilon>0$, there exists $n_0\in \N$ such that for all $n\geq n_0$ there are at most~$\varepsilon \vert \mathcal{P}_n\vert$ colourings~$c\in\mathcal{P}_n$ with $\rho(\langle \mathcal{S} \rangle, c)>\varepsilon \binom{n}{2}$.
\end{theorem}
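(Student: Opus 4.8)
The plan is to combine the container corollary (Corollary~\ref{corollary: containers for arbitrary hereditary properties}) with the defining property of a stability family, using the counting bounds from Corollary~\ref{corollary: speed of arbitrary hereditary properties}. Fix $\varepsilon>0$. Let $\mathcal{S}$ be a stability family for $\mathcal{P}$ and let $\delta>0$, $m$, $n_0$ be the parameters returned by the stability definition applied with input parameter $\varepsilon/2$ (so that any template satisfying the almost-extremality and almost-locally-in-$\mathcal{P}$ conditions with parameter $\delta$ lies within edit distance $\tfrac{\varepsilon}{2}\binom{n}{2}$ of some $s\in\mathcal{S}$). Now apply Corollary~\ref{corollary: containers for arbitrary hereditary properties} with a small parameter $\eta>0$ to be chosen (in particular $\eta<\delta$ and $\eta$ small relative to $\varepsilon$) and with the value of $m$ from the stability family: for all $n$ large we obtain a container family $\mathcal{T}_n$ for $\mathcal{P}_n$ with $\vert\mathcal{T}_n\vert\le k^{\eta\binom{n}{2}}$, every $t\in\mathcal{T}_n$ having $\Ent(t)\le(\pi(\mathcal{P})+\eta)\binom{n}{2}$ and at most $\eta\binom{n}{m}$ bad pairs $(\phi,c)$ with $\phi\in\binom{K_n}{K_m}$, $c\notin\mathcal{P}_m$, $c\in\langle t_{\vert\phi}\rangle$.

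The key step is to separate the containers into two types. Call $t\in\mathcal{T}_n$ \emph{rich} if $\Ent(t)\ge(\pi(\mathcal{P})-\delta)\binom{n}{2}$ and \emph{poor} otherwise. A poor container satisfies $\vert\langle t\rangle\vert=k^{\Ent(t)}\le k^{(\pi(\mathcal{P})-\delta)\binom{n}{2}}$, so the total number of colourings realised by poor containers is at most $\vert\mathcal{T}_n\vert\,k^{(\pi(\mathcal{P})-\delta)\binom{n}{2}}\le k^{(\pi(\mathcal{P})-\delta+\eta)\binom{n}{2}}$, which (since $\eta<\delta$ and using the lower bound $\vert\mathcal{P}_n\vert\ge k^{\pi(\mathcal{P})\binom{n}{2}}$ from~\eqref{eq: entropy bound on P_n}) is at most $k^{-(\delta-\eta)\binom{n}{2}}\vert\mathcal{P}_n\vert<\varepsilon\vert\mathcal{P}_n\vert$ for $n$ large. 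On the other hand, a rich container $t$ satisfies both hypotheses (i) and (ii) of the stability definition with parameter $\delta$ (almost-extremality by being rich; almost-locally-in-$\mathcal{P}$ since $\eta<\delta$), hence lies within edit distance $\tfrac{\varepsilon}{2}\binom{n}{2}$ of some $s\in\mathcal{S}$; consequently every realisation $c\in\langle t\rangle$ is within edit distance $\tfrac{\varepsilon}{2}\binom{n}{2}+\rho(s,\langle s\rangle)$ of $\langle\mathcal{S}\rangle$, and since a realisation of $s$ agrees with $s$ wherever $s$ has a singleton palette and $s$ agrees with $t$ off the edit-distance set, one checks $\rho(\langle\mathcal{S}\rangle,c)\le\rho(t,s)+\rho(t,c)\le \tfrac{\varepsilon}{2}\binom{n}{2}$ — wait, more carefully: pick the realisation $c'$ of $s$ that agrees with $c$ on every edge where $s(e)=t(e)$ (possible since $c(e)\in t(e)=s(e)$ there), so $c$ and $c'$ differ only among the $\le\tfrac{\varepsilon}{2}\binom{n}{2}$ edges where $s(e)\ne t(e)$, giving $\rho(\langle\mathcal{S}\rangle,c)\le\tfrac{\varepsilon}{2}\binom{n}{2}<\varepsilon\binom{n}{2}$.

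Putting the two cases together: every $c\in\mathcal{P}_n$ lies in $\langle t\rangle$ for some $t\in\mathcal{T}_n$ (by property (i), $\mathcal{T}_n$ is a container family); if that container is rich then $\rho(\langle\mathcal{S}\rangle,c)\le\tfrac{\varepsilon}{2}\binom{n}{2}$, while the number of $c$ lying only in poor containers is less than $\varepsilon\vert\mathcal{P}_n\vert$. Hence at most $\varepsilon\vert\mathcal{P}_n\vert$ colourings $c\in\mathcal{P}_n$ satisfy $\rho(\langle\mathcal{S}\rangle,c)>\varepsilon\binom{n}{2}$, which is exactly the claim. The main obstacle, such as it is, is the bookkeeping in the rich-container case — verifying that closeness of a container template $t$ to a stability template $s$ in edit distance transfers to closeness of every realisation $c\in\langle t\rangle$ to the realisation set $\langle\mathcal{S}\rangle$; this hinges on the simple but crucial observation that one may choose a realisation of $s$ matching $c$ outside the small set of disagreement edges. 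Choosing $\eta$ small enough (concretely $\eta<\delta$ suffices, together with $n$ large enough that $k^{-(\delta-\eta)\binom{n}{2}}<\varepsilon$) closes the argument.
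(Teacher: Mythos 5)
Your proposal is correct and follows essentially the same route as the paper's proof: discard the containers of entropy below $(\pi(\mathcal{P})-\delta)\binom{n}{2}$ (whose realisations number at most $\vert\mathcal{T}_n\vert k^{(\pi(\mathcal{P})-\delta)\binom{n}{2}}<\varepsilon\vert\mathcal{P}_n\vert$ by the lower bound $\vert\mathcal{P}_n\vert\geq k^{\pi(\mathcal{P})\binom{n}{2}}$), and apply the stability definition to the remaining high-entropy containers via part (iii) of Corollary~\ref{corollary: containers for arbitrary hereditary properties} with parameter less than $\delta$. Your explicit verification that closeness of $t$ to $s$ transfers to closeness of any realisation $c\in\langle t\rangle$ to $\langle\mathcal{S}\rangle$ (by choosing a realisation of $s$ agreeing with $c$ wherever $s(e)=t(e)$) is exactly the step the paper summarises as ``this readily implies.''
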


\begin{proof}

Let $\varepsilon>0$, and let $\delta, m, n_0$ be as given by Definition~\ref{definition: strong stability family} applied to $\mathcal{S}$ and $\mathcal{P}$ with parameter~$\varepsilon$. Apply Corollary \ref{corollary: containers for arbitrary hereditary properties} to $\mathcal{P}$ with parameters $\varepsilon_1>0, m\in \mathbb{N}$, with $\varepsilon_1<\delta$, to get (for all $n$~sufficiently large) a container family $\mathcal{T}_n$ for $\mathcal{P}_n$.

Now remove from $\mathcal{T}_n$ any template $t$ with $\text{Ent}(t)<(\pi(\mathcal{P})-\delta){\binom{n}{2}}$ and let $\mathcal{T}_n'\subseteq\mathcal{T}_n$ denote the resulting subfamily.  By Corollary~\ref{corollary: containers for arbitrary hereditary properties} parts (i) and (iv), Proposition~\ref{proposition: entropy density} and~\eqref{eq: entropy bound on P_n}, the number of elements of $\mathcal{P}_n$ which are not realisable from a template~$t\in \mathcal{T}_n'$ is then at most
\[\vert \mathcal{T}_n\vert k^{(\pi(\mathcal{P})-\delta){\binom{n}{2}}} \le k^{(\pi(\mathcal{P})+\varepsilon_1-\delta){\binom{n}{2}}} \leq k^{\ex(n, \mathcal{P})+(\varepsilon_1-\delta){\binom{n}{2}}}\leq\vert \mathcal{P}_n\vert k^{(\varepsilon_1-\delta){\binom{n}{2}}}.\]
Since $\varepsilon_1<\delta$, the right hand side is less than~$\varepsilon \vert \mathcal{P}_n\vert$ for all $n$ sufficiently large.

Now let $c$ be a member of $\mathcal{P}_n$ which \textit{is} realisable from a template $t\in\mathcal{T}_n'$. Since $t\in\mathcal{T}_n'$, we have $\text{Ent}(t)\ge (\pi(\mathcal{P})-\delta){\binom{n}{2}}$. Also since $\varepsilon_1<\delta$, Corollary~\ref{corollary: containers for arbitrary hereditary properties} part (iii) implies that $t$ satisfies condition~(ii) from Definition~\ref{definition: strong stability family}, and so there is a template~$s\in \mathcal{S}$ such that $\rho(s,t)<\varepsilon{\binom{n}{2}}$. Since $c$ realises $t$, this readily implies that $\rho(\langle \mathcal{S}\rangle, c)<\varepsilon{\binom{n}{2}}$.
\end{proof}

\subsection{Transference}\label{subsection: transference}
\begin{definition}[Multicolour monotonicity]
An order-hereditary property $\mathcal{P}$ of $k$-colourings is \emph{monotone} with respect to colour $i\in[k]$, or \emph{$i$-monotone}, if whenever $c$ is a $k$-colouring of~$K_n$ which lies in $\mathcal{P}$ and $e$ is any edge of~$K_n$, the colouring~$\tilde{c}$ obtained from $c$ by changing the colour of~$e$ to $i$ also lies in $\mathcal{P}$.
\end{definition}
\begin{definition}[Meet of two templates]
Given two $k$-colouring templates $t$,~$t'$ of~$K_n$ which have at least~one realisation in common, we denote by $t\wedge t'$ the template with $(t\wedge t')(e) =t(e)\cap t'(e)$ for every $e\in E(K_n)$; we refer to $t\wedge t'$ as the \emph{meet} of $t$~and~$t'$.  More generally, given a set~$\mathcal{S}$ of $k$-colouring templates of~$K_n$ and
$t\in (2^{[k]}\setminus\{\emptyset\} )^{K_n}$ we denote by $\mathcal{S}\wedge t$ the collection~$\{s\wedge t: \ s\in\mathcal{S}\}$. 
\end{definition}

\begin{definition}[Complete, random and constant templates]
Let $T_n=[k]^{K_n}$ denote the \emph{complete $k$-colouring template} for $K_n$, that is, the unique template allowing all $k$ colours on every edge. Given a fixed colour $i\in[k]$ and $p\in[0,1]$, we define the \emph{$p$-random template} $T_{n,p}=T_{n,p}(i)$ to be the random template for a $k$-colouring of~$K_n$ obtained by letting
\[T_{n,p}(e)=\left\{\begin{array}{ll}
[k] & \textrm{with probability $p$,}\\
\{i\} & \textrm{otherwise,}\end{array} \right.\]
independently for each edge~$e\in E(K_n)$. Finally, we define $E_n=E_n(i)$ to be the \emph{$i$-monotone} template with $E_n(e)=\{i\}$ for each $e\in E(K_n)$.
\end{definition}
The $p$-random template~$T_{n,p}$ is a $k$-colouring analogue of the celebrated Erd{\H o}s--R\'enyi binomial random graph~$G_{n,p}$, while the zero-entropy template $E_n$ is a $k$-colouring analogue of the empty graph. Just as extremal theorems for the graph~$K_n$ can be reproved in the sparse random setting of~$G_{n,p}$, so also extremal entropy results for $i$-monotone properties in $T_n$ can be transferred to the $T_{n,p}(i)$ setting.


\begin{definition}[Relative entropy]
Let $\mathcal{P}$ be a property of $k$-colourings of complete graphs 
and let $t \in (2^{[k]}\setminus \{\emptyset\} )^{K_n}$ with $\langle t\rangle \cap \mathcal{P}_n \neq \emptyset$.
We define the \emph{extremal entropy of $\mathcal{P}$ relative to $t$} to be:
\[\ex(t,\mathcal{P}):=\max\bigl\{\Ent(t') \, : \, v(t')=n,\ t'\leq t, \ \langle t' \rangle \subseteq \mathcal{P}_n\bigr\}.\]
\end{definition}
Note that this extends the notion of extremal entropy introduced in Definition~\ref{def: entropy}: $\ex(n, \mathcal{P})=\ex(T_n,\mathcal{P})$. Our next theorem states that for $p$ not too small, with high probability the extremal entropy of an $i$-monotone property $\mathcal{P}$ relative to a $p$-random template $T_{n,p}(i)$ is $p\left(\ex(n, \mathcal{P})+o(n^2)\right)$. 
\begin{theorem}[Transference]\label{theorem: transference}
Let $i\in [k]$ be fixed. Let\/ $\mathcal{P}$ be an order-hereditary, $i$-monotone property of $k$-colourings of complete graphs defined by forbidden colourings of $E(K_N)$ for some $N\geq 2$. Let $p=p(n)\gg n^{-1/(2\binom{N}{2}-1)}$ and let $T$ denote an instance of the $p$-random template $T_{n,p}(i)$. For any fixed $\varepsilon>0$, with high probability
\begin{equation}\label{eq:TransferenceEntropy}
p\left(\ex(n,\mathcal{P})-\varepsilon n^2 \right) \leq \ex(T,\mathcal{P})\leq p\left(\ex(n,\mathcal{P})+2\varepsilon n^2 \right).
\end{equation}
\end{theorem}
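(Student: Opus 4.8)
The plan is to establish the two inequalities in \eqref{eq:TransferenceEntropy} separately, with the lower bound being the easier, more robust direction and the upper bound requiring the full force of the container machinery developed above.

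\emph{Lower bound.} The idea is to take a near-extremal template $t^*$ for $\mathcal{P}$ on $K_n$ — so $t^* \leq T_n$, $\langle t^*\rangle \subseteq \mathcal{P}_n$ and $\Ent(t^*) = \ex(n,\mathcal{P})$ — and restrict it to the random template $T = T_{n,p}(i)$. Define $t' = t^* \wedge T$; concretely $t'(e) = t^*(e)$ on the (random) set of edges where $T(e) = [k]$, and $t'(e) = \{i\}$ on the remaining edges. Since $\mathcal{P}$ is $i$-monotone, changing any edge's colour to $i$ keeps us inside $\mathcal{P}$, and since $\langle t^* \rangle \subseteq \mathcal{P}_n$, every realisation of $t'$ is obtained from a realisation of $t^*$ by recolouring a set of edges with colour $i$; hence $\langle t' \rangle \subseteq \mathcal{P}_n$, so $t'$ is a competitor in the definition of $\ex(T,\mathcal{P})$. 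It remains to bound $\Ent(t')$ from below. Writing $\Ent(t') = \sum_{e: T(e)=[k]} \log_k\lvert t^*(e)\rvert$, this is a sum of independent bounded random variables (each in $[0,1]$, one per edge, kept with probability $p$) with expectation $p\,\Ent(t^*) = p\,\ex(n,\mathcal{P})$. A Chernoff/Hoeffding bound gives $\Ent(t') \geq p\,\ex(n,\mathcal{P}) - \varepsilon n^2$ with high probability, as desired. The only mild subtlety is that the summands are not $0/1$-valued, so one uses Hoeffding rather than the stated binomial Chernoff bound \eqref{equation: Chernoff}; alternatively one bounds $\log_k\lvert t^*(e)\rvert \leq 1$ crudely and still wins since $\varepsilon n^2$ dominates $\varepsilon \binom n 2$-type error terms.

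\emph{Upper bound.} Here we must show that \emph{no} subtemplate of $T$ lying in $\mathcal{P}$ has entropy much above $p\cdot\ex(n,\mathcal{P})$. The natural route is via containers applied to $\mathcal{P} = \Forb(\mathcal{F})$ with $\mathcal{F}$ a family of forbidden colourings of $E(K_N)$. Fix $\eta>0$ small. By Theorem~\ref{theorem: multi-colour container} there is a container family $\mathcal{T}_n$ for $\mathcal{P}_n$ with $\lvert \mathcal{T}_n\rvert \leq k^{\eta\binom n 2}$, each $t_0 \in \mathcal{T}_n$ having at most $\eta\binom n N$ pairs $(\phi,c)$ with $c \in \mathcal{F}$, $c \in \langle (t_0)_{\vert\phi}\rangle$. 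Now if $t' \leq T$ with $\langle t'\rangle \subseteq \mathcal{P}_n$, then $t'$ is contained in some $t_0 \in \mathcal{T}_n$, and moreover $t' \leq t_0 \wedge T$. So it suffices to bound $\max_{t_0 \in \mathcal{T}_n} \Ent(t_0 \wedge T)$. The key point is a supersaturation-type argument \emph{inside the random template}: by Lemma~\ref{lemma: supersaturation}, the container bound on forbidden pairs forces $\Ent(t_0) \leq (\pi(\mathcal{P}) + \eta')\binom n 2$ for a small $\eta'$, hence $\Ent(t_0) \leq \ex(n,\mathcal{P}) + \eta'\binom n 2$ by Proposition~\ref{proposition: entropy density}. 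Then $\Ent(t_0 \wedge T) = \sum_{e: T(e)=[k]} \log_k\lvert t_0(e)\rvert$ is again a sum of independent bounded variables with mean $p\,\Ent(t_0) \leq p(\ex(n,\mathcal{P}) + \eta'\binom n 2)$, so a Chernoff/Hoeffding bound plus a union bound over the $k^{\eta\binom n 2}$ templates of $\mathcal{T}_n$ gives, with high probability, $\Ent(t_0 \wedge T) \leq p\,\ex(n,\mathcal{P}) + 2\varepsilon n^2$ simultaneously for all $t_0 \in \mathcal{T}_n$ — here we need the deviation probability $e^{-\Omega(p n^2)}$ to beat $k^{\eta\binom n 2}$, which is exactly where the hypothesis $p \gg n^{-1/(2\binom N 2 - 1)}$ enters, matching the $\log_k\lvert\mathcal{T}_n\rvert = O(n^{-1/(2\binom N 2 -1)})\binom n 2$ bound in Theorem~\ref{theorem: multi-colour container}(iii). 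Choosing $\eta, \eta'$ small enough in terms of $\varepsilon$ completes the bound.

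\emph{Main obstacle.} The delicate step is the union bound in the upper bound: we are taking a maximum of $\Ent(t_0 \wedge T)$ over exponentially many ($k^{\eta\binom n 2}$) templates $t_0$, and the concentration of each $\Ent(t_0 \wedge T)$ around its mean only holds with probability $1 - e^{-c p n^2}$. For the union bound to succeed we need $p n^2 \gg \eta \binom n 2 \log k$, i.e. $p \gg \eta$; since $\eta$ is a constant this looks fine, but one must be careful that the \emph{same} $\mathcal{T}_n$ works — that is, one applies containers \emph{once}, fixes $\mathcal{T}_n$, and only then exposes the randomness of $T$, so that $\mathcal{T}_n$ and $T$ are independent. (One should also double-check that $\Ent(t_0 \wedge T)$ is well-defined, i.e. $t_0 \wedge T$ has a realisation: since $t_0$ and $T$ both allow colour $i$ — wait, $t_0$ need not; but any $t' \leq T$ in $\mathcal{P}_n$ with $t' \leq t_0$ automatically witnesses $\langle t_0 \wedge T\rangle \neq \emptyset$, and if no such $t'$ exists the relative extremal entropy is governed by the trivial $\langle T\rangle \cap \mathcal{P}_n \ni$ the all-$i$ colouring, which has entropy $0 \leq p\,\ex(n,\mathcal{P})$, so the bound holds vacuously.) Everything else is routine: choosing constants, and invoking Proposition~\ref{proposition: entropy density}, Lemma~\ref{lemma: supersaturation} and Theorem~\ref{theorem: multi-colour container} as black boxes.
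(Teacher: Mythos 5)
Your proposal is correct and takes essentially the same route as the paper: one application of Theorem~\ref{theorem: multi-colour container} together with Lemma~\ref{lemma: supersaturation} produces a deterministic container family whose members have entropy at most $\ex(n,\mathcal{P})+O(\varepsilon n^2)$, the upper bound follows from $\ex(T,\mathcal{P})\leq\max_{t_0\in\mathcal{T}_n}\Ent(t_0\wedge T)$ via concentration plus a union bound over the containers, and the lower bound restricts an entropy-extremal template to $T$ using $i$-monotonicity. Two small cautions: the concentration step needs a multiplicative Chernoff-type estimate with exponent $\Omega(\varepsilon^2 p n^2)$ (the paper gets this by partitioning the edges by palette size and applying the binomial bound~\eqref{equation: Chernoff} to each class), since plain Hoeffding only yields exponent $\Omega(\varepsilon^2 p^2 n^2)$, which would not beat $\log_k\vert\mathcal{T}_n\vert = O\bigl(n^{-1/(2\binom{N}{2}-1)}\bigr)\binom{n}{2}$ at the stated threshold for $p$; and your aside that the union bound merely needs ``$p\gg\eta$, fine since $\eta$ is constant'' is backwards --- it is precisely the polynomially decaying exponent of Theorem~\ref{theorem: multi-colour container}(iii), which you do invoke, that makes $p\gg n^{-1/(2\binom{N}{2}-1)}$ sufficient.
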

\begin{proof}
Let $\varepsilon>0$ be fixed, and let $C_0^{\ref{theorem: multi-colour container}}$ and $C_0^{\ref{lemma: supersaturation}}$ denote the constants in Theorem~\ref{theorem: multi-colour container} and Lemma~\ref{lemma: supersaturation} respectively.

Applying Theorem~\ref{theorem: multi-colour container} with parameter $\varepsilon_1= C_0^{\ref{lemma: supersaturation}} \varepsilon$ followed by Lemma~\ref{lemma: supersaturation} gives for all $n\geq n_0=n_0(\varepsilon)$ sufficiently large a container family~$\mathcal{T}_n$ for $\mathcal{P}_n$ such that $\log_k \vert \mathcal{T}_n \vert \leq 
C_0^{\ref{theorem: multi-colour container}}n^{-1/(2\binom{N}{2}-1)} \binom{n}{2}$ and every template $t\in \mathcal{T}_n$  has entropy at most $\ex(n, \mathcal{P})+\varepsilon \binom{n}{2}$ (property (ii) of Theorem~\ref{theorem: multi-colour container} together with Lemma~\ref{lemma: supersaturation}).

Now let $T$ be an instance of the random $k$-colouring template $T_{n,p}$. As $\mathcal{T}_n$ is a container family for $\mathcal{P}_n$,  $\mathcal{T}_n\wedge T$ is a container family for $\mathcal{P}_n\wedge T$. In particular,
\begin{align}\label{equation: bound on the extremal entropy in the sparse random setting}
\ex(T, \mathcal{P})\leq \max\{\mathrm{Ent}(t\wedge T): \ t\in \mathcal{T}_n\}.\end{align}
We claim that whp the right-hand side is at most $p\left(\ex(n,\mathcal{P})+2\varepsilon n^2 \right)$. Indeed for each $t \in \mathcal{T}_n$ we have 
\[\Ent(t\wedge T)=\sum_{e\in E(K_n)} \log_k \bigl\lvert t\wedge T(e)\bigr\rvert=\sum_{e: \ T(e)=[k]} \log_k\bigl\lvert t(e)\bigr\rvert.\]
Partition the edges of $K_n$ into $k$ sets, $A_1$, $A_2$, \ldots $A_k$, where $A_i=\{e:\ \vert t(e)\vert = i\}$. 
Set $A_{i, T}:= A_i\cap \{e:\ T(e)=[k]\}$. By the equation above, each edge in $A_{i,T}$ contributes $\log_k i \leq 1$ to the entropy of $t\wedge T$. A simple union bound together with an application of the Chernoff bound~(\ref{equation: Chernoff}) then yields:
\begin{equation}
\mathbb{P}\left(\Ent(t\wedge T)> p \Ent(t) + p\varepsilon n^2\right) \leq \mathbb{P}\left(\exists i: \ \vert A_{i,T}\vert > p \vert A_i \vert +\frac{2\varepsilon}{k}p\frac{n^2}{2}  \right) \leq 2ke^{-\frac{\varepsilon^2 n^2 p}{2k^2}}.\label{equation: probability of a large deviation in the sparsified entropy of t}
\end{equation}
Bringing together (\ref{equation: bound on the extremal entropy in the sparse random setting}), our bound  on the entropy of templates from $\mathcal{T}_n$ and the inequality (\ref{equation: probability of a large deviation in the sparsified entropy of t}), we get
\[
\mathbb{P}\left(\ex(T, \mathcal{P})> p(\ex(n, \mathcal{P})+2\varepsilon n^2)\right)\leq \mathbb{P}\left(\exists t\in \mathcal{T}_n: \ \Ent(t\wedge T)> p\Ent(t)+ p \varepsilon n^2\right)\leq \vert \mathcal{T}_n\vert 2k e^{-\frac{\varepsilon^2 n^2 p}{2k^2}}.
\]
Since $\log_k \vert \mathcal{T}_n\vert =O\Bigl(n^{2-\frac{1}{2\bigl(\binom{N}{2}-1\bigr) }}\Bigr)$, the expression above is of order $o(1)$ if $p \gg n^{-1/\bigl(2\binom{N}{2}-1\bigr)}$. Thus for such values of $p$, whp $\ex(T, \mathcal{P})\leq p(\ex(n, \mathcal{P})  +2\varepsilon n^2)$. This establishes the upper bound in~\eqref{eq:TransferenceEntropy}.

For the lower bound, consider a maximum entropy template $t_ {\star}$ for $\mathcal{P}_n$. 
Applying the Chernoff bound~\eqref{equation: Chernoff}, we have
\[\mathbb{P}\Bigl(\Ent(t_{\star}\wedge T)<p\bigl(\ex(n,\mathcal{P})-\varepsilon n^2\bigr)\Bigr) \leq 2 e^{-\frac{\varepsilon^2 n^2 p}{k^2}},\]
which is $o(1)$ for $\varepsilon >0$ fixed and $p\gg n^{-2}$. Thus certainly for $p\gg n^{-\frac{1}{2\bigl(\binom{N}{2}-1\bigr)}}$ we have whp 
\[\ex(T, \mathcal{P})\geq \Ent(t_{\star}\wedge T)\geq p (\ex(n, \mathcal{P}) -\varepsilon n^2). \qedhere\]
\end{proof}

\begin{remark}
	The bound on $p$ required in Theorem~\ref{theorem: transference} is not best possible in general (see~\cite{Conlon14}). 
	This bound can be improved by using the more powerful container theorems of~\cite{BaloghMorrisSamotij15} and~\cite{SaxtonThomason15} rather than the simple hypergraph container theorem, Theorem~\ref{theorem: saxton thomason simple containers}. However
	we do not pursue such improvements further here.
\end{remark}
We note Theorem~\ref{theorem: transference} can be extended to cover general order-hereditary properties.
\begin{corollary}[Transference for general order-hereditary properties]\label{corollary: transference for general properties}
Let\/ $\mathcal{P}$ be an order-hereditary, $i$-monotone property of $k$-colourings of complete graphs. Let $p=p(n)$ be a sequence of probabilities satisfying $\log (1/p)=o(\log n)$. 
For any fixed $\varepsilon>0$, with high probability
\[p\left(\ex(n,\mathcal{P})-\varepsilon n^2\right)\leq \ex(T_{n,p}(i),\mathcal{P})\leq p\left(\ex(n,\mathcal{P})+4\varepsilon n^2\right).\]
\end{corollary}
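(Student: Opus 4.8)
The plan is to deduce the general case from Theorem~\ref{theorem: transference} exactly as Corollary~\ref{corollary: speed of arbitrary hereditary properties} was deduced from Theorem~\ref{theorem: counting result for Forb(F), F finite hereditary families}, i.e.\ by approximating the (possibly infinitely-constrained) property~$\mathcal{P}$ by a property~$\mathcal{Q}^N=\Forb(\mathcal{F}_N)$ defined by finitely many forbidden colourings on $N$ vertices, and then applying the finitary transference theorem to $\mathcal{Q}^N$. Concretely, I would first fix $\varepsilon>0$ and invoke the argument in the proof of Theorem~\ref{theorem: approximation of general hereditary properties}: there is an $N=N(\varepsilon)$ (which I may take as large as I like) such that $\mathcal{Q}^N\supseteq \mathcal{P}$ and $\pi(\mathcal{Q}^N)<\pi(\mathcal{P})+\varepsilon/2$, equivalently, by Proposition~\ref{proposition: entropy density}, $\ex(n,\mathcal{Q}^N)\le \ex(n,\mathcal{P})+\varepsilon n^2$ for all $n$ sufficiently large (absorbing the $\binom{n}{2}$ versus $n^2$ discrepancy into the error term). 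Since $\mathcal{Q}^N\supseteq\mathcal{P}$ we also trivially have $\ex(n,\mathcal{Q}^N)\ge \ex(n,\mathcal{P})$.

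Next I would check that $\mathcal{Q}^N$ inherits the hypotheses of Theorem~\ref{theorem: transference}. It is order-hereditary by construction, and it is $i$-monotone: if $c\in\mathcal{Q}^N$ and $\tilde c$ is obtained by recolouring one edge with colour $i$, then any subcolouring of $\tilde c$ on $\le N$ vertices is obtained from a subcolouring of $c$ by recolouring at most one edge with $i$, which lies in $\mathcal{P}$ by $i$-monotonicity of $\mathcal{P}$, hence is not in $\mathcal{F}_N$; thus $\tilde c\in\mathcal{Q}^N$. So Theorem~\ref{theorem: transference} applies to $\mathcal{Q}^N$ (with $N\ge 2$) with parameter $\varepsilon$: writing $T=T_{n,p}(i)$, whp
\[
p\bigl(\ex(n,\mathcal{Q}^N)-\varepsilon n^2\bigr)\le \ex(T,\mathcal{Q}^N)\le p\bigl(\ex(n,\mathcal{Q}^N)+2\varepsilon n^2\bigr),
\]
provided $p\gg n^{-1/(2\binom{N}{2}-1)}$. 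The hypothesis $\log(1/p)=o(\log n)$ guarantees $p\gg n^{-c}$ for every fixed $c>0$, in particular for $c=1/(2\binom{N}{2}-1)$ with our fixed $N$, so the growth condition on $p$ is met.

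Finally I would sandwich. Since $\mathcal{P}\subseteq\mathcal{Q}^N$ we have $\ex(T,\mathcal{P})\le\ex(T,\mathcal{Q}^N)$ for every instance $T$, and also $\ex(T,\mathcal{P})\ge \Ent(t_\star\wedge T)$ where $t_\star$ is a maximum-entropy template for $\mathcal{P}_n$ (as in the lower-bound part of the proof of Theorem~\ref{theorem: transference}); the Chernoff bound~\eqref{equation: Chernoff} gives $\Ent(t_\star\wedge T)\ge p(\ex(n,\mathcal{P})-\varepsilon n^2)$ whp, which already yields the lower bound in the statement directly (so the lower bound does not even need the $\mathcal{Q}^N$ machinery). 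For the upper bound, combine $\ex(T,\mathcal{P})\le\ex(T,\mathcal{Q}^N)\le p(\ex(n,\mathcal{Q}^N)+2\varepsilon n^2)$ with $\ex(n,\mathcal{Q}^N)\le\ex(n,\mathcal{P})+\varepsilon n^2$ to get $\ex(T,\mathcal{P})\le p(\ex(n,\mathcal{P})+3\varepsilon n^2)$ whp; the slack up to $4\varepsilon n^2$ comfortably absorbs the $\binom{n}{2}$-to-$n^2$ conversions and the finitely many "$n$ sufficiently large" thresholds. I do not expect a genuine obstacle here: the only point requiring a little care is confirming that the single fixed value of $N$ produced by the approximation step is compatible with the range of $p$, which is precisely what the condition $\log(1/p)=o(\log n)$ is designed to ensure; everything else is bookkeeping of error terms.
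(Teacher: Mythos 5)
Your proposal is correct and follows essentially the same route as the paper: approximate $\mathcal{P}$ from above by a finitely-defined $\mathcal{Q}^N$ with $\pi(\mathcal{Q}^N)$ close to $\pi(\mathcal{P})$, apply Theorem~\ref{theorem: transference} to $\mathcal{Q}^N$ (the hypothesis $\log(1/p)=o(\log n)$ covering the fixed $N$), and obtain the lower bound directly from a maximum-entropy template for $\mathcal{P}_n$ via Chernoff. Your explicit verification that $\mathcal{Q}^N$ inherits $i$-monotonicity from $\mathcal{P}$ is a correct detail that the paper leaves implicit, and your error bookkeeping (ending at $3\varepsilon n^2\le 4\varepsilon n^2$) is fine.
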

\begin{proof}
Let $\varepsilon>0$ be fixed. As in Theorem~\ref{theorem: approximation of general hereditary properties}, approximate $\mathcal{P}$ from above by some property~$\mathcal{Q}$ defined by forbidden colourings on at most $N$ vertices and satisfying $\mathcal{P}\subseteq\mathcal{Q}$ and $\pi(\mathcal{Q}) \leq \pi(\mathcal{P}) +\varepsilon$. For $n$~sufficiently large, 
\[\ex(n,\mathcal{Q})\leq \pi(\mathcal{Q})\binom{n}{2}+\varepsilon \binom{n}{2}\leq \pi(\mathcal{P})\binom{n}{2}+\varepsilon n^2\leq ex(n, \mathcal{P})+2\varepsilon n^2.\] Applying Theorem~\ref{theorem: transference} to $\mathcal{Q}$, and noting that our condition on $p$ ensures $p\gg n^{-\bigl(1/\bigl(2\binom{N}{2}-1\bigr)\bigr)}$ for all $N \in \N$, we obtain the desired upper bound: whp
\[\ex(T_{n,p}(i), \mathcal{P}) \leq \ex(T_{n,p}(i), \mathcal{Q})\leq p\left(\ex(n, \mathcal{Q}) +2\varepsilon n^2 \right)\leq p \left(\ex(n, \mathcal{P}) +4\varepsilon n^2\right).\] 
For the lower bound, let $t_{\star}$ be a maximum entropy template for $\mathcal{P}_n$. 
Applying the Chernoff bound as in the  proof of Theorem~\ref{theorem: transference}, we have that whp \[\ex(T_{n,p}(i), \mathcal{P})\geq \Ent(t_{\star}\wedge T_{n,p}(i))\geq p\left(\ex(n, \mathcal{P})-\varepsilon n^2\right). \qedhere\]
\end{proof}

\section{Containers for other discrete structures}\label{section: other structures}
Our container results so far allow us to compute the speed of (dense) order-hereditary properties of $k$-colourings of~$K_n$, as well as to characterise typical colourings and (in the $i$-monotone case) to transfer extremal entropy results to the sparse random setting. However, the container theory of Saxton--Thomason and Balogh--Morris--Samotij is robust enough to cover $k$-colourings of many other interesting discrete structures, which is what we explore in this section.

As we show,  all that is required for (the existence of) a container theorem is, in essence, a sufficiently rich notion of substructure: provided we have a sequence of $r$-graphs $(H_n)_{n\in\mathbb{N}}$  such that $e(H_n)\rightarrow \infty$ as $n\rightarrow \infty$ and for each $N\in \mathbb{N}$ we have `many' embeddings of $H_N$ into $H_n$, we can derive a container theorem. In this regard, container theory is somewhat reminiscent of the versatile theory of flag algebras developed by Razborov~\cite{Razborov07}, which can treat any class of discrete structures with a sufficiently rich notion of substructure.

This section is organised as follows: first in Section~\ref{subsection: oriented}, we outline how our $k$-colouring extensions of the container theorems of Balogh--Morris--Samotij and Saxton--Thomason can be applied to tournaments, oriented graphs and directed graphs; next in Section~\ref{subsection: possee} we derive container theorems for very general discrete structure, namely set-sequences equipped with embeddings (or \emph{ssee}, see Definition~\ref{definition: possee}); in Section~\ref{subsection: hypergraph sequences}, we record the consequences of our results for sequences of graphs and hypergraphs, which are the special cases most relevant in applications; in Section~\ref{subsection: hypercubes}, we illustrate our results by obtaining general counting theorems for $k$-colouring properties of hypercube graphs.

\subsection{Tournaments, oriented graphs and directed graphs}\label{subsection: oriented}
Tournaments, oriented graphs and directed graphs are important objects of study in discrete mathematics and computer science, with a number of applications both to other branches of mathematics and to real-world problems. As we show below, we can encode these structures within our framework of order-hereditary properties for $k$-colourings of~$K_n$, which immediately gives container, supersaturation, counting, characterisation and transference theorems for these objects. We note containers had not been successfully applied to the directed graph setting before (see Section~\ref{subsection: digraph examples} for a discussion, or the remark after Corollary~3.4 in K\"uhn, Osthus, Townsend and Zhao~\cite{KuhnOsthusTownsendZhao14}).

Formally, a \emph{directed graph}, or \emph{digraph}, is a pair $D=(V,E)$, where $V=V(D)$ is a set of vertices and $E=E(D)\subseteq V\times V$ is a collection of \emph{ordered} pairs from $V$. By convention, we write $\vec{ij}$ to denote $(i,j)\in E$. Note that we could have both $\vec{ij}\in E(D)$ and $\vec{ji}\in E(D)$, in which case we say that $ij$ is a \emph{double edge} of $D$.

An \emph{oriented graph}, or \emph{orgraph}, is a digraph $\vec{G}$ in which for each pair $ij\in V(\vec{G})^{(2)}$ at most one of $\vec{ij}$ and $\vec{ji}$ lies in $E(\vec{G})$. A \emph{tournament} $\vec{T}$ is a digraph in which for each pair $ij\in V(\vec{T})^{(2)}$ exactly one $\vec{ij}$ and $\vec{ji}$ lies in $E(\vec{T})$; alternatively, a tournament can be viewed as an orientation of the edges of the complete graph.

A \emph{monotone (decreasing)} property of digraphs/orgraphs is a property of digraphs/orgraphs which is closed with respect to taking subgraphs (i.e.\ closed under the deletion of vertices and oriented edges). A \emph{hereditary} property of digraphs/orgraphs/tournaments is a property of digraphs/orgraphs/tournaments which is closed with respect to taking induced subgraphs.

\begin{observation}\label{observation: encoding of digraphs}
Tournaments, oriented graphs and directed graphs on the labelled vertex set $[n]$  can be encoded as $2$--, $3$-- and $4$-colourings of~$K_n$. Moreover, under this encoding, hereditary properties of tournaments, oriented graphs and directed graphs correspond to order-hereditary properties of $2$--, $3$-- and $4$-colourings of~$K_n$ respectively.
\end{observation}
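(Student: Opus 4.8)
The plan is to exhibit explicit encodings of tournaments, oriented graphs and directed graphs as colourings of $K_n$, and then check that the defining closure property (closure under induced subgraphs) translates precisely into the closure property (ii) in Definition~\ref{definition: order hereditary}, namely closure under taking subcolourings $c_{\vert \phi}$ for $\phi \in \binom{K_n}{K_m}$. The encodings are the ones already sketched in Section~\ref{subsection: key definitions templates and entropy}: a tournament $\vec{T}$ on $[n]$ is the $2$-colouring $c$ of $K_n$ with $c(ij)=1$ if $\vec{ij}\in E(\vec{T})$ and $c(ij)=2$ if $\vec{ji}\in E(\vec{T})$, for $i<j$; an oriented graph $\vec{G}$ is the $3$-colouring with $c(ij)=2$ if $\vec{ij}\in E(\vec{G})$, $c(ij)=3$ if $\vec{ji}\in E(\vec{G})$, and $c(ij)=1$ otherwise (no edge), for $i<j$; and a digraph $D$ is the $4$-colouring with $c(ij)\in\{1,2,3,4\}$ recording, for $i<j$, which of the four possibilities ``no edge'', ``$\vec{ij}$ only'', ``$\vec{ji}$ only'', ``$\vec{ij}$ and $\vec{ji}$'' holds. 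In each case this is a bijection between the relevant labelled structures on $[n]$ and a subset (in fact all) of $[k]^{K_n}$ for $k=2,3,4$.

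First I would verify that, under each encoding, an order-preserving injection $\phi\colon [m]\to[n]$ acts on the colouring exactly as the induced-subgraph operation acts on the digraph: if $c$ encodes $D$ on $[n]$, then $c_{\vert \phi}$ encodes the induced sub-digraph $D[\phi([m])]$, transported back to the labelled vertex set $[m]$ via the order isomorphism $[m]\to\phi([m])$. This is immediate from the definitions, since $c_{\vert\phi}(ij)=c(\phi(i)\phi(j))$ and $\phi$ is order-preserving, so the orientation data on the pair $\{\phi(i),\phi(j)\}$ (which depends on the order of $\phi(i)$ and $\phi(j)$, matching the order of $i$ and $j$) is faithfully recorded.

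Given this correspondence, the equivalence is a formality. If $\mathcal{P}$ is a hereditary property of digraphs, define $\mathcal{P}'_n$ to be the set of $4$-colourings of $K_n$ encoding digraphs in $\mathcal{P}$ on $[n]$; closure of $\mathcal{P}$ under induced subgraphs gives closure of $\mathcal{P}'$ under $c\mapsto c_{\vert\phi}$, so $\mathcal{P}'$ is order-hereditary. Conversely, any order-hereditary property of $4$-colourings pulls back under the (inverse) encoding to a class of digraphs closed under induced subgraphs. The same argument works verbatim for oriented graphs with $k=3$ and tournaments with $k=2$. I would then remark that hereditary properties of digraphs are, in general, genuinely \emph{not} symmetric in the sense of being invariant under arbitrary relabellings only after encoding — the order-hereditary framework is the right one because the encoding depends on the linear order of $[n]$ — which is exactly why Definition~\ref{definition: order hereditary} is phrased in terms of $\binom{K_n}{K_m}$ rather than all injections.

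The only point requiring a moment's care — and the closest thing to an obstacle — is the bookkeeping for double edges in the digraph case: one must confirm that four colours genuinely suffice and that the induced-subgraph operation never creates or destroys a double edge in a way not captured by the restriction $c_{\vert\phi}$. This is clear since the state of the pair $\{\phi(i),\phi(j)\}$ in $D$ is entirely determined by, and determines, $c(\phi(i)\phi(j))$, independently of all other pairs. I would close by noting that, once Observation~\ref{observation: encoding of digraphs} is in place, all of Theorem~\ref{theorem: multi-colour container}, Lemma~\ref{lemma: supersaturation}, Corollary~\ref{corollary: speed of arbitrary hereditary properties}, Theorem~\ref{theorem: strong stability and containers} and Theorem~\ref{theorem: transference} apply directly to hereditary properties of tournaments, oriented graphs and digraphs, with the $i$-monotone hypotheses in the transference results corresponding to digraph/orgraph monotonicity with respect to the appropriate colour (e.g.\ colour $1$, ``no edge'', for monotone decreasing properties).
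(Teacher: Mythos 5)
Your proposal is correct and follows essentially the same route as the paper: the paper defines exactly this four-colour encoding of digraphs (with tournaments and oriented graphs as the palette restrictions $\{2,3\}$ and $\{1,2,3\}$, a mere relabelling of your choices) and then notes that the correspondence between induced subdigraphs and subcolourings $c_{\vert\phi}$ is immediate, which is precisely the verification you spell out in more detail. Nothing is missing; your extra care with double edges and with the non-symmetry point is consistent with, and slightly more explicit than, the paper's one-line justification.
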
	
\begin{proof}
	Given a directed graph $D$ on $[n]$, we define a colouring $c$ of $E(K_n)$ by setting for each pair~$ij\in [n]^{(2)}$ with $i<j$
	\[c(ij):=\left\{\begin{array}{ll}1 & \textrm{if neither of $\vec{ij}$,~$\vec{ji}$ lies in $E(D)$,}\\
	2 & \textrm{if $\vec{ij}\in E(D)$, $\vec{ji}\notin E(D)$,}\\
	3 & \textrm{if $\vec{ij}\notin E(D)$, $\vec{ji}\in E(D)$,}\\
	4 & \textrm{if both of $\vec{ij}$,~$\vec{ji}$ lie in $E(D)$.}
	\end{array}
	 \right.\]
	The digraph part of Observation~\ref{observation: encoding of digraphs} is immediate from this colouring and our definition of order-hereditary properties. Tournaments for their part correspond to colourings of $E(K_n)$ with the palette ~$\{2,3\}$ and oriented graphs to colourings with the palette~$\{1,2,3\}$. 
\end{proof}	
\begin{remark}
Monotone properties of digraphs/orgraphs give rise to $1$-monotone order-hereditary properties of $4$--/$3$-colourings of~$K_n$, and so are covered by our transference results. However there are some theoretical subtleties to bear in mind in the digraph case: a monotone digraph property has  monotonicity `away from colour $4$' as well as monotonicity `towards colour $1$'. Thus there could be interesting and natural alternative models to the $p$-random template $T=T_{n,p}$ to study in connection with transference for digraph properties.  Instead of letting $T(e)=[4]$ with probability~$p$ and $\{1\}$ with probability~$1-p$ for each edge $e$, one could consider more general distributions on the collection  of subsets of $[4]$ containing the colour~$1$.

 In addition, we should make it clear that there are examples of $1$-monotone properties of $4$-colourings of $K_n$ which do \emph{not} correspond to to monotone properties of digraphs. A nice example of such a property suggested  by one of the referees is the property of having every pair in colour either $1$ or $4$. This is $1$-monotone, but does not give rise to a monotone property of digraphs in our encoding.	
\end{remark}
\begin{corollary}
If\/ $\mathcal{P}$ is a hereditary property of digraphs/orgraphs/tournaments defined by forbidden configurations on at most~$N$ vertices and  $k=4/3/2$ is the corresponding number of colours from Observation~\ref{observation: encoding of digraphs}, then the conclusions of Theorem~\ref{theorem: multi-colour container}, Lemma~\ref{lemma: supersaturation}, Theorems~\ref{theorem: counting result for Forb(F), F finite hereditary families}, \ref{theorem: strong stability and containers}, and~\ref{theorem: transference} hold for\/ $\mathcal{P}$.\qed
\end{corollary}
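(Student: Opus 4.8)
The plan is to reduce the corollary directly to the results already established for order-hereditary properties of $k$-colourings of $K_n$, via the encoding of Observation~\ref{observation: encoding of digraphs}. The key point is that all of Theorem~\ref{theorem: multi-colour container}, Lemma~\ref{lemma: supersaturation}, Theorem~\ref{theorem: counting result for Forb(F), F finite hereditary families}, Theorem~\ref{theorem: strong stability and containers} and Theorem~\ref{theorem: transference} are stated for arbitrary order-hereditary properties $\mathcal{P}$ of $k$-colourings that are defined by forbidden colourings on $E(K_N)$ (or, in the transference case, that are additionally $i$-monotone), and the encoding converts any hereditary property of digraphs, orgraphs or tournaments defined by forbidden configurations on at most $N$ vertices into exactly such a property.

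First I would fix the number of colours $k=4$, $3$, or $2$ according to whether we are dealing with digraphs, orgraphs, or tournaments, and let $\Phi$ be the injective map from labelled digraphs (resp.\ orgraphs, tournaments) on $[n]$ to $k$-colourings of $K_n$ described in the proof of Observation~\ref{observation: encoding of digraphs}. Given the hereditary digraph property $\mathcal{P}$ with forbidden configurations on at most $N$ vertices, I would set $\mathcal{P}'_n := \{\Phi(D) : D \in \mathcal{P}_n\}$ and check that $\mathcal{P}' = (\mathcal{P}'_n)_{n\in\N}$ is an order-hereditary property of $k$-colourings: this is because an order-preserving injection $\phi\in\binom{K_n}{K_m}$ induces, under $\Phi$, exactly the operation of passing to the induced sub-digraph on $\phi([m])$ with vertices relabelled order-preservingly, and $\mathcal{P}$ being hereditary means this sub-digraph lies in $\mathcal{P}_m$, so $\Phi(D)_{\vert\phi}\in\mathcal{P}'_m$. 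Moreover, since $\mathcal{P}$ is determined by forbidden configurations on $\le N$ vertices, $\mathcal{P}'=\Forb(\mathcal{F})$ where $\mathcal{F}$ is the (finite) collection of $k$-colourings of $E(K_N)$ that encode a forbidden configuration, so the hypotheses of the cited theorems are met. For the transference statement I would additionally note that a \emph{monotone} digraph/orgraph property translates to a $1$-monotone order-hereditary $k$-colouring property, as already observed in the remark preceding the corollary, so Theorem~\ref{theorem: transference} applies with $i=1$.

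Next I would simply invoke each of the five cited results for $\mathcal{P}'$ and translate the conclusions back through $\Phi$. Since $\Phi$ is a bijection between $\mathcal{P}_n$ and $\mathcal{P}'_n$ we get $\vert\mathcal{P}_n\vert = \vert\mathcal{P}'_n\vert$, so the counting bounds of Theorem~\ref{theorem: counting result for Forb(F), F finite hereditary families} transfer verbatim (with $\pi(\mathcal{P})$ defined as $\pi(\mathcal{P}')$); the container family $\mathcal{T}_n$ for $\mathcal{P}'_n$ is a container family for the encoded property, and its templates have the required entropy and local-sparsity bounds; the stability statement transfers because edit distance between digraphs corresponds (up to the fixed constant factor built into the encoding, in fact exactly one edge of $K_n$ per differing pair) to edit distance between the encoding templates; and the transference conclusion for $\ex(T_{n,p}(1),\mathcal{P}')$ is exactly the transference conclusion for $\mathcal{P}$.

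There is essentially no hard mathematical content here — the only thing requiring genuine (if routine) care is verifying that the encoding $\Phi$ really does intertwine the combinatorial operations on the two sides: that order-preserving vertex injections correspond to induced sub-structure on the digraph side, that ``forbidden configuration on $\le N$ vertices'' becomes ``forbidden colouring on $E(K_N)$'' (using that one can pad a forbidden configuration on $m<N$ vertices to a family of colourings on $N$ vertices, or equivalently appeal to the order-hereditary closure), and that hereditariness of $\mathcal{P}$ is exactly what is needed for $\mathcal{P}'$ to satisfy clause~(ii) of Definition~\ref{definition: order hereditary}. So the main (mild) obstacle is bookkeeping: making sure the dictionary between the digraph world and the $k$-colouring world is tight enough that each of the five conclusions really does carry over without loss, and in particular that ``defined by forbidden configurations on at most $N$ vertices'' in the digraph sense lands inside the hypothesis ``$\Forb(\mathcal{F})$ with $\mathcal{F}$ a collection of $k$-colourings of $E(K_N)$'' used in the cited theorems. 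Once that dictionary is set up, the proof is a one-line application of each result followed by the observation that $\Phi$ preserves all the relevant quantities, which is why the statement is recorded with a \qed rather than a written-out argument.
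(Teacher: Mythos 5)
Your proposal is correct and is exactly the argument the paper intends: the corollary is stated with \qed precisely because, once Observation~\ref{observation: encoding of digraphs} identifies hereditary digraph/orgraph/tournament properties with order-hereditary properties of $4$-/$3$-/$2$-colourings (and forbidden configurations on at most $N$ vertices with a family $\mathcal{F}\subseteq[k]^{K_N}$), each cited theorem applies verbatim to the encoded property and the conclusions translate back through the bijective encoding. Your extra care about padding small forbidden configurations to $N$ vertices and about invoking Theorem~\ref{theorem: transference} only via the $1$-monotone encoding of monotone digraph/orgraph properties matches the paper's remark preceding the corollary, so nothing further is needed.
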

\begin{corollary}
If\/ $\mathcal{P}$ is a hereditary property of digraphs/orgraphs/tournaments and $k=4/3/2$ is the corresponding number of colours from Observation~\ref{observation: encoding of digraphs}, then the conclusions of Corollaries~\ref{corollary: containers for arbitrary hereditary properties} and~\ref{corollary: speed of arbitrary hereditary properties}, Theorem~\ref{theorem: strong stability and containers} and Corollary~\ref{corollary: transference for general properties} hold for\/ $\mathcal{P}$.\qed
\end{corollary}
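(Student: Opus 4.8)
The plan is to prove this corollary purely by translation, using Observation~\ref{observation: encoding of digraphs} to transport each of the cited results from the setting of order-hereditary properties of $k$-colourings of $K_n$ to the setting of hereditary properties of digraphs, oriented graphs and tournaments. The key observation is that the encoding of Observation~\ref{observation: encoding of digraphs} is, for each $n$, a \emph{bijection} between digraphs (resp.\ orgraphs, tournaments) on the labelled vertex set $[n]$ and $4$-colourings (resp.\ $3$-colourings using only the palette $\{1,2,3\}$, $2$-colourings using only the palette $\{2,3\}$) of $K_n$, and that under this bijection hereditary properties of the combinatorial structures correspond to order-hereditary properties of the colourings. Moreover, the class of all colourings obeying the orgraph (resp.\ tournament) palette restriction is itself order-hereditary, so the image $\mathcal{P}'$ of a hereditary orgraph/tournament property $\mathcal{P}$ genuinely is an order-hereditary property of $3$- (resp.\ $2$-) colourings in the sense of Definition~\ref{definition: order hereditary}, with $\mathcal{P}'_n\neq\emptyset$ for all $n$ whenever $\mathcal{P}_n\neq\emptyset$ for all $n$ (as is implicitly required by all the cited results).

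First I would set up the dictionary between the relevant quantities on the two sides. Since the encoding is a bijection at each level $n$, the speed $n\mapsto\lvert\mathcal{P}_n\rvert$ equals the speed of $\mathcal{P}'$; the extremal entropy $\ex(n,\mathcal{P}')$ and hence the entropy density $\pi(\mathcal{P}):=\pi(\mathcal{P}')$ are defined through $\mathcal{P}'$; and the edit distance between two digraphs/orgraphs/tournaments (the number of pairs on which they differ) agrees with the edit distance between the corresponding colourings, with the analogous statement for the distance from a colouring to a family of templates. With this dictionary in hand, Corollary~\ref{corollary: containers for arbitrary hereditary properties} (containers), Corollary~\ref{corollary: speed of arbitrary hereditary properties} (speed) and Theorem~\ref{theorem: strong stability and containers} (characterisation of typical elements via a stability family), applied to $\mathcal{P}'$, yield verbatim the corresponding statements for $\mathcal{P}$ --- the containers being, concretely, families of $4$-/$3$-/$2$-colouring templates whose realisations respect the relevant palette restriction.

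For the transference conclusion I would invoke Corollary~\ref{corollary: transference for general properties}, which requires $i$-monotonicity. As explained in the Remark following Observation~\ref{observation: encoding of digraphs}, a monotone (decreasing) property of digraphs (resp.\ orgraphs) translates into a $1$-monotone order-hereditary property of $4$- (resp.\ $3$-) colourings of $K_n$, since deleting an oriented edge corresponds exactly to recolouring the relevant pair towards colour $1$. Hence, for such properties, Corollary~\ref{corollary: transference for general properties} applied with $i=1$ gives the stated transference of extremal entropy to the $p$-random template $T_{n,p}(1)$ whenever $\log(1/p)=o(\log n)$; for a hereditary property that is not monotone the hypothesis of Corollary~\ref{corollary: transference for general properties} is not met, so the transference conclusion is to be read as applying to the monotone case, exactly as in the preceding corollary.

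The main obstacle here is bookkeeping rather than mathematics: one must check that the palette-restricted colouring classes really are order-hereditary and (level-wise) nonempty, so that the hypotheses of the cited results apply, and that every quantity occurring in those results --- speed, (extremal) entropy, edit distance, and the parameter $m$ in the ``almost locally in $\mathcal{P}$'' condition of Definition~\ref{definition: strong stability family} --- is transported faithfully by the bijection of Observation~\ref{observation: encoding of digraphs}. Once this routine verification is done, no further argument is needed: the corollary is a formal consequence of the general colouring results combined with the encoding.
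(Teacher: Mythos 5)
Your proposal is correct and follows exactly the route the paper intends: the corollary is stated without proof precisely because it is the formal transport, via the bijection of Observation~\ref{observation: encoding of digraphs}, of the general colouring results to the digraph/orgraph/tournament setting. Your explicit care about the palette-restricted classes being order-hereditary and about the $1$-monotonicity hypothesis needed for the transference conclusion is, if anything, slightly more scrupulous than the paper's own treatment.
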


In particular, we have general container, counting, stability and transference results for hereditary properties of digraphs, orgraphs and tournaments. As mentioned earlier, this overcomes an obstruction to the extension of containers to the digraph setting.

\subsection{Other host structures: set-sequences equipped with embeddings (ssee-s)}\label{subsection: possee}
The work in Section~\ref{section: containers} was concerned with $k$-colouring properties of the sequence of complete graphs~$\Complete=(K_n)_{n\in \N}$. However, there are many other interesting natural graph sequences we might wish to study. Examples of such sequences include:
\begin{itemize}
	\item $\Path=(P_n)_{n\in \N}$, the sequence of paths on $[n]$, $P_n=([n], \{i(i+1): \ 1\leq i \leq n-1\})$;
	\item $\Grid= (P_n\times P_n)_{n \in \N}$, the sequence of $n\times n$ grids~$P_n\times P_n$ obtained by taking the Cartesian product of $P_n$ with itself, or, more generally for $(a,b)\in \N^2$ the sequence of rectangular grids~$\Grid(a,b)=(P_{an}\times P_{bn})_{n\in \N}$ with vertex-set $\{(x,y): \ 1\leq x \leq an,  \ 1\leq y \leq bn\}$ and edge-set $\{(x,y)(x',y'): \ \vert x-x'\vert + \vert y-y'\vert =1\}$;
	\item $\Branch_b=(B_{b,n})_{n \in \N}$, the sequence of $b$-branching trees with $n$ generations from a single root;
	\item $\Complete_q=(K_q(n))_{n \in \N}$, the sequence of complete balanced $q$-partite graphs on $qn$ vertices;
	\item $\Hypercube=(Q_n)_{n\in \N}$, the sequence of $n$-dimensional discrete hypercube graphs~$Q_n=(\{0,1\}^n, \{\vector{x}\vector{y}: \ \vector{x}_i =\vector{y}_i \textrm{ for all but exactly~one index }i\}$.
\end{itemize}
Outside of extremal combinatorics, the sequences $\Hypercube$ and~$\Branch_2$ are of central importance in theoretical computer science and discrete probability (they represent $n$-bit sequences and binary search trees respectively), while the sequence~$\Grid$ has been extensively studied in the context of percolation theory, in particular with respect to crossing probabilities.

Each of the graph sequences above comes equipped with a natural notion of `substructure' --- subpaths of a path, subgrids of a grid, subtrees of a branching tree, $q$-partite subgraphs of a $q$-partite graph, subcubes of a hypercube --- and of `embeddings' of earlier terms of the sequence into later ones, which leads to a natural notion of an (order-) hereditary property.

As we shall see in this subsection, the container theory of Balogh--Morris--Samotij and Saxton--Thomason is powerful enough to cover the case of $k$-colourings of any graph sequence~$\Graphseq$ with a `sufficiently rich' notion of substructure. More generally, we shall derive container theorems for $k$-colouring properties of some very general structures (\emph{good ssee}, defined below)  which cover $k$-colourings of vertices and $k$-colouring of edges of `good' hypergraph sequences (and many other structures besides) as special cases.  Roughly speaking, a hypergraph sequence $\Graphseq$ is (edge-) good if it is rich in embeddings --- for any $N$ fixed and $n\geq N$ there must be many almost disjoint ways of embedding $G_N$ into $G_n$ relative to the number of edges.

Our main results in this subsection are a container theorem for hereditary $k$-colourings of `good' set-sequences (Theorem~\ref{theorem: ossee container}), and, modulo some easily checkable technical conditions, the accompanying counting results (Theorems~\ref{theorem: counting for possee} and \ref{theorem: counting for arbitrary possee}). Cases of interest covered by our result include $k$-colourings of $\Complete_q$, $\Grid$ and both vertex- and edge-$k$-colourings of $\Hypercube$.  A final observation to make before we give our definitions and results is that, as we shall show in Section~\ref{subsection: nonexample containers fail for paths}, some form of our `goodness' assumption is necessary --- the sequence $\Path$, for instance, has too few embeddings to be `good', and  we give an example of a hereditary $k$-colouring property for $\Path$ for which the statement of Theorem~\ref{theorem: ossee container} fails.

\begin{definition}[Ssee, embeddings]\label{definition: possee}
	A \emph{set-sequence equipped with embeddings}, or \emph{ssee}, is a sequence~$\mathbf{V}=(V_n)_{n\in \mathbb{N}}$ of sets $V_n$, together with for every $N\leq n$ a collection $\binom{V_n}{V_N}$ of injections~$\phi: \ V_N \to V_n$.

	We refer to the members of $\binom{V_n}{V_N}$ as \emph{embeddings} of $V_N$ into $V_n$.
\end{definition}
Ssee may seem rather abstract, so let us immediately give some examples.
\begin{example}\label{example: poset ssee}
Let $\mathbf{V}$ denote a sequence $(V_n)_{n\in \mathbb{N}}$ of partially ordered sets, with embeddings $\binom{V_n}{V_N}$ consisting of all order-preserving injections from $V_N$ to $V_n$. Then $\mathbf{V}$ is an ssee.	
\end{example}
\begin{example}\label{example: graph vertex ssee}
Consider a sequence of graphs $\Graphseq=(G_n)_{n\in \mathbb{N}}$ on linearly-ordered vertex sets. We can obtain an ssee from $\Graphseq$ by taking as our set-sequences the vertex-sets $V_n=V(G_n)$ and setting $\binom{V_n}{V_N}$ to be the collection of all order-preserving injections $\phi: \ V_N \to V_n$ such that $\phi(x)\phi(y) \in E(G_n)$ if and only if $xy \in E(G_N)$ --- in other words, the collection of all order-preserving embeddings of $G_N$ into $G_n$.
\end{example}
 \begin{example}\label{example: graph edge ssee}
Consider again a sequence of graphs $\Graphseq=(G_n)_{n\in \mathbb{N}}$ on linearly-ordered vertex-sets. 
We can obtain another ssee from $\Graphseq$ by taking the sets of our sequence to be the edge-sets $E(G_n)$ and setting $\binom{E(G_n)}{E(G_N)}$ to be the collection of injections $\psi: \ E_N\to E_n$ arising from order-preserving embeddings $\phi: V(G_N)\to V(G_n)$ (i.e. such that $\psi(e)=\{\phi(x): \ x \in e\}$).
 \end{example}
\begin{example}\label{example: permutation ssee}
	Consider a sequence of permutations $\sigma_n \in S_n$. Let $V_n=[n]$, and let $\binom{V_n}{V_N}$ denote the collection of order-preserving injections $\phi: \ [N]\to [n]$ such that $\sigma_n (\phi(i))< \sigma_n (\phi(j))$ whenever $\sigma_N (i)< \sigma_N (j)$. This constitutes an ssee.
\end{example}
\begin{example}\label{example: group ssee}
	Consider a sequence of groups  $((\Gamma_n, +_n))_{n\in \mathbb{N}}$. For every $n$, let $V_n$ be a nonempty subset of $\Gamma_n$, and let $\binom{V_n}{V_N}$ denote the collection of 
	injections $\phi: \ V_N \rightarrow V_n$ which preserve the group actions, i.e. such that $\phi(x+_Ny)=\phi(x)+_n\phi(y)$ for all $x,y \in V_N$ with $x+_Ny \in V_N$. This constitutes an ssee.  
\end{example}
A more concrete form of the last example, which has already been extensively studied using containers, is that of subsets of $(\mathbb{Z}, +)$, which are connected to many problems in additive combinatorics --- see the original papers of Balogh, Morris and Samotij~\cite{BaloghMorrisSamotij15} and of Saxton and Thomason~\cite{SaxtonThomason15}.

Having thus set the scene with some motivational examples of ssee-s, we now turn to the main business of this section, namely generalising Theorem~\ref{theorem: multi-colour container} to the ssee setting. To do this, we need notions of colourings, templates and extremal entropy \emph{relative} to a set.
\begin{definition}[Colourings, templates and entropy relative to a set]\label{definition: template/entropy relative to possee}
	Let $V$ be a set. A $k$-colouring  \emph{template} of~$V$ is a function~$t: \ V\to 2^{[k]}\setminus \{\emptyset\}$, while a $k$-colouring of $V$ is a function~$c: \ V\to [k]$. We denote the set of all $k$-colouring templates of~$V$ and the set of all $k$-colourings of~$V$ by $\left(2^{[k]}\setminus\{\emptyset\} \right)^{V}$ and $[k]^V$ respectively.

	Given a template $t\in \left(2^{[k]}\setminus\{\emptyset\} \right)^{V}$, we write $\langle t \rangle$ for the collection of \emph{realisations} of $t$, that is, the collection of $k$-colourings~$c\in [k]^V$ such that $c(e)\in t(e)$ for every~$e\in V$. The \emph{entropy} of a $k$-colouring template~$t$ of~$V$ is 
	\[\Ent(t):= \sum_{e\in V}\log_k\vert t(e)\vert.\]
\end{definition}
Observe that $0\leq \Ent(t)\leq \vert V\vert $ and $\vert \langle t \rangle\vert =k^{\Ent(t)}$.
\begin{definition}[Extremal entropy relative to an ssee]\label{definition: extremal entropy/relative to possee}
	Let $\mathbf{V}=(V_n)_{n \in \N}$ be an ssee. A \emph{$k$-colouring property of $\mathbf{V}$} is a sequence $\mathcal{P}=(\mathcal{P}_n)_{n\in \N}$, where $\mathcal{P}_n$ is a collection of $k$-colourings of $V_n$. The \emph{extremal entropy} of $\mathcal{P}$ \emph{relative to $\mathbf{V}$} is
	\[\ex(\mathbf{V}, \mathcal{P})_n=\ex(V_n, \mathcal{P}_n):=\max\left\{\Ent(t) \,: \, t\in \left(2^{[k]}\setminus\{\emptyset\} \right)^{V_n}, \ \langle t \rangle\subseteq \mathcal{P}_n\right\}.\]
\end{definition}

\begin{definition}[Hereditary properties for an ssee]
	Let $\mathbf{V}=(V_n)_{n \in \N}$ be an ssee. Given an embedding~$\phi\in \binom{V_n}{V_N}$ and  a template $t\in \left(2^{[k]} \setminus \{\emptyset\}\right)^{V_n}$, we denote by $t_{\vert \phi}$ the $k$-colouring template for $V_N$ induced by $\phi$, 
	\[t_{\vert \phi} (x)= t(\phi(x)) \qquad \forall x \in V_N.\]
	
	A \emph{hereditary $k$-colouring property} for an ssee $\mathbf{V}$ is a $k$-colouring property $\mathcal{P}=\left(\mathcal{P}_n\right)_{n\in \mathbb{N}}$ such that for all $n\geq N$, $c\in \mathcal{P}_n$ and $\phi \in \binom{V_n}{V_N}$, we have $c_{\vert \phi} \in \mathcal{P}_N$.
\end{definition}
\begin{remark}
This is a common generalisation of the notion of hereditary and order-hereditary properties for graphs: by choosing one's embeddings appropriately when building an ssee from a graph sequence, we can encode either kind of property as an ssee hereditary property.
\end{remark}

We are now in a position to state what a `sufficiently rich' notion of substructure means.
\begin{definition}[Intersecting embeddings]
	Let $\mathbf{V}=(V_n)_{n \in \N}$ be an ssee. Let $N_1$,~$N_2 \leq n$.  An \emph{$i$-intersecting embedding} of~$(V_{N_1}, V_{N_2})$ into $V_n$ is a function $\phi: \ V_{N_1}\sqcup V_{N_2}\rightarrow V_n$ such that:
	\begin{enumerate}[(i)]
		\item the restriction of $\phi$ to  $V_{N_1}$ lies in $\binom{V_n}{V_{N_1}}$, and the restriction of $\phi$ to $V_{N_2}$ lies in $\binom{V_n}{V_{N_2}}$;
		\item $\vert \phi(V_{N_1})\cap \phi(V_{N_2})\vert=i$.
	\end{enumerate}
	We denote by $I_i\bigl( (V_{N_1}, V_{N_2}), V_n\bigr)$ the number of $i$-intersecting embeddings of $(V_{N_1}, V_{N_2})$ into $V_n$, and set
	\[I(N, n):=\sum_{1<i<\vert V_N\vert} I_i\bigl( (V_{N}, V_{N}), V_n\bigr).\]		
\end{definition}
\begin{definition}[Good ssee]\label{definition: good ossee}
	A ssee  $\mathbf{V}$ is \emph{good} if it satisfies the following conditions:
	\begin{enumerate}[(i)]
		\item $\vert V_n\vert \rightarrow \infty$ (`the sets in the sequence become large');
		\item for all $N \in \N$ with $\vert V_N\vert >1$,  $\bigl\vert \binom{V_n}{V_N} \bigr\vert \gg \vert V_n\vert $ (`on average, vertices in $V_n$ are contained in many embedded copies of $V_N$');
		\item for all $N \in \N$ with $\vert V_N\vert >1$, $\Bigl(\vert V_n\vert I(N,n)\Bigr)\Big/ \bigl\vert \binom{V_n}{V_N} \bigr\vert ^2 \rightarrow 0$ as $n\rightarrow \infty$ (`most pairs of embeddings of~$V_N$ into $V_n$ share at most one vertex').
	\end{enumerate}	
\end{definition}
\begin{remark}
	Condition (iii) can be interpreted as an `average co-degree condition' in a certain hypergraph, namely $H$ in the proof of Theorem~\ref{theorem: ossee container} below. Thus our `goodness' condition is related to the more usual `co-degree conditions' found in the the container theorems of Balogh--Morris--Samotij~\cite{BaloghMorrisSamotij15} and Saxton--Thomason~\cite{SaxtonThomason15}.
\end{remark}

Let $\mathbf{V}$ be an ssee. Given a collection $\mathcal{F}$ of $k$-colourings of $V_N$, denote by $\Forb_{\mathbf{V}}(\mathcal{F})$ the order-hereditary property of $k$-colourings of $\mathbf{V}$ not containing an embedding of a colouring in $\mathcal{F}$, i.e.
\[\Forb_{\mathbf{V}}(\mathcal{F})_n=\Bigl\{c\in [k]^{V_n}: \ \forall \phi \in \binom{V_n}{V_N}, \ c_{\vert \phi}\notin \mathcal{F}\Bigr\}.\]
Our main result in this subsection is that if $\mathbf{V}$ is a good ssee, then we have a container theorem for $\Forb_{\mathbf{V}}(\mathcal{F})$. As before, we say that a template family $\mathcal{T}_n$ is a \emph{container family} for a family of colourings $\mathcal{P}_n$ if for every $c\in \mathcal{P}_n$ there is $t\in \mathcal{T}_n$ with $c\in \langle t\rangle$.
\begin{theorem}\label{theorem: ossee container}
	Let $\mathbf{V}$ be a good ssee, and let $k$,~$N \in \N$. Let $\mathcal{F}$ be a nonempty collection of $k$-colourings of~$V_N$ and let\/ $\mathcal{P}=\Forb_{\mathbf{V}}(\mathcal{F})$.  For any $\varepsilon>0$, there exists $n_0>0$ such that for any $n\geq n_0$ there exists a collection\/  $\mathcal{T}_n$ of $k$-colouring templates for $V_n$ satisfying:
	\begin{enumerate}[(i)]
		\item $\mathcal{T}_n$ is a container family for\/ $\mathcal{P}_n$;
		\item for each template $t\in\mathcal{T}_n$, there are at most $\varepsilon \bigl\vert \binom{V_n}{V_N} \bigr\vert$ pairs $(\phi,c)$ with $\phi \in \binom{V_n}{V_N}$, $c\in \mathcal{F}$ and $c\in \langle t_{\vert \phi}\rangle$;
		\item $\vert \mathcal{T}_n\vert\leq  k^{\varepsilon \vert V_n\vert}$.
	\end{enumerate}	
\end{theorem}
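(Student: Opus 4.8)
The plan is to mimic the proof of Theorem~\ref{theorem: multi-colour container} almost verbatim, with the sequence of complete graphs replaced by the ssee $\mathbf{V}$, using the goodness conditions in place of the elementary counting facts available for $K_n$. First I would set $r=\abs{V_N}$ and build an auxiliary $r$-graph $H=H_n$ on vertex set $V_n\times[k]$: for every embedding $\phi\in\binom{V_n}{V_N}$ and every forbidden colouring $c\in\mathcal{F}$, add the $r$-edge $e_{\phi,c}=\{(\phi(x),c(x)):x\in V_N\}$. (If $\abs{V_N}=1$ then $\mathcal{F}$ is a list of forbidden colours and the statement is trivial, as in Theorem~\ref{theorem: multi-colour container}; so assume $\abs{V_N}\ge2$, and in fact I will want $r\ge2$, i.e.\ one may need $\abs{V_N}\ge 2$ to even have a nondegenerate hypergraph.) By construction an independent set in $H$ corresponds exactly to (the ``on-set'' of) a template $t$ with $\langle t\rangle\subseteq\mathcal{P}_n$, and $e(H)=\theta\bigl(\abs{\binom{V_n}{V_N}}\bigr)$ since each embedding supports between $1$ and $k^r$ forbidden colourings; goodness~(i),(ii) then give that the average degree of $H$, which is $re(H)/\bigl(k\abs{V_n}\bigr)$, tends to infinity.

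Next, as in Saxton--Thomason and in the proof of Theorem~\ref{theorem: multi-colour container}, $H$ is far from linear, so I would pass to a random sparsification $H'$ keeping each $r$-edge independently with a probability $p=p(n)$ chosen so that $p\cdot e(H)\to\infty$ while the expected number of ``overlapping pairs'' (pairs of $r$-edges meeting in $\ge2$ vertices) is much smaller than $p\,e(H)$. Here is where goodness~(iii) enters: the number of overlapping pairs in $H$ is $O\bigl(I(N,n)\bigr)$ up to the $k^{O(1)}$ factor for colourings, and condition~(iii) says $\abs{V_n}I(N,n)=o\bigl(\abs{\binom{V_n}{V_N}}^2\bigr)$, which after dividing by $\abs{V_n}$ is exactly what is needed for $p^2\cdot(\text{overlaps})=o\bigl(p\cdot e(H)\bigr)$ with an appropriate choice of $p$ (one picks $p$ so that $p\,e(H)$ is, say, of order $\abs{V_n}$, or a bit larger, and checks the ratio). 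I would then reprove the analogue of Lemma~\ref{lemma: random sparsification}: with positive probability $H'$ has (F1) $e(H')\ge \frac12 p\,e(H)$, (F2) at most $\tfrac{\varepsilon_1}{4}p\,e(H)$ overlapping pairs, and (F3) for every $S\subseteq V(H)$ with $e(H[S])\ge\varepsilon_1 e(H)$ one has $e(H'[S])\ge\tfrac{\varepsilon_1}{2}e(H')$ --- the union bound over $S$ costs $2^{k\abs{V_n}}$, which is beaten by the Chernoff bound $e^{-\Omega(p\,e(H))}$ provided $p\,e(H)\gg \abs{V_n}$, so I would choose $p$ with a little room to spare, e.g.\ $p\,e(H)$ of order $\abs{V_n}\log\abs{V_n}$ or $\abs{V_n}^{1+\gamma}$; goodness~(iii) gives enough slack for this.

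Deleting one $r$-edge from each overlapping pair of $H'$ yields a linear $r$-graph $H''$ with $e(H'')\ge\bigl(\tfrac12-\tfrac{\varepsilon_1}{4}\bigr)p\,e(H)$ and hence average degree $d=r e(H'')/\bigl(k\abs{V_n}\bigr)\to\infty$. I then apply Theorem~\ref{theorem: saxton thomason simple containers} with $\delta<\varepsilon_1/4$ to get a collection $\mathcal{C}$ of subsets of $V(H'')=V(H)$ with $\abs{\mathcal{C}}\le 2^{\beta\cdot k\abs{V_n}}$, $\beta=(1/d)^{1/(2r-1)}\to0$, and from each $C\in\mathcal{C}$ read off a template $t(C)$ by $t(C)(x)=\{i:(x,i)\in C\}$, keeping only those $C$ for which every palette is non-empty; let $\mathcal{T}_n$ be the resulting family. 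Conclusion~(i) follows because any template $t'$ with $\langle t'\rangle\subseteq\mathcal{P}_n$ gives an independent set in $H$, hence in $H''$, hence lies inside some $C$, hence $t'\le t(C)\in\mathcal{T}_n$; actually since $\mathbf{V}$ has no extra ordering structure beyond embeddings, one gets directly $c\in\langle t(C)\rangle$ for each $c\in\mathcal{P}_n$, which is the required container property. Conclusion~(iii), $\abs{\mathcal{T}_n}\le k^{\varepsilon\abs{V_n}}$, follows since $\log_k\abs{\mathcal{T}_n}\le \beta\, k\abs{V_n}=o\bigl(\abs{V_n}\bigr)$ once $d$ is large enough. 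Conclusion~(ii): property~2 of Theorem~\ref{theorem: saxton thomason simple containers} gives $e\bigl(H''[C]\bigr)<\delta e(H'')$, adding back the deleted overlapping edges (at most $\tfrac{\varepsilon_1}{4}e(H')$ by F2) gives $e\bigl(H'[C]\bigr)<\tfrac{\varepsilon_1}{2}e(H')$, and then F3 forces $e\bigl(H[C]\bigr)<\varepsilon_1 e(H)\le\varepsilon_1 k^r\abs{\binom{V_n}{V_N}}$; choosing $\varepsilon_1=k^{-r}\varepsilon$ makes this at most $\varepsilon\abs{\binom{V_n}{V_N}}$, and by definition of $H$ the number of pairs $(\phi,c)$ with $c\in\langle t_{\vert\phi}\rangle$ equals $e\bigl(H[C]\bigr)$, so~(ii) holds.

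I expect the only genuine obstacle to be bookkeeping around the choice of $p$: one must simultaneously arrange $p\,e(H)\to\infty$ fast enough to beat the $2^{k\abs{V_n}}$ union bound in the F3 estimate, and $p^2\cdot(\text{overlap count})=o(p\,e(H))$; goodness~(iii) is precisely the hypothesis that makes both achievable, but translating it into a concrete admissible range for $p$ (and verifying the Chernoff exponents are $\omega(\abs{V_n})$) requires a little care --- the cleanest route is to observe $e(H)=\theta\bigl(\abs{\binom{V_n}{V_N}}\bigr)$ and $\text{overlaps}=O\bigl(k^{2r}I(N,n)\bigr)$ and set, say, $p=\abs{V_n}^{2}\big/\abs{\binom{V_n}{V_N}}$ times a slowly growing factor, then check each inequality against goodness (ii)--(iii). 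Everything else is a routine transcription of the $K_n$ argument, with $\binom{n}{2}$ replaced by $\abs{V_n}$, $\binom{n}{N}$ by $\abs{\binom{V_n}{V_N}}$, and the combinatorial overlap bound replaced by $I(N,n)$.
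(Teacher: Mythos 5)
Your route is the paper's own: the same auxiliary $r$-graph $H$ on $V_n\times[k]$ with $r=\vert V_N\vert$, the same sparsify--linearise--apply Theorem~\ref{theorem: saxton thomason simple containers} scheme, and the same reading-off of templates from the container sets, with $\varepsilon_1$ of order $k^{-\vert V_N\vert}\varepsilon$; the verifications of (i)--(iii) are transcribed correctly. The one step you yourself flag as delicate --- the choice of the sparsification probability $p$ --- is, however, a genuine gap as you have written it. You need simultaneously $p\,e(H)\gg\vert V_n\vert$ (to beat the $2^{k\vert V_n\vert}$ union bound in $F_3$ and to make the average degree of the linearised graph large enough that $\beta\to0$ in the container count) and $p\cdot(\text{overlap count})=O(\varepsilon_1\,e(H))$ (so that $F_2$ is useful and the deletion step is cheap). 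With your suggested calibrations --- $p\,e(H)\asymp\vert V_n\vert\log\vert V_n\vert$ or $\vert V_n\vert^{1+\gamma}$, or $p=\vert V_n\vert^{2}\big/\bigl\vert\binom{V_n}{V_N}\bigr\vert$ times a growing factor --- the second requirement becomes, up to constants, $\vert V_n\vert\log\vert V_n\vert\cdot I(N,n)=o\bigl(\bigl\vert\binom{V_n}{V_N}\bigr\vert^{2}\bigr)$, resp.\ $\vert V_n\vert^{1+\gamma}I(N,n)=o\bigl(\bigl\vert\binom{V_n}{V_N}\bigr\vert^{2}\bigr)$, resp.\ $\vert V_n\vert^{2}I(N,n)=o\bigl(\bigl\vert\binom{V_n}{V_N}\bigr\vert^{2}\bigr)$, and none of these follows from goodness~(iii), which gives only $\vert V_n\vert\, I(N,n)=o\bigl(\bigl\vert\binom{V_n}{V_N}\bigr\vert^{2}\bigr)$ (for instance, goodness permits $\vert V_n\vert I(N,n)=\bigl\vert\binom{V_n}{V_N}\bigr\vert^{2}/\log\log\vert V_n\vert$, which defeats all three choices). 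Your last choice can moreover exceed $1$, since goodness~(ii) only guarantees $\bigl\vert\binom{V_n}{V_N}\bigr\vert\gg\vert V_n\vert$. So the assertion that ``goodness~(iii) gives enough slack'' for these particular values of $p$ is not justified.

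The paper resolves exactly this point by calibrating $p$ against $I(N,n)$ itself: it takes $p=\varepsilon_1\bigl\vert\binom{V_n}{V_N}\bigr\vert/I(N,n)$, so that $p\,e(H)\geq\varepsilon_1\bigl\vert\binom{V_n}{V_N}\bigr\vert^{2}/I(N,n)\gg\vert V_n\vert$ is \emph{literally} goodness~(iii), while the expected number of overlapping pairs in $H'$ is $p^{2}I(N,n)=\varepsilon_1 p\bigl\vert\binom{V_n}{V_N}\bigr\vert$, automatically a small fraction of $p\,e(H)$; and since this $p$ may be $\geq1$ when $I(N,n)$ is small, the paper first splits off the case $I(N,n)\leq\varepsilon\bigl\vert\binom{V_n}{V_N}\bigr\vert/2$, where no sparsification is needed at all --- one simply deletes one edge from each of the few overlapping pairs, and the average degree still tends to infinity by goodness~(ii). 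Your proposal is missing both this calibration and the case split guaranteeing $p<1$. (An admissible calibration in the spirit of yours is $p\,e(H)\asymp g(n)\vert V_n\vert$ with $g(n)=\bigl(\bigl\vert\binom{V_n}{V_N}\bigr\vert^{2}\big/(\vert V_n\vert I(N,n))\bigr)^{1/2}$, which tends to infinity precisely by goodness~(iii), but this too requires the small-$I(N,n)$ case to be handled separately; fixed extra factors such as $\log\vert V_n\vert$ or $\vert V_n\vert^{\gamma}$ do not work in general.) Once $p$ is chosen this way, the rest of your argument goes through as in the paper.
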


\begin{proof}
	We follow in the main the proof of Theorem~\ref{theorem: multi-colour container}. Let $\mathbf{V}$, $k$, $N$, $\mathcal{F}$, and~$\mathcal{P}$ be as above. Fix $\varepsilon>0$.  We may assume without loss of generality that $\vert V_N\vert >1$, for otherwise $\mathcal{F}$ just gives us a list~$F$ of forbidden colours and the single template $t=\left([k]\setminus F\right)^{V_n}$ is a container for $\mathcal{P}_n$ lying entirely inside $\mathcal{P}_n$.

	First we modify the construction of the hypergraph~$H=H(\mathcal{F}, n)$ in the proof of Theorem~\ref{theorem: multi-colour container} as follows:
	\begin{itemize}
		\item we set $r=\vert V_N\vert$ (rather than $\binom{N}{2}$);
		\item we let $V(H)= V_n\times [k]$ (rather than $E(K_n)\times [k]$);
		\item for every  $\phi\in \binom{V_n}{V_N}$, and every colouring $c\in \mathcal{F}$, we add to $E(H)$ the $r$-edge
		\[e_{\phi, c}=\bigl\{\bigl(\phi(v), c(v)\bigr) \, : \, v\in V_N\bigr\}.\] 
	\end{itemize}
	As before, we bound $e(H)$; since $\mathcal{F}$ is nonempty, we have the following analogue of (\ref{equation: bounds on e(H)}):
	\begin{equation}\label{equation: bound on e(H), ossee}
	\biggl\vert \binom{V_n}{V_N} \biggr\vert \leq e(H) \leq k^{\vert V_N\vert } \biggl\vert \binom{V_n}{V_N} \biggr\vert.
	\end{equation}
	Just as before, our problem is that $H$ may be far from linear, so that we cannot apply Theorem~\ref{theorem: saxton thomason simple containers} directly. Here unlike in Theorem~\ref{theorem: multi-colour container} we have two cases to consider.

	Observe that $I(N,n)$ is exactly the number  of $r$-edges in $H$ which meet in at least two vertices, i.e. the `bad' pairs that make $H$ non-linear, henceforth referred to as overlapping pairs. If $I(N,n)\leq \varepsilon \bigl\vert \binom{V_n}{V_N} \bigr\vert/2$, then we can delete at most $\varepsilon e(H)/2$ $r$-edges from $H$ to make $H$ linear. This leaves us with a  linear $r$-graph with average degree 
	\[d\geq \vert V_N\vert \left(\biggl\vert \dbinom{V_n}{V_m}\biggr\vert-I(N,n)\right)/\vert V_n\vert \geq \vert V_N\vert (1-\varepsilon/2)\frac{\bigl\vert \binom{V_n}{V_N} \bigr\vert}{\vert V_n\vert},\]
	which by the goodness condition (ii) tends to infinity as $n\rightarrow \infty$. From there, Theorem~\ref{theorem: ossee container} follows easily from Theorem~\ref{theorem: saxton thomason simple containers} applied with parameter $\delta= \varepsilon\big/ 2k^{\vert V_N\vert}$: we obtain a collection $\mathcal{C}$ of sets which together cover all the independent sets in $H$ (property 1. of Theorem~\ref{theorem: saxton thomason simple containers}), each containing at most 
	\[\varepsilon \biggl\vert \binom{V_n}{V_N} \biggr\vert/2 +\delta e(H)\leq \varepsilon \biggl\vert \binom{V_n}{V_N} \biggr\vert\]
	$r$-edges (by property 2., our choice of $\delta$ and inequality (\ref{equation: bound on e(H), ossee})), with $\log_k \vert \mathcal{C}\vert \leq O\Bigl({\vert V_n\vert } d^{-\frac{1}{2\vert V_N\vert-1}}\Bigr)$ (property 3.). For $n$ sufficiently large, $\log_k \vert \mathcal{C}\vert$ is less than $\varepsilon \vert V_n \vert$ (since $d\gg1$). The family $\mathcal{C}$ then gives us our desired family of templates $\mathcal{T}_n$ here just as it did in the proof of Theorem~\ref{theorem: multi-colour container}.

	We therefore consider the more interesting case where $I(N,n)>\varepsilon \bigl\vert \binom{V_n}{V_N} \bigr\vert/2$ --- this is the case where on average embeddings of $V_N$ in $V_n$ are involved in $\Omega(1)$ overlapping pairs. Here we need a $\mathbf{V}$-analogue of our random sparsification lemma, Lemma~\ref{lemma: random sparsification}. The goodness of $\mathbf{V}$ is exactly what is needed for the proof to go through as before.

	Pick $\varepsilon_1>0$ sufficiently small so that 
	\begin{align}\label{equation: epsilon-good choice of epsilon1 in osse container thm}
24 \varepsilon_1 k^{\vert V_N\vert}<\varepsilon, \qquad \varepsilon_1<1/6
	\end{align}
	 and
	\begin{equation}\label{eq: general sparsification probability}
	p=\varepsilon_1\frac{\bigl\vert \binom{V_n}{V_N} \bigr\vert }{ I(N,n)}<1.
	\end{equation}
Keep each $r$-edge of~$H$ independently with probability~$p$, and delete it otherwise, to obtain a random subgraph~$H'$ of~$H$.
	\begin{lemma}\label{lemma: sparsification lemma, osse}
		Let $p$ be as in~\eqref{eq: general sparsification probability}, let $H'$ be the random subgraph of~$H$ defined above, and consider the following events:
		\begin{itemize}
			\item the event $F_1$ that $e(H')\geq \frac{p\bigl\vert \binom{V_n}{V_N}\bigr\vert}{2}$;
			\item the event $F_2$ that $H'$ has at most $3p^2 I(N,n)=3\varepsilon_1 p\bigl\vert \binom{V_n}{V_N} \bigr\vert$ pairs of $r$-edges $(e,e')$ with $\vert e\cap e'\vert \geq 2$;
			\item the event $F_3$ that for all $S\subseteq V(H)$ with $e(H[S])\geq 24 \varepsilon_1 e(H)$, we have $e(H'[S])\geq 12\varepsilon_1 e(H')$.
		\end{itemize}
		There exists $n_1\in\N$ such that for all $n\geq n_1$, $F_1 \cap F_2 \cap F_3$ occurs with strictly positive probability.
	\end{lemma}
	\begin{proof}
		We follow the proof of Lemma~\ref{lemma: random sparsification}. By (\ref{equation: bound on e(H), ossee}), the definition of $p$ and the goodness conditions (iii) and (i) for $\mathbf{V}$ we have
		\begin{equation}\label{equation: bounds on p (Vn choose VN)}
		pe(H)\geq p\biggl\vert \binom{V_n}{V_N} \biggr\vert = \varepsilon_1 \frac{\bigl\vert \binom{V_n}{V_N}\bigr\vert ^2}{I(N,n)}\gg \vert V_n\vert \gg 1.
		\end{equation}
		Together with the Chernoff bound~(\ref{equation: Chernoff}), inequality (\ref{equation: bounds on p (Vn choose VN)}) implies
		\begin{align}\label{equation: prob F1 fails, osse}
		\mathbb{P}(F_1 \textrm{ does not hold})& \leq 2 \exp\left (-p\biggl\vert \binom{V_n}{V_N} \biggr\vert/16\right) = o(1).
		\end{align}
By Markov's inequality applied to the number $Y_{H'}$ of pairs of $r$-edges $(e,e')$ with $\vert e\cap e'\vert \geq 2$ (i.e. the number of overlapping pairs in $H'$),
\begin{align}\label{equation: prob F2 fails, osse}
\mathbb{P}(F_2 \textrm{ does not hold})&=\mathbb{P}\left(Y_{H'}\geq 3\mathbb{E}Y_{H'}\right) \leq \frac{1}{3}.
\end{align}

Finally, consider a set $S\subseteq V(H)$ with $e(H[S])\geq 24\varepsilon_1 e(H)$. Applying the Chernoff bound~\eqref{equation: Chernoff} with $\delta=1-1/\sqrt{2}$ and the lower bound~\eqref{equation: bounds on p (Vn choose VN)} for $pe(H)$ we get
\begin{equation}\label{eq: too few induced edges, osse}
\mathbb{P}\left(e\bigl(H'[S]\bigr)\leq\frac{1}{\sqrt {2}}\mathbb{E}e\bigl(H'[S]\bigr)\right) \leq 2e^{-(1-\frac{1}{\sqrt{2}})\frac{\mathbb{E} e(H'[S])}{4}}=e^{-\Omega( p\varepsilon_1 e(H))}= e^{-\omega(\vert V_n\vert )}.
\end{equation}
Moreover, by \eqref{equation: Chernoff} and~\eqref{equation: bounds on p (Vn choose VN)} again,
\begin{equation}\label{eq: too many induced edges, osse}
\mathbb{P}\left(e(H')\ge\sqrt{2}\mathbb{E} e(H')\right)\leq 2e^{-(\sqrt{2}-1)^2\frac{p e(H)}{4}}= e^{-\omega(\vert V_n\vert )}.
\end{equation}
Say that a nonempty set~$S\subseteq V(H)$ is \emph{bad} if $e(H[S])\geq 24\varepsilon_1 e(H)$ and $e(H'[S])\leq 12\varepsilon_1 e(H')$.   By \eqref{eq: too few induced edges, osse},~\eqref{eq: too many induced edges, osse} and the union bound, the probability that $F_3$ fails, i.e., that there exists some bad $S\subseteq V(H)$, is at most
\begin{equation}\label{equation: prob F3 fails osse}
\mathbb{P}\bigl(\exists \textrm{ bad }S\bigr) \leq \mathbb{P}\bigl(e(H')\geq \sqrt{2}\mathbb{E} e(H')\bigr)+\sum_S \mathbb{P}\biggl(e(H'[S]\leq\frac{1}{\sqrt {2}}\mathbb{E}e\bigl(H'[S]\bigr)\biggr)\leq 2^{k\vert V_n \vert} e^{-\omega(\vert V_n\vert )}=o(1).
\end{equation}
Putting(\ref{equation: prob F1 fails, osse}), \eqref{equation: prob F2 fails, osse} and \eqref{equation: prob F3 fails osse} together we have that  $F_1$, $F_2$ and~$F_3$ hold simultaneously with probability at least~$2/3-o(1)$, which is strictly positive for $n$~sufficiently large.
	\end{proof}
With this sparsification lemma, we can now finish the proof in exactly the same way as we did in Theorem~\ref{theorem: multi-colour container}.

By Lemma~\ref{lemma: sparsification lemma, osse}, for any $\varepsilon>0$, any $\varepsilon_1>0$ satisfying \eqref{equation: epsilon-good choice of epsilon1 in osse container thm} and \eqref{eq: general sparsification probability}  and all $n$ sufficiently large, there exists a sparsification~$H'$ of~$H$ for which the events $F_1$, $F_2$ and~$F_3$ from the lemma all hold.  Deleting one $r$-edge from each overlapping pair in $H'$, we obtain a linear $r$-graph~$H''$ with average degree~$d$ satisfying
\begin{equation}\label{eq: linear hypergraph average degree, ossee}
d=\frac{re(H'')}{v(H'')} = \frac{\vert V_N\vert }{k\vert V_n\vert}\left(e(H') - Y_{H'}\right)
\geq  \frac{\vert V_N\vert }{k\vert V_n\vert} \left(\frac{1}{2}-3\varepsilon_1\right)p\biggl\vert \binom{V_n}{V_N} \biggr\vert \gg 1,
\end{equation}
where in the first inequality we used the fact that $F_1$ and $F_2$ hold, and in the last two inequalities we used the bounds $\varepsilon_1<1/6$ from \eqref{equation: epsilon-good choice of epsilon1 in osse container thm} and the lower bound on $p\binom{V_n}{V_N}$ from \eqref{equation: bounds on p (Vn choose VN)}.

Apply Theorem~\ref{theorem: saxton thomason simple containers} to $H''$ with parameter $\delta=6\varepsilon_1$ and let $d_0=d_0(\delta, r)$ be the constant in Theorem~\ref{theorem: saxton thomason simple containers}. Equation \eqref{eq: linear hypergraph average degree, ossee} tells us that for $n$ sufficiently large we have $d\geq d_0$. Thus there exists a collection~$\mathcal{C}$ of subsets of~$V(H'')=V(H)$ satisfying conclusions 1.--3.\ of Theorem~\ref{theorem: saxton thomason simple containers}. For each $C\in \mathcal{C}$, we obtain a template~$t=t(C)$ for a \emph{partial} $k$-colouring of~$V_n$, assigning to  each $v\in V_n$ a (possibly empty) palette $t(v)=\{i\in[k]: \ (v,i)\in C\}$ of available colours.
Set
\[\mathcal{T}:=\{t(C): \ C\in \mathcal{C}, \ t(v)\neq \emptyset  \textrm{ for all }v\in V_n  \}\]
to be the family of templates from $\left(2^{[k]}\setminus \{\emptyset\}\right)^{V_n}$ which can be constructed in this way. We claim that the template family~$\mathcal{T}$ satisfies the conclusions (i)--(iii) of Theorem~\ref{theorem: ossee container}.

Indeed, by definition of~$H$, any template $t'$ with $\langle t'\rangle\subseteq \mathcal{P}_n$ gives rise to an independent set $I$ in the $r$-graph~$H$ and hence its subgraph~$H''$, namely $I=\{(v,i): \ i\in [k], \ v\in V_n, \ i\in t(v)\}$. Thus there exists $C\in \mathcal{C}$ with $I\subseteq C$, giving rise to a proper template~$t \in \mathcal{T}$ with $t'\leq t$. Conclusion~(i) is therefore satisfied by $\mathcal{T}$.

Further for each $C\in \mathcal{C}$, conclusion 2.\ of Theorem~\ref{theorem: saxton thomason simple containers} and the event~$F_1$ and $F_2$ together imply
\begin{equation*}
e\bigl(H'[C]\bigr)\leq e\bigl(H''[C]\bigr) + \left(e(H')-e(H'')\right)< \delta e(H'') + 6\varepsilon_1e(H')= 12\varepsilon_1e(H').
\end{equation*}
Since $F_3$ holds in $H'$, this implies $e(H[C])<24 \varepsilon_1 e(H)$, which by our choice of $\varepsilon_1$ satisfying \eqref{equation: epsilon-good choice of epsilon1 in osse container thm} and our upper 
bound \eqref{equation: bound on e(H), ossee} on $e(H)$ is at most $\varepsilon \binom{V_n}{V_N}$. In particular, by construction of~$H$, we have that for each $t=t(C)\in \mathcal{T}$ there are at most~$\varepsilon \binom{n}{N}$ pairs~$(\phi, c)$ with $\phi\in \binom{V_n}{V_N}$, $c\in \mathcal{F}$ and $c\in \langle t_{\vert \phi}\rangle$. This establishes (ii).

Finally by conclusion 3.\ of Theorem~\ref{theorem: saxton thomason simple containers} and our bound (\ref{eq: linear hypergraph average degree, ossee}) on the average degree~$d$ in $H''$, we have
\begin{equation*}
 \vert \mathcal{T}\vert \leq \vert \mathcal{C}\vert \leq 2^{\beta(d) k\vert V_n\vert }=\exp\left(O\left(\left(\vert V_n\vert I(N,n)\big/ \biggl\vert \binom{V_n}{V_N} \biggr\vert^2\right)^{1/(2\vert V_N\vert -1)} \vert V_n\vert \right) \right)=k^{o(V_n)},
\end{equation*}
which means that (iii) is satisfied. This concludes the proof of the theorem.
\end{proof}

Theorem~\ref{theorem: ossee container} gives us container theorems for hereditary properties of $k$-colourings of ssee-s defined by a finite family of forbidden colouring. To obtain the standard counting applications of containers for a given ssee $\mathbf{V}$, we need two more ingredients, namely (a) the existence of an entropy density function for $\mathbf{V}$ (i.e.\ an analogue of Proposition~\ref{proposition: entropy density}) and (b) a supersaturation theorem for $\mathbf{V}$  (i.e.\ an analogue of Lemma~\ref{lemma: supersaturation}).

These ingredients are obtained on a more ad hoc basis than the general container theorem, Theorem~\ref{theorem: ossee container} --- the proofs have to be tailored to $\mathbf{V}$ to a greater extent ---  though in many of the most interesting cases the same arguments as those we used in Section~\ref{subsection: entropy density, supersaturation} will work with only trivial modifications. In Section~\ref{subsection: hypercubes} we shall illustrate this by giving a complete treatment of the case of hypercube graphs.

Provided we can obtain (a) and (b), we have as an immediate corollary of Theorem~\ref{theorem: ossee container} the following:
\begin{theorem}\label{theorem: counting for possee}
	Let $\mathbf{V}$ be a good ssee and let $k$,~$N \in \N$. Let $\mathcal{F}$ be a nonempty collection of $k$-colourings of~$V_N$ and let\/ $\mathcal{P}=\Forb_{\mathbf{V}}(\mathcal{F})$. Suppose that the following hold:
	\begin{enumerate}[(a)]
		\item $\pi(\mathcal{P}):=\lim_{n\rightarrow \infty}\ex(V_n, \mathcal{{P}})/\vert V_n\vert$ exists;
		\item for all $\varepsilon>0$ there exist $\delta>0, n_0\in \mathbb{N}$ such that if $n\geq n_0$ then every $t\in \left(2^{[k]}\setminus \{\emptyset\}\right)^{V_n}$ with at most $\delta \bigl\vert \binom{V_n}{V_N}\bigr\vert $ pairs $(\phi, c)$ with $\phi \in \binom{V_n}{V_N}$, $c\in \mathcal{F}$ and $c\in \langle t_{\vert \phi}\rangle$ must have entropy at most~$\mathrm{Ent}(t)\leq \left (\pi(\mathcal{P})+\varepsilon\right)\vert V_n\vert$. 
		\end{enumerate}	
	Then
	\[\vert \mathcal{P}_n\vert = k^{\bigl(\pi(\mathcal{P})+o(1)\bigr)\vert V_n\vert }.\]
\end{theorem}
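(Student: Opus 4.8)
The plan is to follow, essentially verbatim, the two-sided argument used to prove Theorem~\ref{theorem: counting result for Forb(F), F finite hereditary families}, with $\binom{n}{2}$ replaced by $\vert V_n\vert$, the collection $\binom{K_n}{K_N}$ by $\binom{V_n}{V_N}$, and Lemma~\ref{lemma: supersaturation} (supersaturation) replaced by the hypothesis~(b) that is now assumed. The assertion $\vert\mathcal{P}_n\vert = k^{(\pi(\mathcal{P})+o(1))\vert V_n\vert}$ is equivalent to $\log_k\vert\mathcal{P}_n\vert/\vert V_n\vert \to \pi(\mathcal{P})$, so it suffices to prove a matching lower and upper bound on this ratio.

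For the lower bound, fix $n$ and let $t$ be a $k$-colouring template of $V_n$ with $\langle t\rangle\subseteq\mathcal{P}_n$ and $\Ent(t)=\ex(V_n,\mathcal{P})$. Since distinct realisations of $t$ are distinct colourings, $\vert\mathcal{P}_n\vert \ge \vert\langle t\rangle\vert = k^{\Ent(t)} = k^{\ex(V_n,\mathcal{P})}$. By hypothesis~(a), $\ex(V_n,\mathcal{P})/\vert V_n\vert \to \pi(\mathcal{P})$, so $\ex(V_n,\mathcal{P}) = (\pi(\mathcal{P})+o(1))\vert V_n\vert$, whence $\liminf_{n\to\infty}\log_k\vert\mathcal{P}_n\vert/\vert V_n\vert \ge \pi(\mathcal{P})$.

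For the upper bound, fix $\varepsilon>0$. First apply hypothesis~(b) with parameter $\varepsilon/2$ to obtain $\delta>0$ and $n_0\in\N$. Next set $\eta := \min(\delta,\varepsilon/2)$ and apply Theorem~\ref{theorem: ossee container} with parameter $\eta$: for all $n\ge n_1$ there is a container family $\mathcal{T}_n$ for $\mathcal{P}_n$ such that every $t\in\mathcal{T}_n$ has at most $\eta\vert\binom{V_n}{V_N}\vert\le\delta\vert\binom{V_n}{V_N}\vert$ pairs $(\phi,c)$ with $\phi\in\binom{V_n}{V_N}$, $c\in\mathcal{F}$ and $c\in\langle t_{\vert\phi}\rangle$, and such that $\vert\mathcal{T}_n\vert\le k^{\eta\vert V_n\vert}$. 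By hypothesis~(b), every $t\in\mathcal{T}_n$ then satisfies $\Ent(t)\le(\pi(\mathcal{P})+\varepsilon/2)\vert V_n\vert$. Since $\mathcal{T}_n$ is a container family, every $c\in\mathcal{P}_n$ lies in $\langle t\rangle$ for some $t\in\mathcal{T}_n$, so for all $n\ge\max(n_0,n_1)$,
\[
\vert\mathcal{P}_n\vert \le \sum_{t\in\mathcal{T}_n}\vert\langle t\rangle\vert \le \vert\mathcal{T}_n\vert\, k^{\max_{t\in\mathcal{T}_n}\Ent(t)} \le k^{\eta\vert V_n\vert}\,k^{(\pi(\mathcal{P})+\varepsilon/2)\vert V_n\vert} \le k^{(\pi(\mathcal{P})+\varepsilon)\vert V_n\vert}.
\]
As $\varepsilon>0$ was arbitrary, $\limsup_{n\to\infty}\log_k\vert\mathcal{P}_n\vert/\vert V_n\vert \le \pi(\mathcal{P})$, and combining with the lower bound yields the claim.

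Since all the substantial work is already packaged — the container theorem is Theorem~\ref{theorem: ossee container}, and entropy-density and supersaturation are supplied as hypotheses (a) and (b) — there is no genuine obstacle here; the only point requiring care is the parameter bookkeeping, namely choosing the container parameter $\eta$ small enough relative to both the $\delta$ produced by hypothesis~(b) and the target slack $\varepsilon$, and observing that in the ssee setting the lower bound relies on the \emph{existence} of the limit in~(a) rather than on any monotonicity of $\ex(V_n,\mathcal{P})/\vert V_n\vert$, which we have not established in general (contrast Proposition~\ref{proposition: entropy density}).
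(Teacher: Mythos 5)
Your proof is correct and is exactly the argument the paper intends: the paper's own proof simply says the deduction is identical to that of Theorem~\ref{theorem: counting result for Forb(F), F finite hereditary families}, with Theorem~\ref{theorem: ossee container} and hypotheses (a) and (b) replacing Theorem~\ref{theorem: multi-colour container}, Proposition~\ref{proposition: entropy density} and Lemma~\ref{lemma: supersaturation}, which is what you have written out, including the correct parameter bookkeeping. Your closing remark that the lower bound only needs the existence of the limit in (a), not monotonicity, is a fair and accurate observation.
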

\begin{proof}
	This is identical to the deduction of Theorem~\ref{theorem: counting result for Forb(F), F finite hereditary families} from Theorem~\ref{theorem: multi-colour container}, Proposition~\ref{proposition: entropy density} and Lemma~\ref{lemma: supersaturation}, by using Theorem~\ref{theorem: ossee container} and assumptions (a) and (b) to replace these three ingredients.
\end{proof}
To obtain counting results for general order-hereditary properties we need a little more.
\begin{theorem}\label{theorem: counting for arbitrary possee}
	Let $\mathbf{V}$ be a good ssee and let $k\in \mathbb{N}$. Suppose that the following hold:
	\begin{enumerate}[(a)]
		\item $\ex(V_n, \mathcal{P})/\vert V_n\vert$ is nonincreasing (and in particular tends to a limit $\pi(\mathcal{P})$) for all hereditary properties $\mathcal{P}$ of $k$-colourings of $\mathbf{V}$;
		\item for all $N$ and all nonempty families $\mathcal{F}\subseteq [k]^{V_N}$ we have supersaturation for $\Forb_{\mathbf{V}} (\mathcal{F})$:\\
		$\forall \varepsilon>0, \ \exists \delta>0, \ n_0\in \mathbb{N}$ such that if $n\geq n_0$ then for every $t\in \left(2^{[k]}\setminus \{\emptyset\}\right)^{V_n}$ with at most~$\delta \vert \binom{V_n}{V_N}\vert $ pairs $(\phi, c)$ with $\phi \in \binom{V_n}{V_N}$, $c\in \mathcal{F}$ and $c\in \langle t_{\vert \phi}\rangle$ we have $\mathrm{Ent}(t)\leq \left (\pi(\mathcal{P})+\varepsilon\right)\vert V_n\vert$. 
	\end{enumerate}	
	Then for any hereditary property $\mathcal{P}$ of $k$-colourings of $\mathbf{V}$,
	\[\vert \mathcal{P}_n\vert = k^{\bigl(\pi(\mathcal{P})+o(1)\bigr)\vert V_n\vert }.\]
\end{theorem}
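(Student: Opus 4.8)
The plan is to mirror, in the ssee setting, the route by which Corollary~\ref{corollary: speed of arbitrary hereditary properties} was deduced from Theorem~\ref{theorem: counting result for Forb(F), F finite hereditary families} via the approximation result Theorem~\ref{theorem: approximation of general hereditary properties}: I would approximate the (possibly infinitely constrained) hereditary property $\mathcal{P}$ from above by a property $\mathcal{Q}^N$ defined by finitely many forbidden colourings on a \emph{single} vertex set $V_N$, and then apply Theorem~\ref{theorem: counting for possee} to $\mathcal{Q}^N$. The lower bound needs no work: taking an entropy-maximising template $t$ with $\langle t\rangle\subseteq\mathcal{P}_n$ and using $\vert\langle t\rangle\vert=k^{\Ent(t)}$ together with assumption~(a) (which makes $\ex(V_n,\mathcal{P})/\vert V_n\vert$ nonincreasing with limit $\pi(\mathcal{P})$), we obtain $\vert\mathcal{P}_n\vert\geq k^{\ex(V_n,\mathcal{P})}\geq k^{\pi(\mathcal{P})\vert V_n\vert}$.

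For the upper bound, fix $\varepsilon>0$. For each $N$ I would set $\mathcal{F}_N:=[k]^{V_N}\setminus\mathcal{P}_N$ and $\mathcal{Q}^N:=\Forb_{\mathbf{V}}(\mathcal{F}_N)$, a hereditary $k$-colouring property of $\mathbf{V}$ defined by a finite family of forbidden colourings of the \emph{single} set $V_N$. Heredity of $\mathcal{P}$ gives $c_{\vert\phi}\in\mathcal{P}_N$ for every $c\in\mathcal{P}_n$ and $\phi\in\binom{V_n}{V_N}$, so $\mathcal{P}_n\subseteq(\mathcal{Q}^N)_n$ for all $n\geq N$ and hence $\pi(\mathcal{Q}^N)\geq\pi(\mathcal{P})$. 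Conversely, testing membership of $(\mathcal{Q}^N)_N$ against the identity embedding in $\binom{V_N}{V_N}$ (and using heredity of $\mathcal{P}$ for the reverse inclusion) shows $(\mathcal{Q}^N)_N=\mathcal{P}_N$, so $\ex(V_N,\mathcal{Q}^N)=\ex(V_N,\mathcal{P})$; since assumption~(a), applied to the hereditary property $\mathcal{Q}^N$, makes $\bigl(\ex(V_n,\mathcal{Q}^N)/\vert V_n\vert\bigr)_n$ nonincreasing, we get
\[\pi(\mathcal{Q}^N)\leq\frac{\ex(V_N,\mathcal{Q}^N)}{\vert V_N\vert}=\frac{\ex(V_N,\mathcal{P})}{\vert V_N\vert},\]
and the right-hand side tends to $\pi(\mathcal{P})$ as $N\to\infty$ by assumption~(a). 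Thus $\pi(\mathcal{Q}^N)\to\pi(\mathcal{P})$, and I would fix $N$ with $\pi(\mathcal{Q}^N)<\pi(\mathcal{P})+\varepsilon/2$.

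With $N$ fixed, I would invoke Theorem~\ref{theorem: counting for possee} for $\mathcal{Q}^N=\Forb_{\mathbf{V}}(\mathcal{F}_N)$: its hypothesis~(a) holds because $\mathcal{Q}^N$ is hereditary and assumption~(a) of the present theorem forces $\ex(V_n,\mathcal{Q}^N)/\vert V_n\vert$ to converge, and its hypothesis~(b) is exactly the supersaturation assumption~(b) applied to the family $\mathcal{F}_N$. Theorem~\ref{theorem: counting for possee} then yields $\vert(\mathcal{Q}^N)_n\vert=k^{(\pi(\mathcal{Q}^N)+o(1))\vert V_n\vert}$, so for all $n$ sufficiently large $\vert\mathcal{P}_n\vert\leq\vert(\mathcal{Q}^N)_n\vert\leq k^{(\pi(\mathcal{P})+\varepsilon)\vert V_n\vert}$; combined with the lower bound above, this gives $\vert\mathcal{P}_n\vert=k^{(\pi(\mathcal{P})+o(1))\vert V_n\vert}$.

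I do not expect a serious obstacle — the argument is essentially a transcription of the $K_n$ case — but the one point requiring care is the choice of the approximating property. It must be of the form $\Forb_{\mathbf{V}}$ of a family supported on a \emph{single} set $V_N$, so that Theorem~\ref{theorem: counting for possee} (and the container theorem behind it) applies verbatim without any further "extension" or "composition" axiom relating the embedding sets $\binom{V_n}{V_N}$ across different $N$; and one needs the convergence $\pi(\mathcal{Q}^N)\to\pi(\mathcal{P})$, which is where monotonicity assumption~(a) is genuinely used — it is precisely what pins $\pi(\mathcal{Q}^N)$ below the value $\ex(V_N,\mathcal{P})/\vert V_N\vert$ that $\mathcal{Q}^N$ shares with $\mathcal{P}$ at level $N$. (In the route of Theorem~\ref{theorem: approximation of general hereditary properties} this convergence was argued via a chain of inclusions $\mathcal{Q}^1\supseteq\mathcal{Q}^2\supseteq\cdots$; here the monotonicity-of-entropy argument circumvents the need to establish such a chain.)
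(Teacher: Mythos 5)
Your proposal is correct and takes essentially the same route as the paper: the paper's proof is precisely to transcribe the approximation argument of Theorem~\ref{theorem: approximation of general hereditary properties} to the ssee setting (using the monotonicity in assumption~(a)) and then conclude via Theorem~\ref{theorem: counting for possee}, exactly as you deduce the upper bound, with the lower bound coming from extremal entropy as in~\eqref{eq: entropy bound on P_n}. Your only deviation is cosmetic: you approximate by the single-level property $\Forb_{\mathbf{V}}\bigl([k]^{V_N}\setminus\mathcal{P}_N\bigr)$ and pin down $\pi(\mathcal{Q}^N)\leq \ex(V_N,\mathcal{P})/\vert V_N\vert$ directly from monotonicity, rather than running the paper's chain-of-inclusions/contradiction formulation of the same monotonicity argument, and this streamlining (together with the implicit facts, shared with the paper, that $\Forb_{\mathbf{V}}$ of a single-level family is hereditary and detects membership at level $N$) is harmless.
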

\begin{proof}
The monotonicity in (a) and Theorem~\ref{theorem: counting for possee} allow us to apply the proof of Theorem~\ref{theorem: approximation of general hereditary properties} and obtain an ssee-version of our approximation of arbitrary hereditary $k$-colouring properties by properties defined by finite families of forbidden colourings.  We then deduce the claimed counting result in exactly the same way as we deduced Corollary~\ref{corollary: speed of arbitrary hereditary properties} from Theorems \ref{theorem: counting result for Forb(F), F finite hereditary families} and~\ref{theorem: approximation of general hereditary properties}.
\end{proof}
\begin{remark}
While we do not pursue this here, one can also use Theorem~\ref{theorem: ossee container} to derive stability and transference results by following the proofs of Theorems \ref{theorem: strong stability and containers} and~\ref{theorem: transference}. In the latter case, the lower bound on $p$ required for transference is 
\[ p \gg  \left(\frac{\vert V_n\vert I(N,n)}{\bigl\vert\binom{V_n}{V_N}\bigr\vert}\right)^{1/(2\vert V_N\vert -1)}\]  
provided $I(N,n)=\Omega( \binom{V_n}{V_N})$, as opposed to $p\gg n^{\-1/\bigl(2\binom{N}{2}-1\bigr)}$ when we were colouring $E(K_n)$ .
\end{remark}
\subsection{Graph and hypergraph sequences}\label{subsection: hypergraph sequences}
As an ssee is quite an abstract object, we feel it is helpful to clearly state the implications of Theorem~\ref{theorem: ossee container} in the language of graphs and hypergraphs. Recall from Example~\ref{example: graph edge ssee} that we may obtain an ssee $\mathbf{V}$ from a sequence of $l$-graphs $\Graphseq$ on linearly ordered vertex sets by taking $V_n=E(G_n)$ and taking as our set of embeddings $\binom{V_n}{V_N}$ all maps $\psi: \ E(G_N)\rightarrow E(G_n)$ arising from order-preserving embeddings $\phi$ from $G_N$ into $G_n$. We now restate the main definitions and results of the previous section (Definition~\ref{definition: good ossee}, Theorem~\ref{theorem: ossee container}) in terms of hypergraph sequences:
\begin{definition}[Good $l$-graph sequence]\label{definition: good graph sequence}
Let $l\geq 2$. An $l$-graph sequence $\Graphseq$ is \emph{good} if it satisfies the following conditions:
\begin{enumerate}[(i)]
	\item $e(G_n)\rightarrow \infty$ (`the graphs in the sequence become large');
	\item for all $N \in \N$ with $e(G_N)>1$, $\binom{G_n}{G_N}\gg e(G_n)$ as $n\rightarrow \infty$ (`on average, edges of $G_n$ are contained in many embeddings of $G_N$ ');
	\item for all $N \in \N$ with $e(G_N)>1$, $e(G_n)I(N,n)\big/ \binom{G_n}{G_N}^2 \rightarrow 0$ as $n\rightarrow \infty$ (`most pairs of edges in $G_n$ are contained in relatively few embeddings of $G_N$').
\end{enumerate}	
\end{definition}
\begin{theorem}\label{theorem: general multicolour container result for hypergraph sequences}
	Let $l\geq 2$ and let $\Graphseq$ be a good $l$-graph sequence. Let $k$,~$N \in \N$. Let $\mathcal{F}$ be a nonempty collection of $k$-colourings of $G_N$ and let\/ $\mathcal{P}=\Forb_{\mathbf{G}}(\mathcal{F})$.  For any $\varepsilon>0$, there exists $n_0>0$ such that for any $n\geq n_0$ there exists a collection~$\mathcal{T}_n$ of $k$-colouring templates for $G_n$ satisfying:
	\begin{enumerate}[(i)]
		\item $\mathcal{T}_n$ is a container family for\/ $\mathcal{P}_n$;
		\item for each template $t\in\mathcal{T}_n$, there are at most $\varepsilon \bigl\vert \binom{G_n}{G_N}\bigr\vert $ pairs $(\phi, c)$ with $\phi\in \binom{G_n}{G_N}$, $c\in \mathcal{F}$ and  $c\in \langle t_{\vert \phi}\rangle $;
		\item $\vert \mathcal{T}_n\vert\leq k^{\varepsilon e(G_n)}$.
	\end{enumerate}	
\end{theorem}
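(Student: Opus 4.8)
The plan is to deduce the theorem directly from Theorem~\ref{theorem: ossee container} by reinterpreting a good $l$-graph sequence as a good ssee, so that all the real work (random sparsification plus the Saxton--Thomason linear-hypergraph container theorem) is already done.

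First I would set up the dictionary. Given a good $l$-graph sequence $\Graphseq=(G_n)_{n\in\N}$ on linearly ordered vertex sets, form the edge-ssee $\mathbf{V}$ of Example~\ref{example: graph edge ssee}: put $V_n:=E(G_n)$, and let $\binom{V_n}{V_N}$ be the collection of injections $\psi\colon E(G_N)\to E(G_n)$ of the form $\psi(e)=\{\phi(x):x\in e\}$ for an order-preserving embedding $\phi$ of $G_N$ into $G_n$. Under this identification: a $k$-colouring of $G_n$ is precisely a $k$-colouring of the set $V_n$; a $k$-colouring template of $G_n$ is precisely a template of $V_n$, with the same entropy $\Ent$; the induced subcolouring and subtemplate operations agree; $\Forb_{\Graphseq}(\mathcal{F})=\Forb_{\mathbf{V}}(\mathcal{F})$; and the combinatorial parameters match, namely $|V_n|=e(G_n)$, $\bigl|\binom{V_n}{V_N}\bigr|=\bigl|\binom{G_n}{G_N}\bigr|$, and $I(N,n)$ is literally the same quantity on both sides. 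As in the proof of Theorem~\ref{theorem: ossee container}, the degenerate case $e(G_N)\le 1$ is trivial --- then $\mathcal{F}$ is just a list $F$ of forbidden colours and the single template $([k]\setminus F)^{V_n}$ works --- so I may assume $e(G_N)>1$.

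Next I would verify that goodness transfers: clauses (i)--(iii) of Definition~\ref{definition: good graph sequence} for $\Graphseq$ are term-by-term identical, under the dictionary above, to clauses (i)--(iii) of Definition~\ref{definition: good ossee} for $\mathbf{V}$ --- $e(G_n)\to\infty$ becomes $|V_n|\to\infty$; $\binom{G_n}{G_N}\gg e(G_n)$ becomes $\bigl|\binom{V_n}{V_N}\bigr|\gg|V_n|$; and $e(G_n)I(N,n)\big/\binom{G_n}{G_N}^2\to0$ becomes $\bigl(|V_n|\,I(N,n)\bigr)\big/\bigl|\binom{V_n}{V_N}\bigr|^2\to0$. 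Hence $\mathbf{V}$ is a good ssee and $\mathcal{P}=\Forb_{\Graphseq}(\mathcal{F})=\Forb_{\mathbf{V}}(\mathcal{F})$.

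Finally I would apply Theorem~\ref{theorem: ossee container} to $\mathbf{V}$, $k$, $N$, $\mathcal{F}$ with the given $\varepsilon>0$, obtaining for all $n\ge n_0$ a family $\mathcal{T}_n$ of $k$-colouring templates for $V_n=E(G_n)$ that is a container family for $\mathcal{P}_n$, has at most $\varepsilon\bigl|\binom{V_n}{V_N}\bigr|$ pairs $(\phi,c)$ with $\phi\in\binom{V_n}{V_N}$, $c\in\mathcal{F}$, $c\in\langle t_{\vert\phi}\rangle$ for each $t\in\mathcal{T}_n$, and satisfies $|\mathcal{T}_n|\le k^{\varepsilon|V_n|}$. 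Translating these three conclusions back through the dictionary ($|V_n|=e(G_n)$, $\bigl|\binom{V_n}{V_N}\bigr|=\bigl|\binom{G_n}{G_N}\bigr|$) yields conclusions (i)--(iii) of the theorem exactly. There is essentially no obstacle here; the only point requiring care is the bookkeeping of the translation --- in particular making sure the notion of ``embedding'' used to build the ssee (order-preserving embeddings of $G_N$ into $G_n$) is the same one used to define both the hereditary property $\Forb_{\Graphseq}(\mathcal{F})$ and the counting parameter $\binom{G_n}{G_N}$, so that the two sides genuinely coincide.
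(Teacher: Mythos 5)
Your proposal is correct and is exactly the argument the paper intends: Theorem~\ref{theorem: general multicolour container result for hypergraph sequences} is presented there as a restatement of Theorem~\ref{theorem: ossee container} via the edge-ssee of Example~\ref{example: graph edge ssee}, with Definition~\ref{definition: good graph sequence} matching Definition~\ref{definition: good ossee} under the same dictionary you describe. Nothing further is needed.
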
 
In certain applications one maybe be interested in \emph{vertex}-colourings of $l$-graph sequences, rather than the edge-colourings considered above. As described in Example~\ref{example: graph vertex ssee}, we may obtain an ssee~$\mathbf{V}$ from a sequence of $l$-graphs $\Graphseq$ by taking $V_n=V(G_n)$ and taking as our collection of embeddings $\binom{G_n}{G_N}$ all (order-preserving) embeddings from $G_N$ to $G_n$. In this case, Definition~\ref{definition: possee} and Theorem~\ref{theorem: ossee container} become:
\begin{definition}[Vertex-good $l$-graph sequences]\label{def: hypergraph vertex-good}
	Let $l\geq 2$.  An $l$-graph sequence~$\Graphseq$ is \emph{vertex-good}  if it satisfies the following conditions:
	\begin{enumerate}[(i)]
		\item $v(G_n)\rightarrow \infty$ (`the graphs in the sequence become large');
		\item for all $N \in \N$ with $\vert V_N\vert >1$, $\binom{G_n}{G_N}\gg v(G_n) $ (`on average, vertices in $G_n$ are in many embeddings of $G_N$');
		\item for all $N \in \N$ with $\vert V_N\vert>1$, $v(G_n)I(N,n)\big/ \binom{G_n}{G_N}^2 \rightarrow 0$ as $n\rightarrow \infty$ (`most pairs of embeddings of~$G_N$ share at most one vertex').
	\end{enumerate}	
\end{definition}

\begin{theorem}\label{theorem: general multicolour container result for vertex colouring of hypergraph sequences}
	Let $l\geq 2$ and let $\Graphseq$ be a vertex-good $l$-graph sequence. Let $k$,~$N \in \N$. Let $\mathcal{F}$ be a nonempty collection of $k$-colourings of~$V(G_N)$ and let\/ $\mathcal{P}=\Forb_{\Graphseq}(\mathcal{F})$. 
	For any $\varepsilon>0$, there exists $n_0>0$ such that for any $n\geq n_0$ there exists a collection\/ $\mathcal{T}_n$ of $k$-colouring templates for $V(G_n)$ satisfying:
	\begin{enumerate}[(i)]
		\item $\mathcal{T}_n$ is a container family for\/ $\mathcal{P}_n$;
		\item for each template $t\in\mathcal{T}_n$, there are at most $\varepsilon \bigl\vert \binom{G_n}{G_N}\bigr\vert $ pairs $(\phi, c)$ with $\phi\in \binom{G_n}{G_N}$, $c\in \mathcal{F}$ and  $c\in \langle t_{\vert \phi}\rangle $;
		\item $\vert \mathcal{T}_n\vert\leq k^{\varepsilon v(G_n)}$.
	\end{enumerate}	
\end{theorem}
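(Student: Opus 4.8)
The plan is to deduce this statement directly from Theorem~\ref{theorem: ossee container} by specialising it to the vertex-ssee attached to $\Graphseq$. First I would set $\mathbf{V}=(V_n)_{n\in\N}$ with $V_n=V(G_n)$ and with $\binom{V_n}{V_N}$ equal to the collection of order-preserving embeddings of $G_N$ into $G_n$ (that is, order-preserving injections $\phi\colon V(G_N)\to V(G_n)$ with $\phi(e)\in E(G_n)$ if and only if $e\in E(G_N)$ for every $l$-set $e$), exactly as in Example~\ref{example: graph vertex ssee}. With this choice, a hereditary $k$-colouring property of $\mathbf{V}$ is precisely a hereditary property of vertex-$k$-colourings of $\Graphseq$, and $\Forb_{\mathbf{V}}(\mathcal{F})=\Forb_{\Graphseq}(\mathcal{F})$; moreover $\vert V_n\vert=v(G_n)$ and $\bigl\vert\binom{V_n}{V_N}\bigr\vert=\bigl\vert\binom{G_n}{G_N}\bigr\vert$.

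Second, I would check that $\mathbf{V}$ is a good ssee in the sense of Definition~\ref{definition: good ossee} exactly when $\Graphseq$ is vertex-good in the sense of Definition~\ref{def: hypergraph vertex-good}. This is a term-by-term match: since $\vert V_N\vert=v(G_N)$, condition (i) of Definition~\ref{definition: good ossee} reads $v(G_n)\to\infty$; condition (ii) reads $\bigl\vert\binom{G_n}{G_N}\bigr\vert\gg v(G_n)$ for every $N$ with $v(G_N)>1$; and condition (iii), using that the quantity $I(N,n)$ --- the number of pairs of embeddings of $V_N$ into $V_n$ whose images meet in between $2$ and $\vert V_N\vert-1$ vertices --- is literally the same object in both settings, reads $v(G_n)I(N,n)\big/\bigl\vert\binom{G_n}{G_N}\bigr\vert^2\to0$. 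These are exactly conditions (i)--(iii) of Definition~\ref{def: hypergraph vertex-good}, so a vertex-good $l$-graph sequence gives a good ssee.

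Finally, I would invoke Theorem~\ref{theorem: ossee container} with this $\mathbf{V}$, the given $k$, $N$, $\mathcal{F}$ and $\mathcal{P}=\Forb_{\mathbf{V}}(\mathcal{F})=\Forb_{\Graphseq}(\mathcal{F})$, and translate its conclusion back into the language of $\Graphseq$: for every $\varepsilon>0$ there is $n_0$ such that for $n\ge n_0$ there is a family $\mathcal{T}_n\subseteq(2^{[k]}\setminus\{\emptyset\})^{V(G_n)}$ that is a container family for $\mathcal{P}_n$ (part (i)), for which each template has at most $\varepsilon\bigl\vert\binom{G_n}{G_N}\bigr\vert$ bad pairs $(\phi,c)$ (part (ii)), and with $\vert\mathcal{T}_n\vert\le k^{\varepsilon\vert V_n\vert}=k^{\varepsilon v(G_n)}$ (part (iii)). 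This is precisely the asserted theorem. I do not expect any genuine obstacle here: the argument is a bookkeeping translation, and the only point that deserves a sentence of care is to confirm that the notion of embedding used to build $\mathbf{V}$ is the same one appearing in the definitions of $\binom{G_n}{G_N}$, $I(N,n)$ and $\Forb_{\Graphseq}(\mathcal{F})$, so that the two container statements really concern the same families of colourings.
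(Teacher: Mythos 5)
Your proposal is correct and matches the paper's own treatment: the paper obtains this theorem exactly as you do, by building the vertex-ssee of Example~\ref{example: graph vertex ssee} from $\Graphseq$ (so that vertex-goodness in Definition~\ref{def: hypergraph vertex-good} is literally goodness of the ssee in Definition~\ref{definition: good ossee}) and then specialising Theorem~\ref{theorem: ossee container}. Your closing remark about matching the notion of embedding across $\binom{G_n}{G_N}$, $I(N,n)$ and $\Forb_{\Graphseq}(\mathcal{F})$ is exactly the only point of care needed.
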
 
Furthermore, as discussed in Section~\ref{subsection: possee}, and as more specifically stated in Theorem~\ref{theorem: counting for possee}, if $\mathcal{P}=\Forb_{\Graphseq}(\mathcal{F})$ admits an entropy density $\pi(\mathcal{P})$  (condition (a)) and the supersaturation property (condition (b)) then Theorems \ref{theorem: general multicolour container result for hypergraph sequences} and~\ref{theorem: general multicolour container result for vertex colouring of hypergraph sequences} immediately give a counting result, with $\vert \mathcal{P}_ n\vert = k^{( \pi(\mathcal{P}) +o(1) )e(G_n)}$ in the $k$-edge--colouring case and 
$\vert \mathcal{P}_ n\vert = k^{( \pi(\mathcal{P}) +o(1) )v(G_n)}$ in the $k$-vertex--colouring case.

However rather than state edge- and vertex-colouring versions of the counting result for ssee, Theorem~\ref{theorem: counting for possee}, we give an illustrative example in the next subsection by providing counting results for hereditary properties of edge- and vertex-colourings of hypercubes. This will hopefully make the situation clearer than an abstract theorem, avoid repetition, and in addition show how one goes about checking in practice that a sequence of graphs is good/vertex-good, has an entropy density function and satisfies the supersaturation property.

\subsection{Colourings of hypercube graphs}\label{subsection: hypercubes}
In this subsection we show the sequence $\Hypercube$ of hypercube graphs is good, has an entropy density function and satisfies the supersaturation property with respect to both edge-- and vertex--colourings. Using our results from Section~\ref{subsection: possee}, we immediately deduce for hereditary properties of edge-- and vertex--colourings of hypercubes.

In both settings, let $\binom{Q_n}{Q_N}$ denote the collection of injections~$\phi: \ V(Q_N)\to V(Q_n)$ obtained by  selecting an $N$-set $A=\{a_1, \ldots , a_N\}\subseteq [n]$ with $a_1<a_2<\cdots <a_N$ and a vector $\mathbf{v} \in \{0,1\}^{[n]\setminus A}$, and setting
\[\phi(\mathbf{x})_i=\left\{  \begin{array}{ll}\mathbf{v}_i & \textrm{if } i\notin B\\
\mathbf{x}_j & \textrm{if } i=a_j \in B.
\end{array}
 \right. \]
In other words, we have one embedding $\phi$ for each copy of $Q_N$ in $Q_n$. An edge/vertex--colouring property of hypercubes $\mathcal{P}$ is then called hereditary if for every $c \in \mathcal{P}_n$ and $\phi \in \binom{Q_n}{Q_N}$  we have $c_{\vert \phi}\in \mathcal{P}_N$.
\subsubsection{Edge-colourings of hypercubes}
\begin{proposition}[Goodness of hypercube graphs]\label{proposition: hypercube graphs are good}
The sequence $\Hypercube$ is good.
\end{proposition}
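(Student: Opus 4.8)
The plan is to verify directly the three conditions of the edge-version of goodness, Definition~\ref{definition: good graph sequence}, for the ssee attached to $\Hypercube$ as in Example~\ref{example: graph edge ssee}. The relevant parameters are standard: $Q_n$ has $2^n$ vertices and $e(Q_n)=n2^{n-1}$ edges, and an embedding $\phi\in\binom{Q_n}{Q_N}$ is precisely the choice of an $N$-subcube of $Q_n$, i.e.\ of a coordinate set $A\in\binom{[n]}{N}$ together with a fixed-coordinate vector $\mathbf{v}\in\{0,1\}^{[n]\setminus A}$; distinct pairs $(A,\mathbf{v})$ give distinct subcubes (hence distinct induced edge-maps, since for $N\geq 1$ a subcube is recovered as the union of the $Q_n$-edges inside it), so $\bigl\vert\binom{Q_n}{Q_N}\bigr\vert=\binom{n}{N}2^{n-N}=\Theta(n^N2^n)$ for fixed $N$. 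Condition~(i) is immediate since $e(Q_n)=n2^{n-1}\to\infty$. For condition~(ii) one only considers $N$ with $e(Q_N)>1$, equivalently $N\geq 2$, and there $\bigl\vert\binom{Q_n}{Q_N}\bigr\vert/e(Q_n)=\binom{n}{N}2^{1-N}/n=\Theta(n^{N-1})\to\infty$.

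The substance of the proof is condition~(iii), and I would begin with a clean combinatorial description of $I(N,n)$. Given two $N$-subcubes $C_1,C_2$ of $Q_n$, their intersection $C_1\cap C_2$ is either empty or a $d$-subcube for some $0\leq d\leq N$; in the former case they share no edges, and in the latter the set of edges they have in common (as edge-sets) is exactly the set of $Q_n$-edges lying inside $C_1\cap C_2$, which has size $e(Q_d)=d2^{d-1}$. Since $d\mapsto e(Q_d)$ is strictly increasing and $e(Q_1)=1$, this shared count lies strictly between $1$ and $e(Q_N)$ exactly when $2\leq d\leq N-1$, so $I(N,n)$ is precisely the number of ordered pairs $(C_1,C_2)$ of $N$-subcubes with $2\leq\dim(C_1\cap C_2)\leq N-1$. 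Every such pair has $C_1\cap C_2$ containing a $2$-subcube, and an $N$-subcube containing a fixed $2$-subcube is obtained by enlarging that $2$-subcube's coordinate set to size $N$ (the fixed-coordinate vector of the larger cube being then forced), so there are exactly $\binom{n-2}{N-2}$ of them. Hence, with $s_2(n)=\binom{n}{2}2^{n-2}$ denoting the number of $2$-subcubes of $Q_n$,
\[
I(N,n)\;\leq\;s_2(n)\binom{n-2}{N-2}^{2}\;=\;\binom{n}{2}2^{n-2}\binom{n-2}{N-2}^{2}\;=\;O\bigl(n^{2N-2}2^n\bigr)
\]
for fixed $N\geq 2$. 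Condition~(iii) then follows by assembling the three estimates:
\[
\frac{e(Q_n)\,I(N,n)}{\bigl\vert\binom{Q_n}{Q_N}\bigr\vert^{2}}\;=\;O\!\left(\frac{(n2^n)\,(n^{2N-2}2^n)}{(n^N2^n)^{2}}\right)\;=\;O(1/n)\;\longrightarrow\;0 ,
\]
so $\Hypercube$ is good.

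The only step needing genuine care is the bound on $I(N,n)$: a careless estimate that merely counts pairs of subcubes whose coordinate sets overlap in at least two coordinates, while ignoring the requirement that their fixed-coordinate vectors must agree off the union of these sets, produces $I(N,n)=O(n^{2N-2}2^{2n})$ --- too large by a factor $2^n$, which would make the ratio in~(iii) diverge. The fact that does the work is the rigidity observation that two $N$-subcubes sharing two or more edges must share an entire $2$-dimensional subcube; once this is in hand, the rest is bookkeeping. (The parallel statement for vertex-colourings, namely that $\Hypercube$ is vertex-good in the sense of Definition~\ref{def: hypergraph vertex-good}, is proved in the same way, with $v(Q_n)=2^n$ in place of $e(Q_n)$ and $1$-subcubes playing the role of $2$-subcubes.)
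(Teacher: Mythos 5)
Your proof is correct and follows essentially the same route as the paper: conditions (i) and (ii) by the same direct computations, and condition (iii) by bounding $I(N,n)$ via the observation that two $N$-subcubes sharing at least two edges must meet in a subcube of dimension at least $2$, giving $I(N,n)=O(n^{2N-2}2^n)$ and hence a ratio of order $O(1/n)$. The only (immaterial) difference is bookkeeping: you bound $I(N,n)$ by summing over common $2$-subcubes, whereas the paper sums over the possible intersection dimensions $2\leq i\leq N$, with both counts of the same order.
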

\begin{proof}
Let $N>1$.	We have $e(Q_n)=n2^{n-1}$ and $\bigl\vert \binom{Q_n}{Q_N}\bigr\vert=\binom{n}{N}2^{n-N}=\Omega(n^N2^n)\gg e(Q_n)$, establishing parts (i) and~(ii) of Definition~\ref{definition: good graph sequence}. For part~(iii), noting that two $N$-dimensional subcubes with at least two edges in common must meet in an $i$-dimensional subcube for some $i$ such that $2\leq i\leq N$, we have
	\begin{align*}
	I(N,n)&=\frac{1}{2}\biggl\vert \dbinom{Q_n}{Q_{n_0}}\biggr\vert \sum_{2\leq i\leq N} \binom{N}{i}2^{N-i}\binom{n-N}{N-i}=O\biggl(\biggl\vert \dbinom{Q_n}{Q_{n_0}}\biggr\vert n^{N-2}\biggr).
	\end{align*}
It follows that $I(N,n)e(Q_n)/{\bigl\vert \binom{Q_n}{Q_N}\bigr\vert ^2}=O(1/n)=o(1)$ as required. 
\end{proof}
\begin{proposition}[Entropy density for edge-colourings of hypercubes]\label{proposition: entropy density edge-hypercube}
If\/ $\mathcal{P}$ is a hereditary property of $k$-edge-colourings of~$\Hypercube$, then the sequence~$\ex(Q_n, \mathcal{P})/2^{n-1}n$ is nonincreasing and tends to a limit $\pi(\mathcal{P})$ as $n \to \infty$.	
\end{proposition}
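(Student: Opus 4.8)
The plan is to mimic the proof of Proposition~\ref{proposition: entropy density}, replacing $\binom{n}{2}$ by $e(Q_n)=n2^{n-1}$ and averaging over the copies of $Q_n$ sitting inside $Q_{n+1}$ rather than over vertex-deleted subgraphs. Since for any $k$-colouring template $t$ of $E(Q_n)$ we have $0\leq \Ent(t)\leq e(Q_n)$, the ratio $\ex(Q_n,\mathcal{P})/e(Q_n)$ lies in $[0,1]$; so it suffices to show this sequence is nonincreasing, and convergence to a limit $\pi(\mathcal{P})\in[0,1]$ then follows from the monotone convergence of bounded sequences.

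First I would fix $n$, take an arbitrary template $t\in\left(2^{[k]}\setminus\{\emptyset\}\right)^{E(Q_{n+1})}$ with $\langle t\rangle\subseteq\mathcal{P}_{n+1}$, and for each embedding $\phi\in\binom{Q_{n+1}}{Q_n}$ consider the induced subtemplate $t_{\vert\phi}$ on $E(Q_n)$. Because $\mathcal{P}$ is hereditary, $\langle t_{\vert\phi}\rangle\subseteq\mathcal{P}_n$, so $\Ent(t_{\vert\phi})\leq\ex(Q_n,\mathcal{P})$ for every $\phi$. Summing this over all $\phi$ and using $\bigl\vert\binom{Q_{n+1}}{Q_n}\bigr\vert=2(n+1)$ (one embedding for each $n$-set $A\subseteq[n+1]$ together with a vector in $\{0,1\}^{[n+1]\setminus A}$), the right-hand side becomes $2(n+1)\ex(Q_n,\mathcal{P})$.

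The one combinatorial fact to verify is that $\sum_{\phi}\Ent(t_{\vert\phi})=n\,\Ent(t)$. Indeed $\sum_{\phi}\Ent(t_{\vert\phi})=\sum_{\phi}\sum_{f\in E(Q_n)}\log_k\vert t(\phi(f))\vert=\sum_{e\in E(Q_{n+1})}m(e)\log_k\vert t(e)\vert$, where $m(e)$ is the number of embeddings $\phi\in\binom{Q_{n+1}}{Q_n}$ whose image contains $e$; and a direct count shows $m(e)=n$ for every edge $e$ of $Q_{n+1}$ (if $e$ flips coordinate $i$, then $e$ lies in the subcube given by $(A,\mathbf{v})$ precisely when $i\in A$ and $\mathbf{v}$ agrees with the fixed coordinates of $e$ outside $A$, so $A$ ranges over the $n$ subsets of $[n+1]$ of size $n$ containing $i$, and $\mathbf{v}$ is then forced). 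Combining, $n\,\Ent(t)\leq 2(n+1)\ex(Q_n,\mathcal{P})$, that is, $\Ent(t)/e(Q_{n+1})\leq\ex(Q_n,\mathcal{P})/e(Q_n)$ since $e(Q_{n+1})/e(Q_n)=2(n+1)/n$. Taking the maximum over all admissible $t$ yields $\ex(Q_{n+1},\mathcal{P})/e(Q_{n+1})\leq\ex(Q_n,\mathcal{P})/e(Q_n)$, which establishes the monotonicity and hence the proposition.

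The only point requiring care is the uniformity of the incidence count $m(e)=n$, which is exactly what makes the averaging lossless; every other step is a transcription of the classical Tur\'an-density argument with $e(Q_n)$ in place of $\binom{n}{2}$, so I do not anticipate any genuine obstacle.
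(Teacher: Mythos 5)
Your proposal is correct and follows essentially the same route as the paper: average the entropy of the induced subtemplates over the $2(n+1)$ embeddings of $Q_n$ into $Q_{n+1}$, use heredity to bound each by $\ex(Q_n,\mathcal{P})$, and compare with $n\,\Ent(t)$. The only difference is that you spell out the incidence count $m(e)=n$, which the paper leaves implicit in the identity $n\,\Ent(t)=\sum_{\phi}\Ent(t_{\vert\phi})$.
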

\begin{proof}
Let $t$ be an extremal entropy template for $\mathcal{P}$ in $Q_{n+1}$. By averaging over all $\phi \in \binom{Q_{n+1}}{Q_n}$, we have
\begin{equation*}
n\ex(Q_{n+1}, \mathcal{P}) =n\Ent(t)=\sum_{\phi} \Ent\bigl(t_{\vert \phi}\bigr)\leq 2(n+1) \ex(Q_n, \mathcal{P}), 
\end{equation*}
whence $\ex(Q_n, \mathcal{P})/(2^{n-1}n)$ is nonincreasing in $[0,1]$, and hence tends to a limit as $n\rightarrow \infty$.
\end{proof}
\begin{proposition}[Supersaturation for edge--colourings of hypercubes]\label{proposition: supersaturation for hypercube graphs}
Let $N \in \N$ be fixed and let $\mathcal{F}$ be a nonempty collection of $k$-colourings of~$Q_N$. Set\/ $\mathcal{P}=\Forb_{\Hypercube}(\mathcal{F})$. For every $\varepsilon$ with $0 < \varepsilon < 1$, there exist constants $n_0\in\N$ and~$C_{0}>0$ such that for all $n\geq n_0$ and every template~$t\in \left(2^{[k]}\setminus \{\emptyset\}\right)^{Q_n}$ with
\[\Ent(t)> (\pi(\mathcal{P})+\varepsilon)e(Q_n),\] 
there are at least~
\[C_{0} \varepsilon\biggl\vert \dbinom{Q_n}{Q_{n_0}}\biggr\vert\] 
pairs~$(\phi, c)$ with $\phi\in \binom{Q_n}{Q_N}$,  $c \in \mathcal{F}$ and $c\in \langle t_{\vert \phi}\rangle$.
\end{proposition}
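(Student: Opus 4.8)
The plan is to follow the proof of Lemma~\ref{lemma: supersaturation} almost verbatim, replacing Proposition~\ref{proposition: entropy density} by Proposition~\ref{proposition: entropy density edge-hypercube} and the binomial bookkeeping by its hypercube analogue. Throughout, for a template $t'$ on $Q_m$ let $B(t')$ be the set of pairs $(\psi,c)$ with $\psi\in\binom{Q_m}{Q_N}$, $c\in\mathcal{F}$ and $c\in\langle t'_{\vert\psi}\rangle$; note that $B(t')=\emptyset$ forces $\langle t'\rangle\subseteq\mathcal{P}_m$ and hence $\Ent(t')\leq\ex(Q_m,\mathcal{P})$.

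First I would fix the bounded ``seed'' scale. By Proposition~\ref{proposition: entropy density edge-hypercube} the sequence $\ex(Q_m,\mathcal{P})/e(Q_m)$ is nonincreasing with limit $\pi(\mathcal{P})$, so there is an $n_0\geq N$ with $\ex(Q_{n_0},\mathcal{P})<(\pi(\mathcal{P})+\varepsilon/2)e(Q_{n_0})$. Consequently every template $t'$ on $Q_{n_0}$ with $\Ent(t')>(\pi(\mathcal{P})+\varepsilon/2)e(Q_{n_0})$ has $\vert B(t')\vert\geq1$.

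Next I would set up two averaging identities over the embeddings $\binom{Q_n}{Q_{n_0}}$. A fixed edge of $Q_n$ --- a free coordinate together with a $\{0,1\}$-vector on the remaining $n-1$ coordinates --- lies in exactly $\binom{n-1}{n_0-1}$ copies of $Q_{n_0}$ (the base vector of a copy is forced once its coordinate set is chosen), so summing over $\phi\in\binom{Q_n}{Q_{n_0}}$ gives $\sum_\phi\Ent(t_{\vert\phi})=\binom{n-1}{n_0-1}\Ent(t)$; together with $e(Q_m)=m2^{m-1}$ this yields the convenient identity $\binom{n-1}{n_0-1}e(Q_n)=\bigl\vert\binom{Q_n}{Q_{n_0}}\bigr\vert e(Q_{n_0})$. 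Similarly, each $\chi\in\binom{Q_n}{Q_N}$ factors as $\chi=\phi\circ\psi$ with $\phi\in\binom{Q_n}{Q_{n_0}}$ and $\psi\in\binom{Q_{n_0}}{Q_N}$ in exactly $\binom{n-N}{n_0-N}$ ways: one for each choice of the $n_0$-set of coordinates of $Q_n$ containing the $N$ active coordinates of $\chi$, with the intermediate coordinate set and both base vectors then forced. Since $(t_{\vert\phi})_{\vert\psi}=t_{\vert\phi\circ\psi}$, this gives $\sum_{\phi\in\binom{Q_n}{Q_{n_0}}}\vert B(t_{\vert\phi})\vert=\binom{n-N}{n_0-N}\vert B(t)\vert$.

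Finally I would conclude exactly as in Lemma~\ref{lemma: supersaturation}. Assume $\Ent(t)>(\pi(\mathcal{P})+\varepsilon)e(Q_n)$ and let $X$ count the $\phi\in\binom{Q_n}{Q_{n_0}}$ with $\Ent(t_{\vert\phi})>(\pi(\mathcal{P})+\varepsilon/2)e(Q_{n_0})$. Bounding $\Ent(t_{\vert\phi})$ by $e(Q_{n_0})$ for those $\phi$ and by $(\pi(\mathcal{P})+\varepsilon/2)e(Q_{n_0})$ for the rest in the first identity, and using $\binom{n-1}{n_0-1}e(Q_n)=\bigl\vert\binom{Q_n}{Q_{n_0}}\bigr\vert e(Q_{n_0})$, forces $X>\tfrac{\varepsilon}{2}\bigl\vert\binom{Q_n}{Q_{n_0}}\bigr\vert$. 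Each such $\phi$ has $\vert B(t_{\vert\phi})\vert\geq1$ by the seed step, so the second identity gives $\vert B(t)\vert\geq\tfrac{\varepsilon}{2}\bigl\vert\binom{Q_n}{Q_{n_0}}\bigr\vert/\binom{n-N}{n_0-N}$; since $\binom{n}{n_0}/\binom{n-N}{n_0-N}=\binom{n}{N}/\binom{n_0}{N}$ this simplifies to $\vert B(t)\vert>\bigl(2\bigl\vert\binom{Q_{n_0}}{Q_N}\bigr\vert\bigr)^{-1}\varepsilon\bigl\vert\binom{Q_n}{Q_N}\bigr\vert$, giving the proposition with $C_0=\bigl(2\bigl\vert\binom{Q_{n_0}}{Q_N}\bigr\vert\bigr)^{-1}$. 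The only genuine work --- and the one point I would double-check, since it is where the hypercube case departs from the $K_n$ case --- is the factorization count in the second averaging identity: because a hypercube embedding carries a base vector on its unused coordinates, one must confirm that this datum is uniquely determined at each stage once the nested coordinate sets are fixed, so that the number of factorizations is precisely the number $\binom{n-N}{n_0-N}$ of choices of intermediate coordinate set.
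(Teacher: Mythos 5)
Your proposal is correct and follows essentially the same route as the paper's own proof: fix the seed scale $n_0$ via Proposition~\ref{proposition: entropy density edge-hypercube}, average the entropy over $\binom{Q_n}{Q_{n_0}}$ using $\binom{n-1}{n_0-1}e(Q_n)=\bigl\vert\binom{Q_n}{Q_{n_0}}\bigr\vert e(Q_{n_0})$ to force $X>\tfrac{\varepsilon}{2}\bigl\vert\binom{Q_n}{Q_{n_0}}\bigr\vert$, then average $\vert B(t_{\vert\phi})\vert$ with the factorization count $\binom{n-N}{n_0-N}$ (which you rightly verify is the only hypercube-specific point), arriving at the same constant $C_0=\bigl(2\bigl\vert\binom{Q_{n_0}}{Q_N}\bigr\vert\bigr)^{-1}$. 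Note that, exactly like the paper's argument, you obtain at least $C_0\varepsilon\bigl\vert\binom{Q_n}{Q_N}\bigr\vert$ pairs, which is the intended reading of the statement (the $\binom{Q_n}{Q_{n_0}}$ printed there is a typo, as comparison with Proposition~\ref{proposition: vertex hypercube supersaturation} and the hypotheses of Theorem~\ref{theorem: counting for arbitrary possee} confirms), so this is not a gap on your side.
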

\begin{proof}
We follow the proof of Lemma~\ref{lemma: supersaturation}, modifying it as needed to fit the hypercube setting. Given a template   $t'\in \left(2^{[k]}\setminus \{\emptyset\}\right)^{Q_m}$, for some $m\geq N$, let  $B(t')$ denote the collection of pairs~$(\phi, c)$ with $\phi \in \binom{Q_m}{Q_N}$, $c\in \mathcal{F}$ and $c\in \langle {t'}_{\vert \phi}\rangle$.

By Proposition~\ref{proposition: entropy density edge-hypercube}, there exists $n_0\geq N$ such that 
for all $t'\in\left(2^{[k]}\setminus \{\emptyset\}\right)^{Q_{n_0}}$ with $\Ent(t')>\left(\pi(\mathcal{P}) +\frac{\varepsilon}{2}\right)e(Q_{n_0}) $, we must have $\vert B(t')\vert\geq 1$. Let $t\in \left(2^{[k]}\setminus\{\emptyset\}\right)^{Q_n}$ for some $n\geq n_0$, and suppose $\Ent(t)>\left(\pi(\mathcal{P})+\varepsilon\right)e(Q_n)$.  Let $X$ denote the number of embeddings $\phi\in \binom{Q_n}{Q_{n_0}}$ such that $\Ent(t_{\vert\phi})> \left(\pi(\mathcal{P}) +\frac{\varepsilon}{2}\right)e(Q_{n_0})$. By summing $\Ent(t_{\vert\phi})$ over all $\phi \in \binom{Q_n}{Q_{n_0}}$, we have
\begin{align*}
\left(\pi(\mathcal{P})+\varepsilon\right)e(Q_n) \binom{n-1}{n_0-1}&<\Ent(t)\binom{n-1}{n_0-1}=\sum_{ \phi} \Ent(t_{\vert \phi})\\
&\leq
\binom{n}{n_0}2^{n-n_0}\left(\pi(\mathcal{P})+\frac{\varepsilon}{2}\right)e(Q_{n_0}) +X e(Q_{n_0}),
\end{align*}
implying  $X> \frac{\varepsilon}{2}\binom{Q_n}{Q_{n_0}}$.
On the other hand, summing $\vert B(t_{\vert \phi})\vert $ over all $\phi\in \binom{Q_n}{Q_{n_0}}$ yields:
\begin{align*}
\vert B(t)\vert \binom{n-N}{n_0-N} = \sum_{\phi}\vert B(t_{\vert \phi})\vert \geq X>\frac{\varepsilon}{2}\biggl\vert \dbinom{Q_n}{Q_{n_0}}\biggr\vert,
\end{align*}
	so that 
	\begin{align*}
	\vert B(t)\vert > \frac{\varepsilon}{2} \frac{\bigl\vert \binom{Q_n}{Q_{n_0}}\bigr\vert}{\binom{n-N}{n_0-N} }=  \frac{1}{2\bigl\vert \binom{Q_{n_0}}{Q_N}\bigr\vert } \varepsilon\biggl\vert \dbinom{Q_n}{Q_N}\biggr\vert.
	\end{align*}
	This proves the lemma with $C_0= \bigl(2\bigl\vert \binom{Q_{n_0}}{Q_N}\bigr\vert\bigr)^{-1}$.
\end{proof}
\begin{corollary}[Counting for hypercube graph colourings]\label{corollary: edge hypercube counting}
If\/ $\mathcal{P}$ is a hereditary property of $k$-edge--colourings of~$\Hypercube$, then $\vert \mathcal{P}_n\vert=k^{\left(\pi(\mathcal{P})+o(1)\right)2^{n-1}n}$.
\end{corollary}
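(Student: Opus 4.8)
The plan is to read the corollary off from Theorem~\ref{theorem: counting for arbitrary possee}, applied to the ssee $\mathbf{V}$ associated to the hypercube sequence $\Hypercube$ as in Example~\ref{example: graph edge ssee}: one takes $V_n=E(Q_n)$ and lets $\binom{V_n}{V_N}$ consist of the injections of edge-sets induced by the subcube embeddings $\phi\in\binom{Q_n}{Q_N}$ described at the start of this subsection. Under this identification a hereditary property of $k$-edge-colourings of $\Hypercube$ is precisely a hereditary $k$-colouring property of $\mathbf{V}$ in the sense of Section~\ref{subsection: possee}, and $|V_n|=e(Q_n)=2^{n-1}n$; so the estimate $|\mathcal{P}_n|=k^{(\pi(\mathcal{P})+o(1))|V_n|}$ delivered by Theorem~\ref{theorem: counting for arbitrary possee} is exactly the statement to be proved.

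It therefore suffices to check that $\mathbf{V}$ is good and that hypotheses~(a) and~(b) of Theorem~\ref{theorem: counting for arbitrary possee} hold. Goodness of $\mathbf{V}$ is Proposition~\ref{proposition: hypercube graphs are good} (goodness of an $l$-graph sequence in the sense of Definition~\ref{definition: good graph sequence} is, as observed in Section~\ref{subsection: hypergraph sequences}, the edge-ssee instance of Definition~\ref{definition: good ossee}). For hypothesis~(a) I would invoke Proposition~\ref{proposition: entropy density edge-hypercube}, which gives that $\ex(Q_n,\mathcal{P})/|V_n|$ is nonincreasing — hence convergent to $\pi(\mathcal{P})$ — for every hereditary property $\mathcal{P}$. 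For hypothesis~(b), which asks, for each $N$ and each nonempty family $\mathcal{F}$ of $k$-edge-colourings of $Q_N$, for the implication ``at most $\delta\bigl|\binom{Q_n}{Q_N}\bigr|$ embedded copies of $\mathcal{F}$ in $t$ $\Rightarrow$ $\Ent(t)\le(\pi(\mathcal{P})+\varepsilon)|V_n|$'' for $\mathcal{P}=\Forb_{\Hypercube}(\mathcal{F})$, I would take the contrapositive of Proposition~\ref{proposition: supersaturation for hypercube graphs}: given $\varepsilon>0$, use the $n_0$ and $C_0$ it provides and set $\delta:=C_0\varepsilon$, rescaling as needed to pass between the counts $\bigl|\binom{Q_n}{Q_{n_0}}\bigr|$ and $\bigl|\binom{Q_n}{Q_N}\bigr|$, which are comparable up to a factor depending only on $n_0$ and~$N$.

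Feeding these three facts into Theorem~\ref{theorem: counting for arbitrary possee} yields $|\mathcal{P}_n|=k^{(\pi(\mathcal{P})+o(1))2^{n-1}n}$, as claimed. I do not expect any genuine obstacle here: the substantive content — goodness, monotonicity of the entropy density, and supersaturation for the hypercube — has already been established in Propositions~\ref{proposition: hypercube graphs are good}, \ref{proposition: entropy density edge-hypercube} and~\ref{proposition: supersaturation for hypercube graphs}, and Theorem~\ref{theorem: counting for arbitrary possee} then applies as a black box. The only mildly fiddly point is the bookkeeping needed to present Proposition~\ref{proposition: supersaturation for hypercube graphs} in the precise ``few-bad-copies-imply-low-entropy'' shape that hypothesis~(b) demands, uniformly over all $N$ and all finite forbidden families.
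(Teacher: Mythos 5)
Your proposal is correct and is essentially the paper's own proof: the paper likewise deduces the corollary by noting that Propositions~\ref{proposition: hypercube graphs are good}, \ref{proposition: entropy density edge-hypercube} and~\ref{proposition: supersaturation for hypercube graphs} verify the hypotheses of Theorem~\ref{theorem: counting for arbitrary possee} and then applying that theorem. Your extra bookkeeping remark about recasting Proposition~\ref{proposition: supersaturation for hypercube graphs} in the contrapositive form demanded by hypothesis~(b) is a fair observation but does not change the argument.
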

\begin{proof}
Propositions \ref{proposition: hypercube graphs are good}, \ref{proposition: entropy density edge-hypercube} and~\ref{proposition: supersaturation for hypercube graphs} tell us that the hypotheses of Theorem~\ref{theorem: counting for arbitrary possee} are satisfied; applying it yields the desired counting result.
\end{proof}

\subsubsection{Vertex--colourings of hypercubes}\label{subsection: vertex colouring hypercube}

\begin{proposition}[Vertex-goodness of hypercube graphs]\label{proposition: vertex-goodness of hypercubes}
The sequence $\Hypercube$ is vertex-good	
\end{proposition}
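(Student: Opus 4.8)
The plan is to verify directly the three conditions of Definition~\ref{def: hypergraph vertex-good}, mirroring the proof of Proposition~\ref{proposition: hypercube graphs are good} for edge-colourings. Recall that $v(Q_n)=2^n$ and that an embedding in $\binom{Q_n}{Q_N}$ is specified by choosing an $N$-set of ``active'' coordinates $A\subseteq[n]$ together with a vector $\mathbf{v}\in\{0,1\}^{[n]\setminus A}$ prescribing the remaining coordinates, so that $\bigl\lvert\binom{Q_n}{Q_N}\bigr\rvert=\binom{n}{N}2^{n-N}$. Condition~(i) is then immediate, since $v(Q_n)=2^n\to\infty$. For condition~(ii), whenever $\lvert V(Q_N)\rvert=2^N>1$ (i.e.\ $N\geq1$) I would note $\bigl\lvert\binom{Q_n}{Q_N}\bigr\rvert=\binom{n}{N}2^{n-N}=\Omega(n^N2^n)\gg 2^n=v(Q_n)$. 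So the real content is condition~(iii), and that is where the work lies.

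For (iii) I would count the overlapping pairs $I(N,n)$ using the intersection structure of subcubes. The key observation is that the subcube $(A,\mathbf v)$ and the subcube $(A',\mathbf v')$ have a vertex in common precisely when $\mathbf v$ and $\mathbf v'$ agree on $[n]\setminus(A\cup A')$, and in that case their intersection is itself a subcube with active coordinate set $A\cap A'$, hence of size exactly $2^{\lvert A\cap A'\rvert}$. Consequently two distinct embedded copies of $Q_N$ share between $2$ and $2^N-1$ vertices exactly when $1\leq\lvert A\cap A'\rvert\leq N-1$ and the compatibility of $\mathbf v,\mathbf v'$ above holds; these are precisely the ordered pairs counted by $I(N,n)$.

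To estimate $I(N,n)$ I would fix $j=\lvert A\cap A'\rvert$ with $1\leq j\leq N-1$: there are $\binom{n}{N}$ choices for $A$, then $\binom{N}{j}$ for $A\cap A'\subseteq A$ and $\binom{n-N}{N-j}$ for $A'\setminus A\subseteq[n]\setminus A$; there are $2^{n-N}$ choices for $\mathbf v$, after which $\mathbf v'$ is forced on $[n]\setminus(A\cup A')$ and free on the remaining $N-j$ coordinates, giving $2^{N-j}$ choices. Summing over $j$ yields
\[
I(N,n)=\biggl\lvert\binom{Q_n}{Q_N}\biggr\rvert\sum_{j=1}^{N-1}\binom{N}{j}\binom{n-N}{N-j}2^{N-j}=O\!\left(\biggl\lvert\binom{Q_n}{Q_N}\biggr\rvert\, n^{N-1}\right),
\]
the dominant contribution being $j=1$, where $\binom{n-N}{N-1}=\Theta(n^{N-1})$. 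Hence
\[
\frac{v(Q_n)\,I(N,n)}{\bigl\lvert\binom{Q_n}{Q_N}\bigr\rvert^{2}}=O\!\left(\frac{2^n\,n^{N-1}}{\binom{n}{N}2^{n-N}}\right)=O\!\left(\frac{2^n\,n^{N-1}}{n^N2^n}\right)=O(1/n)\longrightarrow0,
\]
which establishes condition~(iii) and completes the verification.

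I do not expect a genuine obstacle here: the argument is entirely parallel to the edge-colouring case. The only point requiring care is the combinatorial bookkeeping for the subcube intersection structure --- in particular, that once $A$, $A'$ and $\mathbf v$ have been chosen, $\mathbf v'$ is constrained to agree with $\mathbf v$ on $[n]\setminus(A\cup A')$, contributing only the factor $2^{N-j}$ of free choices. This constraint is exactly what keeps $I(N,n)$ of order $\bigl\lvert\binom{Q_n}{Q_N}\bigr\rvert\, n^{N-1}$ rather than something larger, and hence what makes the ratio in~(iii) tend to $0$.
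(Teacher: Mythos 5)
Your proof is correct and follows essentially the same route as the paper: conditions (i) and (ii) are checked directly from $v(Q_n)=2^n$ and $\bigl\lvert\binom{Q_n}{Q_N}\bigr\rvert=\binom{n}{N}2^{n-N}$, and condition (iii) by summing over the dimension $j$ of the intersecting subcube to get $I(N,n)=O\bigl(\bigl\lvert\binom{Q_n}{Q_N}\bigr\rvert n^{N-1}\bigr)$ and hence a ratio of $O(1/n)$. The only differences are cosmetic (you count ordered pairs and spell out the compatibility constraint on $\mathbf{v}'$, which the paper leaves implicit).
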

\begin{proof} Let $N>1$. We have $v(Q_n)=2^n\gg 1$ and $\vert \binom{Q_n}{Q_N}\vert =\binom{n}{N} 2^{n-N}\gg v(Q_n)$, so parts (i) and (ii) of Definition~\ref{def: hypergraph vertex-good} are satisfied. Part~(iii) is a simple calculation: two copies of $Q_N$ in $Q_n$ sharing at least two vertices must intersect in an $i$-dimensional subcube for some $i$ such that $1\leq i \leq N$. Thus
	\begin{equation*}
	\frac{v(Q_n) I(N,n)}{\bigl\vert \binom{Q_n}{Q_N}\bigr\vert^2} =\frac{2^{n-1}}{2^{n-N}\binom{n}{N}} \sum_{i\geq 1} \binom{N}{i}\binom{n-N}{N-i}2^{N-i}=O\Bigl(\frac{1}{n}\Bigr)
	\end{equation*}
which tends to $0$ as $n\rightarrow \infty$ as required.
\end{proof}
\begin{proposition}[Entropy density for vertex-colourings of hypercubes]\label{proposition: entropy density, vertex hypercube}
	If\/ $\mathcal{P}$ is a hereditary $k$--vertex-colouring property of hypercubes, then the sequence $\ex(Q_n, \mathcal{P})/ 2^n$ is nonincreasing and tends to a limit $\pi_v(\mathcal{P})$ as $n \rightarrow \infty$.
\end{proposition}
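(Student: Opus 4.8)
The plan is to mimic the proof of Proposition~\ref{proposition: entropy density edge-hypercube}, replacing edges by vertices throughout. Since $0\le\Ent(t)\le v(Q_n)=2^n$ for every $k$-colouring template $t$ of $Q_n$, the sequence $\bigl(\ex(Q_n,\mathcal{P})/2^n\bigr)_{n\in\N}$ lies in $[0,1]$, so it suffices to show it is nonincreasing; a bounded nonincreasing sequence then converges to a limit $\pi_v(\mathcal{P})\in[0,1]$. We may assume $\mathcal{P}_n\neq\emptyset$ for all $n$, since otherwise the claim is vacuous on the relevant range by heredity.

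First I would fix $n$ and let $t\in\left(2^{[k]}\setminus\{\emptyset\}\right)^{V(Q_{n+1})}$ be an extremal entropy template for $\mathcal{P}$ in $Q_{n+1}$, i.e.\ $\langle t\rangle\subseteq\mathcal{P}_{n+1}$ and $\Ent(t)=\ex(Q_{n+1},\mathcal{P})$. Next I would count the embeddings: by the description of $\binom{Q_{n+1}}{Q_n}$ recalled at the start of this subsection, an embedding of $Q_n$ into $Q_{n+1}$ is determined by an $n$-set $A\subseteq[n+1]$ (there are $n+1$ choices) together with a vector $\mathbf{v}\in\{0,1\}^{[n+1]\setminus A}$ (there are $2$ choices), so $\bigl\lvert\binom{Q_{n+1}}{Q_n}\bigr\rvert=2(n+1)$. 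By symmetry every vertex of $Q_{n+1}$ lies in exactly $2(n+1)\cdot 2^n/2^{n+1}=n+1$ of these embedded subcubes.

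Then I would average. Since $\mathcal{P}$ is hereditary, $\langle t\rangle\subseteq\mathcal{P}_{n+1}$ forces $\langle t_{\vert\phi}\rangle\subseteq\mathcal{P}_n$, hence $\Ent(t_{\vert\phi})\le\ex(Q_n,\mathcal{P})$, for every $\phi\in\binom{Q_{n+1}}{Q_n}$. On the other hand, interchanging the order of summation and using that each vertex of $Q_{n+1}$ is covered $n+1$ times,
\[\sum_{\phi\in\binom{Q_{n+1}}{Q_n}}\Ent(t_{\vert\phi})=\sum_{\phi}\sum_{v\in V(Q_n)}\log_k\bigl\lvert t(\phi(v))\bigr\rvert=(n+1)\sum_{w\in V(Q_{n+1})}\log_k\bigl\lvert t(w)\bigr\rvert=(n+1)\Ent(t).\]
Combining the two bounds gives $(n+1)\ex(Q_{n+1},\mathcal{P})=(n+1)\Ent(t)\le 2(n+1)\ex(Q_n,\mathcal{P})$, so $\ex(Q_{n+1},\mathcal{P})\le 2\,\ex(Q_n,\mathcal{P})$; dividing by $2^{n+1}$ yields $\ex(Q_{n+1},\mathcal{P})/2^{n+1}\le\ex(Q_n,\mathcal{P})/2^n$, as desired.

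There is no genuine obstacle here: the argument is a routine averaging, identical in structure to Proposition~\ref{proposition: entropy density edge-hypercube}. The only point requiring care is the combinatorial bookkeeping — correctly obtaining $\bigl\lvert\binom{Q_{n+1}}{Q_n}\bigr\rvert=2(n+1)$ and covering multiplicity $n+1$, and noting that the factor $2$ lost in passing from $Q_{n+1}$ to $Q_n$ is exactly compensated by the ratio $2^{n+1}/2^n$ in the normalisation.
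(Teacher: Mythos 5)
Your proposal is correct and follows essentially the same route as the paper: average the entropy of the extremal template over the $2(n+1)$ embedded subcubes, use heredity to bound each induced entropy by $\ex(Q_n,\mathcal{P})$, and note that each vertex is covered $n+1$ times, giving $(n+1)\ex(Q_{n+1},\mathcal{P})\le 2(n+1)\ex(Q_n,\mathcal{P})$. The extra bookkeeping you spell out (the count of embeddings and the covering multiplicity) is exactly what the paper's one-line averaging implicitly uses.
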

\begin{proof}
Let $t$ be a $k$-vertex--colouring template for $Q_{n+1}$ which is entropy-extremal for $\mathcal{P}$. By averaging over the $2(n+1)$ distinct embeddings $\phi \in \binom{Q_{n+1}}{Q_n}$,
\begin{align*}
	(n+1)\ex(Q_{n+1}, \mathcal{P})=(n+1)\Ent(t)
	=\sum_{\phi} \Ent\bigl(t_{\vert \phi }\bigr)\leq 2(n+1) \ex(Q_n, \mathcal{P}), 
	\end{align*}
	whence $\ex(Q_{n}, \mathcal{P})/2^{n}$ is non-increasing in $[0,1]$ and converges to a limit as required.
\end{proof}
\begin{proposition}[Supersaturation for vertex-colourings of hypercubes]\label{proposition: vertex hypercube supersaturation}
	Let $N\in \mathbb{N}$ be fixed and let $\mathcal{F}$ be a nonempty collection of $k$-vertex--colourings of $Q_N$. Let\/ $\mathcal{P}=\Forb_{\Hypercube}(\mathcal{F})$ be the hereditary $k$-vertex--colouring property of being $\mathcal{F}$-free.

	For every $\varepsilon$ with $0<\varepsilon<1$, there exist constants $n_0\in\N$ and~$C_{0}>0$ such that for all $n\geq n_0$ and every template~$t\in \left(2^{[k]}\setminus\{ \emptyset\}\right)^{V(Q_n)}$ with
\[
\Ent(t)> \left(\pi_v(\mathcal{P})+\varepsilon\right)2^n,
\]
there are at least
\[
C_{0} \varepsilon\biggl\vert \dbinom{Q_n}{Q_N}\biggr\vert
\]
pairs~$(\phi, c)$ with $\phi\in \binom{Q_n}{Q_N}$,  $c\in \mathcal{F}$ and $c\in \langle t_{\vert \phi}\rangle$.
\end{proposition}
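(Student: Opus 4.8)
The plan is to repeat, essentially verbatim, the double-counting argument of Lemma~\ref{lemma: supersaturation} and of its edge-colouring hypercube analogue Proposition~\ref{proposition: supersaturation for hypercube graphs}, the only change being that entropy is now normalised by $v(Q_n)=2^n$ and that ``embedding'' means a vertex-embedding of one subcube into another. For a template $t'\in\left(2^{[k]}\setminus\{\emptyset\}\right)^{V(Q_m)}$ with $m\geq N$, let $B(t')$ be the set of pairs $(\phi,c)$ with $\phi\in\binom{Q_m}{Q_N}$, $c\in\mathcal{F}$ and $c\in\langle t'_{\vert\phi}\rangle$. First I would invoke Proposition~\ref{proposition: entropy density, vertex hypercube}: since $\ex(Q_m,\mathcal{P})/2^m$ is nonincreasing with limit $\pi_v(\mathcal{P})$, there is an $n_0\geq N$ with $\ex(Q_{n_0},\mathcal{P})\leq\left(\pi_v(\mathcal{P})+\varepsilon/2\right)2^{n_0}$, so every template $t'$ on $Q_{n_0}$ with $\Ent(t')>\left(\pi_v(\mathcal{P})+\varepsilon/2\right)2^{n_0}$ has a realisation outside $\mathcal{P}_{n_0}$, i.e.\ $\vert B(t')\vert\geq 1$.

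Next, fix $n\geq n_0$ and a template $t$ on $Q_n$ with $\Ent(t)>\left(\pi_v(\mathcal{P})+\varepsilon\right)2^n$, and let $X$ denote the number of $\phi\in\binom{Q_n}{Q_{n_0}}$ with $\Ent(t_{\vert\phi})>\left(\pi_v(\mathcal{P})+\varepsilon/2\right)2^{n_0}$. The key computation is to sum $\Ent(t_{\vert\phi})$ over all $\phi\in\binom{Q_n}{Q_{n_0}}$ in two ways. On one hand, each vertex of $Q_n$ lies in exactly $\binom{n}{n_0}$ of the embedded $n_0$-subcubes (choose the $n_0$ active coordinates; the frozen coordinates of the subcube are then forced to agree with that vertex), so the sum equals $\binom{n}{n_0}\Ent(t)$. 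On the other hand, the $X$ ``heavy'' embeddings contribute at most $2^{n_0}$ each and the remaining $\binom{n}{n_0}2^{n-n_0}-X$ embeddings at most $\left(\pi_v(\mathcal{P})+\varepsilon/2\right)2^{n_0}$ each; using $\binom{n}{n_0}2^{n-n_0}\cdot 2^{n_0}=\binom{n}{n_0}2^n$, comparison of the two expressions yields $X>\frac{\varepsilon}{2}\binom{n}{n_0}2^{n-n_0}=\frac{\varepsilon}{2}\bigl\vert\binom{Q_n}{Q_{n_0}}\bigr\vert$.

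Finally I would transfer this bound to $\vert B(t)\vert$: each of the $X$ heavy embeddings $\phi$ has $\vert B(t_{\vert\phi})\vert\geq 1$, so summing $\vert B(t_{\vert\phi})\vert$ over all $\phi\in\binom{Q_n}{Q_{n_0}}$ and noting that every pair $(\psi,c)$ with $\psi\in\binom{Q_n}{Q_N}$ is counted once per embedded $n_0$-subcube through the image of $\psi$, i.e.\ $\binom{n-N}{n_0-N}$ times, gives $\vert B(t)\vert\binom{n-N}{n_0-N}\geq X>\frac{\varepsilon}{2}\bigl\vert\binom{Q_n}{Q_{n_0}}\bigr\vert$. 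The bookkeeping identity $\bigl\vert\binom{Q_n}{Q_{n_0}}\bigr\vert\big/\binom{n-N}{n_0-N}=\bigl\vert\binom{Q_n}{Q_N}\bigr\vert\big/\bigl\vert\binom{Q_{n_0}}{Q_N}\bigr\vert$ (equivalently $\binom{n}{n_0}\binom{n_0}{N}=\binom{n}{N}\binom{n-N}{n_0-N}$ together with cancellation of the powers of $2$) then yields $\vert B(t)\vert>\bigl(2\bigl\vert\binom{Q_{n_0}}{Q_N}\bigr\vert\bigr)^{-1}\varepsilon\bigl\vert\binom{Q_n}{Q_N}\bigr\vert$, so $C_0=\bigl(2\bigl\vert\binom{Q_{n_0}}{Q_N}\bigr\vert\bigr)^{-1}$ works. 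The only point requiring attention is pinning down the subcube multiplicities ($\binom{n}{n_0}$ vertices per $n_0$-subcube, $\binom{n-N}{n_0-N}$ copies of $Q_{n_0}$ through a fixed $Q_N$) and the attendant powers of $2$; this is routine, and there is no genuine obstacle, since the argument is structurally identical to Proposition~\ref{proposition: supersaturation for hypercube graphs} with $v(Q_n)$ in place of $e(Q_n)$.
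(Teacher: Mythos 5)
Your proposal is correct and follows essentially the same route as the paper's proof: the same averaging over embeddings of $Q_{n_0}$ (using Proposition~\ref{proposition: entropy density, vertex hypercube} to choose $n_0$), the same subcube multiplicities $\binom{n}{n_0}$ and $\binom{n-N}{n_0-N}$, and the same constant $C_0=\bigl(2\bigl\vert\binom{Q_{n_0}}{Q_N}\bigr\vert\bigr)^{-1}$. No gaps.
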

\begin{proof}
	We follow the proof of Proposition~\ref{proposition: supersaturation for hypercube graphs}, modifying it as needed to fit the vertex-colouring setting. Given a template   $t'\in \left(2^{[k]}\setminus \{\emptyset\}\right)^{V(Q_m)}$ for some $m\geq N$, let  $B(t')$ denote the collection of pairs~$(\phi, c)$ with $\phi\in \binom{Q_m}{Q_N}$,  $c\in \mathcal{F}$ and $c\in \langle {t'}_{\vert \phi}\rangle$.

	By Proposition~\ref{proposition: entropy density, vertex hypercube}, there exists $n_0\geq N$ such that 
	for all $t'\in\left(2^{[k]}\setminus \{\emptyset\}\right)^{V(Q_{n_0})}$ with $\Ent(t')>\left(\pi(\mathcal{P}) +\frac{\varepsilon}{2}\right)v(Q_{n_0}) $, we must have $\vert B(t')\vert\geq 1$. Let $t\in \left(2^{[k]}\setminus\{\emptyset\}\right)^{V(Q_n)}$ for some $n\geq n_0$, and suppose $\Ent(t)>\left(\pi(\mathcal{P})+\varepsilon\right)v(Q_n)$.  Let $X$ denote the number of  $\phi\in \binom{Q_n}{Q_{n_0}}$ such that $\Ent(t_{\vert\phi})> \left(\pi(\mathcal{P}) +\frac{\varepsilon}{2}\right)v(Q_{n_0})$. By summing $\Ent(t_{\vert\phi})$ over all $\phi\in \binom{Q_n}{Q_{n_0}}$ we have
	\begin{align*}
	\left(\pi(\mathcal{P})+\varepsilon\right)v(Q_n) \binom{n}{n_0}&<\Ent(t)\binom{n}{n_0}=\sum_{ \phi} \Ent(t_{\vert \phi})
\leq
	\binom{n}{n_0}2^{n-n_0}\left(\pi(\mathcal{P})+\frac{\varepsilon}{2}\right)v(Q_{n_0}) +X v(Q_{n_0}),
	\end{align*}
	implying  $X> \frac{\varepsilon}{2}\binom{Q_n}{Q_{n_0}}$.
	On the other hand, summing $\vert B(t_{\vert \phi})\vert $ over all $\phi\in \binom{Q_n}{Q_{n_0}}$ yields:
	\begin{align*}
	\vert B(t)\vert \binom{n-N}{n_0-N} = \sum_{\phi}\vert B(t_{\vert \phi})\vert \geq X>\frac{\varepsilon}{2}\biggl\vert \dbinom{Q_n}{Q_N}\biggr\vert,
	\end{align*}
	so that 
	\begin{align*}
	\vert B(t)\vert > \frac{\varepsilon}{2} \frac{\bigl\vert \binom{Q_n}{Q_{n_0}}\bigr\vert}{\binom{n-N}{n_0-N} }=  \frac{1}{2\bigl\vert \binom{Q_{n_0}}{Q_N}\bigr\vert } \varepsilon\biggl\vert \dbinom{Q_n}{Q_N}\biggr\vert.
	\end{align*}
	This proves the lemma with $C_0= \bigl(2\bigl\vert \binom{Q_{n_0}}{Q_N}\bigr\vert\bigr)^{-1}$. 
\end{proof}
From there, the counting result is immediate:
\begin{corollary}\label{corollary: vertex hypercube counting}
If\/ $\mathcal{P}$ is a hereditary property of  $k$-vertex--colourings of $Q_n$, then 
\[\vert \mathcal{P}_n\vert = k^{\left(\pi_v(\mathcal{P})+o(1)\right)2^n}.\]	
\end{corollary}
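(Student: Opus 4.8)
The plan is to reduce the statement to Theorem~\ref{theorem: counting for arbitrary possee}, exactly as Corollary~\ref{corollary: edge hypercube counting} was deduced in the edge-colouring case. I would regard $\Hypercube$ as the ssee $\mathbf{V}$ with $V_n = V(Q_n)$ and embeddings $\binom{Q_n}{Q_N}$ as described in Example~\ref{example: graph vertex ssee}, so that $\vert V_n\vert = v(Q_n) = 2^n$, and the hereditary $k$-vertex--colouring properties of $\Hypercube$ are precisely the hereditary $k$-colouring properties of this ssee.

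First I would invoke Proposition~\ref{proposition: vertex-goodness of hypercubes} to conclude that $\mathbf{V}$ is a good ssee (equivalently, $\Hypercube$ is vertex-good). This is what makes the container machinery of Section~\ref{subsection: possee} applicable; in particular it renders Theorem~\ref{theorem: general multicolour container result for vertex colouring of hypergraph sequences} available for properties of the form $\Forb_{\Hypercube}(\mathcal{F})$ with $\mathcal{F}$ a finite nonempty family of $k$-vertex--colourings of some $Q_N$. Next I would verify the two remaining hypotheses of Theorem~\ref{theorem: counting for arbitrary possee}. Condition~(a), that $\ex(Q_n,\mathcal{P})/2^n$ is nonincreasing and converges for every hereditary $\mathcal{P}$, is exactly Proposition~\ref{proposition: entropy density, vertex hypercube}, with limit the entropy density $\pi_v(\mathcal{P})$. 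Condition~(b), supersaturation for $\Forb_{\Hypercube}(\mathcal{F})$ for every $N$ and every nonempty $\mathcal{F}\subseteq [k]^{V(Q_N)}$, is exactly Proposition~\ref{proposition: vertex hypercube supersaturation}; here one only rewrites the count $C_0\varepsilon\bigl\vert\binom{Q_n}{Q_{n_0}}\bigr\vert$ appearing there in the form $\delta\bigl\vert\binom{Q_n}{Q_N}\bigr\vert$ required by Theorem~\ref{theorem: counting for arbitrary possee}, absorbing the constant factor $\bigl\vert\binom{Q_{n_0}}{Q_N}\bigr\vert^{-1}$ into the choice of $\delta$. With (a) and (b) in hand, Theorem~\ref{theorem: counting for arbitrary possee} applies verbatim and yields $\vert\mathcal{P}_n\vert = k^{(\pi_v(\mathcal{P})+o(1))\vert V_n\vert} = k^{(\pi_v(\mathcal{P})+o(1))2^n}$, as claimed.

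There is essentially no genuine obstacle here: every ingredient has already been established, and the only care needed is bookkeeping --- confirming that the quantitative forms of Propositions~\ref{proposition: entropy density, vertex hypercube} and~\ref{proposition: vertex hypercube supersaturation} literally match conditions (a) and (b) of Theorem~\ref{theorem: counting for arbitrary possee} for the vertex-ssee attached to $\Hypercube$, and that the normalising quantity is $v(Q_n) = 2^n$ rather than $e(Q_n)$. All the real work --- the approximation argument of Theorem~\ref{theorem: approximation of general hereditary properties} and the deduction of Corollary~\ref{corollary: speed of arbitrary hereditary properties} --- is already carried out inside the proof of Theorem~\ref{theorem: counting for arbitrary possee}.
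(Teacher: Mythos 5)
Your proposal is correct and is essentially the paper's own proof: the paper deduces the corollary by citing Propositions~\ref{proposition: vertex-goodness of hypercubes}, \ref{proposition: entropy density, vertex hypercube} and~\ref{proposition: vertex hypercube supersaturation} to verify the hypotheses of Theorem~\ref{theorem: counting for arbitrary possee} and then applying that theorem, exactly as you do. Your extra remark about recasting the supersaturation count in the contrapositive form required by condition~(b) is routine bookkeeping and raises no issue.
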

\begin{proof}
Propositions \ref{proposition: vertex-goodness of hypercubes}, \ref{proposition: entropy density, vertex hypercube} and~\ref{proposition: vertex hypercube supersaturation} tell us that the hypotheses of Theorem~\ref{theorem: counting for arbitrary possee} are satisfied; applying it yields the desired counting result.
\end{proof}




	\section{Examples and applications}\label{section: examples}
	\subsection{Order-hereditary versus hereditary}
	Here we include a quick example stressing the essential difference between hereditary and order-hereditary properties. We identify graphs with $\{0,1\}$-colourings of~$K_n$ in the usual way; as the properties we consider in this example are in fact monotone, templates will consist of pairs~$e\in E(K_n)$ with $t(e)=\{0,1\}$ and entropy~$1$ and pairs~$e$ with $t(e)=\{0\}$ and entropy~$0$. We can thus represent the templates simply as the graph of edges with entropy~$1$.

	Let $\mathcal{P}_1$ be the hereditary property of graphs on $[n]$ of having maximum degree~$2$, and let $\mathcal{P}_2$ be the order-hereditary property of graphs on $[n]$ of not having any triples of vertices~$i<j<k$ with $ij$,~$jk$ both being edges. Clearly, $\mathcal{P}_1$ is the collection of all matchings on $[n]$, with $\ex(n, \mathcal{P}_1)=\lfloor n/2\rfloor$ and maximal matchings as the  extremal entropy templates.  The speed of this property is known as the Hosoya index or $Z$ index of~$K_n$, and is equal to the $n$th telephone number, which is of order~$n^{n/2 +o(n)}$~\cite{ChowlaHersteinMoore51}.	 On the other hand, $\ex(n, \mathcal{P}_2)=\lfloor \frac{n^2}{2}\rfloor$. For the lower bound, consider the template $t$ whose entropy~$1$ edges are $\{(i,j): \ 1\leq i \leq \frac{n}{2}\leq j\leq n, \ i\neq j\}$. Clearly $\Ent(t)=\lfloor n^2/4\rfloor$ and we have no $i<j<k$ with $ij$,~$jk$ both being edges. For the upper bound, suppose $\Ent(t)>n^2/4$. By Mantel's theorem, there must exist a triangle~$ijk$ of edges with full entropy, which gives us a triple of vertices~$i<j<k$ with $ij$,~$jk$ both being edges. Applying Theorem~\ref{theorem: counting result for Forb(F), F finite hereditary families}, we have that
	  \[\bigl\lvert (\mathcal{P}_2)_n\bigr\rvert  = 2^{\frac{1}{4}n^{2}\bigl(1+o(1)\bigr)}.\]

	\subsection{Graphs}\label{subsection: proof of alekseev--bollobas--thomason}

In this subsection, we give a short proof of the Alekseev--Bollob\'as--Thomason theorem (Theorem~\ref{theorem: alekseevbollobasthomason}).  Our argument is similar to the proof in~\cite{BT}.  However, container theory allows us to avoid using the Regularity Lemma, which simplifies the argument.

We shall use the Erd\H{o}s--Stone theorem~\cite{ErdosStone46}.

\begin{theorem}[Erd\H{o}s--Stone theorem]\label{theorem: erdosstone}
Let $r \geq 2$, $m \geq 1$ and $\varepsilon > 0$.  There exists $n_0(r, m, \varepsilon)$ such that if $G$ is a graph of order~$n \geq n_0$ and
\[
e(G) \geq \biggl(1 - \dfrac{1}{r} + \varepsilon \biggr)\binom{n}{2},
\]
then $G$ contains a copy of~$K_{r+1}(m)$.
\end{theorem}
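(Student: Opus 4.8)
The plan is to prove the theorem in its equivalent \emph{minimum-degree form}: for every $r\geq 1$, $m\geq 1$ and $\varepsilon>0$ there is $n_0'$ such that every graph $H$ on $n'\geq n_0'$ vertices with minimum degree $\delta(H)\geq\bigl(1-\tfrac1r+\varepsilon\bigr)n'$ contains $K_{r+1}(m)$. The stated theorem follows by a routine preprocessing step: starting from $G$ on $n$ vertices with $e(G)\geq\bigl(1-\tfrac1r+\varepsilon\bigr)\binom n2$, repeatedly delete any vertex whose current degree is below $\bigl(1-\tfrac1r+\tfrac\varepsilon2\bigr)$ times the current number of vertices; a short count bounds the number of deleted edges by $\bigl(1-\tfrac1r+\tfrac\varepsilon2\bigr)\binom{n+1}2$, so the process stops at a subgraph $H$ with $\delta(H)\geq\bigl(1-\tfrac1r+\tfrac\varepsilon2\bigr)v(H)$ and $v(H)=\Omega(n)\to\infty$, and a copy of $K_{r+1}(m)$ in $H$ is a copy in $G$. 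This sidesteps the regularity lemma entirely; only the Kővári--Sós--Turán theorem is needed from outside.

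I would prove the minimum-degree form by induction on $r$. The base case $r=1$ is Kővári--Sós--Turán: $\delta(H)\geq\varepsilon n'$ gives $e(H)\geq\tfrac\varepsilon2(n')^2\gg(n')^{2-1/m}$, forcing a copy of $K_{m,m}=K_2(m)$. For the inductive step ($r\geq2$) assume the result for $r-1$. Since $1-\tfrac1r=1-\tfrac1{r-1}+\tfrac1{r(r-1)}$, any $H$ with $\delta(H)\geq\bigl(1-\tfrac1r+\varepsilon\bigr)n'$ also has $\delta(H)\geq\bigl(1-\tfrac1{r-1}+\tfrac1{2r(r-1)}\bigr)n'$, so by the inductive hypothesis (applied with part-size $K$ for a suitable large constant $K$) $H$ contains a copy of $K_r(K)$ with parts $W_1,\dots,W_r$; put $W=W_1\cup\dots\cup W_r$. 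Set
\[
U=\bigl\{\,v\in V(H)\;:\;\abs{N(v)\cap W_i}\geq m\text{ for every }i\in[r]\,\bigr\}.
\]
If I can show $\abs U\geq B:=(m-1)\binom Km^{r}+rK+1$, I am done: for each $v\in U\setminus W$ (there are at least $(m-1)\binom Km^{r}+1$ of these) choose an $m$-subset $S_i^v\subseteq N(v)\cap W_i$ in each part, and pigeonhole on the tuple $(S_1^v,\dots,S_r^v)\in\prod_i\binom{W_i}m$ to obtain $m$ vertices $v_1,\dots,v_m$ sharing a common tuple $(S_1,\dots,S_r)$. Then $\{v_1,\dots,v_m\},S_1,\dots,S_r$ are $r+1$ pairwise disjoint $m$-sets that are pairwise complete in $H$ (each $v_j$ to each $S_i$ because $S_i\subseteq N(v_j)$, and $S_i$ to $S_{i'}$ because $W_i$ is complete to $W_{i'}$), i.e.\ a copy of $K_{r+1}(m)$.

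The main obstacle is precisely the bound $\abs U\geq B$, and this is where the minimum-degree hypothesis and the choice of $K$ enter quantitatively. Suppose $\abs U<B$. Every $v\in V(H)\setminus(W\cup U)$ fails the defining condition of $U$, so it has at least $K-m+1$ non-neighbours inside some single part $W_i$, hence inside $W$; thus the number of non-adjacent pairs between $W$ and $V(H)\setminus W$ is at least $(n'-rK-B)(K-m+1)$. On the other hand every vertex of $H$ has at most $n'-1-\delta(H)\leq\bigl(\tfrac1r-\varepsilon\bigr)n'$ non-neighbours, so the same quantity is at most $rK\bigl(\tfrac1r-\varepsilon\bigr)n'=K(1-r\varepsilon)n'$. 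Dividing both bounds by $n'$ and letting $n'\to\infty$ forces $K-m+1\leq K(1-r\varepsilon)$, i.e.\ $rK\varepsilon\leq m-1$; so if $K$ is fixed to be any integer with $K>(m-1)/(r\varepsilon)$ — a constant depending only on $r,m,\varepsilon$ — we reach a contradiction, and hence $\abs U\geq B$ once $n'$ is large enough. There is no circularity: $K$ is chosen first (from $r,m,\varepsilon$ alone), which determines $B$, and only then is $n'$ taken large. The routine parts are the minimum-degree reduction and checking that the inductive hypothesis genuinely applies (the slack $\tfrac1{r(r-1)}$ supplies the room); the delicate point, and the crux of the argument, is the double count above, which only closes because $K$ was taken larger than $(m-1)/(r\varepsilon)$.
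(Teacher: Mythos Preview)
Your proof is correct and follows the classical route to Erd\H{o}s--Stone: reduce to a minimum-degree statement by iterated deletion of low-degree vertices, then induct on $r$ with K\H{o}v\'ari--S\'os--Tur\'an as the base case, using a pigeonhole argument on common $m$-neighbourhoods inside a $K_r(K)$ found by the inductive hypothesis. The double count you set up is the right one, and your quantitative choice $K>(m-1)/(r\varepsilon)$ closes it exactly as needed; the order of quantifiers ($K$ first, then $B$, then $n'$) is handled carefully.

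There is nothing to compare against in the paper itself: the theorem is quoted there as a classical result with a citation and is not proved. So your write-up goes well beyond what the paper does for this statement; it supplies a complete, self-contained argument where the paper simply invokes the literature.
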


Recall the family~$\mathcal{H}(r, \vector{v})$ from Definition~\ref{definition: colouring number} and observe that $\mathcal{H}(r, \vector{v})$ is a hereditary property of graphs. Write $\mathcal{H}(r, \vector{v})_l$ for the collection of all members of $\mathcal{H}(r, \vector{v})$ in which each of the $r$ parts contains at most $l$ vertices.

\begin{lemma}\label{lemma: universal}
Let\/ $\PP$ be a hereditary property of graphs, let $r \geq 2$, let $\ell \geq 1$ and let $\varepsilon > 0$.  There exists a constant~$n_0 \in \N$ such that if $n \geq n_0$ and $t\in (2^{[k]}\setminus \{\emptyset\} )^{K_n}$ with $\langle t \rangle \subseteq \PP_n$ and
\[
\Ent(t) \geq \biggl(1 - \dfrac{1}{r} + \varepsilon \biggr)\binom{n}{2},
\]
then $\mathcal{H}(r + 1, \vector{v})_{\ell} \subseteq \PP$ for some $\vector{v} \in \{0, 1\}^{r + 1}$.
\end{lemma}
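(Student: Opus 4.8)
The plan is to read the entropy hypothesis as a Tur\'an-type condition and then combine the Erd\H{o}s--Stone theorem with Ramsey's theorem. For a $2$-colouring template $t$ of $K_n$ one has $\Ent(t)=\sum_e\log_2\lvert t(e)\rvert=\lvert\{e\in E(K_n):t(e)=\{0,1\}\}\rvert$, so if I let $G_t$ be the graph on $[n]$ whose edges are exactly the \emph{free} edges of $t$ (those with full palette $\{0,1\}$), the hypothesis becomes $e(G_t)\ge\bigl(1-\tfrac1r+\varepsilon\bigr)\binom n2$. First I would apply the Erd\H{o}s--Stone theorem (Theorem~\ref{theorem: erdosstone}), with the same $r$, with $\varepsilon$, and with a parameter $m=m(\ell)$ to be fixed in the next step: for all $n$ at least some threshold $n_0(r,m,\varepsilon)$ this produces pairwise disjoint $m$-sets $A_1,\dots,A_{r+1}\subseteq[n]$ such that every edge between two distinct $A_i$ is free in $t$. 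Note we have no information about $t$ \emph{inside} each $A_i$, which is exactly what the second step repairs.

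Next I would $3$-colour the edges of $K_n[A_i]$ according to their palette $t(e)\in\{\{0\},\{1\},\{0,1\}\}$ and invoke Ramsey's theorem: choosing $m=m(\ell)$ large enough that every $3$-edge-colouring of $K_m$ contains a monochromatic $K_\ell$ (e.g.\ $m=R(\ell,\ell,\ell)$), I obtain in each $A_i$ a set $B_i$ of $\ell$ vertices on which $t$ is constant, equal to some $\tau_i\in\{\{0\},\{1\},\{0,1\}\}$ on every pair inside $B_i$. I would then set $\vector v\in\{0,1\}^{r+1}$ by $v_i=1$ if $\tau_i=\{1\}$ and $v_i=0$ otherwise. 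The point of this choice is that a realisation of $t$, restricted to $B_i$, is forced to be the empty graph if $\tau_i=\{0\}$, forced to be the complete graph if $\tau_i=\{1\}$, and may be chosen to be either if $\tau_i=\{0,1\}$; in all cases it can be made empty when $v_i=0$ and complete when $v_i=1$, while the pairs between distinct $B_i$ are free and hence can be realised arbitrarily.

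Finally I would verify $\mathcal{H}(r+1,\vector v)_\ell\subseteq\PP$ directly. Given any $H\in\mathcal{H}(r+1,\vector v)_\ell$ with a witnessing partition $V(H)=C_1\sqcup\dots\sqcup C_{r+1}$, $\lvert C_i\rvert\le\ell$, I fix injections $C_i\hookrightarrow B_i$, let $W\subseteq[n]$ be the union of their images (identified with $V(H)$), and build a realisation $c\in\langle t\rangle$ that agrees with $H$ on $W$: inside each $B_i$ choose $c$ to realise $H[C_i]$ (empty or complete, possible by the previous paragraph), on the free cross-pairs inside $W$ choose $c$ to match the cross-edges of $H$, and on all remaining edges of $K_n$ choose $c$ arbitrarily within the palette. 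Then the graph defined by $c$ lies in $\PP_n$, and $H$ is isomorphic to its induced subgraph on $W$; since $\PP$ is hereditary (in particular symmetric and closed under induced subgraphs), $H\in\PP$. Taking $n_0:=n_0(r,R(\ell,\ell,\ell),\varepsilon)$ from the Erd\H{o}s--Stone step finishes the proof. I expect the only subtle point to be this last step: one must check that \emph{every} member of $\mathcal{H}(r+1,\vector v)_\ell$ --- not merely some maximal one --- embeds as an induced subgraph of a realisation of $t$, which is precisely why the unconstrained cross-edges in the definition of $\mathcal{H}(r+1,\vector v)$ have to be matched against the free cross-edges supplied by Erd\H{o}s--Stone, and the palettes $\tau_i$ against the correctly chosen coordinates of $\vector v$; granting that bookkeeping, the remainder is a routine application of Erd\H{o}s--Stone and Ramsey.
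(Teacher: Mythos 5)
Your proof is correct and follows essentially the same route as the paper's: interpret the entropy bound as a density bound on the graph of full-palette edges, apply Erd\H{o}s--Stone to find a $K_{r+1}(m)$ of free cross-edges, use Ramsey inside each part to fix the vector $\vector{v}$, and embed an arbitrary member of $\mathcal{H}(r+1,\vector{v})_{\ell}$ into a realisation of $t$, concluding by heredity. The only (cosmetic) difference is that you apply $3$-colour Ramsey to the exact palettes inside each part, whereas the paper uses a $2$-colour Ramsey argument on which of the two colours is available throughout a clique; both yield the same conclusion.
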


\begin{proof} Recall that we may identify graphs with colourings of $E(K_n)$ by colours from $[2]$. Let $t$ be as above. Let $G$ be the graph with vertex set~$[n]$ and $E(G) = \{e \in E(K_n) : \ t(e) = \{1, 2\}\}$. By Ramsey's theorem, for every $\ell$, there exists $m$ such that any $2$-colouring of~$E(K_m)$ contains a monochromatic copy of~$K_{\ell}$.  Our assumption on $\Ent(t)$ and the Erd\H{o}s--Stone theorem imply that if $n$ is sufficiently large, then $G$ contains a copy~$K$ of~$K_{r + 1}(m)$.

Let $t'$ denote the restriction of~$t$ to $V(K)$ and let $V_1$, \dots,~$V_{r + 1}$ denote the classes of~$V(K)$.  Now we construct a vector~$\vector{v} \in \{0, 1\}^{r + 1}$.  By our choice of~$m$, for each $i$, either $\{e \in E(K[V_i]) : 1 \in t(e)\}$ or $\{e \in E(K[V_i]) : 2 \in t(e)\}$ contains a copy of~$K_{\ell}$.  In the former case set $v_i = 0$, and otherwise set $v_i = 1$.  In either case, we let $U_i$ denote the vertex set of the monochromatic copy of~$K_{\ell}$.

Let $H \in \mathcal{H}(r + 1, \vector{v})_{\ell}$ and let $W_1$, \dots~$W_{r + 1}$ be a partition of~$V(H)$ such that for each $i$, $W_i$ is a clique if $v_i = 1$ and an independent set if $v_i = 0$.  Because $\abs{W_i} \leq  \ell$ for each $i$, we may embed $W_i$ into $U_i \subseteq V_i$ arbitrarily.  It follows that there is a realisation~$c$ of~$t'$ such that $H$ is a subgraph of~(the graph corresponding to) $c(K_n)$. Since $H$ was arbitrary and $\PP$ is hereditary, it follows that $\mathcal{H}(r + 1, \vector{v})_{\ell} \subseteq \langle t' \rangle \subseteq \PP$, as desired.
\end{proof}

\begin{proof}[Proof of Theorem~\ref{theorem: alekseevbollobasthomason}.]
First, by the definition of~$\chi_c(\PP)$, there exists $\vector{v} \in \{0, 1\}^r$ such that $\mathcal{H}(r, \vector{v}) \subseteq \PP$.  By considering the graphs in $\mathcal{H}(r, \vector{v})$ such that each clique or independent set has size $\lfloor n/r \rfloor$ or~$\lceil n/r \rceil$, we see that
\[
\lvert \PP_n \rvert \geq \lvert \mathcal{H}(r, \vector{v})_n \rvert \geq 2^{\bigl(1 - 1/r + o(1)\bigr)\binom{n}{2}}.
\]

Second, suppose for a contradiction that for some $\varepsilon > 0$, there exist infinitely many $n$ such that
\begin{equation}\label{eq: P_n too big}
\lvert \PP_n \rvert \geq 2^{(1 - 1/r + \varepsilon)\binom{n}{2}}.
\end{equation}
Corollary~\ref{corollary: speed of arbitrary hereditary properties} implies that there exists $n_0$ such that for all $n \geq n_0$ for which~\eqref{eq: P_n too big} holds, there exists a template~$t\in (2^{[k]}\setminus \{\emptyset\})^{K_n}$ such that $\langle t \rangle \subseteq \PP_n$ and
\[
\Ent(t) \geq \biggl(1 - \dfrac{1}{r} + \dfrac{\varepsilon}{2} \biggr)\binom{n}{2}.
\]

It follows from Lemma~\ref{lemma: universal} that for each $\ell \geq 1$, there exists $\vector{v} \in \{0, 1\}^{r+1}$ such that $\mathcal{H}(r + 1, \vector{v})_{\ell} \subseteq \PP$.  In particular, there is some $\vector{v} \in \{0, 1\}^{r+1}$ such that $\mathcal{H}(r + 1, \vector{v})_{\ell} \subseteq \PP$ for infinitely many $\ell$, and thus for all $\ell$.  However, this contradicts the definition of~$\chi_c(\PP)$. The theorem follows.
\end{proof}

	\subsection{Digraphs}\label{subsection: digraph examples}

	As mentioned earlier, hereditary properties for tournaments, orgraphs and digraphs have received significant attention from the extremal combinatorics research community, see~\cite{Bollobas07}. In a recent paper, K\"uhn, Osthus, Townsend, Zhao~\cite{KuhnOsthusTownsendZhao14} determined the typical structure of certain families of oriented and directed graphs.  As part of their argument, they proved a container theorem and, using it, a counting theorem for $H$-free orgraphs and $H$-free digraphs, where $H$ is a fixed orgraph with at least~two edges (Theorems~3.3 and Corollary~3.4 in~\cite{KuhnOsthusTownsendZhao14}).

	They went on to observe that their results did not extend to the case where $H$ is a digraph, giving the specific example when $H=DK_3$, the double triangle $([3], [3]\times[3])$. Their approach considered the extremal weight achievable in an $H$-free digraph where double edges receive a different weight from single edges. In the case of $DK_3$, they observed that the extremal weight did not predict the correct count of $DK_3$-free digraphs, showing that their container theorem failed to generalise in its given form to the digraph case. Giving some vindication to our entropy-based approach to containers, we use our theorems to determine the speed of the digraph property~$\Forb(DK_3)$ of not containing any $DK_3$. More generally, given a graph~$F$, let $DF$ be the digraph obtained by replacing each edge of~$F$ with a directed edge in each direction. 
\begin{theorem}\label{theorem: extremal entropy, no F}  Let\/ $\PP=\Forb(DF)$ be the digraph property of not containing any $DF$. Then
	\[\ex(n, \mathcal{P})= (1-\log_4 3) \ex(n, F) + \log_43 \binom{n}{2},\]
	where $\ex(n,F)$ is the Tur\'an number of $F$.	
	\end{theorem}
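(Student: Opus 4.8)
The plan is to work through the encoding of digraphs as $4$-colourings of $K_n$ from Observation~\ref{observation: encoding of digraphs}, under which colour~$4$ is assigned to an edge $ij$ precisely when $ij$ is a double edge of the digraph. Under this encoding, a digraph contains $DF$ as a (not necessarily induced) sub-digraph if and only if the graph formed by its double edges contains a copy of~$F$. The key step is therefore the following characterisation of the admissible templates: for a $4$-colouring template $t$ of $K_n$, one has $\langle t\rangle\subseteq\mathcal{P}_n$ if and only if the graph $G_4(t):=\{e\in E(K_n):\ 4\in t(e)\}$ is $F$-free. Indeed, if $G_4(t)$ is $F$-free, then for every realisation $c\in\langle t\rangle$ the set of edges coloured~$4$ by $c$ is contained in $G_4(t)$ and hence spans no copy of $F$, so $c$ contains no $DF$; conversely, if $G_4(t)$ contains a copy of $F$, then colouring those edges~$4$ and all other edges arbitrarily (picking any colour from each palette) yields a realisation of $t$ containing $DF$.

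Next I would bound the entropy of an arbitrary admissible template in terms of $e(G_4(t))$. Writing $\Ent(t)=\sum_{e\in E(K_n)}\log_4\lvert t(e)\rvert$ and splitting the sum according to whether $e\in G_4(t)$, we have $\log_4\lvert t(e)\rvert\le\log_4 4=1$ when $e\in G_4(t)$, and $\log_4\lvert t(e)\rvert\le\log_4 3$ when $e\notin G_4(t)$, since in that case $t(e)\subseteq\{1,2,3\}$. Hence
\[\Ent(t)\le e(G_4(t))+\Bigl(\binom{n}{2}-e(G_4(t))\Bigr)\log_4 3=\log_4 3\binom{n}{2}+(1-\log_4 3)\,e(G_4(t)).\]
Since $1-\log_4 3>0$ and, by the characterisation above, $G_4(t)$ is $F$-free so $e(G_4(t))\le\ex(n,F)$, this gives $\ex(n,\mathcal{P})\le(1-\log_4 3)\ex(n,F)+\log_4 3\binom{n}{2}$.

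For the matching lower bound I would exhibit an explicit extremal template: let $G$ be an $F$-free graph on $[n]$ with $e(G)=\ex(n,F)$, and define $t(e)=[4]$ for $e\in E(G)$ and $t(e)=\{1,2,3\}$ for $e\notin E(G)$. All palettes are nonempty, $G_4(t)=G$ is $F$-free so $\langle t\rangle\subseteq\mathcal{P}_n$ by the characterisation, and $\Ent(t)=\ex(n,F)+\bigl(\binom{n}{2}-\ex(n,F)\bigr)\log_4 3$, matching the upper bound; this proves the theorem.

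There is no serious obstacle here: the only point requiring care is the characterisation step, where one must verify both directions of ``$\langle t\rangle\subseteq\mathcal{P}_n\iff G_4(t)$ is $F$-free'' and, in particular, use that $DF$ is forbidden as a (not necessarily induced) sub-digraph — this is exactly what guarantees that the double-edge graph of any realisation of $t$ is contained in $G_4(t)$. Everything else is a one-line entropy count and the elementary observation that $x\mapsto x+(\binom{n}{2}-x)\log_4 3$ is increasing.
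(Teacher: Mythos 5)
Your proof is correct and follows essentially the same route as the paper: both arguments reduce admissibility of a template to the graph of edges whose palette contains colour~$4$ being $F$-free, and then count entropy as $1$ per such edge and $\log_4 3$ per remaining edge, with the same extremal construction for the lower bound. The only (cosmetic) difference is that you bound an arbitrary admissible template directly, whereas the paper first uses monotonicity and maximality to normalise the extremal template so that every palette is $[3]$ or $[4]$.
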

	\begin{proof}
	We use the correspondence between digraphs and $4$-colourings of~$K_n$ from Observation~\ref{observation: encoding of digraphs}. Let $t$ be an $n$-vertex $4$-colouring template for $\mathcal{P}$ with maximal entropy. The monotonicity of $\mathcal{P}$ and the maximality of $\Ent(t)$ imply that all edges $e$ of~$K_n$ have $t(e)=[4]$ or $t(e)=[3]$. As $\mathcal{P}$ is exactly the property of having no copy of $F$ in colour $4$, at most $\ex(n, F)$ edges can have full entropy (entropy $1$), with the rest having entropy $\log_4 3$. This gives the upper bound on $\ex(n, \mathcal{P})$.

	For the lower bound, consider an $F$-free graph $G$ on $[n]$ with $e(G)=\ex(n,F)$. Let $t$ be the $4$-colouring template with $t(e)=[4]$ if $e\in E(G)$ and $t(e)=[3]$ otherwise. Clearly every realisation of $t$ contains no copy of $F$ in colour $4$, and hence lies in $\mathcal{P}$. The entropy of $t$ exactly matches the upper bound we established above, concluding the proof of the theorem.
	\end{proof}
	\begin{corollary} Let $F$ be a graph with chromatic number $r$. Then there are 
		\[\vert \Forb(DF)_n\vert = 3^{\frac{1}{r-1}\binom{n}{2}}4^{(1-\frac{1}{r-1})\binom{n}{2} +o(n^2) }\] 
		digraphs on $[n]$ not containing any copy of $DF$.
	\end{corollary}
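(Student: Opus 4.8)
The plan is to deduce this corollary directly from the exact extremal-entropy formula of Theorem~\ref{theorem: extremal entropy, no F}, the Erd\H{o}s--Stone determination of the Tur\'an density of $F$, and the general counting bound of Corollary~\ref{corollary: speed of arbitrary hereditary properties}.

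First I would note that, under the encoding of digraphs as $4$-colourings of $K_n$ from Observation~\ref{observation: encoding of digraphs}, the property $\mathcal{P}=\Forb(DF)$ is an order-hereditary (indeed $1$-monotone) property of $4$-colourings with $\mathcal{P}_n\neq\emptyset$ for every $n$, so Corollary~\ref{corollary: speed of arbitrary hereditary properties} applies with $k=4$: for every $\varepsilon>0$ and all $n$ large enough,
\[4^{\pi(\mathcal{P})\binom{n}{2}}\ \leq\ \vert \Forb(DF)_n\vert\ \leq\ 4^{(\pi(\mathcal{P})+\varepsilon)\binom{n}{2}}.\]
Hence it suffices to identify $\pi(\mathcal{P})$ and rewrite $4^{\pi(\mathcal{P})\binom{n}{2}}$ in the stated form.

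Second, I would compute $\pi(\mathcal{P})$. By Theorem~\ref{theorem: extremal entropy, no F}, $\ex(n,\mathcal{P})=(1-\log_4 3)\ex(n,F)+\log_4 3\binom{n}{2}$. Since $\chi(F)=r$, the lower bound from the complete balanced $(r-1)$-partite graph together with the Erd\H{o}s--Stone theorem (Theorem~\ref{theorem: erdosstone}) gives $\ex(n,F)=\bigl(1-\tfrac{1}{r-1}+o(1)\bigr)\binom{n}{2}$. Dividing by $\binom{n}{2}$ and letting $n\to\infty$ yields
\[\pi(\mathcal{P})=(1-\log_4 3)\Bigl(1-\tfrac{1}{r-1}\Bigr)+\log_4 3=\Bigl(1-\tfrac{1}{r-1}\Bigr)+\tfrac{1}{r-1}\log_4 3.\]

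Finally, I would unwind the exponent using $4^{\log_4 3}=3$: from $\ex(n,\mathcal{P})=(1-\log_4 3)\ex(n,F)+\log_4 3\binom{n}{2}$ we get $4^{\ex(n,\mathcal{P})}=(4/3)^{\ex(n,F)}\,3^{\binom{n}{2}}$, and substituting $\ex(n,F)=\bigl(1-\tfrac{1}{r-1}\bigr)\binom{n}{2}+o(n^2)$ turns this into $3^{\frac{1}{r-1}\binom{n}{2}}\,4^{(1-\frac{1}{r-1})\binom{n}{2}+o(n^2)}$. The lower bound in the displayed inequality matches this expression, and the upper bound matches it up to an extra factor $4^{\varepsilon\binom{n}{2}}$; letting $\varepsilon\to0$ and absorbing all error terms into a single $o(n^2)$ in the exponent of the base $4$ gives the claimed count. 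I do not expect a genuine obstacle: the substantive work is already done in Theorem~\ref{theorem: extremal entropy, no F} and Corollary~\ref{corollary: speed of arbitrary hereditary properties}, and the only point requiring a little care is verifying that the $o(n^2)$ error from Erd\H{o}s--Stone and the $\varepsilon\binom{n}{2}$ slack from the counting bound can be collected into one $o(n^2)$ term in the exponent of $4$ --- which is immediate since $\log_4 3$ and $1-\log_4 3$ are fixed positive constants.
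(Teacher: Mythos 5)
Your proposal is correct and follows essentially the same route as the paper: combine the Erd\H{o}s--Stone--Simonovits asymptotics for $\ex(n,F)$ with Theorem~\ref{theorem: extremal entropy, no F} to get $\pi(\Forb(DF))=\bigl(1-\tfrac{1}{r-1}\bigr)+\tfrac{1}{r-1}\log_4 3$, then apply Corollary~\ref{corollary: speed of arbitrary hereditary properties}. Your extra step of unwinding $4^{\pi(\mathcal{P})\binom{n}{2}}$ into the stated mixed-base form and absorbing the $\varepsilon\binom{n}{2}$ slack into the $o(n^2)$ term is exactly the routine verification the paper leaves implicit.
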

	\begin{proof}
	The Erd{\H o}s--Stone--Simonovits theorem~\cite{ErdosSimonovits65, ErdosStone46}  implies that for every graph $F$  with chromatic number $r$, $\ex(n,F)=(1-\frac{1}{r-1})\binom{n}{2} +o(n^2)$. Together with theorem~\ref{theorem: extremal entropy, no F} this implies $\pi(\Forb(DF))= (1-\frac{1}{r-1})+\frac{1}{r-1}\log_43$. The result is then immediate from Corollary~\ref{corollary: speed of arbitrary hereditary properties}.
	\end{proof}
 Furthermore, we can characterise typical graphs in $\PP=\Forb(DF)$. Suppose $F$ has chromatic number $r$. Let $\mathcal{S}_n=\mathcal{S}_n(F)$ denote the collection of $t\in (2^{[4]}\setminus \{\emptyset\} )^{K_n}$ obtained by taking a balanced $(r-1)$-partition $\bigsqcup_{i=1}^{r-1}A_i$ of $[n]$ and setting $t(e)=[4]$ for all edges~$e$ between distinct parts $A_i$,~$A_j$ with $i\neq j$, and letting $t(e)=[3]$ for all other edges. The celebrated Erd{\H o}s--Simonovits stability theorem~\cite{Simonovits68} applied to the graph $F$ immediately implies the following result:
	\begin{proposition}\label{theorem: stability for no double triangle digraph}
	Let\/ $\PP=\Forb(DF)$ and $\mathcal{S}_n$ be as above.  For every $\varepsilon>0$, there exists $\delta>0$ and $n_0$ such that if $n\geq n_0$ and $t\in (2^{[4]}\setminus \{\emptyset\} )^{K_n}$ satisfies
	\begin{enumerate}[(i)]
	\item $\Ent(t)\geq \left(\pi(\mathcal{P})-\delta\right)\binom{n}{2}$, and
	\item there are at most $\delta \binom{n}{v(F)}$ monochromatic copies of $F$ in colour $4$  which can be found in the realisations of~$t$,
	\end{enumerate} 
	then $\rho(\mathcal{S}_n, t) \leq \varepsilon \binom{n}{2}$.\qed
	\end{proposition}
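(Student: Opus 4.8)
The plan is to reduce the statement to the Erd\H{o}s--Simonovits stability theorem for $F$-free graphs, combined with the graph removal lemma, after extracting the relevant structural information from the entropy hypothesis. Throughout I would identify digraphs with $4$-colourings of $K_n$ as in Observation~\ref{observation: encoding of digraphs} and write $r=\chi(F)$, so that $\pi(\mathcal{P})=1-\tfrac{1}{r-1}+\tfrac{1}{r-1}\log_4 3$. Given a template $t$, set $G=G(t)$ to be the graph on $[n]$ whose edges are the $e\in E(K_n)$ with $4\in t(e)$. The key translation is that a monochromatic copy of $F$ in colour $4$ is realisable from $t$ exactly when it is a copy of $F$ in $G$; thus hypothesis~(ii) says precisely that $G$ contains at most $\delta\binom{n}{v(F)}$ copies of $F$. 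For the entropy: since each edge of $G$ contributes at most $1$ to $\Ent(t)$ and each non-edge at most $\log_4 3$, hypothesis~(i) gives, after rearranging, $e(G)\ge\bigl(1-\tfrac{1}{r-1}\bigr)\binom{n}{2}-\tfrac{\delta}{1-\log_4 3}\binom{n}{2}$.

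Next I would obtain a matching upper bound and the partition structure. Since $G$ has at most $\delta\binom{n}{v(F)}$ copies of $F$, the graph removal lemma provides an $F$-free subgraph $G'\subseteq G$ with $e(G)\le e(G')+o_\delta(1)\binom{n}{2}$; by the Erd\H{o}s--Stone theorem this forces $e(G)\le\bigl(1-\tfrac{1}{r-1}+o_\delta(1)\bigr)\binom{n}{2}$, so in fact $e(G)=\bigl(1-\tfrac{1}{r-1}+o_\delta(1)\bigr)\binom{n}{2}$. Applying the Erd\H{o}s--Simonovits stability theorem to the nearly extremal $F$-free graph $G'$ yields a balanced partition $[n]=A_1\sqcup\dots\sqcup A_{r-1}$ whose complete $(r-1)$-partite graph $T$ satisfies $\bigl\lvert E(G)\triangle E(T)\bigr\rvert\le\eta(\delta)\binom{n}{2}$ with $\eta(\delta)\to0$ as $\delta\to0$. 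Let $s\in\mathcal{S}_n$ be the template associated with this partition, so $s(e)=[4]$ on the edges of $T$ and $s(e)=[3]$ on the remaining pairs.

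To finish I would bound $\rho(s,t)$ by going, via the triangle inequality, through the canonical template $s_G$ defined by $s_G(e)=[4]$ on $E(G)$ and $s_G(e)=[3]$ off $E(G)$. On the one hand $s$ and $s_G$ differ only on $E(G)\triangle E(T)$, so $\rho(s,s_G)\le\eta(\delta)\binom{n}{2}$. On the other hand, comparing $\Ent(t)$ with the upper bound $e(G)+\bigl(\binom{n}{2}-e(G)\bigr)\log_4 3$ and using both $e(G)=\bigl(1-\tfrac{1}{r-1}+o_\delta(1)\bigr)\binom{n}{2}$ and hypothesis~(i) shows that the total ``entropy waste'' is at most $o_\delta(1)\binom{n}{2}$; since every edge of $G$ with $t(e)\ne[4]$ wastes at least $1-\log_4 3$ and every non-edge with $t(e)\ne[3]$ wastes at least $\log_4\tfrac{3}{2}>0$, this bounds the number of pairs on which $t$ and $s_G$ disagree by $o_\delta(1)\binom{n}{2}$. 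Hence $\rho(s,t)\le o_\delta(1)\binom{n}{2}$, which is at most $\varepsilon\binom{n}{2}$ once $\delta$ is chosen small enough and $n\ge n_0$; therefore $\rho(\mathcal{S}_n,t)\le\varepsilon\binom{n}{2}$.

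The main obstacle I anticipate is the bookkeeping in the last step. The entropy hypothesis must be exploited twice and rather tightly: once to show that $G$ has close to the Tur\'an number of edges (so that stability applies and $\eta(\delta)$ is small), and once more to show that the entropy waste is small, which is what forces almost every pair to carry palette exactly $[4]$ or exactly $[3]$. Without this second use, $t$ could be far in edit distance from every member of $\mathcal{S}_n$ even though $G$ were close to $T_{r-1}(n)$. A secondary point, harmless since only the existence of $\delta$ is claimed, is that passing through the removal lemma makes the dependence of $\delta$ on $\varepsilon$ of tower type; a direct supersaturation-plus-stability argument would avoid this.
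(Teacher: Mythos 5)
Your proof is correct and follows essentially the same route as the paper, which derives the proposition as an immediate consequence of the Erd\H{o}s--Simonovits stability theorem applied to $F$ on the graph of edges whose palette contains colour $4$ --- exactly your $G(t)$. You simply supply the bookkeeping the paper leaves implicit (the removal-lemma/Erd\H{o}s--Stone upper bound on $e(G)$ and the entropy-waste argument forcing almost every palette to be exactly $[4]$ or $[3]$), and that bookkeeping is carried out correctly.
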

	Applying Theorem~\ref{theorem: strong stability and containers} then yields:
	\begin{corollary}
	Let\/ $\PP=\Forb(DF)$ and $\mathcal{S}_n$ be as above. For every $\varepsilon >0$ there exists $n_0>0$ such that for all $n\geq n_0$, all but $\varepsilon \vert \mathcal{P}_n\vert$ colourings in\/ $\mathcal{P}$ are within edit distance~$\varepsilon \binom{n}{2}$ of a realisation from\/ $\mathcal{S}_n$.

Equivalently, for all but an $\varepsilon$-proportion of $DF$-free digraphs~$D$ on $[n]$, there exists a digraph~$H$ that is obtained by taking a balanced $(r-1)$-partition $\bigsqcup_{i=1}^{r-1}A_i$ of $[n]$, setting double edges between distinct parts $V_i$,~$V_j$ and placing quasirandom tournaments inside each of the parts, and a subdigraph~$H'$ of~$H$ such that $\rho(D, H') \leq \varepsilon\binom{n}{2}$.\qed
	\end{corollary}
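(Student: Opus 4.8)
The plan is to read the statement off from our general characterisation theorem, Theorem~\ref{theorem: strong stability and containers}, once we have checked that $\mathcal{S}=(\mathcal{S}_n)_{n\in\N}$ is a stability family for $\PP=\Forb(DF)$ in the sense of Definition~\ref{definition: strong stability family}, and then translated its conclusion about $4$-colourings of $K_n$ into the language of digraphs via the encoding of Observation~\ref{observation: encoding of digraphs}.

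First I would check the stability hypothesis, for which Proposition~\ref{theorem: stability for no double triangle digraph} does essentially all the work; it remains only to reconcile its ``few colour-$4$ copies of $F$'' condition with condition~(ii) of Definition~\ref{definition: strong stability family} (condition~(i), almost entropy-extremality, matches verbatim after a harmless rescaling of $\delta$). Taking $m=v(F)=v(DF)$, a $4$-colouring $c$ of $K_m$ lies outside $\PP_m$ exactly when its colour-$4$ graph contains a copy of $F$; consequently the number of pairs $(\phi,c)$ with $\phi\in\binom{K_n}{K_m}$, $c\notin\PP_m$ and $c\in\langle t_{\vert\phi}\rangle$ and the number of colour-$4$ copies of $F$ realisable from $t$ differ by at most a multiplicative constant depending only on $F$ --- bounding on one side the number of copies of $F$ spanning a fixed $K_m$, and on the other the number of valid completions of such a copy to a colouring of $K_m$. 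Rescaling $\delta$ by this constant, Proposition~\ref{theorem: stability for no double triangle digraph} becomes precisely the assertion that $\mathcal{S}$ is a stability family for $\PP$.

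Applying Theorem~\ref{theorem: strong stability and containers} to $\PP$ with this $\mathcal{S}$ then yields the first assertion essentially verbatim: for every $\varepsilon>0$ there is $n_0$ such that for $n\geq n_0$ at most $\varepsilon\lvert\PP_n\rvert$ colourings $c\in\PP_n$ satisfy $\rho(\langle\mathcal{S}\rangle,c)>\varepsilon\binom{n}{2}$. For the equivalent digraph formulation I would transport this across the bijection of Observation~\ref{observation: encoding of digraphs}, which identifies $\PP_n$ with the set of $DF$-free digraphs on $[n]$ and is an isometry for the edit distance of Definition~\ref{definition: edit distance} (two colourings differ on a pair precisely when the associated digraphs are in different states --- no edge, single edge in either direction, or double edge --- on that pair). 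Finally I would unwind $\mathcal{S}_n$: a realisation of one of its templates assigns, relative to a balanced $(r-1)$-partition $\bigsqcup_i A_i$ of $[n]$, an arbitrary digraph state to each cross-part pair (palette $[4]$) and an orientation of some subgraph to each intra-part pair (palette $[3]$); extending each intra-part orientation to a tournament exhibits that realisation as a subdigraph $H'$ of a digraph $H$ of the advertised shape --- double edges between the parts, a tournament inside each part --- which is exactly the structural description claimed.

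The hard part will be bookkeeping rather than ideas: pinning down the parameters in the reduction of Proposition~\ref{theorem: stability for no double triangle digraph} to Definition~\ref{definition: strong stability family} (the choice $m=v(F)$ and the $F$-dependent constants relating pairs $(\phi,c)$ to colour-$4$ copies of $F$), and being honest about the word ``quasirandom'' in the restatement. Strictly, the argument above produces \emph{some} tournament inside each part; one should add the remark that for a typical $DF$-free digraph this tournament may be taken quasirandom, since up to a small edit distance its intra-part structure is an essentially unconstrained orientation and hence resembles a uniformly random one.
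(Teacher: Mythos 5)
Your proposal is correct and follows the paper's own route exactly: the paper deduces the corollary in one line by feeding the stability family of Proposition~\ref{theorem: stability for no double triangle digraph} into Theorem~\ref{theorem: strong stability and containers} and then reading the conclusion through the encoding of Observation~\ref{observation: encoding of digraphs}. Your extra bookkeeping (relating colour-$4$ copies of $F$ to the pairs $(\phi,c)$ with $c\notin\PP_m$ in Definition~\ref{definition: strong stability family}, and flagging that the argument literally produces \emph{some} tournament in each part, with quasirandomness only a remark about typical realisations) is sound and, if anything, more careful than the paper's terse treatment.
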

	The case $F=K_3$ (which has chromatic number $r=3$) in the results above resolves the problem identified by K\"uhn, Osthus, Townsend and Zhao.
	\subsection{Multigraphs}
In this subsection we study multigraphs with bounded edge multiplicities, viewed as weightings of the edges of~$K_n$ by non-negative integers. An $n$-vertex multigraph~$G$ in which all edge multiplicities are at most~$d$ can be encoded as a $(d + 1)$-colourings of~$E(K_n)$, with each edge coloured by its multiplicity. In this way, the problem of counting such multigraphs is placed in our framework of counting $k$-colourings.

		Let $\mathcal{P}$ be the property of multigraphs that no triple of vertices supports more than $4$ edges (counting multiplicities). Clearly no edge of such a multigraph can have weight more than $4$.  We shall determine the speed of $\mathcal{P}_n$. As always, we do this by first proving an extremal result (which in this case is quite easy), with the counting result then following immediately from an application of Corollary~\ref{corollary: speed of arbitrary hereditary properties}.

	Similar extremal problems for multigraphs were previously considered by Bondy and Tuza~\cite{BondyTuza97} and F\"uredi and K\"undgen~\cite{FurediKundgen02}. However the crucial difference is that, as far as counting results are concerned, we need to determine the asymptotically extremal entropy, rather than the asymptotically extremal total number of edges that was studied in~\cite{FurediKundgen02}. Indeed, in our problem, there exist configurations which are extremal with respect to the number of edges but \emph{not} with respect to entropy --- see Examples~\ref{example: digraph1} and \ref{example: digraph2} below.

Very recently in a pair of papers, Mubayi and Terry~\cite{MubayiTerry16a,MubayiTerry16b} study our problem in much greater generality, determining the extremal entropy, number and typical structure of multigraphs in which no $s$ vertices support more than~$q$ edges for a very large class of pairs~$(s, q)$. Our work in this subsection is thus a special case of their much more general results.

	\begin{example}\label{example: digraph1}
	Consider a balanced bipartition $V_1\sqcup V_2$ of $[n]$ and let $G_1$ be the multigraph assigning weight~$2$ to every edge from $V_1$ to $V_2$ and weight~$0$ to every other edge. Let also $t_1$ be the associated template, assigning colour list~$\{0,1,2\}$ to every edge from $V_1$ to $V_2$ and colour list~$\{0\}$ to every other edge.

Clearly $G_1\in \langle t_1 \rangle\subseteq \mathcal{P}$.  The total edge weight of~$G_1$ is $\lfloor \frac{n^2}{2}\rfloor$, and the entropy of~$t_1$ is $\log_5(3) \left\lfloor \frac{n^2}{4}\right\rfloor$. 
	\end{example}
It is not hard to show that the total edge weight of~$G_1$ is extremal; this is an easy exercise on proof by induction, and follows from results of Bondy--Tuza~\cite{BondyTuza97} and F\"uredi--K\"undgen~\cite{FurediKundgen02}. 
	\begin{proposition}\label{proposition: extremal size (3,5) problem}
	If $G$ is a multigraph in\/ $\mathcal{P}_n$, for some $n\geq 3$, then $e(G)\leq \lfloor \frac{n^2}{2}\rfloor$.
	\end{proposition}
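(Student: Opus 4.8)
The plan is to prove the bound $e(G)\leq\lfloor n^2/2\rfloor$ by induction on $n$. The base case $n=3$ is immediate from the definition of $\mathcal{P}$: a triple supports at most $4$ edges, and $\lfloor 9/2\rfloor=4$, so the bound holds (with equality possible). For the inductive step, suppose the bound holds for all multigraphs in $\mathcal{P}_{n-2}$ and let $G\in\mathcal{P}_n$ with $n\geq 4$. The idea is to pick a suitable pair of vertices $\{u,v\}$, delete them, apply induction to $G-\{u,v\}\in\mathcal{P}_{n-2}$, and bound the edge weight lost by the deletion. The weight removed is $w(uv)+\sum_{x\neq u,v}\bigl(w(ux)+w(vx)\bigr)$. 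For each third vertex $x$, the constraint on the triple $\{u,v,x\}$ gives $w(uv)+w(ux)+w(vx)\leq 4$, hence $w(ux)+w(vx)\leq 4-w(uv)$. Summing over the $n-2$ choices of $x$ and adding $w(uv)$, the total weight removed is at most $w(uv)+(n-2)(4-w(uv))$.

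The quantity $w(uv)+(n-2)(4-w(uv))=4(n-2)-(n-3)w(uv)$ is decreasing in $w(uv)$ for $n\geq 4$, so it is largest when $w(uv)=0$, giving the bound $4(n-2)$ on the removed weight. Combining with the inductive hypothesis,
\begin{equation*}
e(G)\leq\Bigl\lfloor\tfrac{(n-2)^2}{2}\Bigr\rfloor+4(n-2)=\Bigl\lfloor\tfrac{(n-2)^2}{2}\Bigr\rfloor+4n-8.
\end{equation*}
However, $\lfloor(n-2)^2/2\rfloor+4n-8=\lfloor(n^2-4n+4)/2\rfloor+4n-8=\lfloor n^2/2\rfloor-2n+2+4n-8=\lfloor n^2/2\rfloor+2n-6$, which exceeds $\lfloor n^2/2\rfloor$ for $n\geq 4$. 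So the naive deletion of an arbitrary pair is too wasteful; the main obstacle is to choose the pair $\{u,v\}$ cleverly so that the weight removed is at most $2n-2$ (roughly the average degree one would expect in an extremal configuration).

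To fix this, I would instead choose $\{u,v\}$ to be a pair achieving (or nearly achieving) the \emph{minimum} of $w(uv)+\sum_{x}\bigl(w(ux)+w(vx)\bigr)$ over all pairs, i.e.\ a pair whose ``boundary weight'' is smallest, and argue by an averaging/extremality argument that this minimum is at most the average boundary weight, which is $O(n)$ with the right constant. Concretely: if we sum the boundary weight over all $\binom{n}{2}$ pairs, each edge $e$ of weight $w(e)$ is counted once as the pair itself and $2(n-2)$ times as an edge $ux$ or $vx$ (for each of the $n-2$ choices of the third vertex in the summing pair that uses $e$ on its boundary), so the total is $(2n-3)\sum_e w(e)=(2n-3)e(G)$. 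Hence some pair has boundary weight at most $(2n-3)e(G)/\binom{n}{2}$. If $e(G)>\lfloor n^2/2\rfloor$, I would derive a contradiction by feeding this back into the induction: the removed weight is at most $(2n-3)e(G)/\binom{n}{2}\approx 2e(G)/n$, so $e(G-\{u,v\})\geq e(G)(1-2/n+o(1/n))=e(G)\cdot\frac{(n-2)^2}{n^2}(1+o(1))$, and if $e(G)/\binom{n}{2}$ strictly exceeds $1$ then so does $e(G-\{u,v\})/\binom{n-2}{2}$, contradicting the inductive hypothesis after finitely many steps. Care will be needed with the floor functions and the exact inequality, but the clean way to run this is: show that $e(G)\leq\lfloor n^2/2\rfloor$ fails for the minimal counterexample $n$ by producing via the averaging argument a pair whose deletion yields a multigraph in $\mathcal{P}_{n-2}$ with more than $\lfloor(n-2)^2/2\rfloor$ edges, contradicting minimality. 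Alternatively — and this is likely the slickest route — one can avoid the averaging subtlety entirely by the standard trick of deleting a single vertex of \emph{minimum weighted degree}: the sum of all weighted degrees is $2e(G)$, so some vertex $v$ has weighted degree at most $2e(G)/n$; if $e(G)\le \lfloor n^2/2\rfloor$ this is at most $n$, and then $e(G-v)\le \lfloor n^2/2\rfloor - $ (something), but this does not obviously close either, so I would fall back on the pair-deletion with the averaging bound as the main argument, citing Bondy--Tuza and F\"uredi--K\"undgen for the cleanest packaging if the bookkeeping becomes heavy.
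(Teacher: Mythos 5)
Your main argument has a genuine quantitative gap. The induction needs the deleted pair $\{u,v\}$ to carry total weight at most $\lfloor n^2/2\rfloor-\lfloor (n-2)^2/2\rfloor = 2n-2$, but your averaging bound only guarantees a pair of boundary weight at most $(2n-3)e(G)/\binom{n}{2}$, which at the extremal value $e(G)=\lfloor n^2/2\rfloor$ is already about $2n-1-\frac{1}{n-1}>2n-2$; with $e(G)=\lfloor n^2/2\rfloor+1$ it is even larger. So the minimum-boundary pair may exceed the budget, and the ``density is preserved under deletion'' reformulation does not help either, because the allowed density $\lfloor m^2/2\rfloor/\binom{m}{2}\approx m/(m-1)$ \emph{increases} as $m$ decreases, so a preserved density above $1$ never contradicts the hypothesis at smaller $m$ or at the base case. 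The extremal configuration $G_2$ of the paper (a doubled perfect matching on top of the complete simple graph) makes the failure concrete: there every non-matched pair has boundary weight exactly $2n-1$, and only the matched (multiplicity-$2$) pairs achieve $2n-2$, which an averaging argument cannot locate.

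You in fact had all the needed ingredients in your first computation and abandoned them one step too early: the bound $w(uv)+(n-2)\bigl(4-w(uv)\bigr)$ is only too large when $w(uv)\in\{0,1\}$, and that case can be dispatched without induction. Argue as follows: if every edge of $G$ has multiplicity at most $1$, then $e(G)\leq\binom{n}{2}\leq\lfloor n^2/2\rfloor$ and you are done; otherwise choose a pair $\{u,v\}$ with $w(uv)\geq 2$, so that the triple constraint gives $w(ux)+w(vx)\leq 4-w(uv)\leq 2$ for every $x$, whence the removed weight is at most $w(uv)+(n-2)\bigl(4-w(uv)\bigr)\leq 2n-2$ for every $w(uv)\in\{2,3,4\}$ and $n\geq 3$, and the induction (with base cases $n=3,4$ checked by hand) closes. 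This ``select a heavy edge'' step is exactly the mechanism the paper uses in its proof of the entropy statement, Theorem~\ref{theorem: extremal entropy (3,5) problem}; for the edge-weight proposition itself the paper simply records it as an easy induction exercise and points to Bondy--Tuza and F\"uredi--K\"undgen, so citing those results is legitimate, but as written your averaging route is not a proof and is not merely a matter of heavy bookkeeping.
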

 The total edge weight of $G_1$ is thus maximal; however, the entropy of the associated template~$t_1$ is not. Indeed we can construct a different edge-extremal construction with strictly larger entropy.
	 	\begin{example}\label{example: digraph2} Let $M$ be a maximal matching in $[n]$ and let $G_2$ be the multigraph assigning weight~$2$ to every edge in $M$ and weight~$1$ to every other edge. Let also $t_2$ be the associated template, assigning colour list~$\{0,1,2\}$ to every edge of~$M$ and colour list~$\{0,1\}$ to every other edge.

	 	As before, we have $G_2\in \langle t_2 \rangle\subseteq \mathcal{P}$ and $e(G_2)=\lfloor \frac{n^2}{2}\rfloor$. However,
	 	\[\Ent(t_2)=\log_5(2) \binom{n}{2}+\log_5\left(\frac{3}{2}\right)\left\lfloor \frac{n}{2}\right\rfloor=\log_5(\sqrt{2}) n^2+o(n^2) > \log_5(3^{1/4})n^2\geq \Ent(t_1).\]
	 	\end{example}
	 	It is straightforward to show that $t_2$ is indeed an entropy-extremal template for $\mathcal{P}$; this is a special case of recent and much more general results of Mubayi and Terry~\cite{MubayiTerry16b}.
	\begin{theorem}\label{theorem: extremal entropy (3,5) problem}
For all $n\geq 3$, $\ex(n, \mathcal{P})=\log_5(2) \binom{n}{2}+\log_5\left(\frac{3}{2}\right)\left\lfloor \frac{n}{2}\right\rfloor$.
	\end{theorem}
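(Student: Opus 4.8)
The lower bound $\ex(n,\mathcal{P})\ge \log_5(2)\binom{n}{2}+\log_5(3/2)\lfloor n/2\rfloor$ is witnessed by the template $t_2$ of Example~\ref{example: digraph2}, so the task is the matching upper bound. The plan is as follows. Fix an entropy-extremal template $t$ for $\mathcal{P}_n$. Replacing each palette $t(e)$ by the interval $\{0,1,\dots,\max t(e)\}$ leaves $\langle t\rangle\subseteq\mathcal{P}_n$ unchanged (validity only depends on the maxima of the three palettes on each triangle) and does not decrease $\Ent(t)$, so we may assume $t(e)=\{0,\dots,w_e\}$ with $w_e\in\{0,1,2,3,4\}$ and that $w$ is \emph{valid}, i.e. $w_{ij}+w_{ik}+w_{jk}\le 4$ for every triple. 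Writing $a$ for the number of edges with $w_e=0$ and $c_j$ for the number with $w_e=j$, so that $\binom{n}{2}=a+c_1+c_2+c_3+c_4$, a direct computation gives
\[\Ent(t)=\log_5(2)\binom{n}{2}+\log_5(3/2)\,c_2+\log_5(2)\,c_3+\log_5(5/2)\,c_4-\log_5(2)\,a.\]
Since $\log_5(2)\le 2\log_5(3/2)$, $\log_5(5/2)\le 3\log_5(3/2)$ and $\log_5(3/2)\le\log_5(2)$, the claimed bound on $\Ent(t)$ reduces to the purely combinatorial inequality
\[c_2+2c_3+3c_4\le \lfloor n/2\rfloor+a \qquad\text{for every valid }w. \qquad (\star)\]

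To prove $(\star)$ I would induct on $n$ (with $n\in\{3,4\}$ checked by hand), using two structural facts. First, $G'=\{e:w_e\ge 2\}$ is triangle-free, since three edges of weight $\ge 2$ on a triple sum to $\ge 6$. Second, if $w_{ij}\ge 3$ then for every $k$ the triangle $ijk$ forces $w_{ik}+w_{jk}\le 1$; so all edges at $i$ or $j$ other than $ij$ have weight $\le 1$, and at least $n-2$ of them have weight $0$. Hence, if some edge $ij$ has $w_{ij}\ge3$, delete $i$ and $j$: the deleted edges contribute at most $\log_5(5)+(n-2)\log_5(2)$ to $\Ent(t)$ (the factor $\log_5(2)$ per index $k$ because $(w_{ik}+1)(w_{jk}+1)\le 2$), and since $\ex(n,\mathcal{P})-\ex(n-2,\mathcal{P})=(2n-3)\log_5(2)+\log_5(3/2)$ and $1-\log_5(3/2)\le (n-1)\log_5(2)$ for $n\ge 3$, the induction closes. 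Thus we may assume $w_e\le 2$ for all $e$, i.e. $c_3=c_4=0$ and $G'$ carries only the weight-$2$ edges.

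In this remaining case $c_2=e(G')$, and for every pair $\{x,y\}$ at distance exactly $2$ in $G'$ any common $G'$-neighbour $v$ forces $w_{xy}=0$ (via the triangle $xvy$); so $a\ge p_2(G')$, where $p_2(H)$ denotes the number of vertex pairs at distance exactly $2$ in $H$. As $c_2$ is an integer, $(\star)$ then follows (after a one-line floor estimate) from the following statement applied to $H=G'$:
\begin{quote}\emph{Lemma.} For every triangle-free graph $H$, $\ e(H)\le \tfrac12\,|V(H)|+p_2(H)$.\end{quote}
I would prove the Lemma by induction on $|V(H)|$. Isolated vertices and single-edge components are stripped off trivially; a degree-$1$ vertex whose neighbour $u$ has degree $\ge 2$ is deleted, decreasing $p_2$ by exactly $d(u)-1$, which makes the step go through. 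This reduces to minimum degree $\ge 2$, where I would take a shortest cycle and delete one of its vertices $v$. If the girth is $\ge 5$, every pair inside $N(v)$ has $v$ as its unique common neighbour and the second neighbourhood of $v$ is nonempty, so deleting $v$ drops $e$ by $d(v)$ and $p_2$ by at least $\binom{d(v)}{2}+1$, and $d-\binom{d}{2}-\tfrac32\le-\tfrac12$ closes the induction with room to spare.

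The main obstacle is the last case of the Lemma: girth exactly $4$ with minimum degree $\ge 2$. Here a pair in $N(v)$ may have common neighbours besides $v$, so deleting a single vertex need not destroy enough distance-$2$ pairs; one must instead delete a suitable pair of vertices of an induced $4$-cycle (equivalently, prove $\sum_v(d_H(v)-1)\le 2p_2(H)$ directly), carefully bookkeeping how many distance-$2$ pairs survive — this, together with the parity/floor accounting threaded through $(\star)$ and the Lemma, is where the real work lies. Equality holds throughout for a perfect (or near-perfect) matching of weight-$2$ edges, i.e. for the template $t_2$, so the upper bound is attained and the theorem follows.
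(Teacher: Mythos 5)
Your reductions all check out: passing to interval palettes is justified exactly as in the paper (monotonicity of $\mathcal{P}$), the entropy identity in terms of $a,c_1,\dots,c_4$ and the three logarithmic inequalities are correct, and the weight-$\geq 3$ case closes by deleting the two endpoints of such an edge with the arithmetic you state (this case is essentially the paper's ``entropy at least $\log_5 4$'' case). Your route is genuinely different from the paper's: the paper proves the upper bound by a single induction on $n$, locating a pair $u_1u_2$ whose incident edges carry total entropy at most $2(n-1)\log_5 2+\log_5 3$ via a two-case analysis on the largest palette, and never needs anything like your Lemma; your version instead isolates a clean combinatorial core, namely that every triangle-free graph $H$ satisfies $e(H)\leq \tfrac12\lvert V(H)\rvert+p_2(H)$, a statement of independent interest (tight for matchings and for balanced complete bipartite graphs), at the cost of a longer chain of reductions. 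One presentational point: you announce an induction on $(\star)$ but the weight-$\geq 3$ case is really an induction on the entropy statement itself; the argument should be organised as induction on $n$ for the theorem, with the all-weights-$\leq 2$ case handled by the Lemma (where $(\star)$ degenerates to $c_2\leq\lfloor n/2\rfloor+a$) and no inductive hypothesis needed there.

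As written, though, there is a gap: you explicitly leave the girth-$4$, minimum-degree-$\geq 2$ case of the Lemma unproven, and that is indeed the case your single-vertex deletion cannot handle, since a pair in $N(v)$ may retain a common neighbour after $v$ is removed. The good news is that the Lemma is true and the fix is far easier than the ``real work'' you anticipate; it also makes the entire girth/degree case analysis unnecessary. Induct on $\lvert V(H)\rvert$ by deleting \emph{both} endpoints of an arbitrary edge $uv$ (if $H$ has no edge the claim is trivial). This removes $d(u)+d(v)-1$ edges and two vertices, and it destroys at least $(d(u)-1)+(d(v)-1)$ distance-$2$ pairs, namely $\{v,w\}$ for $w\in N(u)\setminus\{v\}$ and $\{u,w\}$ for $w\in N(v)\setminus\{u\}$: by triangle-freeness each such pair is non-adjacent with a common neighbour in $\{u,v\}$, hence at distance exactly $2$, the pairs are pairwise distinct, and all of them meet $\{u,v\}$ and so disappear. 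Since deleting vertices can only increase distances, $p_2$ drops by at least $d(u)+d(v)-2$, and the inductive inequality $e\leq \tfrac12 n+p_2$ follows immediately. With that paragraph inserted (and the reorganisation noted above), your proof is complete and correct, though still longer than the paper's direct induction.
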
 
			\begin{proof}
By induction on $n$. The base cases $n=3$,~$4$ are again easily checked by hand. For $n\geq 4$, consider a template~$t$ for a $5$-colouring of~$E(K_{n+1})$ with $\langle t \rangle\subseteq \mathcal{P}_{n+1}$, with colours from $\{0,1,2,3,4\}$ corresponding to edge weights. Suppose $\Ent(t)\geq \log_5(2) \binom{n+1}{2}+\log_5\left(\frac{3}{2}\right)\left\lfloor \frac{n+1}{2}\right\rfloor$. We claim that we must in fact have equality. By the inductive hypothesis it is enough to show that we can find a pair of vertices~$u_1u_2$ such that the sum of the entropies of the edges incident to $u_1$ or~$u_2$ is at most~$2(n-1)\log_5(2)+ \log_5(3)$. By monotonicity of the property~$\mathcal{P}$, we may assume that for every edge~$e$ if $i<j$ and $j\in t(e)$ then $i\in t(e)$. Thus the possible entropies for a single edge are $0$ (weight~zero), $\log_5 (2)$ (weight $0$ or~$1$),  $\log_5(3)$ (weight $0$, $1$ or~$2$), and so on.

			Suppose $G$ contains an edge~$u_1u_2$ with entropy at least~$\log_5(4)$. Then $3\in t(u_1u_2)$, and thus for every other vertex~$v$, the combined weight of $u_1v$,~$u_2v$ in any realisation of~$t$ must be at most~one, so that $\log_5 \vert t(u_1v)\vert +\log_5 \vert t(u_2v)\vert \leq \log_5(2)$. Thus the total entropy of the edges incident to $u_1$ or~$u_2$ is at most~$\log_5(5)+(n-1)\log_5(2)<2(n-1)\log_5(2)$. We may therefore assume that every edge~$u_1u_2$ has entropy at most~$\log_5(3)$ in $t$, and, given the bound we are trying to prove, that there is some edge with entropy exactly~$\log_5(3)$. Then $2\in t(u_1u_2)$, and for every other vertex~$v$ the pairs $u_1v$,~$u_2v$ can have combined weight at most~$2$ in every realisation of $t$. In particular,
\[\log_5 \vert t(u_1v)\vert +\log_5 \vert t(u_2v)\vert \leq \max\bigl\{ \log_5(3)+\log_5(1), \log_5(2)+\log_5(2)\bigr\}=2\log_5(2).\]
Thus the total entropy of the edges incident to $u_1$ or~$u_2$ is at most~$2(n-1)\log_5(2)+\log_5(3)$, as required, and
\[\Ent(t) \leq \log_5(2) \binom{n+1}{2}+\log_5\left(\frac{3}{2}\right)\left\lfloor \frac{n+1}{2}\right\rfloor. \qedhere\]
			\end{proof}
We may thereby deduce a counting result for $\mathcal{P}$: 	
	\begin{corollary}
		There are $2^{\binom{n}{2}+o(n^2)}$ multigraphs on $[n]$ for which no triple of vertices supports more than~$4$ edges (counting multiplicities).
	\end{corollary}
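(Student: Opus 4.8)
The plan is to combine the extremal entropy computation of Theorem~\ref{theorem: extremal entropy (3,5) problem} with the general counting result of Corollary~\ref{corollary: speed of arbitrary hereditary properties}. First I would observe that an $n$-vertex multigraph in which every edge has multiplicity at most~$4$ is exactly a $5$-colouring of~$E(K_n)$, the colour of an edge recording its multiplicity, and that under this encoding $\mathcal{P}$ (``no triple of vertices supports more than $4$ edges'') becomes a symmetric, hence order-hereditary, property of $5$-colourings with $\mathcal{P}_n\neq\emptyset$ for every $n$. This is the only point where one has to check a definition, and the check is immediate: deleting a vertex or relabelling cannot create a new triple supporting more than four edges.

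Next, Theorem~\ref{theorem: extremal entropy (3,5) problem} gives $\ex(n,\mathcal{P})=\log_5(2)\binom{n}{2}+\log_5\!\left(\tfrac32\right)\left\lfloor\tfrac n2\right\rfloor$. Dividing by $\binom{n}{2}$ and letting $n\to\infty$ (equivalently, invoking Proposition~\ref{proposition: entropy density}, which guarantees the limit exists) yields the entropy density $\pi(\mathcal{P})=\log_5 2$, since the $\left\lfloor n/2\right\rfloor$ term contributes only $O(1/n)$ to the ratio. Then I would apply Corollary~\ref{corollary: speed of arbitrary hereditary properties} with $k=5$: for every $\varepsilon>0$ and all $n$ sufficiently large, $5^{\pi(\mathcal{P})\binom{n}{2}}\le\vert\mathcal{P}_n\vert\le 5^{(\pi(\mathcal{P})+\varepsilon)\binom{n}{2}}$. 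Substituting $\pi(\mathcal{P})=\log_5 2$ and converting to base $2$ gives $\vert\mathcal{P}_n\vert=5^{(\log_5 2+o(1))\binom{n}{2}}=2^{\binom{n}{2}}\cdot 5^{o(1)\binom{n}{2}}=2^{\binom{n}{2}+o(n^2)}$, which is the claimed count.

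There is essentially no obstacle here: the substantive work---the determination of $\ex(n,\mathcal{P})$, and in particular the fact that the entropy-extremal template $t_2$ of Example~\ref{example: digraph2} (rather than the edge-extremal $t_1$ of Example~\ref{example: digraph1}) governs the count---was carried out in Theorem~\ref{theorem: extremal entropy (3,5) problem}. The only mild care needed is in passing from the base-$5$ exponent appearing in Corollary~\ref{corollary: speed of arbitrary hereditary properties} to the base-$2$ exponent in the statement, and in absorbing both the linear correction $\log_5(3/2)\left\lfloor n/2\right\rfloor$ and the $\varepsilon\binom{n}{2}$ slack into a single $o(n^2)$ error term.
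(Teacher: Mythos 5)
Your proposal is correct and follows exactly the route of the paper: the paper's proof is simply ``immediate from Theorem~\ref{theorem: extremal entropy (3,5) problem} and Corollary~\ref{corollary: speed of arbitrary hereditary properties}'', and you have just spelled out the intermediate steps (the $5$-colouring encoding, $\pi(\mathcal{P})=\log_5 2$ via Proposition~\ref{proposition: entropy density}, and the base change from $5$ to $2$). No gaps.
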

	\begin{proof}
		Immediate from Theorem~\ref{theorem: extremal entropy (3,5) problem} and Corollary~\ref{corollary: speed of arbitrary hereditary properties}.
	\end{proof}
\begin{remark}
With a little more work, it can be shown that $t_2$ and its isomorphic copies constitute a strong stability template for $\mathcal{P}$ and that typical members of~$\mathcal{P}$ are close to realisations of~$t_2$ --- and thus far from realisations of~$t_1$, despite the fact that $t_1$ was constructed from an edge-extremal graph. This also follows from considerably more general (and more difficult) stability results for multigraphs in which no $s$-set spans more than $q$ edges, which was obtained by Mubayi and Terry~\cite{MubayiTerry16b}, and shows how different the extremal problems for the total number of edges and for the entropy are in this setting.
\end{remark}

	\subsection{\texorpdfstring{$3$}{3}-coloured graphs}
	Let $\mathcal{P}$ denote the set of $3$-coloured graphs with no rainbow triangle, where a triangle is called \emph{rainbow} if it has an edge in each of the three colours~$\{1,2,3\}$. We use our multicolour container results to count the number of graphs in $\mathcal{P}$ and to characterise typical elements of~$\mathcal{P}$. This is related to the multicolour Erd{\H o}s--Rothschild problem~\cite{Erdos74}, which has received significant attention, see e.g.\ Alon, Balogh, Keevash and Sudakov's proof of a conjecture of Erd{\H o}s and Rothschild in~\cite{AlonBaloghKeevashSudakov04}, as well as the recent work of Benevides, Hoppen and Sampaio~\cite{BenevidesHoppenSampaio16}, Pikhurko, Staden and Yilma~\cite{PikhurkoStadenYilma16} and Hoppen, Lefmann and Odermann~\cite{HoppenLefmannOdermann}.

	\begin{theorem}[Extremal entropy]~\label{theorem: extremal entropy no rainbow k3}
Let\/ $\mathcal{P}$ denote the set of $3$-coloured graphs with no rainbow triangle.  For all $n\geq 3$, 
	\[\ex(n, \mathcal{P})=(\log_3 2)\binom{n}{2}.\]
	Furthermore, the unique extremal templates~$t$ are obtained by choosing a pair of colours~$\{c_1,c_2\}$ from $\{1,2,3\}$ and setting $t(e)=\{c_1,c_2\}$ for every $e\in E(K_n)$.
	\end{theorem}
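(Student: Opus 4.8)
The plan is to prove the value of $\ex(n,\mathcal{P})$ and the uniqueness of the extremisers together. For the lower bound I would simply exhibit, for any fixed pair of colours — say $\{1,2\}$ — the template $t$ with $t(e)=\{1,2\}$ for every $e\in E(K_n)$. Every realisation of $t$ uses only two colours, so it has no rainbow triangle; hence $\langle t\rangle\subseteq\mathcal{P}_n$ and $\Ent(t)=(\log_3 2)\binom n2$. It then remains to show that every template $t$ with $\langle t\rangle\subseteq\mathcal{P}_n$ satisfies $\Ent(t)\le(\log_3 2)\binom n2$, and that equality forces $t$ to be constant equal to some $2$-element set.

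The one substantive ingredient is a finite case analysis of a single triangle $uvw$ with palettes $A=t(uv)$, $B=t(uw)$, $C=t(vw)$, using only that no transversal of $(A,B,C)$ consists of three distinct colours. I would extract two facts. \emph{Fact 1:} if some palette, say $A$, equals $\{1,2,3\}$, then $B$ and $C$ are equal singletons. (If either of $B,C$ has size $\ge2$ one easily picks a rainbow transversal, and if $B=\{x\},C=\{y\}$ with $x\ne y$ then the remaining colour of $A$ completes a rainbow.) \emph{Fact 2:} if $|A|=|B|=|C|=2$, then $A=B=C$ — equivalently, any three $2$-subsets of $\{1,2,3\}$ that are not all equal admit a rainbow transversal, which is a direct check.

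With these in hand I would argue by strong induction on $n\ge3$, splitting on whether $t$ has a \emph{full} edge (palette $\{1,2,3\}$). Suppose $t(uv)=\{1,2,3\}$. By Fact~1 every edge incident to $u$ or $v$ other than $uv$ is a singleton, contributing $0$ to the entropy, so writing $t'$ for the restriction of $t$ to the complete graph on $V(K_n)\setminus\{u,v\}$ — which still satisfies $\langle t'\rangle\subseteq\mathcal{P}_{n-2}$ because $\mathcal{P}$ is hereditary — we get $\Ent(t)=1+\Ent(t')\le 1+\ex(n-2,\mathcal{P})$. For $n\ge5$ the induction hypothesis gives $\ex(n-2,\mathcal{P})=(\log_3 2)\binom{n-2}2$, and since $(\log_3 2)\bigl(\binom n2-\binom{n-2}2\bigr)=(\log_3 2)(2n-3)\ge\log_3 8>1$ this yields $\Ent(t)<(\log_3 2)\binom n2$; for $n\in\{3,4\}$ the trivial bound $\ex(n-2,\mathcal{P})\le\binom{n-2}2\le1$ gives $\Ent(t)\le2<(\log_3 2)\binom n2$ directly. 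Hence a template with a full edge is never extremal. If instead no edge is full, then every palette has size $1$ or $2$, so $\Ent(t)=\sum_e\log_3|t(e)|\le(\log_3 2)\binom n2$ term by term, with equality exactly when every palette has size $2$; in that case Fact~2 shows that any two edges of $K_n$ sharing a vertex carry equal palettes, and since the line graph of $K_n$ is connected for $n\ge3$ all palettes coincide with a single $2$-set $\{c_1,c_2\}$. This gives precisely the claimed family of extremisers and completes the proof.

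The main obstacle — really the only place any care is needed — is the single-triangle enumeration behind Facts~1 and~2; it is routine but not entirely automatic. Everything after that is bookkeeping, together with the easy but essential observation that $\mathcal{P}$ is hereditary, so that restricting a valid template to a vertex subset yields a template still valid for $\mathcal{P}$, which is what powers the induction.
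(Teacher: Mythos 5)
Your proof is correct, and it globalises the local triangle analysis differently from the paper. Both arguments rest on the same single-triangle case check (your Facts~1 and~2 are the paper's Observation on rainbow-$K_3$-free palettes: a full edge forces singleton palettes on the adjacent edges of any triangle, and three size-$2$ palettes in a triangle must coincide). The paper then finishes in one stroke by an averaging argument: it first notes that every $3$-set $A$ satisfies $\Ent(t_{\vert A})\leq 3\log_3 2$ with equality iff all three palettes are the same $2$-set, and then averages $\Ent(t_{\vert A})$ over all $\binom{n}{3}$ triangles, so that $\Ent(t)\geq(\log_3 2)\binom{n}{2}$ forces equality in every triangle simultaneously — this yields the upper bound and the uniqueness of the extremisers in a single step, with no induction and no separate treatment of full edges (a triangle containing a full edge automatically has entropy at most $1<3\log_3 2$, so it is swallowed by the same averaging). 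Your route instead splits on whether a full edge exists, kills that case by deleting its two endpoints and inducting (using heredity of $\mathcal{P}$, exactly as the paper uses it in its entropy-density proposition), and in the remaining case bounds edge by edge and propagates the common palette through triangles via connectivity of the line graph. This costs you an induction and the bookkeeping around small $n$, but is more elementary in flavour; the averaging proof is shorter and scales verbatim to the supersaturation/stability analysis the paper carries out afterwards. One small slip to tidy: in the full-edge case for $n\in\{3,4\}$ you lump the bound as $\Ent(t)\leq 2<(\log_3 2)\binom{n}{2}$, which is false for $n=3$ since $(\log_3 2)\binom{3}{2}=\log_3 8\approx 1.89<2$; the fix is immediate, because for $n=3$ your own inequality gives $\Ent(t)\leq 1+\binom{1}{2}=1<\log_3 8$, so just keep the bound $1+\binom{n-2}{2}$ case by case instead of rounding it up to $2$.
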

	\begin{proof}
	Our theorem shall follow from the following observation and a straightforward averaging argument.
	\begin{observation}\label{observation: rainbow K3}
	Suppose $\langle t \rangle\subseteq \mathcal{P}$ and $e=\{v_1,v_2\}$ is some edge of~$K_n$. Then rainbow $K_3$-freeness implies the following:
	\begin{enumerate}[(i)]
	\item if $\vert t(e)\vert =3$, then for all $x\in V(K_n)\setminus e$ and $i \in \{1, 2\}$, we have $\vert t(xv_i)\vert =1$;
	\item if $\vert t(e)\vert =\vert t(f)\vert= 2$ and $t(e)\neq t(f)$, then $e\cap f = \emptyset$;
	\item if $\vert t(e)\vert =2$ and $c$ is the colour missing from $t(e)$, then for every $x\in V(K_n)\setminus e$, either $t(xv_1)=t(xv_2)=\{c\}$ or $c$ is missing from both $t(xv_1)$ and $t(xv_2)$. \qed
	\end{enumerate}
	\end{observation}
	In particular, for any $3$-set $A\subseteq [n]$, we have $\Ent(t\vert_A)\leq 3\log_32$, with equality attained if and only if all~three edges of $A$ are assigned the same pair of colours~$\{c_1,c_2\}$ by $t$.

	Now, suppose $t$ is a template with $\Ent(t)\geq (\log_3 2) \binom{n}{2}$. The average entropy of $t\vert_A$ over all $3$-sets~$A\subseteq [n]$ is:
	\begin{equation}
	\frac{1}{\binom{n}{3}} \sum_A \Ent\bigl(t_{\vert_A}\bigr) =\frac{1}{\binom{n}{3}} (n-2)\Ent(t)\geq 3\log_3 2. \label{equation: average entropy}
	\end{equation}
	Our previous bound on the entropy inside triangles then tells us that we must have equality everywhere in~\eqref{equation: average entropy} and that $t$ must have entropy $3\log_32$ inside \emph{every} $3$-set $A$. In particular, all edges~$e$ must have $\vert t(e)\vert=2$. Finally by (ii) in Observation~\ref{observation: rainbow K3}, there exists a pair of colours $\{c_1,c_2\}$ such that $t(e)=\{c_1,c_2\}$ for \emph{all} edges $e\in E(K_n)$. This concludes the proof of the theorem.
	\end{proof}
	\begin{corollary}[Counting]
	For all $\varepsilon>0$, there exists $n_0\in \N$ such that for all $n\geq n_0$,
	\[3\cdot3^{(\log_3 2)\binom{n}{2}} -3 \leq \vert \mathcal{P}_n\vert \leq 3^{(\log_3 2)\binom{n}{2}+\varepsilon \binom{n}{2}}. \]
	\end{corollary}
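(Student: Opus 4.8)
The plan is to obtain the upper bound directly from the general counting result of this section and the lower bound from an explicit construction combined with inclusion--exclusion.

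For the \textbf{upper bound}, I would first observe that $\mathcal{P}$ is symmetric under relabelling of the vertices, and hence in particular order-hereditary, and that $\mathcal{P}_n\neq\emptyset$ for every $n$ (for instance the monochromatic colouring $c\equiv 1$ contains no rainbow triangle). By Theorem~\ref{theorem: extremal entropy no rainbow k3} we have $\ex(n,\mathcal{P})=(\log_3 2)\binom{n}{2}$ for all $n\geq 3$, so the entropy density is $\pi(\mathcal{P})=\log_3 2$. Applying Corollary~\ref{corollary: speed of arbitrary hereditary properties} with $k=3$ and this value of $\pi(\mathcal{P})$ then yields, for every $\varepsilon>0$ and all $n$ sufficiently large, $|\mathcal{P}_n|\leq 3^{(\pi(\mathcal{P})+\varepsilon)\binom{n}{2}}=3^{(\log_3 2)\binom{n}{2}+\varepsilon\binom{n}{2}}$, which is exactly the claimed upper bound.

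For the \textbf{lower bound}, the point is that any colouring using at most two colours is automatically rainbow-triangle-free. For each pair $\{c_1,c_2\}\subseteq\{1,2,3\}$ let $A_{\{c_1,c_2\}}$ denote the set of $c\in[3]^{K_n}$ with $c(e)\in\{c_1,c_2\}$ for every $e\in E(K_n)$; then $A_{\{1,2\}}\cup A_{\{1,3\}}\cup A_{\{2,3\}}\subseteq\mathcal{P}_n$. Here $|A_{\{c_1,c_2\}}|=2^{\binom{n}{2}}$ for each of the three pairs, each pairwise intersection consists of the single monochromatic colouring in the shared colour, and the triple intersection is empty. Inclusion--exclusion gives $|A_{\{1,2\}}\cup A_{\{1,3\}}\cup A_{\{2,3\}}|=3\cdot 2^{\binom{n}{2}}-3$, and since $2^{\binom{n}{2}}=3^{(\log_3 2)\binom{n}{2}}$ this equals $3\cdot 3^{(\log_3 2)\binom{n}{2}}-3$, giving the desired lower bound on $|\mathcal{P}_n|$ (in fact for every $n$, not just for $n\geq n_0$).

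There is no genuine obstacle here: all the real work has already been done in Theorem~\ref{theorem: extremal entropy no rainbow k3}, which pins down the extremal entropy and hence $\pi(\mathcal{P})$. The only points that require a moment's care are (a) checking the hypotheses of Corollary~\ref{corollary: speed of arbitrary hereditary properties}, namely that $\mathcal{P}$ is order-hereditary and everywhere nonempty, and (b) getting the small additive constant in the lower bound exactly right --- the $-3$ being precisely the correction for the three monochromatic colourings, each of which is double-counted among the sets $A_{\{c_1,c_2\}}$.
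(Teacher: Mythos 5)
Your proposal is correct and follows essentially the same route as the paper: the upper bound comes from $\pi(\mathcal{P})=\log_3 2$ (via Theorem~\ref{theorem: extremal entropy no rainbow k3}) plugged into Corollary~\ref{corollary: speed of arbitrary hereditary properties}, and the lower bound is the count of colourings using only a prescribed pair of colours, which the paper states tersely and you simply make explicit via inclusion--exclusion.
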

	\begin{proof}
	The lower bound is the number of colourings of~$E(K_n)$ such that each edge receives one of a prescribed pair of colours.  For the upper bound, Theorem~\ref{theorem: extremal entropy no rainbow k3} gives $\pi(\mathcal{P})=\log_32$, and the result then follows from Corollary~\ref{corollary: speed of arbitrary hereditary properties}.
	\end{proof}
We note that the stronger bound $\abs{\PP_n} \leq 3^{(\log_3 2)\binom{n}{2}+ O(n \log n)}$ was proved in~\cite{BenevidesHoppenSampaio16}. With a bit more case analysis, we can obtain the following stability result --- see the Appendix for a proof.  
	\begin{theorem}[Stability]\label{theorem: stability no rainbow K3}
	The family of templates $\mathcal{S}=\bigcup_n \{  \{1,2\}^{K_n}, \{1,3\}^{K_n}, \{2,3\}^{K_n}\}$ is a stability family for\/ $\mathcal{P}$.  That is, for all $\varepsilon>0$, there exist $\delta=\delta(\varepsilon)>0$ and $n_0=n_0(\delta)\in \N$ such that the following holds: if $t$ is a $3$-colouring template on $n\geq n_0$ vertices satisfying 
	\begin{enumerate}[(i)]
		\item $\Ent(t)\geq (\log_3 2  -\delta) \binom{n}{2}$, and
		\item there at most $\delta \binom{n}{3}$ rainbow triangles in $K_n$ which can be realised from $t$,
	\end{enumerate}
	then there exists a pair of colours $\{c_1,c_2\}\in [3]^{(2)}$ such that $t(e)=\{c_1,c_2\}$ for all but at most $\varepsilon \binom{n}{2}$ edges of~$K_n$. 
	\end{theorem}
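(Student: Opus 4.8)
The plan is to prove Theorem~\ref{theorem: stability no rainbow K3} as a stability counterpart of Theorem~\ref{theorem: extremal entropy no rainbow k3}, combining an averaging argument over triples with the local case analysis already contained in Observation~\ref{observation: rainbow K3}, followed by a standard local-to-global cleanup. Fix $\varepsilon>0$ and choose constants in the order: an absolute constant $\eta<\log_3(8/7)$, then $\delta=\delta(\varepsilon)$ sufficiently small, then $n_0$ sufficiently large. Given $t$ satisfying (i) and (ii), the first step is to average $\Ent(t_{\vert A})$ over all $A\in[n]^{(3)}$ via $(n-2)\Ent(t)=\sum_A\Ent(t_{\vert A})$, using $\Ent(t_{\vert A})\le\ex(3,\mathcal{P})=3\log_3 2$ whenever no rainbow triangle is realisable on $A$ (Theorem~\ref{theorem: extremal entropy no rainbow k3} with $n=3$) and $\Ent(t_{\vert A})\le 3$ otherwise. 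A Markov-type estimate then shows that, for some $\delta''=\delta''(\delta,\eta)$ with $\delta''\to 0$ as $\delta\to 0$, all but at most $\delta''\binom{n}{3}$ triples are \emph{rich}: they support no realisable rainbow triangle and satisfy $\Ent(t_{\vert A})\ge 3\log_3 2-\eta$.

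The second step pins down the structure of rich triples: with $\eta<\log_3(8/7)$, every rich triple has all three edges assigned one and the same $2$-element palette. Indeed, if an edge of $A$ had a palette of size $3$, Observation~\ref{observation: rainbow K3}(i) forces the two incident edges to have palette size $1$, so $\Ent(t_{\vert A})\le 1<3\log_3 2-\eta$; hence all three palettes of a rich triple have size $2$, in which case $\Ent(t_{\vert A})=3\log_3 2$ automatically and richness just says no rainbow triangle is realisable. By Hall's theorem, three $2$-element palettes admit a rainbow realisation (a system of distinct representatives using all three colours) unless they are all contained in — hence all equal to — a single $2$-set, as in part (ii) of Observation~\ref{observation: rainbow K3}. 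A double-counting argument (each edge with palette of size $\neq 2$ lies in $n-2$ non-rich triples) then shows that all but $O(\delta'')\binom{n}{2}$ edges of $K_n$ carry a $2$-element palette; writing $\tau(e)$ for this palette, all but $O(\delta'')\binom{n}{3}$ of the triples whose three edges all have palettes of size $2$ are $\tau$-monochromatic.

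The remaining task is a purely local-to-global statement about $\tau$: a $3$-edge-colouring defined on all but $O(\delta'')\binom{n}{2}$ edges of $K_n$ in which all but $O(\delta'')\binom{n}{3}$ of the fully-coloured triples are monochromatic must agree with one fixed colour on all but $\varepsilon\binom{n}{2}$ edges, provided $\delta$ is small. I would prove this by two rounds of Markov's inequality. First isolate the ``good'' vertices $v$ through which few ``rainbow cherries'' (pairs of incident $\tau$-coloured edges receiving different palettes) pass, so that one palette $P^*(v)$ dominates the edges at $v$. Then isolate the ``good'' edges lying in few non-monochromatic triples, and observe that for a good edge $uv$ with good endpoints the near-totality of common $P^*$-neighbours forces $\tau(uv)=P^*(u)=P^*(v)$. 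Since almost all pairs of good vertices are joined by good edges, $P^*$ is globally constant, and summing the contributions of exceptional edges, edges incident to non-good vertices, and ``wrong'' edges at good vertices gives a bound of the form $O((\delta'')^{1/4})\binom{n}{2}\le\varepsilon\binom{n}{2}$. This yields the stated structure of $t$, i.e.\ that $\mathcal{S}=\bigcup_n\{\{1,2\}^{K_n},\{1,3\}^{K_n},\{2,3\}^{K_n}\}$ is a stability family for $\mathcal{P}$; feeding this into Theorem~\ref{theorem: strong stability and containers} then gives the corresponding statement about typical members of $\mathcal{P}$.

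Steps~1--2 are short consequences of averaging together with the finite case analysis already in Observation~\ref{observation: rainbow K3}; the main obstacle is the third step, namely turning ``almost all triangles monochromatic'' into ``almost all edges one colour'' with an explicit polynomial dependence of the error on $\delta$. This is precisely the ``bit more case analysis'' referred to in the statement, and it is where the multi-round cleanup sketched above is needed.
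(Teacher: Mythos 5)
Your proposal is correct, but it takes a genuinely different route from the paper's proof. You localise the problem to triples: averaging $\Ent(t_{\vert A})$ over all $A\in[n]^{(3)}$, together with the bound $\Ent(t_{\vert A})\le 3\log_3 2$ for rainbow-free triples (the $n=3$ case of Theorem~\ref{theorem: extremal entropy no rainbow k3}), shows almost every triple is rich; the finite case analysis (your Hall's-theorem observation, which amounts to Observation~\ref{observation: rainbow K3}(i)--(ii) plus the entropy gap between $\log_3 8$ and $\log_3 4$, so in fact any $\eta<\log_3 2$ works) shows each rich triple carries a single $2$-element palette on all three edges; and a removal-type cleanup converts ``almost all fully $\tau$-coloured triangles monochromatic'' into ``one palette on almost all edges''. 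The paper instead argues globally on edge counts: it bounds the number of full-palette edges by splitting them into those lying in many realisable rainbow triangles and the rest, the latter handled by a matching-deletion argument combined with the supersaturation lemma (Lemma~\ref{lemma: supersaturation}); it then bounds singleton-palette edges by an entropy count, and finally analyses the $2$-palette edges through the vertex classes $V_A$ of vertices incident to many edges of palette $A$, showing at most one such class can be large. Your approach is more modular and avoids invoking Lemma~\ref{lemma: supersaturation} (your Step 1 is a self-contained supersaturation-by-averaging at the scale of triples), at the cost of having to prove the local-to-global cleanup, which is where all the $\delta$-chasing lives; the paper avoids any such cleanup lemma but leans on the deletion trick and on constants imported from Lemma~\ref{lemma: supersaturation}. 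Both give a polynomial dependence of $\delta$ on $\varepsilon$. Two bookkeeping points to include when you write out Step 3: a bichromatic cherry need not close into a fully $\tau$-coloured (hence non-monochromatic) triangle, so you must add the $O(\delta''n^3)$ term coming from cherries whose third edge is one of the $O(\delta'')\binom{n}{2}$ uncoloured edges before applying Markov; and in defining good vertices you should also require few uncoloured edges at the vertex, so that the majority palette $P^*(v)$ genuinely dominates the full degree.
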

	\begin{corollary}[Typical colourings]
	Almost all $3$-coloured graphs with no rainbow triangles are almost $2$-coloured: for every $\varepsilon>0$ there exists $n_0$ such that for all $n\geq n_0$ at most $3^{\varepsilon \binom{n}{2}}$ (rainbow $K_3$)-free $3$-colourings of~$K_n$ have at least $\varepsilon \binom{n}{2}$ edges in each of the colours $\{1,2,3\}$.
	\end{corollary}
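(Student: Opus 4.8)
The plan is to read the statement off from the stability family for $\mathcal P$ produced in Theorem~\ref{theorem: stability no rainbow K3}, combined with the general characterisation result Theorem~\ref{theorem: strong stability and containers}, via a single elementary observation: a $3$-colouring that uses every colour on a positive fraction of the edges of $K_n$ is far, in the edit distance, from every member of the stability family $\mathcal S$, because each template in $\mathcal S$ is a $2$-colouring and hence each of its realisations omits one colour entirely.

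Concretely, I would first record that if $c\in\mathcal P_n$ has at least $\varepsilon\binom n2$ edges in each of the three colours, then $\rho(\langle\mathcal S\rangle,c)\ge\varepsilon\binom n2$: for a template $s=\{c_i,c_j\}^{K_n}$ in $\mathcal S_n$, every realisation of $s$ colours no edge with the third colour $c_\ell$, so it disagrees with $c$ on at least the $\ge\varepsilon\binom n2$ edges that $c$ colours $c_\ell$, and one then minimises over the three templates of $\mathcal S_n$. Thus the colourings in question all lie in $\{c\in\mathcal P_n:\rho(\langle\mathcal S\rangle,c)>\tfrac{\varepsilon}{2}\binom n2\}$. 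Since $\pi(\mathcal P)=\log_3 2$ by Theorem~\ref{theorem: extremal entropy no rainbow k3} and $\mathcal S$ is a stability family for $\mathcal P$ by Theorem~\ref{theorem: stability no rainbow K3}, Theorem~\ref{theorem: strong stability and containers} applied with input parameter $\varepsilon/2$ bounds the size of that set, and therefore the number of colourings with $\ge\varepsilon\binom n2$ edges in each colour, by $\tfrac{\varepsilon}{2}\vert\mathcal P_n\vert$ once $n$ is large.

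To upgrade the $\tfrac{\varepsilon}{2}\vert\mathcal P_n\vert$ bound to an exponential one, I would not quote Theorem~\ref{theorem: strong stability and containers} as a black box but repeat its proof. Fix the closeness parameter $\varepsilon/2$ and let $\delta,m,n_0$ be the constants it extracts from Definition~\ref{definition: strong stability family} for $\mathcal S$; then apply Corollary~\ref{corollary: containers for arbitrary hereditary properties} to $\mathcal P$ with parameter $\eta<\min(\delta,\varepsilon)$ and this same $m$, obtaining for all large $n$ a container family $\mathcal T_n$ with $\vert\mathcal T_n\vert\le 3^{\eta\binom n2}$ and with each $t\in\mathcal T_n$ having at most $\eta\binom nm$ pairs $(\phi,c)$, $\phi\in\binom{K_n}{K_m}$, $c\notin\mathcal P_m$, $c\in\langle t_{\vert\phi}\rangle$. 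Any colouring with $\ge\varepsilon\binom n2$ edges in each colour is a realisation of some $t\in\mathcal T_n$, and such a $t$ allows each colour on $\ge\varepsilon\binom n2$ edges; by the contrapositive of the stability statement (the condition $\eta<\delta$ ensures hypothesis~(ii) of Definition~\ref{definition: strong stability family} is met), no such $t$ lies within $\tfrac{\varepsilon}{2}\binom n2$ of any member of $\mathcal S$, so it is not almost extremal: $\Ent(t)<(\log_3 2-\delta)\binom n2$. Summing $\vert\langle t\rangle\vert=3^{\Ent(t)}$ over the at most $3^{\eta\binom n2}$ such containers bounds the number of these colourings by $3^{(\log_3 2-\delta+\eta)\binom n2}$, which (choosing $\eta$ small) is the required exponential bound.

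Essentially all of the work has already been done in Theorems~\ref{theorem: extremal entropy no rainbow k3} and~\ref{theorem: stability no rainbow K3}, so there is no genuine obstacle here; the only thing that needs care is the order of the parameters --- one must extract $\delta$, $m$, $n_0$ from the stability conclusion \emph{first}, and only then feed a sufficiently small $\eta$ and that particular $m$ into the container corollary, so that the containers left after discarding the non-almost-extremal ones are exactly the ones to which stability applies.
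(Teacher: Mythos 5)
Your route is exactly the paper's: the printed proof of this corollary is the single line that it is ``instant'' from Theorems~\ref{theorem: strong stability and containers} and~\ref{theorem: stability no rainbow K3}, and your first two paragraphs carry out precisely that deduction, including the correct (and worth stating) observation that for $c\in\mathcal{P}_n$ the distance $\rho(\langle\mathcal{S}\rangle,c)$ is exactly the minimum number of edges of $c$ in a single colour, so ``at least $\varepsilon\binom{n}{2}$ edges in each colour'' is the same as being $\varepsilon\binom{n}{2}$-far from all realisations of $\mathcal{S}$. Your third paragraph then goes beyond the paper: re-running the proof of Theorem~\ref{theorem: strong stability and containers} with the containers of Corollary~\ref{corollary: containers for arbitrary hereditary properties}, with the parameters extracted in the order you describe, is sound, and it shows that the number of colourings in $\mathcal{P}_n$ with at least $\varepsilon\binom{n}{2}$ edges in every colour is at most $3^{(\log_3 2-\delta+\eta)\binom{n}{2}}$, where $\delta=\delta(\varepsilon)$ is the constant of Theorem~\ref{theorem: stability no rainbow K3}; taking $\eta<\delta/2$ and using $\pi(\mathcal{P})=\log_32$ (Theorem~\ref{theorem: extremal entropy no rainbow k3}) together with~\eqref{eq: entropy bound on P_n}, this is at most $3^{-\delta\binom{n}{2}/2}\vert\mathcal{P}_n\vert$, an exponentially small proportion.

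The one step that fails is your closing sentence: $3^{(\log_3 2-\delta+\eta)\binom{n}{2}}$ is not at most $3^{\varepsilon\binom{n}{2}}$ for any admissible choice of $\eta$, since $\delta$ is handed to you by the stability theorem and is in general much smaller than $\log_32-\varepsilon$; the exponent you obtain is close to $\log_32$, not to $\varepsilon$. However, you should not read this as a defect of your argument so much as of the displayed constant in the corollary: the bound $3^{\varepsilon\binom{n}{2}}$ cannot be correct for small $\varepsilon$. The construction described immediately after the corollary (a balanced bipartition $A\sqcup B$ with cross edges Red, edges inside $A$ Red or Blue, edges inside $B$ Red or Green) produces at least $2^{(\frac12+o(1))\binom{n}{2}}=3^{(\frac12\log_32+o(1))\binom{n}{2}}$ rainbow-triangle-free colourings, almost all of which use every colour on at least $(1+o(1))n^2/16$ edges, and for any fixed $\varepsilon<1/8$ this already exceeds $3^{\varepsilon\binom{n}{2}}$. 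What the two quoted theorems give ``instantly'' is the proportional bound $\varepsilon\vert\mathcal{P}_n\vert$ of Theorem~\ref{theorem: strong stability and containers}; what your third paragraph actually proves is the stronger exponential-proportion bound above. Either of these is the form the statement should take, so the only amendment your write-up needs is to drop the final identification of your bound with $3^{\varepsilon\binom{n}{2}}$ and state one of those bounds instead.
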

	\begin{proof}
	Instant from Theorems \ref{theorem: strong stability and containers} and~\ref{theorem: stability no rainbow K3}.
	\end{proof}

	We note that there are (many) examples of rainbow $K_3$-free $3$-coloured graphs in which all three colours are used.  Indeed, consider a balanced bipartition $[n]=A\sqcup B$. Colour the edges from $A$ to $B$ Red, and then arbitrarily colour the edges internal to $A$ Red or Blue and the edges internal to $B$ Red or Green.  The resulting $3$-colouring has no rainbow $K_3$, and by randomly colouring the edges inside $A$ and~$B$ we can in fact ensure that all three colours are used on at least $(1+o(1))\frac{n^2}{16}$ edges.

\subsection{Hypercubes}
Let $\mathcal{P}=(\mathcal{P}_n)_{n\in \mathbb{N}}$ be the collection of all induced subgraphs of $Q_n$, $n\in \mathbb{N}$, with no copy of the square, or $4$-cycle, $Q_2$. Clearly, this may be viewed as a hereditary property of $2$-vertex-colourings of $Q_n$.
We have $\pi_v(\mathcal{P})\geq \frac{2}{3}$, as may be seen for example by removing every third layer of~$Q_n$, i.e.\ taking as our construction the family of all $\vector{x}\in Q_n$ with $\sum_i \vector{x}_i\not\cong 0 \mod 3$, which clearly contains no $Q_2$. Kostochka~\cite{Kostochka76} and, later and independently, Johnson and Entringer~\cite{JohnsonEntringer89} showed that this lower bound is tight: 
\[\pi_v(\mathcal{P})=\frac{2}{3}.\]
By Corollary~\ref{corollary: vertex hypercube counting} this immediately implies the following counting result:
\begin{corollary}
There are $\vert \mathcal{P}_n\vert = 2^{\left(\frac{2}{3}+o(1)\right)2^n}$ $Q_2$-free induced subgraphs of~$Q_n$.
\end{corollary}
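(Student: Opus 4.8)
The claim is essentially an application of the counting machinery for hereditary colouring properties of hypercubes developed in Section~\ref{subsection: hypercubes}, so the plan is mostly bookkeeping. First I would fix the ambient cube $Q_n$ and identify an induced subgraph of $Q_n$ with its vertex set $S\subseteq V(Q_n)$, equivalently with the $2$-vertex-colouring $c_S$ of $Q_n$ given by $c_S(\mathbf{x})=1$ if $\mathbf{x}\in S$ and $c_S(\mathbf{x})=2$ otherwise. Since every $4$-cycle of $Q_n$ spans a $2$-dimensional subcube (two consecutive edges flip distinct coordinates $i\neq j$, and closing the cycle forces the other two flips to be $i$ and $j$ again), the induced subgraph $Q_n[S]$ is $Q_2$-free precisely when $S$ contains no full $2$-subcube; this is the condition $c_S\in(\Forb_{\Hypercube}(\mathcal{F}))_n$, where $\mathcal{F}=\{c_0\}$ and $c_0$ is the colouring of $Q_2$ giving colour $1$ to all four vertices. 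In particular $\mathcal{P}=\Forb_{\Hypercube}(\mathcal{F})$ is a hereditary property of $2$-vertex-colourings of $\Hypercube$ (a subcube of $Q_n$ whose marked set contains no full $2$-subcube retains this property under further passage to a subcube), and the number of $Q_2$-free induced subgraphs of $Q_n$ is exactly $\abs{\mathcal{P}_n}$.

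Next I would record that $\pi_v(\mathcal{P})=\tfrac23$. The key point is that for $k=2$ the extremal entropy $\ex(Q_n,\mathcal{P})$ coincides exactly with the largest size of a $Q_2$-free subset of $V(Q_n)$: on the one hand, if $S$ is such a subset then the template $t$ with $t(\mathbf{x})=\{1,2\}$ for $\mathbf{x}\in S$ and $t(\mathbf{x})=\{2\}$ otherwise satisfies $\langle t\rangle\subseteq\mathcal{P}_n$ and $\Ent(t)=\abs{S}$; on the other hand, given any template $t$ with $\langle t\rangle\subseteq\mathcal{P}_n$, the realisation that uses colour $1$ on every vertex where it is permitted shows that $\{\mathbf{x}:1\in t(\mathbf{x})\}$ is $Q_2$-free, and clearly $\Ent(t)\leq\abs{\{\mathbf{x}:1\in t(\mathbf{x})\}}$. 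The ``remove every third weight layer'' construction --- take all $\mathbf{x}\in\{0,1\}^n$ with $\sum_i\mathbf{x}_i\not\equiv0\pmod 3$, a set which meets every $2$-subcube (the four weights of a $2$-subcube realise all three residues modulo $3$) and has $(\tfrac23+o(1))2^n$ vertices by a roots-of-unity count --- gives the lower bound $\pi_v(\mathcal{P})\geq\tfrac23$, while the matching upper bound $\pi_v(\mathcal{P})\leq\tfrac23$ is precisely the extremal result of Kostochka~\cite{Kostochka76} and, independently, Johnson and Entringer~\cite{JohnsonEntringer89} on the maximum order of an induced $Q_2$-free subgraph of $Q_n$.

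Finally I would invoke Corollary~\ref{corollary: vertex hypercube counting}: since $\mathcal{P}$ is a hereditary property of $2$-vertex-colourings of $\Hypercube$, that corollary --- whose hypotheses are furnished by Propositions~\ref{proposition: vertex-goodness of hypercubes}, \ref{proposition: entropy density, vertex hypercube} and~\ref{proposition: vertex hypercube supersaturation} --- yields $\abs{\mathcal{P}_n}=2^{(\pi_v(\mathcal{P})+o(1))2^n}=2^{(2/3+o(1))2^n}$, which is the assertion. The only genuinely non-routine input to this argument is the external extremal theorem pinning $\pi_v(\mathcal{P})$ down to $2/3$; the heavy lifting on the counting side --- verifying that $\Hypercube$ is vertex-good, establishing the entropy-density limit, and proving supersaturation --- has already been done, so there is no substantial obstacle remaining, only the encoding and the reduction of ``extremal entropy'' to ``extremal vertex-set size'' indicated above.
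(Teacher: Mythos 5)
Your proposal is correct and follows essentially the same route as the paper: encode $Q_2$-free induced subgraphs as a hereditary property of $2$-vertex-colourings of $\Hypercube$, obtain $\pi_v(\mathcal{P})=\tfrac23$ from the ``delete every third weight layer'' construction together with the Kostochka and Johnson--Entringer extremal theorem, and then apply Corollary~\ref{corollary: vertex hypercube counting}. The only difference is that you make explicit the identification of the extremal entropy $\ex(Q_n,\mathcal{P})$ with the maximum size of a $Q_2$-free vertex set (which the paper leaves implicit), and this added step is argued correctly.
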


In a different direction, let $\mathcal{Q}=(\mathcal{Q}_n)_{n\in \mathbb{N}}$ be the collection of all subgraphs of $Q_n$, $n\in \mathbb{N}$, with no copy of $Q_2$. This may be viewed as a hereditary property of $2$-edge--colourings of $Q_n$.
A long-standing conjecture of Erd{\H o}s~\cite{Erdos84} states that the edge-Tur\'an density (entropy density relative to $\Hypercube$) of this property is~$\pi(\mathcal{Q})=1/2$. The lower bound is obtained by deleting all edges between layer~$2i$ and layer~$2i+1$ for $0\leq i \leq \lfloor n/2\rfloor$. The best upper bound to date is~$0.603\ldots$ from applications of flag algebras due to Baber~\cite{Baber12} and Balogh, Hu, Lidick{\'y} and Liu~\cite{BaloghHuLidickyLiu14}. By Corollary~\ref{corollary: edge hypercube counting} we have the following:
\begin{corollary}
There are at most 	
	\[\vert \mathcal{Q}_n\vert =2^{\left(0.604+o(1)\right)2^{n-1}n}\]
	$Q_2$-free subgraphs of $Q_n$. Further, if Erd{\H o}s's conjecture on $\pi(\mathcal{Q})$ is true, then there are
\[\vert \mathcal{Q}_n\vert =2^{\left(\frac{1}{2}+o(1)\right)2^{n-1}n}\]
$Q_2$-free subgraphs of $Q_n$.
\end{corollary}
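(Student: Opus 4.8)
The plan is to recognise $\mathcal{Q}$ as a hereditary property of $2$-edge-colourings of the hypercube sequence $\Hypercube$ and then to invoke Corollary~\ref{corollary: edge hypercube counting} directly, feeding in the known bounds on the edge-Tur\'an density of~$Q_2$. So the proof will be essentially a bookkeeping exercise on top of the machinery of Section~\ref{subsection: hypercubes}.

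First I would set up the identification between subgraphs of~$Q_n$ and $2$-edge-colourings: a subgraph $G\leq Q_n$ corresponds to the colouring of the edges of~$Q_n$ that assigns colour~$1$ to the edges of~$G$ and colour~$2$ to the non-edges. Under this identification $\mathcal{Q}_n$ is precisely the collection of $2$-edge-colourings in which no embedded copy of~$Q_2$ is monochromatic in colour~$1$. Since any copy of~$Q_2$ inside a subcube $\phi(Q_N)$ of~$Q_n$ is also a copy of~$Q_2$ in~$Q_n$, the property is preserved under passing to $c_{\vert \phi}$ for $\phi\in\binom{Q_n}{Q_N}$, so $\mathcal{Q}$ is a hereditary property of $2$-edge-colourings of~$\Hypercube$ in the sense of Section~\ref{subsection: hypercubes}; in particular $\pi(\mathcal{Q})$ exists by Proposition~\ref{proposition: entropy density edge-hypercube}.

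Next I would observe that ``containing no $Q_2$'' is decreasing with respect to adding edges (colour~$1$), so an entropy-extremal template for~$\mathcal{Q}$ may be taken to assign to each edge either $\{1,2\}$ or $\{2\}$, and all its realisations are $Q_2$-free exactly when the set of $\{1,2\}$-edges spans a $Q_2$-free subgraph of~$Q_n$. Hence $\ex(Q_n,\mathcal{Q})$ equals the maximum number of edges in a $Q_2$-free subgraph of~$Q_n$, so the entropy density $\pi(\mathcal{Q})$ coincides with the classical edge-Tur\'an density of~$Q_2$ in the hypercube. Applying Corollary~\ref{corollary: edge hypercube counting} with $k=2$ gives $\vert\mathcal{Q}_n\vert = 2^{(\pi(\mathcal{Q})+o(1))2^{n-1}n}$. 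The upper bound $\pi(\mathcal{Q})\leq 0.603\ldots<0.604$ of Baber~\cite{Baber12} and Balogh--Hu--Lidick\'y--Liu~\cite{BaloghHuLidickyLiu14} then yields the first displayed estimate (an upper bound only, since no matching lower bound on $\pi(\mathcal{Q})$ is known), while under Erd\H{o}s's conjecture $\pi(\mathcal{Q})=\tfrac12$, the lower bound coming from deleting all edges between layers $2i$ and $2i+1$, which gives the exact second estimate.

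There is no real obstacle here: the substantive content --- goodness of $\Hypercube$, the existence of an entropy density, and supersaturation for edge-colourings --- was already established in Propositions~\ref{proposition: hypercube graphs are good}, \ref{proposition: entropy density edge-hypercube} and~\ref{proposition: supersaturation for hypercube graphs}, and the hard extremal input is quoted from the flag-algebra literature. The only point requiring a moment's care is the identification of $\pi(\mathcal{Q})$ with the classical edge-Tur\'an density, i.e.\ checking that zero-entropy contributions do not inflate the extremal count; this follows at once from the monotonicity observation above.
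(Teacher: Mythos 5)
Your proposal is correct and matches the paper's own argument: the paper likewise views $\mathcal{Q}$ as a hereditary property of $2$-edge-colourings of $\Hypercube$, identifies $\pi(\mathcal{Q})$ with the edge-Tur\'an density of $Q_2$ (where your monotonicity remark supplies the justification the paper leaves implicit), and applies Corollary~\ref{corollary: edge hypercube counting} together with the quoted bounds of Baber and Balogh--Hu--Lidick\'y--Liu and the conjectured value $\tfrac12$. Nothing further is needed.
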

\subsection{A non-example: a graph sequence with too few embeddings} \label{subsection: nonexample containers fail for paths}
Let $\Path=(P_n)_{n\in\N}$ be the sequence of paths on $[n]$ introduced in Section~\ref{subsection: possee}. An easy calculation reveals that $\Path$ fails to satisfy the `goodness' condition introduced in Definition~\ref{definition: good graph sequence}, and is therefore not covered by Theorem~\ref{theorem: general multicolour container result for hypergraph sequences}.There is a good reason for this: the conclusion Theorem~\ref{theorem: general multicolour container result for hypergraph sequences} does not hold for $\Path$ (or, more generally, for sequences of `tree-like' graphs).

Let $\mathcal{P}$ be the order-hereditary property of $3$-colourings of $\Path$ of not having two consecutive edges in the same colour. It is easy to see that $\vert \mathcal{P}_n\vert= 3\cdot 2^{n-2}=3^{n\log_3(2) -O(1)}$. On the other hand, the extremal entropy of $\mathcal{P}_n$ is only about $ n \log_3\sqrt{2}$.
\begin{theorem}
For any $n\geq 3$, $\ex(P_n, \mathcal{P})=\lceil (n-1)/2\rceil \log_32$.
\end{theorem}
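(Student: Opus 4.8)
The plan is to reduce the statement to a small discrete optimisation problem and then solve it. Identify the edge set of $P_n$ with $\{e_1,\dots,e_{n-1}\}$, where $e_i=i(i+1)$, so that a $3$-colouring template $t$ of $P_n$ is a choice of nonempty palette $t(e_i)\subseteq[3]$ for each $i$, with $\Ent(t)=\sum_{i=1}^{n-1}\log_3\lvert t(e_i)\rvert$.

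The first step is to characterise which templates $t$ satisfy $\langle t\rangle\subseteq\mathcal{P}_n$. I claim this holds precisely when $t(e_i)\cap t(e_{i+1})=\emptyset$ for every $1\le i\le n-2$. Indeed, if some colour lies in $t(e_i)\cap t(e_{i+1})$ then one can realise $t$ by a colouring that assigns that colour to both $e_i$ and $e_{i+1}$ (and admissible colours elsewhere), which is not in $\mathcal{P}_n$; conversely, disjointness of consecutive palettes forces $c(e_i)\ne c(e_{i+1})$ in every realisation $c$. Hence $\ex(P_n,\mathcal{P})$ equals the maximum of $\sum_i\log_3\lvert t(e_i)\rvert$ over all templates with consecutive palettes disjoint.

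The second step is to solve this optimisation. Since $n\ge3$, every index $i$ has a neighbour in $\{1,\dots,n-1\}$ whose palette must be disjoint from $t(e_i)$; as the palettes are nonempty subsets of a $3$-element set, this forces $\lvert t(e_i)\rvert\le2$ for all $i$. Thus only the edges with $\lvert t(e_i)\rvert=2$ contribute to the entropy, each contributing $\log_3 2$, and two consecutive edges cannot both have palette size $2$ (there are no two disjoint $2$-subsets of $[3]$). Therefore $\{i:\lvert t(e_i)\rvert=2\}$ is an independent set in the path on the index set $\{1,\dots,n-1\}$, so it has size at most $\lceil(n-1)/2\rceil$, giving the upper bound $\Ent(t)\le\lceil(n-1)/2\rceil\log_3 2$. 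For the matching construction I would take $t(e_i)=\{1,2\}$ for odd $i$ and $t(e_i)=\{3\}$ for even $i$: consecutive palettes are disjoint, so $\langle t\rangle\subseteq\mathcal{P}_n$ by the first step, and exactly $\lceil(n-1)/2\rceil$ of the palettes have size $2$.

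This argument is short and largely routine; the only points needing a little care are the equivalence in the first step (which relies on the fact that $\langle t\rangle$ contains \emph{every} colouring respecting the palettes, a feature of this `tree-like' setting that is precisely what fails to give a useful container theorem here) and noticing that the hypothesis $n\ge3$ is exactly what excludes a size-$3$ palette, after which the problem is just a maximum-independent-set count on a path. I do not anticipate any serious obstacle.
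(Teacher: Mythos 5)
Your proof is correct and takes essentially the same route as the paper's: both rest on the observation that consecutive palettes must be disjoint (hence, for $n\geq 3$, no palette has size $3$ and no two consecutive edges both have size-$2$ palettes), and both use the same alternating $\{1,2\}/\{3\}$ template for the lower bound. The only cosmetic difference is bookkeeping for the upper bound --- the paper pairs up consecutive edges while you count the size-$2$ palettes as an independent set in the path of edge-indices --- which yields the same bound $\lceil (n-1)/2\rceil\log_3 2$.
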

\begin{proof}
If $f$ and $f'$ are consecutive edges and $t$ is a $3$-colouring template with $\langle t \rangle\subseteq \mathcal{P}$ then $t(f)\cap t(f')=\emptyset$, from which it follows that $\log_3(\vert t(f)\vert) + \log_3(\vert t(f')\vert)\leq \log_32$. Further there can be no edge~$f$ with $t(f)=[3]$, since otherwise we would have a realisation of $t$ with two consecutive edges of the same colour. Partitioning the path~$P_n$ into disjoint pairs of consecutive edges and at most~one single edge, we get $\Ent(t)\leq \lceil(n-1)/2\rceil \log_3(2)$ as desired. For the lower bound, consider the template~$t$ defined by setting $t(\{2i+1, 2i+2\})=[2]$ and $t(\{2i, 2i+1\})=\{3\}$ for $0\leq i\leq \lfloor (n-1)/2\rfloor$. This has the correct entropy and all of its realisations clearly lie in $\mathcal{P}$. 
\end{proof} 
Now, $\binom{P_n}{P_3}=n-3$, and it is easy to see that we have supersaturation of sorts for $\mathcal{P}$: if $t$ is a template with $\Ent(t)\geq n \log_3(\sqrt{2})+\varepsilon n$, there are at least~$\Omega(\varepsilon n)=\Omega(\varepsilon\binom{P_n}{P_3})$ pairs of consecutive edges which can be made monochromatic in some realisation of~$t$. In particular, templates having $o(n)$ such pairs must have entropy at most~$\log_3(\sqrt{2}) n +o(n)$. A collection of $3^{o(n)}$ such templates can thus cover at most~$2^{n/2}3^{o(n)}=o(2^n)= o(\vert \mathcal{P}_n\vert)$ colourings---in particular, it cannot form a container family for $\mathcal{P}_n$. This shows that the analogue of Theorem~\ref{theorem: general multicolour container result for hypergraph sequences} does not hold for the graph sequence $\Path$, and that some form of the `goodness' assumption in the statement of that theorem is necessary, as we claimed.

	\section{Concluding remarks}\label{section: concluding remarks}
\subsection{Entropy maximisation in the multicolour setting}
In the $2$-colour setting, the rough structure of entropy maximisers for hereditary properties is well-understood, via the choice number~$\chi_c$: given a hereditary property~$\PP$ with $\chi_c(\PP) = r$, partition the vertex sets into $r$ equal parts and define a template by giving the $r$-partite edges full entropy (i.e.\ free choice of their colour) and the other edges zero entropy (i.e.\ fix their colour). 
In particular, Theorem~\ref{theorem: alekseevbollobasthomason} implies that the set of possible entropy densities for hereditary properties is $\{0, 1/2, 2/3, 3/4, \cdots\}\cup\{1\}$.

By contrast, it is less clear what the set of possible values of entropy densities or the possible rough structure of entropy maximisers should be in the $k$-coloured setting for $k\geq 3$.  We are only aware of one partial result in this area: Alekseev and Sorochan~\cite{AlekseevSorochan:colored} showed that if $\PP$ is a hereditary property of $k$-coloured graphs, then either $\pi(\PP) = 0$ or $\pi(\PP) \geq (1/2)\log_k (2)$.  Moreover, the examples in Section~\ref{section: examples} suggest that the possible structures of entropy maximisers are much more varied than in the case $k  = 2$.  

\begin{problem}\label{problem: possible entropy density in multicolour setting}
	Let $k\in \N$ with $k\geq 3$. Determine the set of possible entropy densities of hereditary properties of $k$-colourings of~$K_n$ and the rough structure of entropy maximisers.
\end{problem}
\subsection{Containers and the entropy of graph limits}\label{subsection: discussion of containers/limits}
In a forthcoming paper~\cite{FalgasRavryStrombergUzzell17} (see also~\cite{FalgasRavryOConnellStrombergUzzell16} for a preliminary version of these results), Johanna Str\"omberg and a subset of the authors of the present paper relate the container theorems to work of Hatami--Janson--Szegedy on the entropy of graph limits~\cite{HJS}. The multicolour container theorems in the present paper are used to obtain generalisations of Hatami--Janson--Szegedy's results to the setting of decorated graph limits. In the other direction, a second proof of those generalisations is obtained by working directly in the world of decorated graphons and using tools from analysis; it is further shown that these analytic results can then be used to recover some of the main combinatorial applications of containers, namely counting and characterisation (the case of transference is more delicate) for hereditary properties of multicoloured graphs. There thus appear to be significant links --- or at least similarities --- between the applications of the rich and currently quite distinct theories of graph limits and of hypergraph containers. The general `abstract' container results obtained in this paper and those obtained by Terry~\cite{Terry16} may thus be seen as first steps towards an elucidation of those links.

\section*{Acknowledgements}
We are extremely grateful to two anonymous referees for their careful work and scholarship. Not only did they help us greatly improve the readability and exposition of this paper, but they pointed us to some crucial references which we had missed, in particular the work of Balogh--Wagner and Mubayi--Terry, and corrected our misunderstandings of some recent work in the area, in particular the K{\L}R conjecture. Their comments on the notation and organisation of the paper were particularly appreciated, and led us to formulate a more general version of Theorem~\ref{theorem: ossee container} than in our initial manuscript.

Victor Falgas-Ravry is grateful for an AMS-Simons award which allowed him to invite Andrew Uzzell to visit him and Kelly O'Connell at Vanderbilt University in November~2015, when the last stages of this research were carried out, and to the Swedish Research Council (Vetenskapsr{\aa}det) for a grant supporting his research.  The authors would also like to thank Caroline Terry for helpful remarks about~\cite{Terry16} and Daniel Toundykov for Russian language assistance.

	\bibliographystyle{plain}
	\bibliography{containersgraphonbiblio}

\begin{thebibliography}{10}

\bibitem{Alekseev93}
V.~E. Alekseev.
\newblock On the entropy values of hereditary classes of graphs.
\newblock {\em Discrete Math. Appl.}, 3(2):191--200, 1993.

\bibitem{AlekseevSorochan:colored}
V.~E. Alekseev and S.~V. Sorochan.
\newblock On the entropy of hereditary classes of colored graphs.
\newblock {\em Diskret. Mat.}, 12(2):99--102, 2000.

\bibitem{ABBM11}
N.~Alon, J.~Balogh, B.~Bollob{\'a}s, and R.~Morris.
\newblock The structure of almost all graphs in a hereditary property.
\newblock {\em J. Combin. Theory Ser. B}, 101(2):85--110, 2011.

\bibitem{AlonBaloghKeevashSudakov04}
N.~Alon, J.~Balogh, P.~Keevash, and B.~Sudakov.
\newblock The number of edge colorings with no monochromatic cliques.
\newblock {\em J. London Math. Soc.}, 70(2):273--288, 2004.

\bibitem{Austin08}
T.~Austin.
\newblock On exchangeable random variables and the statistics of large graphs
  and hypergraphs.
\newblock {\em Probab. Surv.}, 5:80--145, 2008.

\bibitem{Baber12}
R.~Baber.
\newblock Tur{\'a}n densities of hypercubes.
\newblock Preprint, \url{http://arxiv.org/abs/1201.3587}, 2012.

\bibitem{BaloghBollobasMorris06}
J.~Balogh, B.~Bollob{\'a}s, and R.~Morris.
\newblock Hereditary properties of ordered graphs.
\newblock In {\em Topics in discrete mathematics}, pages 179--213. Springer,
  2006.

\bibitem{BaloghBollobasMorris07b}
J.~Balogh, B.~Bollob{\'a}s, and R.~Morris.
\newblock Hereditary properties of combinatorial structures: posets and
  oriented graphs.
\newblock {\em J. Graph Theory}, 56(4):311--332, 2007.

\bibitem{BaloghBollobasMorris07}
J.~Balogh, B.~Bollob{\'a}s, and R.~Morris.
\newblock Hereditary properties of tournaments.
\newblock {\em Electronic J. Combin.}, 14(3):R60, 2007.

\bibitem{BBS04}
J.~Balogh, B.~Bollob{\'a}s, and M.~Simonovits.
\newblock The number of graphs without forbidden subgraphs.
\newblock {\em J. Combin. Theory Ser. B}, 91(1):1--24, 2004.

\bibitem{BBS09}
J.~Balogh, B.~Bollob{\'a}s, and M.~Simonovits.
\newblock The typical structure of graphs without given excluded subgraphs.
\newblock {\em Random Structures Algorithms}, 34(3):305--318, 2009.

\bibitem{BaloghHuLidickyLiu14}
J.~Balogh, P.~Hu, B.~Lidick{\'y}, and H.~Liu.
\newblock Upper bounds on the size of 4-and 6-cycle-free subgraphs of the
  hypercube.
\newblock {\em European J. Combin.}, 35:75--85, 2014.

\bibitem{BaloghMorrisSamotij15}
J.~Balogh, R.~Morris, and W.~Samotij.
\newblock Independent sets in hypergraphs.
\newblock {\em J. Amer. Math. Soc.}, 28(3):669--709, 2015.

\bibitem{BaloghWagner16}
J.~Balogh and A.~Zs. Wagner.
\newblock Further applications of the container method.
\newblock In A.~Beveridge, R.~J. Griggs, L.~Hogben, G.~Musiker, and P.~Tetali,
  editors, {\em Recent Trends in Combinatorics}, volume 159 of {\em The IMA
  Volumes in Mathematics and its Applications}, pages 191--213. Springer
  International Publishing, 2016.

\bibitem{BenevidesHoppenSampaio16}
F.~S. Benevides, C.~Hoppen, and R.~M. Sampaio.
\newblock Edge-colorings of graphs avoiding complete graphs with a prescribed
  coloring.
\newblock {\em Discrete Math.}, 340(9):2143--2160, 2017.

\bibitem{Bollobas07}
B.~Bollob{\'a}s.
\newblock Hereditary and monotone properties of combinatorial structures.
\newblock In A.~Hilton and J.~Talbot, editors, {\em Surveys in combinatorics
  2007}, volume 346 of {\em London Math. Soc. Lecture Note Ser.}, pages 1--39.
  Cambridge Univ. Press, Cambridge, 2007.

\bibitem{BT}
B.~Bollob{\'a}s and A.~Thomason.
\newblock Hereditary and monotone properties of graphs.
\newblock In R.~L. Graham and J.~Ne\v{s}et\v{r}il, editors, {\em The
  {M}athematics of {P}aul {E}rd{\"o}s {II}}, volume~14 of {\em Algorithms
  Combin.}, pages 70--78. Springer Berlin Heidelberg, 1997.

\bibitem{BondyTuza97}
J.~A. Bondy and Zs. Tuza.
\newblock A weighted generalization of {T}ur{\'a}n's theorem.
\newblock {\em J. Graph Theory}, 25(4):267--275, 1997.

\bibitem{ChowlaHersteinMoore51}
S.~Chowla, I.~N. Herstein, and W.~K. Moore.
\newblock On recursions connected with symmetric groups. {I}.
\newblock {\em Canadian J. Math.}, 3:328--334, 1951.

\bibitem{Conlon14}
D.~Conlon.
\newblock Combinatorial theorems relative to a random set.
\newblock {\em Proceedings of the International Congress of Mathematicians,
  Seoul 2014}, to appear.

\bibitem{ConlonGowers10}
D.~Conlon and W.~T. Gowers.
\newblock Combinatorial theorems in sparse random sets.
\newblock {\em Ann. Math.}, 184(2):367--454, 2016.

\bibitem{ConlonGowersSamotijSchacht14}
D.~Conlon, W.~T. Gowers, W.~Samotij, and M.~Schacht.
\newblock On the {K{\L}R} conjecture in random graphs.
\newblock {\em Israel J. Math.}, 203(1):535--580, 2014.

\bibitem{DotsonNagle09}
R.~Dotson and B.~Nagle.
\newblock Hereditary properties of hypergraphs.
\newblock {\em J. Combin. Theory Ser. B}, 99(2):460--473, 2009.

\bibitem{Erdos74}
P.~Erd{\H o}s.
\newblock Some new applications of probability methods to combinatorial
  analysis and graph theory.
\newblock In {\em Proceedings of the {F}ifth {S}outheastern {C}onference on
  {C}ombinatorics, {G}raph {T}heory and {C}omputing ({F}lorida {A}tlantic
  {U}niv., {B}oca {R}aton, {F}la., 1974)}, pages 39--51. Congressus
  Numerantium, No. X. Utilitas Math., Winnipeg, Man., 1974.

\bibitem{Erdos84}
P.~Erd{\H o}s.
\newblock On some problems in graph theory, combinatorial analysis and
  combinatorial number theory.
\newblock {\em Graph Theory and Combinatorics (Cambridge, 1983), Academic
  Press, London}, pages 1--17, 1984.

\bibitem{ErdosFranklRodl86}
P.~Erd{\H o}s, P.~Frankl, and V.~R{\"o}dl.
\newblock The asymptotic number of graphs not containing a fixed subgraph and a
  problem for hypergraphs having no exponent.
\newblock {\em Graphs Combin.}, 2(1):113--121, 1986.

\bibitem{ErdosKleitmanRothschild76}
P.~Erd{\H{o}}s, D.~J. Kleitman, and B.~L. Rothschild.
\newblock Asymptotic enumeration of {$K_n$}-free graphs.
\newblock In {\em Internat. Colloq. Combin.} Atti Convegni Lincei (Rome), 1976.

\bibitem{ErdosSimonovits65}
P.~Erd{\H{o}}s and M.~Simonovits.
\newblock A limit theorem in graph theory.
\newblock {\em Studia Sci. Math. Hungar}, 1:51--57, 1966.

\bibitem{ErdosStone46}
P.~Erd{\"o}s and A.~H. Stone.
\newblock On the structure of linear graphs.
\newblock {\em Bull. Amer. Math. Soc.}, 52:1087--1091, 1946.

\bibitem{FalgasRavryOConnellStrombergUzzell16}
V.~Falgas-Ravry, K.~O'Connell, J.~Str{\"o}mberg, and A.~Uzzell.
\newblock Multicolour containers and the entropy of decorated graph limits.
\newblock 2016.
\newblock Manuscript, available on ArXiv:
  \url{http://arxiv.org/abs/1607.08152}.

\bibitem{FalgasRavryStrombergUzzell17}
V.~Falgas-Ravry, J.~Str{\"o}mberg, and A.~Uzzell.
\newblock Multicolour containers and the entropy of decorated graph limits.
\newblock 2017.
\newblock Preprint.

\bibitem{FurediKundgen02}
Z.~F{\"u}redi and A.~K{\"u}ndgen.
\newblock Tur{\'a}n problems for integer-weighted graphs.
\newblock {\em J. Graph Theory}, 40(4):195--225, 2002.

\bibitem{Galvin14}
D.~Galvin.
\newblock Three tutorial lectures on entropy and counting.
\newblock Preprint, \url{arxiv.org/abs/1406.7872}, 2014.

\bibitem{GreenTao08}
B.~Green and T.~Tao.
\newblock The primes contain arbitrarily long arithmetic progressions.
\newblock {\em Ann. Math. (2)}, 167(2):481--547, 2008.

\bibitem{HJS}
H.~Hatami, S.~Janson, and B.~Szegedy.
\newblock Graph properties, graph limits, and entropy.
\newblock {\em J. Graph Theory}.
\newblock To appear.

\bibitem{HoppenLefmannOdermann}
C.~Hoppen, H.~Lefmann, and K.~Odermann.
\newblock On graphs with a large number of edge-colorings avoiding a rainbow
  triangle.
\newblock {\em European J. Combin.}
\newblock To appear.

\bibitem{Ishigami07}
Y.~Ishigami.
\newblock The number of hypergraphs and colored hypergraphs with hereditary
  properties.
\newblock Preprint, \url{arxiv.org/abs/0712.0425}, 2007.

\bibitem{JohnsonEntringer89}
K.~A. Johnson and R.~Entringer.
\newblock Largest induced subgraphs of the {$n$}-cube that contain no
  {$4$}-cycles.
\newblock {\em J. Combinatorial Theory Ser. B}, 46(3):346--355, 1989.

\bibitem{KleitmanWinston82}
D.~J. Kleitman and K.~J. Winston.
\newblock On the number of graphs without {$4$}-cycles.
\newblock {\em Discrete Math.}, 41(2):167--172, 1982.

\bibitem{KLR}
Y.~Kohayakawa, T.~{\L}uczak, and V.~R{\"o}dl.
\newblock On {$K\sp 4$}-free subgraphs of random graphs.
\newblock {\em Combinatorica}, 17(2):173--213, 1997.

\bibitem{KohayakawaNagleRodl03}
Y.~Kohayakawa, B.~Nagle, and V.~R{\"o}dl.
\newblock Hereditary properties of triple systems.
\newblock {\em Combin. Probab. Comput.}, 12(2):155--189, 2003.

\bibitem{Kostochka76}
E.~A. Kostochka.
\newblock Piercing the edges of the n-dimensional unit cube.
\newblock {\em Diskret. Analiz Vyp}, 28(223):55--64, 1976.

\bibitem{KuhnOsthusTownsendZhao14}
D.~K{\"u}hn, D.~Osthus, T.~Townsend, and Y.~Zhao.
\newblock On the structure of oriented graphs and digraphs with forbidden
  tournaments or cycles.
\newblock {\em J. Combin. Theory Ser. B}, 124:88--127, 2017.

\bibitem{LovaszBook}
L.~Lov{\'a}sz.
\newblock {\em Large Networks and Graph Limits}, volume~60 of {\em Amer. Math.
  Soc. Colloq. Publ.}
\newblock Amer. Math. Soc., Providence, RI, 2012.

\bibitem{MubayiTerry16a}
D.~Mubayi and C.~Terry.
\newblock An extremal graph problem with a transcendental solution.
\newblock Preprint, \url{http://arxiv.org/abs/1607.07742}, 2016.

\bibitem{MubayiTerry16b}
D.~Mubayi and C.~Terry.
\newblock Extremal theory of locally sparse multigraphs.
\newblock Preprint, \url{http://arxiv.org/abs/1608.08948}, 2016.

\bibitem{NagleRodlSchacht06}
B.~Nagle, V.~R{\"o}dl, and M.~Schacht.
\newblock Extremal hypergraph problems and the regularity method.
\newblock {\em Topics in discrete mathematics}, pages 247--278, 2006.

\bibitem{PikhurkoStadenYilma16}
O.~Pikhurko, K.~Staden, and Z.~Yilma.
\newblock The {E}rd{\H o}s--{R}othschild problem on edge-colourings with
  forbidden monochromatic cliques.
\newblock {\em Math. Proc. Cambridge Philos. Soc.}, 163(2):341--356, 2017.

\bibitem{PS92}
H.~J. Pr{\"o}mel and A.~Steger.
\newblock Excluding induced subgraphs. {III}. {A} general asymptotic.
\newblock {\em Random Structures Algorithms}, 3(1):19--31, 1992.

\bibitem{Razborov07}
A.~A. Razborov.
\newblock Flag algebras.
\newblock {\em J. Symbolic Logic}, 72(4):1239--1282, 2007.

\bibitem{Sapozhenko87}
A.~A. Sapozhenko.
\newblock On the number of connected subsets with given cardinality of the
  boundary in bipartite graphs.
\newblock {\em Metody Diskret. Analiz.}, (45):42--70, 96, 1987.

\bibitem{Sapozhenko01}
A.~A. Sapozhenko.
\newblock On the number of independent sets in extenders.
\newblock {\em Diskret. Mat.}, 13(1):56--62, 2001.

\bibitem{SaxtonThomason15}
D.~Saxton and A.~Thomason.
\newblock Hypergraph containers.
\newblock {\em Invent. Math.}, 201(3):925--992, 2015.

\bibitem{SaxtonThomason16}
D.~Saxton and A.~Thomason.
\newblock Simple containers for simple hypergraphs.
\newblock {\em Combin. Probab. Comput.}, 25(3):448--459, 2016.

\bibitem{Schacht2009}
M.~Schacht.
\newblock Extremal results for random discrete structures.
\newblock {\em Ann. Math.}, 184(2):333--365, 2016.

\bibitem{Shannon48}
C.~Shannon.
\newblock A mathematical theory of communication.
\newblock {\em Bell System Tech. J.}, 27:379--423, 1948.

\bibitem{Simonovits68}
M.~Simonovits.
\newblock A method for solving extremal problems in graph theory, stability
  problems.
\newblock In {\em Theory of {G}raphs ({P}roc. {C}olloq., {T}ihany, 1966)},
  pages 279--319. Academic Press, New York, 1968.

\bibitem{Terry16}
C.~Terry.
\newblock Structure and enumeration theorems for hereditary properties in
  finite relational languages.
\newblock Preprint, \url{http://arxiv.org/abs/1607.04902}, 2016.

\bibitem{Terry17}
C.~Terry.
\newblock {$VC_ {\ell}$}-dimension and the jump to the fastest speed of a
  hereditary $\mathcal{L}$-property.
\newblock Preprint, \url{http://arxiv.org/abs/1701.00470}, 2017.

\end{thebibliography}
	\appendix
	\section{Appendix}

	\begin{proof}[Proof of Theorem~\ref{theorem: stability no rainbow K3}]
		Fix $\varepsilon>0$. Let $\delta>0$ and $n_0\in \N$ be sufficiently small and sufficiently large constants respectively, to be specified later. let $n\geq n_0$ and let $t\in (2^{[3]}\setminus \{\emptyset\})^{K_n}$ satisfy conditions (i) and~(ii) in the statement of the theorem. Our proof is a (lengthy) exercise in stability analysis --- essentially, we shall prove an approximate version of Observation~\ref{observation: rainbow K3}, and then run through the proof of Theorem~\ref{theorem: extremal entropy no rainbow k3} replacing each `for all pairs' by a `for almost all pairs'.

		By Lemma~\ref{lemma: supersaturation}, there exists an absolute constant $C_0=C_0(\mathcal{P})$ such that  for all $\eta>0$ there exists $n_1(\eta, \mathcal{P})$ such that for all $n\geq n_1$, if $t\in (2^{[3]}\setminus \{\emptyset\})^{K_n}$  can realise at most $\eta\binom{n}{3}$ rainbow triangles, then $\Ent(t)\leq \left(\pi(\mathcal{P})+ C_0\eta\right)\binom{n}{2}$.

		Let $e_3'$ be the number of edges $e=\{u,v\}\in E(K_n)$ for which there are at least~$\delta n$ vertices~$x\in V(K_n)\setminus\{u,v\}$ for which $\vert t(\{x,u\})\vert+ \vert t(\{x,v\})\vert >2$. For each such edge~$e$ and each such vertex~$x$, there is at least~one rainbow triangle which can be realised inside $e\cup\{x\}$. Each such triangle is counted at most~$3$ times, so that in total we must have at least $\frac{e_3'\delta n}{3}<\delta \binom{n}{3}$ rainbow triangles, and in particular we must have $e_3'<\frac{\delta}{2}n^2$.

		Now let $e_3''$ denote the number of edges $e=\{u,v\}$ for which there are at most $\delta n$ vertices $x$ with $\vert t(\{x,u\})\vert + \vert t(\{x,v\})\vert >2$. We shall choose $\delta$~sufficiently small to ensure that (a) $(1-200C_0\delta)^2>2/3$ and (b) $\delta <\frac{2\pi(\mathcal{P}) -1/50}{200(C_0 +2)}$ (we can certainly do that since the value of the constant~$C_0$ does not depend on $\delta$).

		Suppose $n>3n_1(2\delta)$. We claim that $e_3''<200 (C_0 +1)\delta n^2$. Indeed suppose not. Then we can find a set $E_3''$ of at least $200 (C_0 +1)\delta n^2/2n = 100 (C_0 +1)\delta n:=cn$ pairwise vertex-disjoint edges $e=\{u,v\}$ with $\vert t(e)\vert =3$ and $\vert t(\{x,u\})\vert+\vert t(\{x,v\})\vert=2$ for all but at most $\delta n$ vertices $x$. Remove from $K_n$ the pairs of vertices $e=\{u,v\}$ from $E_3''$ one by one. This leaves us with a graph on $n'= n -2c n$ vertices, which by (a) and our assumption on $n$ is strictly greater than~$n_1(2\delta)$.

		Let $t'$ denote the subtemplate of $t$ induced by the remaining vertices. Clearly $t'$ can realise at most $\delta\binom{n}{3}$ rainbow triangles, which by (a) is at most $2\delta \binom{n'}{3}$. Now Lemma~\ref{lemma: supersaturation} and the fact that $n'>n_1(2\delta)$ implies that 
		\begin{align}\label{equation: upper bound on entropy of t'}
		\Ent({t'})&\leq \left(\pi(\mathcal{P})+ C_02\delta\right)\binom{n'}{2}\leq \pi(\mathcal{P})\binom{n}{2}+\frac{C_02\delta}{2}n^2 -2c(1-c)\pi(\mathcal{P})n^2.
		\end{align} 
		On the other hand, each of the edges $e$ from $E_3''$ we removed decreased the entropy by at most $\delta n $, so we have the following lower bound on $\Ent(t')$:
		\begin{align}\label{equation: lower bound on entropy of t'}
		\Ent(t')&\geq \Ent(t)-c\delta n^2\geq\pi(\mathcal{P})\binom{n}{2}-(c\delta +\delta)\frac{n^2}{2}.
		\end{align}
		Bringing the two bounds (\ref{equation: upper bound on entropy of t'}) and (\ref{equation: lower bound on entropy of t'}) together and cancelling terms as appropriate, we get
		\[ -\frac{c\delta}{2} -\frac{\delta}{2}\leq \frac{C_02\delta}{2}-2c(1-c)\pi(\mathcal{P}).\]
		Rearranging yields
		\[ c\left(2(1-c)\pi(\mathcal{P})-\frac{\delta}{2}\right)\leq \frac{\delta}{2}(1+2C_0).\]
		Since $c=100(C_0+1)$, this contradicts our assumption (b) on $\delta $. It follows that $e_3''<200(C_0+1)\delta n^2$, as claimed. Thus in total, there are at most~$e_3'+e_3''=(\delta/2 +200(C_0+1)\delta)n^2:=C_2 \delta n^2$ edges~$e$ with $\vert t(e)\vert=3$.

		We now move on to bounding the number $e_1$ of edges $e$ with $\vert t(e)\vert =1$. We have
		\[\bigl(\pi(\mathcal{P})-\delta\bigr)\binom{n}{2}\leq \Ent(t)\leq \pi(\mathcal{P})\biggl(\binom{n}{2} -e_1\biggr) + e_3' + e_3'',\]
		which together with our bound on $e_3'+e_3''$ implies that
		\[e_1< \frac{1}{\pi(\mathcal{P})} \left(\frac{1}{2}+C_2\right)\delta n^2.\]
		In particular, all but at most 
		\[\left(\frac{1}{\pi(\mathcal{P})} \left(\frac{1}{2}+C_2\right) +C_2\right)\delta n^2:=C_3\delta n^2\] edges $e$ have $\vert t(e)\vert=2$.

		Finally we turn to the edges assigned two colours by $t$. For each pair of colours~$A\in [3]^{(2)}$, let $V_A$ denote the collection of vertices incident to at least~$\delta^{1/3} n$ edges that are assigned $A$ by $t$. For any $A\neq B$, each vertex in $V_A\cap V_B$ gives rise to at least~$\delta^{2/3} n^2$ distinct rainbow triangles, whence 
		\[\frac{\vert V_A\cap V_B\vert}{3}\leq \delta \binom{n}{3},\]
		implying $\vert A\cap V_B\vert \leq  \delta^{1/3} n/2$. Suppose we had $\vert V_A\vert$ and $\vert V_B\vert$ both greater than~$(\sqrt{C_3}+3)\delta^{1/3}n$ for some colour pairs~$A\neq B$, and let $C$ denote the third colour pair from $[3]$. Then all but at most~$\delta^{1/3} n$ vertices in $A$ are incident to at most~$2\delta^{1/3} n$ edges whose $t$-colour assignment is $B$ or~$C$. In particular such vertices~$a$ must be incident to at least $\vert V_B\vert -\vert V_B\cap V_A\vert -2\delta^{1/3} n$ edges~$ab$ with $b\in V_B\setminus V_A$ and $t(a,b)\notin \{A,B,C\}$. This gives at least
		\[ \left(\vert V_A\vert - \delta^{1/3} n\right) \left(\vert V_B\vert -\vert V_B\cap V_A\vert -2\delta^{1/3} n\right)\geq (\sqrt{C_3}+1)\delta^{1/3}n \sqrt{C_3} \delta^{1/3}n>C_3\delta n^2\]
		edges~$e$ with $\vert t(e)\vert\neq 2$, a contradiction. It follows that there is at most~one colour pair, say $A$, with $\vert V_A\vert\geq (\sqrt{C_3}+3)\delta^{1/3}n$. Let $B$,~$C$ denote the two other colour pairs, and $e_B$,~$e_C$ the number of edges~$e$ with $t(e)=B$ and $t(e)=C$ respectively. By the definition of $V_B$, we have 
		\[e_B\leq \vert V_B\vert n/2 + (n-\vert V_B\vert )\delta^{1/3}n/2< \frac{(\sqrt{C_3}+4)\delta^{1/3}}{2}n^2,\]
		and similarly $e_C \leq (\sqrt{C_3}+4)\delta^{1/3}n^2/2$. We have thus shown that all but at most~$(C_3\delta +(\sqrt{C_3}+4)\delta^{1/3})n^2$ edges $e\in E(K_n)$ have $t(e)\neq A$. Picking $\delta=\delta(\varepsilon)$~sufficiently small (and $n_0\geq 3n_1(2\delta)$), this is less than $\varepsilon \binom{n}{2}$, proving the theorem.
	\end{proof}
\end{document}